\date{\today}
\newtheorem*{theorem*}{Theorem}
\newtheorem{theorem}{Theorem}[section]
\newtheorem{corollary}[theorem]{\bf{Corollary}}
\newtheorem{lemma}[theorem]{Lemma}
\theoremstyle{definition}
\theoremstyle{remark}
\newtheorem{remark}[theorem]{\bf{Remark}}
\numberwithin{equation}{section}
\newcommand{\beas}{\begin{eqnarray*}}
\newcommand{\eeas}{\end{eqnarray*}}
\newcommand{\bes} {\begin{equation*}}
\newcommand{\ees} {\end{equation*}}
\newcommand{\be} {\begin{equation}}
\newcommand{\ee} {\end{equation}}
\newcommand{\bea} {\begin{eqnarray}}
\newcommand{\eea} {\end{eqnarray}}
\newcommand{\R}{\mathbb R}
\newcommand{\C}{\mathbb C}
\newcommand{\Z}{\mathbb Z}%
\newcommand{\N}{\mathbb N}
\newcommand{\X}{\mathbb{X}}
\newcommand{\fa}{\mathfrak{a}}
\newcommand{\bfc}{\mathbf{c}}
\newcommand{\Chi}{\mbox{\large$\chi$} }
\renewcommand{\Im}{\text{Im}}
\renewcommand{\Re}{\text{Re}}
\renewcommand{\Re}{\operatorname{Re}}
\renewcommand{\Im}{\operatorname{Im}}
\title[Shifted Pitt and uncertainty inequalities  on $\mathbb{X}$]{Shifted Pitt and uncertainty inequalities on Riemannian symmetric spaces of noncompact type}
\author[Rana, Ruzhansky]{Tapendu Rana, Michael Ruzhansky}	
\address{Tapendu Rana  \endgraf Department of Mathematics: Analysis, Logic and Discrete Mathematics,	\endgraf Ghent University, 	\endgraf Krijgslaan 281, Building S8, B 9000 Ghent, Belgium.} \email{tapendurana@gmail.com, tapendu.rana@ugent.be}
\address{Michael Ruzhansky \endgraf Department of Mathematics: Analysis, Logic and Discrete Mathematics,	\endgraf Ghent University, 	\endgraf Krijgslaan 281, Building S8, B 9000 Ghent, Belgium. 
\endgraf and
\endgraf School of Mathematical Sciences
\endgraf Queen Mary University of London
\endgraf United Kingdom}
\email{michael.ruzhansky@ugent.be}
\date{}
\subjclass[2010]{Primary 43A85, 42A38; Secondary 22E30, 43A90.}
\keywords{Pitt's inequality, Heisenberg-Pauli-Weyl inequality, Jacobi transform,  Uncertainty principle, Riemannian symmetric spaces}
\begin{document}
\begin{abstract}
Our primary objective is to study Pitt-type inequalities on Riemannian symmetric spaces $\mathbb{X}$ of noncompact type, as well as within the framework of Jacobi analysis.  Inspired by the spectral gap of the Laplacian on $\mathbb{X}$, we introduce the notion of a \textit{shifted} Pitt's inequality as a natural and intrinsic analogue tailored to symmetric spaces, capturing key aspects of the underlying non-Euclidean geometry. In the rank one case (in particular, for hyperbolic spaces), we show that the sufficient condition for the \textit{shifted} Pitt's inequality matches the necessary condition in the range $p \leq q \leq p'$, yielding a sharp characterization of admissible polynomial weights with non-negative exponents.

In the Jacobi setting, we modify the transform so that the associated measure exhibits polynomial volume growth. This modification enables us to fully characterize the class of polynomial weights with non-negative exponents for which Pitt-type inequalities hold for the modified Jacobi transforms. As applications of the \textit{shifted} Pitt's inequalities, we derive $L^2$-type Heisenberg-Pauli-Weyl uncertainty inequalities and further establish generalized $L^p$ versions. Moreover, the geometric structure of symmetric spaces allows us to formulate a broader version of the uncertainty inequalities previously obtained by Ciatti-Cowling-Ricci in the setting of stratified Lie groups.
\end{abstract}
\maketitle
\tableofcontents
\addtocontents{toc}{\setcounter{tocdepth}{2}}
\section{Introduction}
Pitt's inequality is a classical result in harmonic analysis that quantifies how the weighted integrability of a function controls the decay of its Fourier transform. In 1937, H.R. Pitt  \cite{Pit37} established a weighted $L^p$–$L^q$ inequality for Fourier coefficients of integrable functions on the unit circle. Over the following decades, this result was extended and refined in the Euclidean setting, most notably by Zygmund-Stein \cite{Ste56} and Benedetto–Heinig \cite{BH03}. In the Euclidean case, Pitt's inequality typically takes the form 
\begin{equation}\label{Clss_Pitt_Rn_pw}
 \left( \int_{\R^N} |\mathcal{F}(f)(\xi)|^q |\xi|^{-q\sigma}  \, d\xi \right)^{\frac{1}{q}}\leq C \left( \int_{\R^N} |f(x)|^{p}  |x|^{p \kappa} dx \right)^{\frac{1}{p}},
\end{equation}
where $\mathcal{F}$ denotes the Fourier transform operator  on $\R^N$. It is known that \eqref{Clss_Pitt_Rn_pw} holds for all $f \in C_c^{\infty}(\R^N)$ and for exponents $1<p\leq q<\infty$, if and only if 
    \begin{align}
        &\max \left\{0, N\left(\frac{1}{p}+\frac{1}{q} -1\right)  \right\}\leq \sigma <\frac{N}{q}, \notag  \\
         \label{k-s=N_equiv}
 \hspace{-4.45cm}\text{ and the \textit{balance condition:}}\qquad & \hspace{2cm}\sigma -\kappa  = N\left(\frac{1}{p}+\frac{1}{q}-1 \right).
    \end{align}
Pitt's inequality unifies and generalizes several fundamental inequalities in harmonic analysis, including the Hausdorff–Young and Hardy–Littlewood–Paley inequalities. Moreover, it provides a natural method to measure uncertainty \cite{Bec95, Bec08, GIT16}: it captures how the spatial decay of a function constrains the integrability of its spectral content.  This perspective highlights the role of Pitt-type inequalities in probing the limits of weighted Fourier inequalities and their dependence on geometric and analytic structures.

In parallel, the uncertainty principle has emerged as a central theme in both harmonic analysis and mathematical physics.  Broadly speaking, it expresses the idea that a nontrivial function and its Fourier transform cannot both be sharply localized. This principle was famously articulated by  Heisenberg \cite{Hei27} in 1927. More precise and rigorous mathematical formulations were later developed by Kennard \cite{Ken27} and Weyl \cite{Wey27}, while Pauli’s early insights have since been recognized as foundational. The mathematical inequality that formalizes this principle is commonly referred to as the Heisenberg–Pauli–Weyl uncertainty inequality. In its most general form, it states that for every $\gamma, \delta > 0$ and $f \in L^2(\R^N)$,
\begin{align}\label{eqn_uncn_L2_Rn}
    \|f\|_{L^2(\R^N)}^{\gamma+\delta}\leq C_{\gamma, \delta} \left( \int_{\R^N} |x|^{2 \gamma} |f(x)|^2 \, dx\right)^{\frac{\delta}{2}} \left( \int_{\R^N} |\xi|^{2 \delta} |\mathcal{F} f(\xi)|^2 \, d\xi\right)^{\frac{\gamma}{2}}.
\end{align}
We refer to the beautiful survey  \cite{FS97} for an overview of the history, as well as
to its generalizations allowing other $L^p$-norms and different powers of $|x|$ and $|\xi|$ and extensions of these results in different settings. 

These inequalities have been extensively studied in the Euclidean setting and subsequently extended to a variety of contexts. Extensions of the classical Pitt's inequality \eqref{Clss_Pitt_Rn_pw} to different types of Fourier transforms on $\R^N$ have been investigated in \cite{DC08, Joh16, GLT18}. Notably, Beckner \cite{Bec95, Bec12} established a sharp form of Pitt's inequality for the case $p=q=2$, which was later extended to the setting of generalized Fourier transforms in \cite{GIT16}. The problem of generalizing Pitt-type inequalities to broader classes of weights on $\R^N$ has been studied by several authors, including \cite{Muc83, JS84, He84}. More recent advances on general weighted Fourier inequalities in the Euclidean setting appear in \cite{LT12, DCGT17, Deb20, ST25}, while extensions to certain non-Euclidean contexts can be found in \cite{GLT18, KPRS24}.

 Uncertainty inequalities have likewise been generalized in numerous directions and extended to a wide range of settings, including Lie groups and manifolds. To place these results in context, we recall that, by the Plancherel formula, the uncertainty inequality \eqref{eqn_uncn_L2_Rn} on $\R^N$ is equivalent to
 \begin{align}\label{eqn_uncn_L_RN}
    \|f\|_{L^2(\R^N)}^{\gamma+\delta}\leq C_{\gamma, \delta} \| |x|^{\gamma} f\|_{L^2(\R^N)}^{\delta} \| (-\mathcal{L}_{\R^N})^{\frac{\delta}{2}}f \|_{L^2(\R^N)}^{\gamma},
\end{align}
where $\mathcal{L}_{\R^N}$ denotes the Laplacian on $\R^N$ and the operator  $ (-\mathcal{L}_{\R^N})^{\frac{\delta}{2}}$ is defined by 
\begin{align}\label{eqn_symb_Frac_L}
     (-\mathcal{L}_{\R^N})^{\frac{\delta}{2}}f(\xi) =|\xi|^{\delta}\mathcal{F}f(\xi).
\end{align}
This formulation of the Heisenberg–Pauli–Weyl (HPW) inequality is particularly amenable to generalization, as it allows for the Laplacian to be replaced by a positive self-adjoint operator and the Euclidean norm $|x|$ to be replaced by an appropriate distance function in other geometric settings.

Over the years, uncertainty principles have developed into a rich and influential area of research in mathematics, generating an extensive and diverse body of work. In what follows, we highlight a selection of key contributions that are particularly relevant and motivating for the problems addressed in this article. In 1995, Beckner \cite{Bec95} obtained a logarithmic version of the uncertainty principle \eqref{eqn_uncn_L2_Rn} by employing a sharp form of Pitt's inequality. Early extensions of the HPW inequality of the form \eqref{eqn_uncn_L_RN} were developed by Thangavelu \cite{Tha90}, who replaced the Laplacian with the Hermite operator and the sub-Laplacian on Heisenberg groups, obtaining results for the case $\gamma = \delta = 1$. These results were later extended to arbitrary $\gamma, \delta > 0$ by Sitaram–Sundari–Thangavelu \cite{SST95} and Xiao–He \cite{XH12}, thereby broadening the applicability of uncertainty inequalities to non-Euclidean settings. Following the direction in \cite{Tha90}, Ciatti–Ricci–Sundari \cite{CRS05, CRS07} extended the framework further to Lie groups with polynomial volume growth. They replaced the Laplacian with a Hörmander-type sub-Laplacian and established an exact analogue of the $L^2$ HPW inequality given in \eqref{eqn_uncn_L_RN}. In a subsequent development, Ciatti-Cowling-Ricci \cite{CCR15} generalized this result by replacing the $L^2$ norms with $L^p$ norms in the setting of stratified Lie groups. They also extended Beckner’s logarithmic uncertainty inequality \cite{Bec95} to this broader context. Specifically, they proved the following.
\begin{theorem}[{{\cite[Theorem C]{CCR15}}}]\label{thm_uncn_Lp_G}
    Let  $G$ be a stratified Lie group equipped with a sub-Laplacian $L$, and  $|\cdot|$ is a homogeneous norm.   Suppose that $\gamma, \delta \in (0,\infty)$, $p, r \in (1,\infty)$, and $q \geq 1$, and assume that
\begin{align}\label{eqn_p,q,r,g,d_Str}
\frac{\gamma+\delta}{p} = \frac{\delta}{q} + \frac{\gamma}{r}.
\end{align}
Then, for all $f \in C_c^{\infty}(G)$, the following inequality holds
        \begin{align}\label{eqn_Lp_uncn_L,G}
             \|f\|_{L^p(G)} \leq C \| |x|^{\gamma}f\|_{L^q(G)}^{\frac{\delta}{\gamma+\delta}} \| L^{\frac{\delta}{2}}f\|_{L^r(G)}^{\frac{\gamma}{\gamma+\delta}}.
        \end{align}
\end{theorem}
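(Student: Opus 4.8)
The plan is to obtain \eqref{eqn_Lp_uncn_L,G} by combining an elementary Hölder interpolation --- one that exploits the fact that the balance relation \eqref{eqn_p,q,r,g,d_Str} forces the powers of $|x|$ to cancel \emph{exactly} --- with a fractional Hardy inequality for the sub-Laplacian $L$ on $G$. Throughout write $\theta=\frac{\delta}{\gamma+\delta}$ (so $1-\theta=\frac{\gamma}{\gamma+\delta}$) and let $Q$ denote the homogeneous dimension of $G$. Note first that \eqref{eqn_p,q,r,g,d_Str} is precisely the condition making the asserted inequality invariant under the dilations $f\mapsto f\circ\delta_{t}$, which is a useful consistency check but will not be needed explicitly below.

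\emph{Step 1: pointwise factorization and Hölder.} For $x\neq e$ one has the algebraic identity $|f(x)|^{p}=\bigl(|x|^{\gamma}|f(x)|\bigr)^{p\theta}\bigl(|x|^{-\delta}|f(x)|\bigr)^{p(1-\theta)}$, since $\gamma\,p\theta-\delta\,p(1-\theta)=0$ and $p\theta+p(1-\theta)=p$. Integrating over $G$ and applying Hölder's inequality with the conjugate pair $u=\tfrac{q}{p\theta}$ and $v=u'=\tfrac{q}{q-p\theta}$ --- admissible because \eqref{eqn_p,q,r,g,d_Str} together with $\gamma/r>0$ gives $\tfrac{\delta}{q}<\tfrac{\gamma+\delta}{p}$, i.e. $p\theta<q$, so that $u,v\in(1,\infty)$ --- yields
\[
\|f\|_{L^{p}(G)}^{p}\le\bigl\||x|^{\gamma}f\bigr\|_{L^{q}(G)}^{p\theta}\,\bigl\||x|^{-\delta}f\bigr\|_{L^{p(1-\theta)v}(G)}^{p(1-\theta)}.
\]
A short computation using \eqref{eqn_p,q,r,g,d_Str} identifies the exponent in the last factor as $r$: indeed $\frac{1}{p(1-\theta)v}=\frac{q-p\theta}{p(1-\theta)q}=\frac{1}{\gamma}\bigl(\frac{\gamma+\delta}{p}-\frac{\delta}{q}\bigr)=\frac{1}{\gamma}\cdot\frac{\gamma}{r}=\frac1r$.

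\emph{Step 2: the Hardy input.} It therefore suffices to establish the fractional Hardy inequality on $G$,
\[
\bigl\||x|^{-\delta}g\bigr\|_{L^{r}(G)}\le C\,\bigl\|L^{\delta/2}g\bigr\|_{L^{r}(G)},\qquad g\in C_{c}^{\infty}(G),
\]
and apply it to $g=f$. One proves this by writing $f=L^{-\delta/2}(L^{\delta/2}f)$ and invoking that the Riesz potential $L^{-\delta/2}$ on the homogeneous group $G$ is convolution with a kernel dominated by $c\,|x|^{\delta-Q}$ for $0<\delta<Q$; the boundedness of $h\mapsto|x|^{-\delta}\bigl(|\cdot|^{\delta-Q}*h\bigr)$ on $L^{r}(G)$, valid for $1<r<\infty$ and $0<\delta<Q/r$, is exactly the endpoint ($p=q$) Stein--Weiss inequality on stratified groups --- precisely the kind of Pitt/weighted-$L^{p}$ estimate this paper is concerned with. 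Feeding this into the display of Step 1 gives $\|f\|_{L^{p}}^{p}\le C^{\,p(1-\theta)}\||x|^{\gamma}f\|_{L^{q}}^{p\theta}\|L^{\delta/2}f\|_{L^{r}}^{p(1-\theta)}$, which is \eqref{eqn_Lp_uncn_L,G} after taking $p$-th roots. (For $\gamma=\delta=1$, $p=q=r=2$ on $\R^{N}$ this reproduces the classical deduction of the Heisenberg inequality from Hardy's inequality.)

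The main obstacle is Step 2: the fractional Hardy/Stein--Weiss inequality on $G$ with the sharp homogeneity, which requires controlling the kernel of $L^{-\delta/2}$ both near the identity and at infinity and verifying the weighted $L^{r}$ bound in the borderline $p=q$ regime. One must also keep track of the admissibility constraints --- $p\theta<q$ for the Hölder step, $0<\delta<Q/r$ for the Hardy step, and $q\ge1$, $p,r\in(1,\infty)$ so that all the weighted quantities are genuine norms --- and confirm that they follow from, or are compatible with, the standing hypotheses; the first is automatic from \eqref{eqn_p,q,r,g,d_Str}, while the condition $\delta<Q/r$ is the natural (and necessary) restriction for the Hardy inequality to hold. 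The factorization in Step 1 is then routine once \eqref{eqn_p,q,r,g,d_Str} is read correctly.
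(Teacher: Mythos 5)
This statement is quoted from \cite[Theorem C]{CCR15} and is not proved in the paper itself; the closest internal comparison is the paper's proof of its symmetric-space analogue (Theorem \ref{thm_Lp_unc_x}, via Lemma \ref{lem_Lp_unc_X_p_0} and Lemma \ref{lem_LKP_p0_n}), which follows exactly your scheme: the Hölder factorization of $|f|^{p}$ into $\bigl(|x|^{\gamma}|f|\bigr)^{p\theta}\bigl(|x|^{-\delta}|f|\bigr)^{p(1-\theta)}$, followed by a fractional Hardy inequality to replace $\||x|^{-\delta}f\|_{L^{r}}$ by $\|L^{\delta/2}f\|_{L^{r}}$. Your Steps 1 and 2 are correct as far as they go, and your exponent bookkeeping ($p\theta<q$ from the balance condition, $p(1-\theta)v=r$) checks out.

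There is, however, a genuine gap: your argument only proves the theorem for $\delta<Q/r$, whereas the statement asserts it for all $\delta\in(0,\infty)$. You acknowledge that $\delta<Q/r$ is necessary for the Hardy/Stein--Weiss step, but you treat it as "compatible with the standing hypotheses" when in fact the hypotheses impose no upper bound on $\delta$. The missing idea is a Landau--Kolmogorov (Kolmogorov--Stein) interpolation inequality for fractional powers of $L$: for $\delta\ge Q/r$ one picks $\theta_{0}\in(0,Q/(r\delta))$, sets $\delta_{0}=\theta_{0}\delta$, runs your argument with $\delta_{0}$ in place of $\delta$ and an intermediate exponent $r_{0}$ determined by $\tfrac{1}{r_{0}}=\tfrac{\theta_{0}}{r}+\tfrac{1-\theta_{0}}{p}$, and then uses
\begin{align*}
\|L^{\delta_{0}/2}f\|_{L^{r_{0}}(G)}\;\leq\;C\,\|L^{\delta/2}f\|_{L^{r}(G)}^{\theta_{0}}\,\|f\|_{L^{p}(G)}^{1-\theta_{0}}
\end{align*}
(proved by complex interpolation of $z\mapsto L^{z/2}f$ using bounds for the imaginary powers $L^{iy}$) to absorb the resulting $\|f\|_{L^{p}}$ factor back into the left-hand side. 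This is precisely how the paper passes from Lemma \ref{lem_Lp_unc_X_p_0} (which carries the restriction $\delta_{0}<n/r$) to the unrestricted Theorem \ref{thm_uncn_Lp_p0,n}, and the same device is needed in \cite{CCR15}. Without it your proof establishes only the restricted range of $\delta$.
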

We would like to mention that a straightforward argument based on dilation and homogeneity shows that inequality \eqref{eqn_Lp_uncn_L,G} can only hold for indices $(p, q, r)$ satisfying the condition \eqref{eqn_p,q,r,g,d_Str}. More recent developments on the uncertainty principle in the setting of groups can be found in \cite{DT15,RS17,RS17_A,RS19}.
For results on the $L^p$ version of the HPW inequality \eqref{eqn_uncn_L2_Rn} expressed via the Fourier transform, we refer the reader to \cite{Ste21, FX23, GS25} and the references therein. 

While significant progress has been made in the study of HPW-type uncertainty inequalities, most of the existing literature focuses on settings with polynomial volume growth. Notably, Martini \cite{Mar10} extended the results of \cite{CRS07} by formulating the $L^2$ HPW uncertainty inequality within a more abstract framework that accommodates spaces with non-polynomial, including exponential, volume growth. As applications, he derived $L^2$ HPW-type inequalities of the form \eqref{eqn_uncn_L_RN} on Riemannian manifolds and symmetric spaces of noncompact type; see \cite[Cor. 2, 3]{Mar10} for details. See also \cite{Bra24, DGLL25} for further developments in similar settings. In addition, Hardy–Littlewood–Sobolev and Stein–Weiss inequalities in this context were studied in \cite{KRZ_23, KKR24}.

In this work, we introduce and systematically study a \textit{shifted} version of Pitt's inequality and HPW–type uncertainty inequalities, formulated in accordance with the geometry and spectral properties of the Laplacian on Riemannian symmetric spaces $\X$ of noncompact type and the Jacobi transform. Unlike the Euclidean case, where no spectral shift appears, the Laplacian on $\X$ naturally involves a spectral shift, making the classical formulation of Pitt's inequality inadequate in this context. We propose the \textit{shifted} Pitt's inequality as the appropriate and intrinsic analogue for symmetric spaces, capturing the essential features of the underlying non-Euclidean structure.

As applications of the \textit{shifted} Pitt's inequality, we derive the $L^2$ HPW uncertainty inequality for a class of Laplacians on $\mathbb{X}$. We also establish an analogue of the Landau–Kolmogorov inequality for the same class of operators. Furthermore, by leveraging the spectral gap of the Laplacian on $\mathbb{X}$, we obtain a broader version of the $L^p$ HPW uncertainty principle (compare to Theorem~\ref{thm_uncn_Lp_G}), illustrating how the underlying geometry yields stronger and more general forms of such inequalities. We now proceed by providing a specific description of the problems considered in this article to facilitate a mathematically rigorous discussion. We refer the reader to Section \ref{sec_preliminaries} for any other unexplained notations. 

\subsection{Shifted Pitt's inequality on noncompact type symmetric spaces}
Let $G$ be a noncompact connected semisimple Lie group with finite center, and let $K$ be a maximal compact subgroup of $G$. The associated Riemannian symmetric space of noncompact type (referred to simply as a symmetric space throughout) is given by $\mathbb{X} = G/K$. We denote by $-\mathcal{L}$ the positive Laplace–Beltrami operator on $\X$.

Before delving into the main results of this article, we briefly motivate our study and highlight key differences between Pitt-type inequalities on symmetric spaces $\X$ and their classical Euclidean counterpart. In the Euclidean setting $\R^N$, the $L^2$-spectrum of the Laplacian $-\mathcal{L}_{\R^N}$ is the nonnegative half-line $[0, \infty)$, and the symbol of the fractional Laplacian $(-\mathcal{L}_{\R^N})^{\sigma/2}$ is simply $|\xi|^\sigma$ (see \eqref{eqn_symb_Frac_L}). In this sense, the weight for Pitt’s inequality on the Fourier side is actually hidden as the symbol of the fractional Laplacian on the corresponding spaces. In contrast, on a symmetric space $\X$ of noncompact type, the Laplacian has a spectral gap, with the bottom of the $L^2$-spectrum equal to $|\rho|^2$. Moreover, the symbol of the fractional Laplacian $(-\mathcal{L})^{\sigma/2}$ takes the \textit{shifted} form $(|\lambda|^2 + |\rho|^2)^{\sigma/2}$. This structural difference motivates the consideration of Pitt-type inequalities on $\X$ involving weights of the form $(|\lambda|^2 + \zeta^2)^{\sigma/2}$ on the Fourier transform side, for $0 \leq \zeta \leq |\rho|$, in contrast to the Euclidean case, where the weight is $|\xi|^\sigma$.

Furthermore, for a suitable function $f$ on $\X$, its Fourier transform $\widetilde{f}: \fa \times B \rightarrow \C$ is defined as a function of two variables, where $B$ is given in \eqref{defn:hft}. Unlike the Euclidean case, the (Poisson) kernel involved in defining $\widetilde{f}$ is not bounded on $\X$. This nontrivial behavior necessitated the introduction of mixed norms on the Fourier side  \cite{RS09,RR24} to establish corresponding Fourier inequalities. In light of these considerations, we propose to investigate the following version of \textit{shifted} Pitt's inequality on symmetric spaces of noncompact type, valid for $0 \leq \zeta \leq |\rho|^2$:
\begin{align}\label{inq_pitt_pol}
\left( \int_{\fa} \left( \int_B |\widetilde{f}(\lambda, b)|^2 \, db \right)^{\frac{q}{2}} (|\lambda|^2 + \zeta^2)^{- \frac{\sigma q}{2}} |\mathbf{c}(\lambda)|^{-2} \, d\lambda \right)^{\frac{1}{q}}
\leq C \left( \int_{\X} |f(x)|^p |x|^{\kappa p} \, dx \right)^{\frac{1}{p}}.
\end{align}
We will see that this formulation is not only the appropriate analogue of Pitt's inequality in the context of symmetric spaces, but it also reveals new admissible regimes for the weight exponents $(\sigma, \kappa)$ that differ substantially from the classical Euclidean case. In particular, we show that introducing a spectral shift $\zeta > 0$ enlarges the admissible region for Pitt-type inequalities. We further provide a sharp characterization of the admissible parameters with non-negative exponents in both the rank one symmetric space and Jacobi transform settings for the range $p \leq q \leq p'$.

Let us begin by analyzing the necessary conditions for the validity of \eqref{inq_pitt_pol}, starting with the unshifted case $\zeta = 0$. Unlike the constant Plancherel density in Euclidean spaces, the density $|\mathbf{c}(\lambda)|^{-2}$ on $\X$ exhibits distinct behavior both near the origin and at infinity (see \eqref{est_c-2,hr}). Particularly, in the rank one case, one has the asymptotic estimates $|\mathbf{c}(\lambda)|^{-2} \asymp |\lambda|^{\nu - 1}$ as $|\lambda| \to 0$ and $|\mathbf{c}(\lambda)|^{-2} \asymp |\lambda|^{n - 1}$ as $|\lambda| \to \infty$, where $\nu$ is the pseudo-dimension of $\X$ and $n = \dim \X$. This asymptotic behavior yields the necessary conditions $\sigma - \kappa \geq n(1/p + 1/q - 1)$ and $\sigma < \nu/q$ for the validity of \eqref{inq_pitt_pol} (see Theorem \ref{thm_nec_pitt_n}), in contrast to the Euclidean requirement $\sigma < N/q$. As noted in Remark~\ref{rem_prob_lam_0}, these two constraints may be incompatible, implying that the classical Hardy–Littlewood–Paley inequality (corresponding to $1 < p = q \leq 2$ and $\kappa = 0$ in \eqref{inq_pitt_pol}) may fail to hold in this setting unless $n \leq \nu$.

These obstructions, however, vanish when $\zeta \neq 0$, making it the more appropriate and geometrically natural case to consider. In particular, by applying the shifted Pitt’s inequality \eqref{inq_pitt_pol} with $\zeta = |\rho|^2$, we obtain a precise analogue of the HPW inequality \eqref{eqn_uncn_L_RN} adapted to the geometry of $\X$. This perspective strongly motivates the formulation and study of the shifted Pitt’s inequality in the context of symmetric spaces.

To the best of our knowledge, this shifted formulation of Pitt’s inequality has not previously appeared in the literature. It provides an intrinsic extension of the classical theory to non-Euclidean settings, where the spectral properties of the Laplacian inherently involve such a shift. We now present our first main result, establishing the validity of the shifted Pitt’s inequality \eqref{inq_pitt_pol} on symmetric spaces of noncompact type.  For $1 \leq p \leq \infty$, $p' = p/(p-1)$ denotes the conjugate exponent of $p$. 
\begin{theorem}\label{thm_pitt_X_n}
     Let $\X$ be a Riemannian symmetric space of noncompact type with dimension $n \geq 2$ and pseudo-dimension $\nu \geq 3$. Suppose $1 \leq p \leq q \leq p'$ and  $\sigma, \kappa \geq 0 $, which satisfy
\begin{align}\label{eqn_suff_s-k>X}
\kappa < \frac{n}{p'} \, (\text{when } p=1, \kappa=0), \quad \text{and} \quad \sigma -\kappa \geq  n\left(\frac{1}{p} + \frac{1}{q} - 1\right).
\end{align}
Then for $\zeta \neq 0$ and $1 \leq p \leq 2$, the shifted Pitt’s inequality \eqref{inq_pitt_pol} holds for all $f \in C_c^{\infty}(\X)$.

Moreover, the Pitt's inequality \eqref{inq_pitt_pol} also holds for $\zeta = 0$ provided that $\min\{n, \nu\} = n$ and the following additional conditions are satisfied:
\begin{align}\label{eqn_suff_s-k<X}
\sigma < \frac{\nu}{q}, \quad \text{and} \quad \sigma - \kappa \leq \nu\left( \frac{1}{p} + \frac{1}{q} - 1 \right).
\end{align}
Conversely, in the rank one case, the condition \eqref{eqn_suff_s-k>X} (for $p > 1$) together with $p \leq 2$ are necessary for \eqref{inq_pitt_pol} to hold for all $\zeta \geq 0$. Furthermore, when $\zeta = 0$, the condition \eqref{eqn_suff_s-k<X} is also necessary in the case $p = 2$.
\end{theorem}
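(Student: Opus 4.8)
The plan is to recognize the weight $(|\lambda|^{2}+\zeta^{2})^{-\sigma/2}$ on the Fourier side as the symbol of a \emph{shifted fractional Laplacian}, thereby turning \eqref{inq_pitt_pol} into a weighted mapping property of that operator, and then to run a local--global splitting that separates the Euclidean-type behaviour near the base point from the behaviour governed by the spectral gap. Concretely, set $\mathcal D_{\zeta}:=-\mathcal L-|\rho|^{2}+\zeta^{2}$, a nonnegative self-adjoint operator with $L^{2}$-spectrum $[\zeta^{2},\infty)$; since the Helgason--Fourier transform diagonalizes $-\mathcal L$ with symbol $|\lambda|^{2}+|\rho|^{2}$, it intertwines $\mathcal D_{\zeta}^{-\sigma/2}$ with multiplication by $(|\lambda|^{2}+\zeta^{2})^{-\sigma/2}$. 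Hence the left-hand side of \eqref{inq_pitt_pol} is precisely the mixed Plancherel-type norm (outer exponent $q$) of $\widetilde{\mathcal D_{\zeta}^{-\sigma/2}f}$, and writing $h=|x|^{\kappa}f$ the claim becomes that $h\mapsto\widetilde{\,k_{\sigma}^{\zeta}\ast(|x|^{-\kappa}h)\,}$ is bounded from $L^{p}(\X)$ into $L^{q}\big(\fa;L^{2}(B)\big)_{|\mathbf c|^{-2}}$, where $k_{\sigma}^{\zeta}$ is the radial convolution kernel of $\mathcal D_{\zeta}^{-\sigma/2}$.

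For the sufficiency with $\zeta\neq0$ I would split $k_{\sigma}^{\zeta}=k_{\sigma}^{\zeta}\mathbf 1_{\{|x|\le1\}}+k_{\sigma}^{\zeta}\mathbf 1_{\{|x|>1\}}$. The local part carries an $|x|^{\sigma-n}$-type singularity, so after invoking the mixed-norm Hausdorff--Young inequality on $\X$ (Section~\ref{sec_preliminaries}) when $q\ge2$, or a H\"older argument against the Plancherel identity that reduces matters to the $L^{2}$-endpoint $p=q=2$ when $q\le2$, it is controlled by the classical Stein--Weiss inequality of order $\sigma$ in dimension $n$ -- and this is exactly where $\kappa<n/p'$ and $\sigma-\kappa\ge n(\tfrac1p+\tfrac1q-1)$, i.e.\ \eqref{eqn_suff_s-k>X}, enter. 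The $L^{2}$-endpoint itself reduces, via Plancherel and duality, to the fractional Hardy inequality $\||x|^{-\kappa}u\|_{L^{2}(\X)}\lesssim\|(-\mathcal L)^{\kappa/2}u\|_{L^{2}(\X)}$ (locally the Euclidean Hardy inequality for $\kappa<n/2$, globally the spectral gap). For the global part, the spectral gap forces $k_{\sigma}^{\zeta}(x)\asymp e^{-|\rho|\,d(x,o)}$ up to a polynomial factor; this exponential decay overcomes the exponential volume growth of $\X$, and combined with the Kunze--Stein phenomenon and the mixed-norm Fourier inequalities of \cite{RS09,RR24} it yields a bounded operator for all admissible $(p,q)$, with $p\le2$ being precisely what the Kunze--Stein step needs. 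For $\zeta=0$ the same scheme applies with $k_{\sigma}^{0}$ the Riesz kernel of $(-\mathcal L)^{-\sigma/2}$, except that its global decay is only polynomial of order $\nu$; demanding that this kernel still define a bounded weighted convolution produces the two extra conditions in \eqref{eqn_suff_s-k<X} -- integrability of $|\mathbf c(\lambda)|^{-2}\asymp|\lambda|^{\nu-1}$ near $\lambda=0$ gives $\sigma<\nu/q$, and matching the global decay gives $\sigma-\kappa\le\nu(\tfrac1p+\tfrac1q-1)$ -- while the hypothesis $\min\{n,\nu\}=n$ is what reconciles the $n$-dimensional local scale with the $\nu$-dimensional global one.

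For the converse in the rank one case I would test \eqref{inq_pitt_pol} against one explicit family per constraint. Approximate identities $f=\varepsilon^{-n}\mathbf 1_{B_{\varepsilon}}$, $\varepsilon\to0$, satisfy $\||x|^{\kappa}f\|_{L^{p}}\asymp\varepsilon^{\kappa-n/p'}$ while $\widetilde f\to1$, so the left-hand side stays bounded below -- incompatible with the inequality unless $\kappa<n/p'$ (and forcing $\kappa=0$ when $p=1$). Functions concentrating at $o$ at scale $\varepsilon\to0$ make the ratio of the two sides of \eqref{inq_pitt_pol} scale like $\varepsilon^{\,n(\tfrac1p+\tfrac1q-1)-(\sigma-\kappa)}$ (using $|\mathbf c(\lambda)|^{-2}\asymp|\lambda|^{n-1}$ at infinity, which matches the Euclidean Plancherel measure), forcing $\sigma-\kappa\ge n(\tfrac1p+\tfrac1q-1)$ exactly as in \eqref{k-s=N_equiv}. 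Taking $f=\mathbf 1_{B_{R}}$ with $R\to\infty$: its spherical transform concentrates on $|\lambda|\lesssim1/R$ with size $\asymp e^{|\rho|R}$ (because $\varphi_{0}$ decays), so the left-hand side is $\asymp e^{|\rho|R}R^{-\nu/q}$ up to polynomial factors while the right-hand side is $\asymp R^{\kappa}e^{2|\rho|R/p}$; comparing the exponential rates forces $p\le2$. Finally, when $\zeta=0$ and $p=2$ (so $q=2$): a wave packet with $\widetilde f$ supported on $\{|\lambda|<\varepsilon\}$ makes the left-hand side infinite unless $\sigma<\nu/2$, and one with $\widetilde f$ supported on $\{|\lambda-\lambda_{0}|<|\lambda_{0}|/2\}$ and $\lambda_{0}\to0$ gives left-hand side $\asymp|\lambda_{0}|^{-\sigma}$ against right-hand side $\asymp|\lambda_{0}|^{-\kappa}$, forcing $\sigma\le\kappa$; together these are \eqref{eqn_suff_s-k<X}.

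The main obstacle is the global estimate on $\X$ in the sufficiency part. Because of the exponential volume growth, the weighted space $L^{p}(|x|^{\kappa p}dx)$ is very large at infinity, so the convolution bound can only survive thanks to the sharp decay of $k_{\sigma}^{\zeta}$ -- exponential of rate $|\rho|$ when $\zeta\neq0$, polynomial of order $\nu$ when $\zeta=0$ -- and converting this pointwise decay into an operator bound in the mixed-norm Fourier space requires a careful use of the Kunze--Stein phenomenon together with precise bookkeeping that matches the Euclidean local scale to the $\rho$- and $\nu$-dependent global scale. This same tension is also what makes the rank-one converse delicate, since each of the local constraint, the constraint near $\lambda=0$, and the constraint at spatial infinity must be saturated by a different test family.
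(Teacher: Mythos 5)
Your architecture is genuinely different from the paper's: you propose a kernel (local/global) analysis of the shifted fractional Laplacian $\mathcal D_\zeta^{-\sigma/2}$, whereas the paper never touches the convolution kernel. It instead interpolates (via the mixed-norm Stein--Weiss scheme of Lemma \ref{cor_sw_ana}) between three endpoint inequalities: the Hardy--Littlewood--Paley inequality of Lemma \ref{lem_plaey_inq} (itself a consequence of the Paley-type weighted Plancherel inequality, Theorem \ref{thm_paley_uni}), the weighted $L^2$ Pitt inequality of Lemma \ref{cor_pitt_KRZ>0}, and the $L^1\to L^\infty(L^2(B))$ restriction estimate \eqref{f_L2K<L1_pre}. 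Your necessity argument, by contrast, is essentially the paper's: test families concentrating at the origin, indicators of large balls to force $p\le 2$, and low-frequency packets for \eqref{eqn_suff_s-k<X}; the paper implements these through the Jacobi framework using $\varphi_\lambda(t)\asymp\varphi_0(t)$ for $\lambda t\le\theta_0$ (Theorem \ref{thm_nec_pitt_n}), but the mechanism is the same.

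There is, however, a genuine gap in your sufficiency argument in the range $q<2$. You claim that for $q\le 2$ ``a H\"older argument against the Plancherel identity reduces matters to the $L^2$-endpoint $p=q=2$.'' H\"older applied to $\int\|\widetilde f(\lambda,\cdot)\|_{L^2(B)}^{q}\,w(\lambda)\,|\bfc(\lambda)|^{-2}d\lambda$ with exponents $2/q$ and $(2/q)'$ produces the factor $\bigl(\int\|\widetilde f\|_{L^2(B)}^{2}|\bfc|^{-2}\bigr)^{q/2}=\|f\|_{L^2(\X)}^{q}$, not $\|f\|_{L^p(\X)}^{q}$ for $p<2$, together with an integrability condition on a power of the weight. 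Taken at face value this would establish \eqref{inq_pitt_pol} for $p=q<2$, $\kappa=0$, under a threshold of the form $\sigma>n(1/p-1/2)$, which is strictly weaker than the necessary condition $\sigma\ge n(2/p-1)$ that your own converse (and Theorem \ref{thm_nec_pitt_n}) establishes; so the step cannot be repaired as stated. The Fourier-side $L^{q}$ norm with $q<2$ is not reachable by Hausdorff--Young either (that only produces outer exponents $\ge 2$), so your local/global kernel scheme has no mechanism to land in the target space off the line $q=p'$. The missing ingredient is precisely a Paley/H\"ormander-type weighted Plancherel inequality --- a weak-type condition on $(|\lambda|^{2}+\zeta^{2})^{-\sigma/2}$ with respect to the measure $|\bfc(\lambda)|^{-2}d\lambda$ --- which is what the paper supplies in Lemma \ref{lem_plaey_inq} and then feeds into the interpolation. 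For $q\ge 2$ your route is viable but quietly assumes a full Stein--Weiss/Hardy--Littlewood--Sobolev inequality on $\X$ from $L^p(|x|^{\kappa p}dx)$ into $L^{q'}(\X)$, which is itself a nontrivial theorem (available only at $q'=2$ in the form the paper uses) rather than a corollary of a local Euclidean estimate plus kernel decay.
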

\begin{remark}
\begin{enumerate}
        \item The admissible region for the weight parameters in the shifted Pitt’s inequality, as illustrated in Figure \ref{pitt_X_Fig}, highlights key distinctions between the Euclidean and symmetric space settings. In the Euclidean case, both $\kappa$ and $\sigma$ are constrained, and the \textit{balance condition} (magenta segments in Figure \ref{pitt_X_Fig}) is necessary for the inequality to hold (see \eqref{k-s=N_equiv}). In contrast, for symmetric spaces, while a similar restriction on $\kappa$ remains, the additional constraint on $\sigma$ is absent when $\zeta \neq 0$, and the \textit{balance condition} is relaxed due to the lack of a dilation structure. As a result, the inequality \eqref{inq_pitt_pol} holds over a broader region in the $(\sigma, \kappa)$-plane, rather than being confined to a line. Moreover, when $\zeta = 0$ and $n \leq \nu$, the admissible region remains at least as large, and in some cases larger, than in the Euclidean setting.

    \item We note that in \cite[Corollary 1.8]{KRZ_23}, the second author and his collaborators obtained Pitt's inequality with $\zeta = 0$ for the case $q = 2$ and $1 \leq p \leq 2$. 
    \item As a direct consequence of the theorem above, we obtain the following characterization result in the rank one case.
\end{enumerate}
\end{remark}
\begin{figure}[ht]
    \centering
        \begin{subfigure}{0.4\textwidth}
            \begin{tikzpicture}[line cap=round,line join=round,>=triangle 45,x=1.0cm,y=1.0cm, scale=0.33]
\clip(-3,-2) rectangle (15,11);
\draw [line width=0.5pt] (0,0)-- (0,10);
\draw[-latex, color=black](10,0)--(10.5,0);
\draw[-latex, color=cyan](0,10.5);

\draw [line width=0.5pt] (0,0)--(10,0);


\draw (-0.75,-0.75) node{\small $0$};
\draw (10,-0.75) node{\small $\kappa$};
\draw (-0.75,10) node{\small $\sigma$};

\draw (4,3.5) node[rotate=45]{\tiny $\!\frac{\sigma\! -\!\kappa}{n} = \frac{1}{q}-\!\!\frac{1}{p'}\!$};
\draw [line width=1pt,color=cyan] (0,1)--(6,7);
\draw [line width=1pt,color=cyan] (0,1)--(0,10);

\draw [line width=1pt,color=cyan,dashed] (6,7)--(6,10);
\draw[line width=1pt,color=magenta] (0,1)--(6,7);

\fill[line width=1pt,color=cyan,fill=cyan, opacity=0.2] (0,1) -- (6,7) -- (6,10) --(0,10)--cycle;

\draw (0,1) node[color=cyan]{$\circ$};
\draw (6,7) node[color=cyan]{$\circ$};

 \draw (8,7) node{\tiny $(\frac{n}{p'},\frac{n}{q})$};
 
\end{tikzpicture}
            \noindent\subcaption{When $\zeta \neq 0$}
        \end{subfigure}
        ~
        \begin{subfigure}{0.33\textwidth}
            \begin{tikzpicture}[line cap=round,line join=round,>=triangle 45,x=1.0cm,y=1.0cm, scale=0.33]
\clip(-3,-2) rectangle (15,11);
\draw [-latex,line width=0.5pt] (0,0)-- (0,10);
\draw[-latex, color=black](10,0)--(10.5,0);

\draw [line width=0.5pt] (0,0)--(10,0);


\draw (-0.75,-0.75) node{\small $0$};
\draw (10,-0.75) node{\small $\kappa$};
\draw (-0.75,10) node{\small $\sigma$};

\draw (4,3.5) node[rotate=45]{\tiny $\!\frac{\sigma\! -\!\kappa}{n} = \frac{1}{q}-\!\!\frac{1}{p'}\!$};
\draw (2.5,6.6) node[rotate=45]{\tiny $\!\frac{\sigma\! -\!\kappa}{\nu} = \frac{1}{q}-\!\!\frac{1}{p'}\!$};
\draw [line width=1pt,color=cyan] (0,1)--(6,7);
\draw [line width=1pt,color=cyan] (0,3)--(6,9);
\draw [line width=1pt,color=cyan] (0,1)--(0,3);

\draw [line width=1pt,color=cyan,dashed] (6,7)--(6,10);
\draw[line width=1pt,color=magenta] (0,1)--(6,7);

\fill[line width=1pt,color=cyan,fill=cyan, opacity=0.2] (0,1) -- (6,7) -- (6,9) --(0,3)--cycle;

\draw (0,1) node[color=cyan]{$\circ$};
\draw (6,7) node[color=cyan]{$\circ$};
\draw (6,9) node[color=cyan]{$\circ$};
\draw (0,3) node[color=cyan]{$\circ$};

 \draw (8,7) node{\tiny $(\frac{n}{p'},\frac{n}{q})$};
 \draw (9.3,9) node{\tiny $(\frac{n}{p'},\frac{\nu}{q} -\frac{\nu-n}{p'})$};
 
\end{tikzpicture}

 
            \subcaption{When $\zeta =0$ and $n< \nu$ }
        \end{subfigure}
        \caption{Admissible regions of $\sigma$ and $\kappa$ for Pitt-type inequality on $\X$, given $1 < p \leq 2$ and $ p < q < p'$.}
         \label{pitt_X_Fig}
    \end{figure}
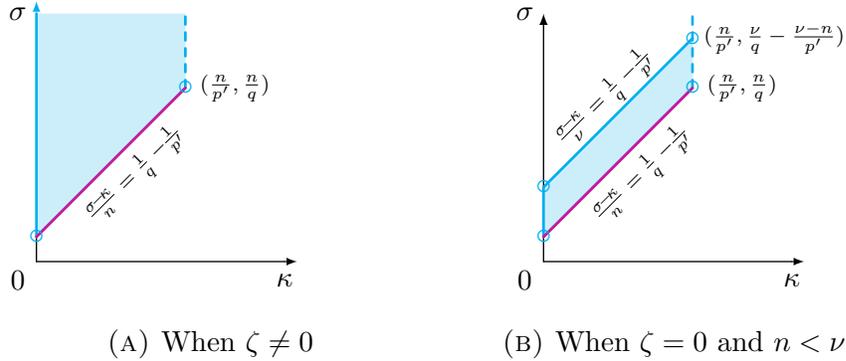 
\begin{corollary}\label{cor_char_pitt_X<p'}
    Let $\X$ be a rank one symmetric space of noncompact type (in particular, hyperbolic space) with dimension $n \geq 2$. Suppose that  $1 < p \leq q \leq p'$ and $\kappa \geq 0, \sigma \in \R$. Then the shifted Pitt's inequality \eqref{inq_pitt_pol} holds for $\zeta>0$ if and only if $p\leq 2$ and  \eqref{eqn_suff_s-k>X} holds.
\end{corollary}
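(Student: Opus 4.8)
The plan is to deduce the corollary directly from Theorem \ref{thm_pitt_X_n}, specialized to the rank one setting, by pinning down exactly which of the hypotheses of that theorem are forced when we are in the regime $\zeta > 0$, $1 < p \leq q \leq p'$, $\kappa \geq 0$. The sufficiency direction is essentially immediate: if $p \leq 2$ and \eqref{eqn_suff_s-k>X} holds, then since $\X$ has rank one we automatically have $\nu \geq 3$ (the pseudo-dimension of a rank one symmetric space equals $2m_{2\alpha} + m_\alpha/2 + 1$-type quantities that are at least $3$; alternatively one notes the rank one spaces $\mathbb{H}^n(\R), \mathbb{H}^n(\C), \mathbb{H}^n(\mathbb{H}), \mathbb{H}^2(\mathbb{O})$ all satisfy $n \geq 2, \nu \geq 3$), so Theorem \ref{thm_pitt_X_n} applies and yields \eqref{inq_pitt_pol} for all $f \in C_c^\infty(\X)$. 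One small point to check: the corollary allows $\sigma \in \R$ rather than $\sigma \geq 0$, but the constraint $\sigma - \kappa \geq n(1/p + 1/q - 1)$ together with $\kappa \geq 0$ and $p \leq q \leq p'$ forces $\sigma \geq n(1/p + 1/q - 1) \geq n(1/p + 1/p - 1) = n(2/p - 1) \geq 0$ when $p \leq 2$; so in the relevant regime $\sigma \geq 0$ is not an extra assumption but a consequence, and the statements are consistent.

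For the necessity direction I would invoke the converse part of Theorem \ref{thm_pitt_X_n}, which states precisely that in the rank one case, condition \eqref{eqn_suff_s-k>X} (for $p > 1$) together with $p \leq 2$ are necessary for \eqref{inq_pitt_pol} to hold for all $\zeta \geq 0$. So assuming the shifted Pitt inequality holds for some fixed $\zeta > 0$, I need to (i) derive $p \leq 2$, and (ii) derive both halves of \eqref{eqn_suff_s-k>X}, namely $\kappa < n/p'$ and $\sigma - \kappa \geq n(1/p + 1/q - 1)$. The necessity of $p \leq 2$ and of the balance-type inequality $\sigma - \kappa \geq n(1/p+1/q-1)$ should come from testing \eqref{inq_pitt_pol} against functions concentrating at scale $t$ near a fixed point (or near the origin $eK$) and letting $t \to 0$: at small scales the symmetric space looks Euclidean, $|\mathbf{c}(\lambda)|^{-2} \asymp |\lambda|^{n-1}$ for large $|\lambda|$, and $(|\lambda|^2 + \zeta^2)^{-\sigma q/2} \asymp |\lambda|^{-\sigma q}$ in that regime, so the inequality degenerates into the Euclidean Pitt inequality on $\R^n$, whose known necessary conditions \eqref{k-s=N_equiv} force exactly $\sigma - \kappa \geq n(1/p+1/q-1)$ and, via the Hausdorff–Young obstruction when $q > p'$ would be violated, the constraint $q \leq p'$ (already assumed) and $p \leq 2$. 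The necessity of $\kappa < n/p'$ should follow from a complementary large-scale / integrability test function: one takes $f$ smooth, compactly supported, not identically zero, so that the right-hand side $\int_\X |f(x)|^p |x|^{\kappa p}\,dx$ is finite exactly when $\kappa p$ does not over-weight, but more to the point one tests near-minimizers and uses that $\widetilde f(\lambda, b)$ is essentially bounded below near $\lambda = 0$ on a set of positive measure, against which the weight $(|\lambda|^2+\zeta^2)^{-\sigma q/2} |\mathbf{c}(\lambda)|^{-2}$ integrates without trouble (since $\zeta > 0$ kills the would-be singularity at $\lambda = 0$), forcing the spatial side to be finite, which caps $\kappa$ at $n/p'$. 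Alternatively, and more cleanly, necessity of $\kappa < n/p'$ is a statement about the right-hand side alone in the limiting/dual sense and can be extracted by a scaling argument on balls $B(eK, R)$ with $R \to \infty$.

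The main obstacle I anticipate is not in the structure of the argument — since Theorem \ref{thm_pitt_X_n} already packages both directions — but in verifying the hypothesis bookkeeping: specifically, confirming that every rank one symmetric space of noncompact type with $n \geq 2$ automatically satisfies $\nu \geq 3$ so that Theorem \ref{thm_pitt_X_n} is genuinely applicable without an extra hypothesis in the corollary, and confirming that the ``for all $\zeta \geq 0$'' in the converse part of Theorem \ref{thm_pitt_X_n} can be replaced by ``for a single fixed $\zeta > 0$'' in the necessity direction of the corollary (this requires checking that the test-function arguments proving necessity of \eqref{eqn_suff_s-k>X} and of $p \leq 2$ only exploit the large-$|\lambda|$ asymptotics of the weight, where the value of $\zeta$ is irrelevant). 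Once those two points are settled, the corollary is just the equivalence: for $\zeta > 0$ and $1 < p \leq q \leq p'$, \eqref{inq_pitt_pol} $\iff$ $p \leq 2$ and \eqref{eqn_suff_s-k>X}, with ``$\Leftarrow$'' from the first part of Theorem \ref{thm_pitt_X_n} (rank one $\Rightarrow \nu \geq 3$, $n \geq 2$) and ``$\Rightarrow$'' from its converse part.
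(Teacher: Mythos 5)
Your overall route is the paper's route: the corollary is deduced directly from Theorem \ref{thm_pitt_X_n}, with sufficiency from its first part (rank one gives $\nu=3\geq 3$, and your observation that $\kappa\geq 0$, $q\leq p'$, and \eqref{eqn_suff_s-k>X} force $\sigma\geq 0$ is exactly the needed bookkeeping) and necessity from its converse part, which the paper in turn establishes through the Jacobi-transform necessary conditions of Theorem \ref{thm_nec_pitt_n} applied to $K$-biinvariant test functions. Your concern about the quantifier on $\zeta$ is legitimate, and the resolution is as you suspect: Theorem \ref{thm_nec_pitt_n} is proved for each fixed $\zeta$ separately, so necessity does hold for a single $\zeta>0$.

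However, your sketched justification of \emph{why} $p\leq 2$ is necessary is wrong, and this matters because you explicitly make the validity of your argument contingent on checking that mechanism. You attribute $p\leq 2$ to a small-scale Euclidean degeneration, but the Euclidean Pitt inequality \eqref{Clss_Pitt_Rn_pw} holds for all $1<p\leq q<\infty$ under \eqref{k-s=N_equiv}; there is no $p\leq 2$ obstruction in $\R^N$, so no concentration argument at scale $t\to 0$ can produce one. In the paper, $p\leq 2$ is forced by the exponential volume growth at infinity: one tests with $f_2(t)=t^{-\kappa/(p-1)}\varphi_0(t)^{1/(p-1)}\Chi_{(1,s)}(t)$ and uses $\int_1^s t^{p'-\kappa p'}e^{\rho t(2-p')}\,dt\gtrsim e^{\rho s(2-p')}$, which blows up as $s\to\infty$ when $p'<2$, overwhelming any polynomial factor coming from the Fourier side (see \eqref{eqn_tk>1}--\eqref{eqn_p<2_nec}). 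The role of $\zeta$ there is harmless for a different reason than the one you give: on $0<\lambda<1/s$ the factor $(\lambda^2+\zeta^2)^{-\sigma/2}$ is comparable to the constant $\zeta^{-\sigma}$, which does not affect the exponential divergence in $s$. Similarly, the condition $\kappa<n/p'$ is a \emph{local} obstruction near the origin (local integrability of the dual weight $t^{-\kappa p'}$ against $m(t)\,dt\asymp t^{n-1}\,dt$, tested via $f_0$ singular at $t=0$), not a large-ball scaling as $R\to\infty$; on a space of exponential growth the latter gives no polynomial information. The small-scale Euclidean heuristic is only the right picture for the remaining condition $\sigma-\kappa\geq n(1/p+1/q-1)$.
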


In \cite{KPRS24}, motivated by the work of Heinig \cite{He84}, the first author and his collaborators established $(L^p, L^q)$ Fourier inequalities with general weights in the setting of rank one symmetric spaces of noncompact type, using Calderón’s rearrangement method. 
 
However, a key obstacle in applying \cite[Theorem 1.4]{KPRS24} to polynomial weights is that, in spaces with exponential volume growth, the decreasing rearrangement of a polynomial weight behaves like a logarithmic function rather than retaining its polynomial nature. Consequently, an exponential weight had to be introduced on the function side to derive a version of the unshifted Pitt’s inequality. This formulation, however, appears somewhat unnatural and has limited applicability (see \cite[Remark 5.2]{KPRS24}).

To overcome this limitation, in the present article we first establish an analogue of the classical Hardy–Littlewood–Paley inequality, and then apply a mixed norm interpolation method,
to derive a more effective version of the Pitt-type inequalities.

Furthermore, in the context of the Jacobi transform, we demonstrate how Calder{\'o}n’s rearrangement method can be adapted in a more effective manner. Specifically, in Section~\ref{sec_jac_transform}, we modify the standard Jacobi transform so that the associated measure on the function side exhibits polynomial growth. This modification makes Calder{\'o}n’s method suitably applicable and leads to sharp sufficient conditions for Pitt-type inequalities in the modified Jacobi transform setting. We also prove that these conditions are necessary, thereby completely characterizing the admissible class of polynomial weights with non-negative exponents. In addition, by employing sharp estimates of the Jacobi (spherical) functions, we derive \textit{shifted} Pitt's inequalities for the standard Jacobi transform and extend the results to include the case $q > p'$, which was not accessible for symmetric spaces in Theorem~\ref{thm_pitt_X_n}.

Due to the additional notation and technical framework required, we refer the reader to Section~\ref{sec_jac_transform} for a detailed discussion of the results on shifted Pitt's inequality and its necessary conditions in the Jacobi setting, and to Section~\ref{sec_app_jac} for applications to uncertainty principles.
\subsection{Uncertainty principles on noncompact type symmetric spaces}
Uncertainty principles of various forms, such as Hardy’s theorem \cite{Sen02}, Beurling’s theorem \cite{SS08}, and uncertainty principles related to the Schrödinger equation \cite{PS12}, have been formulated within the framework of symmetric spaces of noncompact type. Ingham–Chernoff-type theorems were introduced in this context by Bhowmik-Pusti-Ray \cite{BPR20, BPR23}, with further developments appearing in more recent works, including \cite{GMT22, GT22, Sar25}. 
Within this broader context, it is natural to explore analogues of the Heisenberg–Pauli–Weyl uncertainty inequality in non-Euclidean settings, such as symmetric spaces of noncompact type (in particular, hyperbolic spaces) and in the framework of the Jacobi transform. Our first result in this direction establishes an $L^2$ version of the HPW inequality \eqref{eqn_uncn_L2_Rn} for a class of Laplacians on $\X$, obtained through a direct application of shifted Pitt's inequality. 
\begin{corollary}\label{cor_unc_2,2_F}
     Let $\X$ be a general symmetric space of noncompact type and let $\gamma, \delta \in (0,\infty) $, $p_0\geq 1$, and $|\rho_{p_0}| := |2/{p_0}-1| |\rho|$. Then we have the following 
    \begin{align}\label{eqn_unc_2,2_F}
        \|f\|_{L^2(\X)} \leq C \| |x|^{\gamma} f\|_{L^2(\X)}^{\frac{\delta} {\gamma+\delta}}  \| (|\lambda|^2+ |\rho_{p_0}|^2)^{\frac{\delta}{2}}\widetilde{f}\|_{L^2(\fa \times B, |\bfc(\lambda)|^{-2}d\lambda db)}^{\frac{\gamma} {\gamma+\delta}}
    \end{align}
    for all $f \in C_c^{\infty}(\X)$. Equivalently, we have 
     \begin{align}\label{eqn_unc_2,2_X}
        \|f\|_{L^2(\X)} \leq C \| |x|^{\gamma} f\|_{L^2(\X)}^{\frac{\delta} {\gamma+\delta}}  \|  (-\mathcal{L}_{p_0})^{\frac{\delta}{2}} f \|_{L^2(\X)}^{\frac{\gamma} {\gamma+\delta}},
    \end{align}
    where $\mathcal{L}_{p_0}:= \mathcal{L}+(|\rho|^2-|\rho_{p_0}|^2) I$ is known as modified or $L^{p_0}$-Laplacian on $\X$. 
\end{corollary}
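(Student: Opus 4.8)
The plan is to derive Corollary~\ref{cor_unc_2,2_F} as a direct consequence of the shifted Pitt's inequality (Theorem~\ref{thm_pitt_X_n}) specialized to the $L^2$ setting, combined with a standard H\"older-type interpolation in the spirit of the classical derivation of \eqref{eqn_uncn_L_RN} from Pitt's inequality. First I would set $p = q = 2$ in \eqref{inq_pitt_pol}, so that the mixed norm on the Fourier side reduces to the genuine Plancherel norm $\|\widetilde{f}\|_{L^2(\fa \times B,\, |\bfc(\lambda)|^{-2}d\lambda\, db)}$, which by the Helgason--Plancherel theorem equals $\|f\|_{L^2(\X)}$. In this case the balance condition becomes $\sigma - \kappa = 0$, i.e.\ $\sigma = \kappa$, and the admissible range from \eqref{eqn_suff_s-k>X} is simply $0 \le \sigma = \kappa < n/2$. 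For $\zeta \neq 0$ this gives, for every $0 \le \alpha < n/2$,
\begin{align}\label{eqn_pitt_p=q=2_plan}
\left( \int_{\fa} \left( \int_B |\widetilde{f}(\lambda,b)|^2 \, db \right) (|\lambda|^2 + \zeta^2)^{-\alpha} |\bfc(\lambda)|^{-2} \, d\lambda \right)^{1/2} \le C \left( \int_{\X} |f(x)|^2 |x|^{2\alpha} \, dx \right)^{1/2}.
\end{align}
Here I would take $\zeta^2 = |\rho_{p_0}|^2$, which lies in $[0,|\rho|^2]$ since $|\rho_{p_0}| = |2/p_0 - 1|\,|\rho| \le |\rho|$, so the hypothesis $0 \le \zeta \le |\rho|$ is met; when $p_0 = 2$ we have $|\rho_{p_0}| = 0$ and one instead invokes the $\zeta = 0$ part of the theorem (noting that for $p=q=2$ the extra conditions \eqref{eqn_suff_s-k<X} read $\sigma < \nu/2$ and $\sigma - \kappa \le 0$, automatically compatible with $\sigma = \kappa$, and $\min\{n,\nu\}=n$ is needed).

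Next I would run the interpolation argument. Write $g(\lambda) = \big(\int_B |\widetilde{f}(\lambda,b)|^2\,db\big)^{1/2}$ and $w(\lambda) = (|\lambda|^2 + |\rho_{p_0}|^2)^{1/2}$, and split
\begin{align*}
\|f\|_{L^2(\X)}^2 = \int_{\fa} g(\lambda)^2 |\bfc(\lambda)|^{-2}\, d\lambda = \int_{\fa} \big( g(\lambda) w(\lambda)^{-\alpha} \big)^{\theta} \big( g(\lambda) w(\lambda)^{\delta} \big)^{1-\theta} \cdot (\text{powers of } w) \; |\bfc(\lambda)|^{-2}\, d\lambda,
\end{align*}
choosing $\theta \in (0,1)$ and $\alpha \ge 0$ so that $-\alpha\theta + \delta(1-\theta) = 0$, i.e.\ $\theta = \delta/(\alpha+\delta)$; then apply H\"older with exponents $1/\theta$ and $1/(1-\theta)$ to obtain
\begin{align*}
\|f\|_{L^2(\X)}^2 \le \Big( \int_{\fa} g(\lambda)^2 w(\lambda)^{-2\alpha} |\bfc(\lambda)|^{-2} d\lambda \Big)^{\theta} \Big( \int_{\fa} g(\lambda)^2 w(\lambda)^{2\delta} |\bfc(\lambda)|^{-2} d\lambda \Big)^{1-\theta}.
\end{align*}
The first factor is controlled by \eqref{eqn_pitt_p=q=2_plan} with $\alpha$ in place of the weight exponent, giving $C\,\| |x|^{\alpha} f\|_{L^2(\X)}^{2\theta}$; the second factor is precisely $\|(|\lambda|^2 + |\rho_{p_0}|^2)^{\delta/2}\widetilde{f}\|_{L^2(\fa\times B,\,|\bfc(\lambda)|^{-2}d\lambda\,db)}^{2(1-\theta)}$. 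Raising to the power $1/2$, relabeling $\alpha = \gamma$ (so $\theta = \delta/(\gamma+\delta)$, $1-\theta = \gamma/(\gamma+\delta)$), and observing that $\gamma$ can be any positive number below $n/2$ yields \eqref{eqn_unc_2,2_F} on that range; the full range $\gamma \in (0,\infty)$ then follows by a truncation/interpolation trick (dominating $|x|^{2\gamma}$ on a ball by a smaller power and using that the $L^2$-norm already controls the complement via the spectral gap, exactly as in \cite{Mar10}), or simply by noting that for $\gamma \ge n/2$ one bounds a lower power and absorbs the rest. Finally, the equivalence of \eqref{eqn_unc_2,2_F} and \eqref{eqn_unc_2,2_X} is the statement that $(-\mathcal{L}_{p_0})^{\delta/2}$ has Helgason--Fourier multiplier $(|\lambda|^2 + |\rho_{p_0}|^2)^{\delta/2}$: indeed $-\mathcal{L}$ has multiplier $|\lambda|^2 + |\rho|^2$, so $-\mathcal{L}_{p_0} = -\mathcal{L} - (|\rho|^2 - |\rho_{p_0}|^2)I$ has multiplier $|\lambda|^2 + |\rho_{p_0}|^2$, and the claim follows from the Plancherel theorem applied to $(-\mathcal{L}_{p_0})^{\delta/2} f$.

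The main obstacle I anticipate is not the interpolation itself but justifying the extension to all $\gamma, \delta > 0$: the shifted Pitt inequality \eqref{eqn_pitt_p=q=2_plan} is only available for $\gamma = \alpha < n/2$ (this ceiling is genuinely needed, cf.\ the necessity part of Theorem~\ref{thm_pitt_X_n}), so for large $\gamma$ one cannot feed $|x|^{2\gamma}$ directly into Pitt. The standard fix is a two-region estimate: on $\{|x| \le 1\}$ replace $|x|^{2\gamma}$ by $|x|^{2\gamma_0}$ for some fixed $\gamma_0 < n/2$, and on $\{|x| > 1\}$ use $\|f\|_{L^2(\{|x|>1\})} \le \|f\|_{L^2(\X)}$ together with the spectral-gap bound $\|f\|_{L^2(\X)} \le |\rho|^{-\delta}\|(-\mathcal{L})^{\delta/2}f\|_{L^2(\X)} \le C\|(-\mathcal{L}_{p_0})^{\delta/2}f\|_{L^2(\X)}$ (valid since $\mathcal{L}_{p_0} \ge \mathcal{L}$ in the form sense for $p_0 \ne \infty$, or directly from the multiplier bound $|\lambda|^2 + |\rho_{p_0}|^2 \le |\lambda|^2 + |\rho|^2$ reversed appropriately), and then combine with the geometric-mean bookkeeping so that the product structure of \eqref{eqn_unc_2,2_F} is preserved. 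A minor additional care point is the case $p_0 = 2$, where $\zeta = 0$ forces one to check the compatibility of \eqref{eqn_suff_s-k<X} and the hypothesis $\min\{n,\nu\} = n$; since $\nu \ge 3$ and $n \ge 2$ this is harmless for the rank-one spaces and holds for the relevant general spaces, and in any event $\gamma_0$ can be chosen below $\min\{n,\nu\}/2$.
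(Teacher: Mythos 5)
Your proposal is essentially the paper's own proof: specialize the shifted Pitt inequality to $p=q=2$ with $\sigma=\kappa=\gamma$ and $\zeta=|\rho_{p_0}|$ (for $p_0=2$ the correct source is Corollary~\ref{cor_pitt_KRZ,0}, whose $p=2$ case gives $\kappa=\sigma<\min\{n/2,\nu/2\}$ with no hypothesis $n\le\nu$, so the compatibility worry you raise disappears), then split $\|f\|_{L^2}^2$ on the Fourier side with H\"older exponents $(\gamma+\delta)/\gamma$ and $(\gamma+\delta)/\delta$; the identification of the multiplier of $(-\mathcal{L}_{p_0})^{\delta/2}$ via Plancherel is also as in the paper.

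The one place to be careful is the extension to $\gamma\ge n/2$. Your second suggestion ("bound a lower power and absorb the rest") is exactly what the paper does: from $|x|^{\gamma_0}\epsilon^{-\gamma_0}\le 1+|x|^{\gamma}\epsilon^{-\gamma}$ and optimizing in $\epsilon$ one gets the moment interpolation $\||x|^{\gamma_0}f\|_{L^2}\le C\|f\|_{L^2}^{1-\gamma_0/\gamma}\||x|^{\gamma}f\|_{L^2}^{\gamma_0/\gamma}$, which is substituted into the already-proved case $\gamma_0<\min\{n/2,\nu/2\}$ and the resulting power of $\|f\|_{L^2}$ is absorbed to the left. Your first suggestion, the two-region/spectral-gap route, does not work as written: the chain $\|f\|_{L^2}\le|\rho|^{-\delta}\|(-\mathcal{L})^{\delta/2}f\|_{L^2}\le C\|(-\mathcal{L}_{p_0})^{\delta/2}f\|_{L^2}$ has its second inequality reversed (since $|\rho_{p_0}|\le|\rho|$, the multiplier $(|\lambda|^2+|\rho|^2)^{\delta/2}$ dominates $(|\lambda|^2+|\rho_{p_0}|^2)^{\delta/2}$, not the other way around), and for $p_0=2$ the operator $-\mathcal{L}_{p_0}$ has no spectral gap at all, so $\|f\|_{L^2}\lesssim\|(-\mathcal{L}_{2})^{\delta/2}f\|_{L^2}$ is false. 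Rely on the absorption argument only.
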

We would like to emphasize the importance of the modified Laplacians $(-\mathcal{L}_{p_0})^{\delta/2}$ and, more generally, of operators of the form $(zI - \mathcal{L})^{{\delta}/{2}}$, which have been extensively studied due to their important role in harmonic analysis and multiplier theory in various settings. For a comprehensive discussion, we refer the reader to \cite[Sec. 4]{Ank92}. In the specific context of symmetric spaces of noncompact type, we refer \cite[Sec. 4]{CGM93} and \cite[Sec. 5]{Ion03}.

Our next result is an $L^p$-version of the HPW inequality on $\X$, inspired by the work of Ciatti, Cowling, and Ricci \cite{CCR15} on stratified Lie groups. While their results serve as a motivating framework, the setting of symmetric spaces allows us to formulate a broader version than Theorem~\ref{thm_uncn_Lp_G}, reflecting structural differences between the two contexts.
\begin{theorem}\label{thm_Lp_unc_x}
           Let $\X$ be a general symmetric space of noncompact type, and let $1 \leq p_0 < 2$. Suppose $\gamma, \delta  \in (0,\infty)$, and that the exponents $p,q$, and $r$ with $p, r \in (p_0, p_0')$ and $q\geq 1$, which satisfy \eqref{eqn_p,q,r,g,d_Str}.
Then, for any $\sigma \geq \delta$, we have the following inequality
        \begin{align}\label{eqn_uncn_p0_thm_int}
            \|f\|_{L^p(\X)} \leq C \| |x|^{\gamma}f\|_{L^q(\X)}^{\frac{\delta}{\gamma+\delta}} \| (-\mathcal{L}_{p_0})^{\frac{\sigma}{2}}f\|_{L^r(\X)}^{\frac{\gamma}{\gamma+\delta}}
        \end{align}
        for all $f \in C_c^{\infty}(\X)$.
       \end{theorem}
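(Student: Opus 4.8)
The plan is to follow the strategy of Ciatti--Cowling--Ricci \cite{CCR15}, the decisive new input being the spectral gap of the modified Laplacian: since $p_0<2$ and $r\in(p_0,p_0')$ one has $\gamma_r:=|\rho|^2-|\rho_r|^2>\gamma_{p_0}$, so the $L^r(\X)$-spectrum of $-\mathcal L_{p_0}=-\mathcal L-\gamma_{p_0}I$ is bounded away from $0$; this substitutes for the missing dilation structure on $\X$. Throughout set $\theta=\tfrac{\delta}{\gamma+\delta}$, so that $1-\theta=\tfrac{\gamma}{\gamma+\delta}$, $(1-\theta)\delta=\gamma\theta$, and \eqref{eqn_p,q,r,g,d_Str} reads $\tfrac1p=\tfrac\theta q+\tfrac{1-\theta}r$. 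First I would reduce to $\sigma=\delta$: for $\sigma>\delta$ the function $t\mapsto t^{(\delta-\sigma)/2}$ is bounded on the $L^r$-spectrum of $-\mathcal L_{p_0}$, hence $(-\mathcal L_{p_0})^{(\delta-\sigma)/2}$ is bounded on $L^r(\X)$ and $\|(-\mathcal L_{p_0})^{\delta/2}f\|_{L^r}\le C\|(-\mathcal L_{p_0})^{\sigma/2}f\|_{L^r}$, so \eqref{eqn_uncn_p0_thm_int} for $\sigma\ge\delta$ follows from the case $\sigma=\delta$.

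The analytic core is a Sobolev-type estimate \emph{with a genuine gain of integrability}: for $r\in(p_0,p_0')$ and any $s\ge r$ with $\tfrac1r-\tfrac1s<\tfrac\delta n$ and $s<p_0'$, the operator $(-\mathcal L_{p_0})^{-\delta/2}$ maps $L^r(\X)$ boundedly into $L^s(\X)$. I would prove this from the subordination identity $(-\mathcal L_{p_0})^{-\delta/2}=\tfrac1{\Gamma(\delta/2)}\int_0^\infty t^{\delta/2-1}e^{-t(-\mathcal L_{p_0})}\,dt$ combined with heat kernel estimates on $\X$. For $0<t\le1$ the small-time Gaussian bound for the heat kernel of $\mathcal L$ together with Young's convolution inequality on $G$ gives $\|e^{-t(-\mathcal L_{p_0})}\|_{L^r\to L^s}\lesssim t^{-\frac n2(\frac1r-\frac1s)}$, which is integrable against $t^{\delta/2-1}$ near $0$ precisely because $\tfrac1r-\tfrac1s<\tfrac\delta n$. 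For $t\ge1$ one writes $e^{-t(-\mathcal L_{p_0})}=e^{\gamma_{p_0}t}e^{-t(-\mathcal L)}$ and uses $\|e^{-t(-\mathcal L)}\|_{L^s\to L^s}\lesssim e^{-\gamma_s t}$ with $\gamma_s>\gamma_{p_0}$; this yields exponential decay of $\|e^{-t(-\mathcal L_{p_0})}\|_{L^r\to L^s}$ and hence integrability at infinity. Both the diagonal case $s=r$ (which only uses $\delta>0$ and the spectral gap, i.e.\ $L^r$-boundedness of $(-\mathcal L_{p_0})^{-\delta/2}$) and the above $s>r$ will be used; such embeddings for the shifted Laplacian are close to those underlying the Hardy--Littlewood--Sobolev estimates on $\X$ in \cite{KRZ_23,KKR24}, and I would invoke them accordingly.

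With this in hand, fix $f\in C_c^\infty(\X)$; then $(-\mathcal L_{p_0})^{\delta/2}f$ is smooth and exponentially decaying (hence in $L^r(\X)$) and $f=(-\mathcal L_{p_0})^{-\delta/2}\big((-\mathcal L_{p_0})^{\delta/2}f\big)$. Choose $s>r$ with $\tfrac1r-\tfrac1s<\tfrac\delta n$ and $s<p_0'$, and put $\tfrac1u:=(1-\theta)\big(\tfrac1r-\tfrac1s\big)$, so that $\tfrac1p=\tfrac1u+\tfrac\theta q+\tfrac{1-\theta}s$, $u>1$, and $u\gamma\theta<(1-\theta)\delta\,u=n\cdot u\tfrac1u\cdot\tfrac{(1-\theta)\delta}{n}<n$ (using $\gamma\theta=(1-\theta)\delta$ and $\tfrac1u<\tfrac\delta n$). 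Write $|f|=|x|^{-\gamma\theta}\big(|x|^{\gamma}|f|\big)^{\theta}|f|^{1-\theta}$ and split $\X=B_1\sqcup B_1^c$, where $B_1$ is the unit geodesic ball about the base point. On $B_1^c$ one has $|x|^{-\gamma\theta}\le1$, so Hölder with exponents $(\tfrac q\theta,\tfrac r{1-\theta})$ and the case $s=r$ of the Sobolev estimate give
\begin{equation*}
\|f\,\chi_{B_1^c}\|_{L^p}\ \le\ \big\||x|^{\gamma}f\big\|_{L^q}^{\theta}\,\|f\|_{L^r}^{1-\theta}\ \le\ C\,\big\||x|^{\gamma}f\big\|_{L^q}^{\theta}\,\big\|(-\mathcal L_{p_0})^{\delta/2}f\big\|_{L^r}^{1-\theta}.
\end{equation*}
On $B_1$, since $u\gamma\theta<n$ the weight $|x|^{-\gamma\theta}\chi_{B_1}$ lies in $L^{u}(\X)$ (the singularity being of Euclidean type near the base point), so Hölder with exponents $(u,\tfrac q\theta,\tfrac s{1-\theta})$ followed by $\|f\|_{L^s}\le C\|(-\mathcal L_{p_0})^{\delta/2}f\|_{L^r}$ gives
\begin{equation*}
\|f\,\chi_{B_1}\|_{L^p}\ \le\ \big\||x|^{-\gamma\theta}\chi_{B_1}\big\|_{L^{u}}\,\big\||x|^{\gamma}f\big\|_{L^q}^{\theta}\,\|f\|_{L^s}^{1-\theta}\ \le\ C\,\big\||x|^{\gamma}f\big\|_{L^q}^{\theta}\,\big\|(-\mathcal L_{p_0})^{\delta/2}f\big\|_{L^r}^{1-\theta}.
\end{equation*}
Adding the two contributions yields \eqref{eqn_uncn_p0_thm_int}.

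The main obstacle is the Sobolev-with-gain estimate for the shifted Laplacian, and in particular reconciling its two regimes in $t$: the small-$t$ (local, essentially Euclidean) behaviour forces $\tfrac1r-\tfrac1s<\tfrac\delta n$, while the large-$t$ behaviour — where the exponential volume growth of $\X$ would otherwise be fatal — is controlled only because $r,s$ lie strictly between $p_0$ and $p_0'$, so that $\gamma_s-\gamma_{p_0}>0$. A secondary, purely technical point is the qualitative justification that $(-\mathcal L_{p_0})^{\delta/2}f\in L^r(\X)$ and that $(-\mathcal L_{p_0})^{-\delta/2}(-\mathcal L_{p_0})^{\delta/2}f=f$ for $f\in C_c^\infty(\X)$; once the functional calculus of $-\mathcal L_{p_0}$ on $L^r(\X)$ and the exponential decay of the Bessel--Green kernels of the shifted Laplacian are recorded, this is routine, and everything else is bookkeeping with Hölder's inequality and the relation \eqref{eqn_p,q,r,g,d_Str}.
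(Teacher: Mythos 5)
Your architecture (reduce to $\sigma=\delta$ via the spectral gap, factor $|f|=|x|^{-\gamma\theta}\big(|x|^{\gamma}|f|\big)^{\theta}|f|^{1-\theta}$, split into $B_1$ and $B_1^c$) is sensible, and both the reduction to $\sigma=\delta$ and the $B_1^c$ estimate are correct. The gap is in the local step: the chain of inequalities claiming $u\gamma\theta<n$ is false, and in fact the opposite holds. Since $\gamma\theta=(1-\theta)\delta$ \emph{exactly} and $\tfrac1u=(1-\theta)\big(\tfrac1r-\tfrac1s\big)<(1-\theta)\tfrac{\delta}{n}=\tfrac{\gamma\theta}{n}$, you get $u>n/(\gamma\theta)$, i.e.\ $u\gamma\theta>n$, so $\int_{B_1}|x|^{-u\gamma\theta}\,dx=\infty$ and $|x|^{-\gamma\theta}\chi_{B_1}\notin L^{u}(\X)$. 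The two requirements of your scheme --- the strict Sobolev gain $\tfrac1r-\tfrac1s<\tfrac{\delta}{n}$ and the weight integrability $u\gamma\theta<n$ --- are mutually exclusive for every admissible $s$: the balance condition \eqref{eqn_p,q,r,g,d_Str} places you exactly at the critical Stein--Weiss scaling ($u\gamma\theta=n$ when $\tfrac1r-\tfrac1s=\tfrac{\delta}{n}$), where the weight lies only in weak $L^{u}$ and decoupling it from the smoothing operator by H\"older loses the endpoint. So the obstacle is not the Sobolev-with-gain estimate you flag (that is fine, given the spectral gap); it is that no choice of $s$ makes your H\"older step on $B_1$ close.

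The paper avoids this by never decoupling weight and operator: after the single global H\"older step $\|f\|_{L^p}\le\||x|^{\gamma}f\|_{L^q}^{\theta}\,\||x|^{-\delta}f\|_{L^r}^{1-\theta}$ it applies the weighted Hardy (Stein--Weiss-type) inequality $\||x|^{-\delta}(-\mathcal{L}_{p_0})^{-\sigma/2}g\|_{L^r}\lesssim\|g\|_{L^r}$ from \cite{KRZ_23} (Lemma \ref{cor_hardy_inq_X}), valid for $0\le\delta\le\sigma$ and $\delta<n/r$, where the local singularity $|x|^{-\delta}$ is compensated by the $|x|^{\sigma-n}$ local behaviour of the Bessel--Green kernel rather than handled by a separate $L^{u}$ bound. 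Moreover, when $\delta\ge n/r$ even this fails, and the paper needs a second ingredient absent from your proposal: a Landau--Kolmogorov inequality for $(-\mathcal{L}_{p_0})^{\sigma/2}$ on $L^s(\X)$, $s\in(p_0,p_0')$ (proved by Stein interpolation from bounds on imaginary powers of the Laplacian), which reduces the general case to $\delta_0=\theta\delta<n/r$. To repair your argument, replace the H\"older-plus-Sobolev step on $B_1$ by such a weighted potential estimate, and supply a separate treatment of $\delta\ge n/r$.
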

    \begin{remark}
      Theorem~\ref{thm_Lp_unc_x} is a generalization of the corresponding result for stratified Lie groups (Theorem~\ref{thm_uncn_Lp_G}) in the following sense: when $p_0=1$ and $\sigma = \delta$, inequality~\eqref{eqn_uncn_p0_thm_int} recovers an exact analogue of Theorem~\ref{thm_uncn_Lp_G}. However, on symmetric spaces, our result remains valid for any $\sigma \geq \delta$, offering a broader range of applicability. Moreover, Corollary~\ref{cor_unc_2,2_F} recovers \cite[Cor. 2]{Mar10} as a special case and provides an improvement of \cite[Cor. 3]{Mar10}. We also note that Theorem~\ref{thm_Lp_unc_x} remains valid if $\mathcal{L}_{p_0}$ is replaced by $\mathcal{L} + (|\rho|^2 - \zeta^2) I$ for any $\zeta > |\rho|$, for all $p, q, r \in (1, \infty)$ satisfying \eqref{eqn_p,q,r,g,d_Str}.
        \end{remark}

We conclude this section with an outline of the article. The next section provides essential background on harmonic analysis related to semisimple Lie groups and noncompact type symmetric spaces, along with a review of relevant previous results. In Section \ref{sec_pitt_X}, we establish the shifted Pitt's inequality in general symmetric spaces of noncompact type, specifically Theorem \ref{thm_pitt_X_n}. Section~\ref{sec_jac_transform} investigates Pitt's inequality in the context of various Jacobi transforms, addressing both the sufficient and necessary conditions. In Section~\ref{sec_uncn_principle}, we study uncertainty principles on $\X$ and their connection to shifted Pitt's inequality, culminating in the proof of Theorem~\ref{thm_Lp_unc_x}. 

\vspace{3mm}
\textbf{Notation.} We use the standard notation $\N$, $\Z$, $\R$, and $\C$ to denote the set of natural numbers, the ring of integers, and the fields of real and complex numbers, respectively. For any $z \in \C$, we write $\Re z$ and $\Im z$ for the real and imaginary parts of $z$. Throughout this article, the symbols $C$, $C_1$, $C_2$, etc., will denote positive constants, which may vary from line to line. We write $X \lesssim Y$ (respectively, $X \gtrsim Y$) to mean that there exists a constant $C > 0$ such that $X \leq CY$ (respectively, $X \geq CY$). The notation $X \asymp Y$ indicates that both $X \lesssim Y$ and $X \gtrsim Y$ hold.

\section{Preliminaries}\label{sec_preliminaries}
Here, we review some general facts and necessary preliminaries regarding semisimple Lie groups and harmonic analysis on Riemannian symmetric spaces of noncompact type. Most of this information is already known and can be found in \cite{GV88, Hel08}. We will use the standard notation from \cite{Ion03, KRZ_23}. To keep the article self-contained, we will include only the results necessary for this article.
\subsection{Riemannian symmetric spaces} Let $G$ be a connected, noncompact, real semisimple Lie group with a finite center, and let $\mathfrak{g}$ be its Lie algebra. Let  $\theta$ be a Cartan involution of $\mathfrak{g}$, inducing the decomposition   $\mathfrak{g} = \mathfrak{k} \oplus \mathfrak{p}$. Let $K= \exp \mathfrak{k} $ be a maximal compact subgroup of $G$, and let $\mathbb{X} = G / K$ be an associated symmetric space with origin $\mathbf{0} = \{eK\}$. The Killing form of $\mathfrak{g}$ induces a $K$-invariant inner product  $\left\langle \cdot, \cdot \right\rangle$ on $ \mathfrak{p}$ and therefore a $G$-invariant Riemannian metric on $\X$. 
  
    Let $\mathfrak{a}$ be a maximal abelian subalgebra of $\mathfrak{p}$. The dimension of $\mathfrak{a}$ determines the rank of the symmetric space $\mathbb{X}$, denoted by $l$. Using the  inner product $\left\langle \cdot, \cdot \right\rangle$ inherited from  $\mathfrak{p}$, we identify  $\mathfrak{a}$ with $\mathbb{R}^l$. Let $\mathfrak{a}^*$ and $\mathfrak{a}_{\mathbb{C}}^*$ denote the real and complex duals of $\mathfrak{a}$, respectively. For any $\lambda \in \mathfrak{a}^*$, let $H_{\lambda}$ be the unique element of $\mathfrak{a}$ such that
    \begin{align*}
    \lambda(H) = \left\langle H_{\lambda}, H \right\rangle, \quad \text{for all } H \in \mathfrak{a}.
    \end{align*}
   With this, we transfer the inner product $\left\langle \cdot, \cdot \right\rangle$ from $\mathfrak{a}$ to $\mathfrak{a}^*$, and, by abuse of notation, denote it again by $\left\langle \cdot, \cdot \right\rangle$, defined by the rule
    \begin{align*}
    \left\langle \lambda, \lambda' \right\rangle= \left\langle H_{\lambda}, H_{\lambda'} \right\rangle, \quad \text{for all }  \lambda, \lambda' \in \mathfrak{a}^*.
\end{align*}
We identify $\fa$ with its dual $\fa^*$ by
means of the inner product defined above. We further extend the inner product on $\mathfrak{a}^*$ to a $\mathbb{C}$-bilinear form on $\mathfrak{a}_{\mathbb{C}}^*$.

Let $\Sigma \subset \mathfrak{a}^*$ be the set of restricted roots for the pair $(\mathfrak{g}, \mathfrak{a})$. Let $M$ denote the centralizer of $\mathfrak{a}$ in $K$, and let $W$ represent the Weyl group associated with $\Sigma$. For each $\alpha \in \Sigma$, let $\mathfrak{g}_\alpha$ be the corresponding root space, with $m_\alpha = \dim \mathfrak{g}_\alpha$. We choose a system of positive roots $\Sigma^+$ and define the associated positive Weyl chamber as $\mathfrak{a}^+ = \{H \in \mathfrak{a} : \alpha(H) > 0 \text{ for all } \alpha \in \Sigma^+\}$. Let $\Sigma_0^+$ denote the set of positive indivisible roots. The half-sum of all positive roots, counted with their multiplicities, is given by
$\rho = \frac{1}{2} \sum_{\alpha \in \Sigma^+} m_\alpha \alpha. $ 

Let $n$ represent the dimension of $\mathbb{X}$ and $\nu$ the pseudo-dimension (or dimension at infinity) of $\mathbb{X}$. These quantities are given by the following expressions
\begin{align*} n = l + \sum_{\alpha \in \Sigma^+} m_\alpha, \qquad \nu = l + 2 | \Sigma_0^+ |. \end{align*}
For example, $\nu=3$ while $n\geq 2 $ is arbitrary in the rank one case and $\nu=n$ if $G$ is complex.
In general, we have $l \geq 1$, $n \geq 2$, and $\nu \geq 3$. We denote by $-\mathcal{L}$ the positive Laplace-Beltrami operator on $\X$, and it is known that $|\rho|^2$ is the bottom of its $L^2$-spectrum.

By $\mathfrak{n} = \bigoplus_{\alpha \in \Sigma^+} \mathfrak{g}_\alpha$, we denote the nilpotent Lie subalgebra of $\mathfrak{g}$ associated with $\Sigma^+$, and by $N$ and $A$ the corresponding Lie subgroups of $\mathfrak{n}$ and $\mathfrak{a}$, respectively, in $G$. The Iwasawa decomposition of $G$ is given by $G = KAN$, meaning each element $g \in G$ can be uniquely expressed as \begin{align*} g = K(g) \exp(H(g))N(g), \quad K(g) \in K,\, H(g) \in \mathfrak{a}, \, N(g) \in N. \end{align*} Let $A^{+} = \exp(\mathfrak{a}^+)$, and let $\overline{A^{+}}$ denote the closure of $A^{+}$ in $G$. We then obtain the polar decomposition $G = K \overline{A^{+}} K$. As usual, on the compact group $K$ (respectively on $B=K/M$), we fix the normalized Haar
measure $dk$ (respectively the normalized measure  $db$). Under the Cartan decomposition, the Haar measure on $G$ can be expressed as follows.
\begin{equation*}
\begin{aligned}
\int_G f(g) d g & =\int_K \int_{\mathfrak{a}^{+}} \int_K f\left(k_1 (\exp H^+) k_2\right) \delta(H^+) \, d k_1\, d H^+\, d k_2,
\end{aligned}
\end{equation*}
where  $K$ is equipped with its normalized Haar measure, the density function $\delta(H^+)=C_0 \prod_{\alpha \in \Sigma^{+}}(\sinh \alpha(H^+))^{m_\alpha} $.
 If $f$ is a function on $\mathbb{X} = G / K$, it can be viewed as a right $K$-invariant function on $G$. Moreover, a function $f $ is called a $K$-biinvaraint if $f(x)= f(\exp x^+)$ for all $x \in G$, where $x^+$ denotes the component of $x$ in $\overline{A^{+}}$ under the Cartan decomposition.

\subsection{Fourier transform on Riemannian symmetric spaces}
The Fourier transform $\widetilde{f}$ of a smooth, compactly supported function $f$ on $\mathbb{X}$ is defined on $\mathfrak{a}_{\mathbb{C}} \times B$ and is given by (see \cite[page 199]{Hel08}):
\begin{equation}\label{defn:hft}
\widetilde{f}(\lambda,b) = \int_{G} f(g) e^{(i\lambda- \rho)H(g^{-1}b)} dg,\:\:\:\:\:\:  \lambda \in \mathfrak{a}_{\mathbb{C}}, b \in B,
\end{equation}
where $B= K/ M$. It is known that if $f \in L^1(\mathbb{X})$, then $\widetilde{f}(\lambda, b)$ is continuous with respect to $\lambda \in \mathfrak{a}$ for almost every $b \in B$ (and is, in fact, holomorphic in $\lambda$ on a domain containing $\mathfrak{a}$). Furthermore, if $\widetilde{f} \in L^1(\mathfrak{a} \times B, |\bfc(\lambda)|^{-2}  \,d\lambda \,db)$, the following Fourier inversion formula holds for almost every $gK \in \mathbb{X}$ (\cite[Ch. III]{Hel08})
\begin{align*}
f(gK)= \frac{1}{|W|} \int_{\fa} \int_B \widetilde{f}(\lambda, b)~e^{-(i\lambda+\rho)H(g^{-1}b)} ~ |\bfc(\lambda)|^{-2}\,d\lambda~db,
\end{align*}
 where $\bfc(\lambda)$ is the Harish-Chandra's $\bfc$-function. Moreover, the Plancherel theorem states \cite[Ch. III, Theorem 1.5]{Hel08}
\begin{align}\label{Planc}
    \int_{\X} f_1(x) \overline{f_2(x)} \,dx= \frac{1}{|W|} \int_{\fa} \int_B \widetilde{f_1}(\lambda,b) \overline{\widetilde{f_2}(\lambda,b)}  |\bfc(\lambda)|^{-2} \, db \, d\lambda,
\end{align}
for all $f_1, f_2 \in C_c^{\infty}(\X)$. We also require the following estimates \cite[(2.2)]{CGM93} (see also \cite[Ch. IV, prop 7.2]{Hel00})
\begin{equation}\label{est_c-2,hr}
\begin{aligned}
    |\bfc(\lambda)|^{-2} \leq C |\lambda|^{\nu -l} (1+ |\lambda|)^{n-\nu}, \quad \text{for all } \lambda \in \mathfrak{a}.
\end{aligned}
\end{equation}
If $f$ is $K$-biinvariant, 
then  $\widetilde{f}$ does not depend on $B$ and  it reduces to spherical transform. In this case, the formula \eqref{defn:hft} simplifies to
\begin{equation}\label{eqn_sph_trans}
\widetilde{f}(\lambda, b)=\widehat{f}(\lambda)=\int_G f(g) \varphi_{-\lambda}(g) \,d g,
\end{equation}
for all $\lambda \in \mathfrak{a}, b \in B$, where
\begin{align*}
\varphi_\lambda(g)=\int_K e^{-(i \lambda+\rho)H(g^{-1} k)} d k, \quad \lambda \in \mathfrak{a}_{\mathbb{C}},
\end{align*} is the Harish-Chandra's elementary spherical function \cite{HC58}. In the rank one case, it is known that spherical analysis can be naturally embedded into the broader framework of Jacobi analysis developed by Koornwinder \cite{Koo84} (see also \cite{Koo75, ADY96}).
This connection between spherical analysis and Jacobi Fourier analysis enables the translation of many problems on Riemannian symmetric spaces into a setting where classical tools, such as special function estimates, weighted norm inequalities, and operator theory, can be effectively employed.
We will come back to this in Section \ref{sec_jac_base}, where we briefly recall some facts related to the Jacobi transform.

Now we like to recall some results from \cite{KRZ_23,RR24} which will be useful for us.  We can write from  \cite[Theorem 1.8]{RR24} the following mixed norm version of the Hausdorff-Young inequality on $\X$.
\begin{theorem}[{{\cite[Theorem 1.8]{RR24}}}]\label{thm_HY_UD}
     Let $\X$ be a general symmetric space of noncompact type, and let $1\leq p\leq 2 $. Then, for $1 < p \leq 2$ and all $f \in C_c^{\infty}(\X)$, we have the inequality
\begin{align}\label{eqn_HY_X,n}
    \left( \frac{1}{|W|}\int_{\mathbb{\fa}} \left(\int_B|\widetilde{f}(\lambda, b)|^{2} \, db\right)^{\frac{p'}{2}} |\bfc(\lambda)|^{-2} \,d \lambda \right)^{\frac{1}{p'}} \leq  \|f\|_{L^p(\X)}.
\end{align}  
For $p = 1$, we have the following analogue of a restriction estimate
     \begin{align}\label{f_L2K<L1_pre}
         \left( \int_B  \left| \widetilde{f}(\lambda,b)\right|^2 \,db \right)^{\frac{1}{2}}  & \leq  \|f\|_{L^1(\X)}.
         \end{align}
Moreover, if $p \geq 2$, then for all $f \in C_c^{\infty}(\X)$ we have the reverse inequality
\begin{align}\label{eqn_HYP_dual-int}
     \|f\|_{L^p(\X)} \leq   \left( \frac{1}{|W|}\int_{\mathbb{\fa}} \left(\int_B|\widetilde{f}(\lambda, b)|^{2} \, db\right)^{\frac{p'}{2}} |\bfc(\lambda)|^{-2} \,d \lambda \right)^{\frac{1}{p'}} .
\end{align}
\end{theorem}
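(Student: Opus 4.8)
The plan is to read \eqref{eqn_HY_X,n} as the interpolant of the two natural endpoint exponents $p=1$ and $p=2$, to establish those two endpoints directly, and then to obtain the reverse inequality \eqref{eqn_HYP_dual-int} for $p\ge2$ by duality. Throughout, set $d\mu(\lambda):=\tfrac1{|W|}|\bfc(\lambda)|^{-2}\,d\lambda$ on $\fa$, and regard the Fourier transform $f\mapsto\widetilde f$ as a linear operator defined on $C_c^\infty(\X)\subset L^1(\X)\cap L^2(\X)$.

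First I would treat the endpoint $p=1$, which is \eqref{f_L2K<L1_pre} itself. From the definition \eqref{defn:hft} and Minkowski's integral inequality,
\begin{align*}
\left(\int_B|\widetilde f(\lambda,b)|^2\,db\right)^{1/2}=\left\|\int_G f(g)\,e^{(i\lambda-\rho)H(g^{-1}\cdot)}\,dg\right\|_{L^2(B)}\le\int_G|f(g)|\,\left\|e^{(i\lambda-\rho)H(g^{-1}\cdot)}\right\|_{L^2(B)}\,dg.
\end{align*}
For real $\lambda\in\fa$ one has $|e^{(i\lambda-\rho)H(g^{-1}b)}|=e^{-\rho(H(g^{-1}b))}$, so the inner norm squared equals $\int_B e^{-2\rho(H(g^{-1}b))}\,db$, which is the integral over $B$ of the Poisson kernel and hence equals $1$ (equivalently it equals $\varphi_{-i\rho}(g)=\varphi_{i\rho}(g)\equiv1$, using $w_0\rho=-\rho$). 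This proves \eqref{f_L2K<L1_pre}, i.e. $f\mapsto\widetilde f$ maps $L^1(\X)$ into $L^\infty\bigl(\fa,d\mu;L^2(B,db)\bigr)$ with norm $\le1$; here one may freely place the measure $d\mu$ on $\fa$, since an $L^\infty$-norm is unchanged under a mutually absolutely continuous change of measure.

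The endpoint $p=2$ is precisely the Plancherel theorem \eqref{Planc}: $f\mapsto\widetilde f$ is an isometry from $L^2(\X)$ into $L^2\bigl(\fa,d\mu;L^2(B,db)\bigr)$. As the two endpoint target spaces now carry the same base measure $d\mu$ on $\fa$ and the same fixed Hilbert space $L^2(B)$, I would apply the $L^2(B)$-valued (mixed-norm) Riesz--Thorin theorem to the source couple $(L^1(\X),L^2(\X))$ and the target couple $\bigl(L^\infty(\fa,d\mu;L^2(B)),L^2(\fa,d\mu;L^2(B))\bigr)$. At parameter $\theta\in(0,1)$ this gives boundedness $L^{p_\theta}(\X)\to L^{q_\theta}(\fa,d\mu;L^2(B))$ with norm $\le1$, where $1/p_\theta=1-\theta/2$ and $1/q_\theta=\theta/2$; hence $q_\theta=p_\theta'$ and $p_\theta$ sweeps out $(1,2)$. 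Unwinding the mixed norm, this is exactly \eqref{eqn_HY_X,n} for $1<p<2$, the cases $p=1,2$ being the endpoints themselves.

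For $p\ge2$ I would argue by duality. For $f\in C_c^\infty(\X)$ write $\|f\|_{L^p(\X)}=\sup\{|\langle f,g\rangle_{L^2(\X)}|:g\in C_c^\infty(\X),\ \|g\|_{L^{p'}(\X)}\le1\}$, express $\langle f,g\rangle_{L^2(\X)}$ on the Fourier side via \eqref{Planc}, and apply Cauchy--Schwarz in $b$ followed by Hölder in $\lambda$ with exponents $p',p$ relative to $d\mu$:
\begin{align*}
|\langle f,g\rangle_{L^2(\X)}|\le\int_\fa\|\widetilde f(\lambda,\cdot)\|_{L^2(B)}\,\|\widetilde g(\lambda,\cdot)\|_{L^2(B)}\,d\mu(\lambda)\le\|\widetilde f\|_{L^{p'}(d\mu;L^2(B))}\,\|\widetilde g\|_{L^{p}(d\mu;L^2(B))}.
\end{align*}
Since $p'\le2$, the case already proved, applied with exponent $p'$, bounds $\|\widetilde g\|_{L^{(p')'}(d\mu;L^2(B))}=\|\widetilde g\|_{L^{p}(d\mu;L^2(B))}\le\|g\|_{L^{p'}(\X)}\le1$; taking the supremum over $g$ yields \eqref{eqn_HYP_dual-int}. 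The step I expect to require the most care is the interpolation itself: carrying the weight $|\bfc(\lambda)|^{-2}$, which degenerates polynomially both as $\lambda\to0$ and as $|\lambda|\to\infty$ (cf. \eqref{est_c-2,hr}), through the interpolation and making the vector-valued mixed-norm Riesz--Thorin step rigorous. Both are handled by noting that $d\mu$ is $\sigma$-finite with an a.e.-positive density, so $L^\infty(d\mu)=L^\infty(d\lambda)$ isometrically and both target couples are admissible interpolation couples; alternatively one may phrase the interpolation via the Stein--Weiss theorem with change of measures, for which a direct computation shows the interpolated weight is again $|\bfc(\lambda)|^{-2}$. The only genuinely symmetric-space ingredient beyond Plancherel is the normalization $\int_B e^{-2\rho H(g^{-1}b)}\,db=1$ of the Poisson kernel, used at the $p=1$ endpoint.
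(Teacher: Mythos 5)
Your argument is correct, and it is essentially the standard route: the paper does not reprove this statement but quotes it from \cite[Theorem 1.8]{RR24}, where it is obtained exactly as you describe — the $L^1\to L^\infty(L^2(B))$ restriction endpoint from $\int_B e^{-2\rho H(g^{-1}b)}\,db=\varphi_{i\rho}(g)=1$, the Plancherel endpoint, the mixed-norm (vector-valued) interpolation that the present paper recalls in Section~\ref{subsec: mix_norm}, and duality for $p\geq 2$. No gaps.
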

We also recall from \cite[Theorem 1.9]{RR24} the following analogue of Paley's inequality \cite[Theorem 1.10]{Hor60} in the context of noncompact type symmetric spaces, which can also be seen as a weighted version of Plancherel’s formula for $L^p(\X)$ with $1 < p \leq 2$.
\begin{theorem}[{{\cite[Theorem 1.9]{RR24}}}]\label{thm_paley_uni}  Let $\X$ be a general symmetric space of noncompact type and let $1<p\leq 2$.
 Assume that $u$ is a positive function on $\fa$ satisfying the following condition 
	\begin{align*}
	 	 \|u\|_{c,\infty} := \sup_{\alpha>0} \alpha  \int\limits_{\substack{\{\lambda \in \fa : u(\lambda)>\alpha\} }} |\bfc(\lambda)|^{-2} \,d\lambda <\infty.
	 \end{align*}
  Then we have for all $f\in C_c^{\infty}(\X)$
  \begin{align*}
       \left( \int_{\mathbb{\fa}} \left(\int_B|\widetilde{f}(\lambda, b)|^{2} \, db\right)^{\frac{p}{2}} u(\lambda)^{2-p} |\bfc(\lambda)|^{-2} \,d \lambda \right)^{\frac{1}{p}} \leq C_p  \|u\|_{c,\infty}^{\frac{2}{p}-1}\|f\|_{L^p(\X)}.
  \end{align*}
\end{theorem}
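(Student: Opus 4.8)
The plan is to recast the asserted inequality as an ordinary strong-type $L^p$ bound for a well-chosen auxiliary sublinear operator, and then obtain that bound by Marcinkiewicz interpolation between an $L^2$ endpoint coming from Plancherel and a weak $(1,1)$ endpoint coming from the restriction estimate \eqref{f_L2K<L1_pre}. Write $d\mu(\lambda)=|\bfc(\lambda)|^{-2}\,d\lambda$ for the Plancherel measure on $\fa$, put
\begin{align*}
G(\lambda):=\Big(\int_B|\widetilde f(\lambda,b)|^2\,db\Big)^{1/2},\qquad Vf(\lambda):=\frac{G(\lambda)}{u(\lambda)},\qquad d\mu_*(\lambda):=u(\lambda)^2\,d\mu(\lambda).
\end{align*}
The elementary identity $G(\lambda)^p u(\lambda)^{2-p}=\big(G(\lambda)/u(\lambda)\big)^p\,u(\lambda)^2$ shows that
\begin{align*}
\int_{\fa}\Big(\int_B|\widetilde f(\lambda,b)|^2\,db\Big)^{p/2}u(\lambda)^{2-p}\,|\bfc(\lambda)|^{-2}\,d\lambda=\int_{\fa}|Vf(\lambda)|^p\,d\mu_*(\lambda),
\end{align*}
so the theorem is equivalent to $\|Vf\|_{L^p(\mu_*)}\lesssim_p \|u\|_{c,\infty}^{2/p-1}\,\|f\|_{L^p(\X)}$; and $V$ is sublinear since $G$ is (by Minkowski's inequality in $L^2(B)$). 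The cases $p=2$ (where $u^{2-p}$ disappears and the claim is just \eqref{Planc}) and $p=1$ (which is \eqref{f_L2K<L1_pre}) are immediate, so only $1<p<2$ needs interpolation.

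For the $L^2$ endpoint, the Plancherel formula \eqref{Planc} gives $\int_{\fa}G(\lambda)^2\,d\mu(\lambda)=|W|\,\|f\|_{L^2(\X)}^2$, hence $\|Vf\|_{L^2(\mu_*)}^2=\int_{\fa}G^2\,d\mu=|W|\,\|f\|_{L^2(\X)}^2$; thus $V\colon L^2(\X)\to L^2(\mu_*)$ with norm $|W|^{1/2}$, independent of $u$. For the $L^1$ endpoint, the restriction estimate \eqref{f_L2K<L1_pre} gives $G(\lambda)\le\|f\|_{L^1(\X)}$ pointwise, so $|Vf(\lambda)|\le\|f\|_{L^1(\X)}/u(\lambda)$ and therefore $\{\,|Vf|>\alpha\,\}\subseteq\{\,u<\beta\,\}$ with $\beta:=\|f\|_{L^1(\X)}/\alpha$. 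It remains to estimate $\mu_*(\{u<\beta\})=\int_{\{u<\beta\}}u^2\,d\mu$; by the layer-cake formula and the hypothesis $\mu(\{u>t\})\le\|u\|_{c,\infty}/t$,
\begin{align*}
\int_{\{u<\beta\}}u^2\,d\mu=\int_0^\beta 2t\,\mu\big(\{t<u<\beta\}\big)\,dt\le \int_0^\beta 2t\cdot\frac{\|u\|_{c,\infty}}{t}\,dt=2\,\|u\|_{c,\infty}\,\beta.
\end{align*}
Hence $\alpha\,\mu_*(\{|Vf|>\alpha\})\le 2\,\|u\|_{c,\infty}\,\|f\|_{L^1(\X)}$, i.e.\ $V\colon L^1(\X)\to L^{1,\infty}(\mu_*)$ with norm $\lesssim\|u\|_{c,\infty}$.

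Applying the Marcinkiewicz interpolation theorem for sublinear operators to these two endpoint estimates (source $L^1(\X)$ and $L^2(\X)$, target $L^{1,\infty}(\mu_*)$ and $L^{2,\infty}(\mu_*)$), for $1<p<2$ write $\tfrac1p=(1-\theta)+\tfrac\theta2$, so $\theta=\tfrac2{p'}$ and $1-\theta=\tfrac2p-1$; then $\|V\|_{L^p(\X)\to L^p(\mu_*)}\lesssim_p \big(\|u\|_{c,\infty}\big)^{2/p-1}\big(|W|^{1/2}\big)^{2/p'}\asymp_p\|u\|_{c,\infty}^{2/p-1}$, which is exactly the claimed inequality once one unwinds the identity above. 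Since $C_c^\infty(\X)\subseteq L^1(\X)\cap L^2(\X)$, the interpolation applies directly to all such $f$.

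The argument is essentially mechanical once assembled; the genuinely delicate points are (i) spotting the right reformulation — the pair (operator $G/u$, measure $u^2\,d\mu$) — that converts a weighted mixed-norm inequality into a plain strong-type $L^p$ bound; (ii) the layer-cake step, where $u\in L^{1,\infty}(\mu)$ is exactly enough to produce a bound that is \emph{linear} in $\beta$ (and not, say, of order $\beta\log(1/\beta)$), which is precisely what makes a true weak $(1,1)$ bound possible; and (iii) bookkeeping the dependence on $\|u\|_{c,\infty}$ through Marcinkiewicz so that the power comes out exactly $2/p-1$.
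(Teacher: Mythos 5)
Your proof is correct, and it is the standard Hörmander-style argument for Paley-type inequalities (real interpolation between the Plancherel identity and the weak $(1,1)$ bound extracted from the $L^{1,\infty}$ hypothesis on $u$ via the restriction estimate \eqref{f_L2K<L1_pre}); the paper itself gives no proof of this statement but imports it from \cite[Theorem 1.9]{RR24}, where it is established by essentially this same reformulation-plus-Marcinkiewicz scheme. The key steps — the identity $G^p u^{2-p} = (G/u)^p u^2$, the layer-cake bound $\int_{\{u<\beta\}} u^2\,d\mu \le 2\|u\|_{c,\infty}\beta$, and the exponent bookkeeping $1-\theta = 2/p-1$ — are all sound.
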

We have the following from \cite[Corollary 1.8]{KRZ_23}.
\begin{corollary}\label{cor_pitt_KRZ,0}
    Let $\X$ be a symmetric space of dimension $n\geq 2$ and pseudo-dimension $\nu \geq 3$. Suppose that $0< \sigma< \nu$ and $f \in C_c^{\infty}(\X)$. Then, the inequality 
    \begin{align}\label{inq_pit_p,2}
     \left(   \int_{\fa} \int_{B}|\widetilde{f}(\lambda, b)|^2 |\lambda|^{-2\sigma} |\bfc(\lambda)|^{-2} \,db\, d\lambda \right)^{\frac{1}{2}} \leq C \left( \int_{\X}|f(x)|^p |x|^{\kappa p}\,dx \right)^{\frac{1}{p}}
    \end{align}
    holds for $1\leq   p<2$ if and only if   $0\leq \kappa\leq \sigma<{\nu}/{2}$, $\kappa<{n}/{p'}$ (when $p=1$, $\kappa=0$), and $\sigma-\kappa \geq n\left( {1}/{p}-{1}/{2}\right)$. Moreover, the inequality \eqref{inq_pit_p,2} holds for $p=2$ if and only if $\kappa=\sigma<\min\{\frac{\nu}{2},\frac{n}{2}\}$.
\end{corollary}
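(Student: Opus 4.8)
The statement in question, Corollary~\ref{cor_pitt_KRZ,0}, is quoted verbatim from \cite[Corollary 1.8]{KRZ_23}, so the natural approach is to reconstruct its derivation from the machinery already available in the excerpt. The inequality \eqref{inq_pit_p,2} is precisely the case $q=2$, $\zeta=0$ of the shifted Pitt inequality \eqref{inq_pitt_pol}. We split the argument into the sufficiency direction ($1\le p\le 2$) and the necessity direction, and then treat the endpoint $p=2$ separately.

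\textbf{Step 1: Sufficiency via Paley's inequality.} Assume $0\le\kappa\le\sigma<\nu/2$, $\kappa<n/p'$ (with $\kappa=0$ if $p=1$), and $\sigma-\kappa\ge n(1/p-1/2)$. The plan is to apply Theorem~\ref{thm_paley_uni} with the weight $u(\lambda) = |\lambda|^{-2\sigma\cdot\frac{?}{?}}$ chosen so that the left side of Theorem~\ref{thm_paley_uni} becomes the left side of \eqref{inq_pit_p,2}; concretely one wants $u(\lambda)^{2-p}=|\lambda|^{-2\sigma}\big(\int_B|\widetilde f|^2\,db\big)^{(p-2)/2}$-type matching, i.e. after a standard interpolation/duality manipulation one reduces to estimating $\|u\|_{c,\infty}$ for $u(\lambda)=|\lambda|^{-2\sigma/(2-p)}$. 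Using the bound \eqref{est_c-2,hr}, $|\bfc(\lambda)|^{-2}\lesssim|\lambda|^{\nu-l}(1+|\lambda|)^{n-\nu}$, one checks $\int_{\{u>\alpha\}}|\bfc(\lambda)|^{-2}\,d\lambda<\infty$ uniformly in $\alpha$ exactly when $\sigma<\nu/2$ (controls the behavior near $\lambda=0$, where the density grows like $|\lambda|^{\nu-l}$) and $\sigma>0$ (controls decay at infinity against $|\lambda|^{n-1}$-type growth — here one needs $\sigma$ large enough, which is why the balance condition $\sigma-\kappa\ge n(1/p-1/2)$ together with $\kappa\ge0$ forces $\sigma\ge n(1/p-1/2)>0$). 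This gives the Fourier–side weighted bound $\big(\int_\fa(\int_B|\widetilde f|^2 db)^{p/2}|\lambda|^{-\sigma p}|\bfc(\lambda)|^{-2}d\lambda\big)^{1/p}\lesssim\|f\|_{L^p(\X)}$, i.e. \eqref{inq_pit_p,2} with $\kappa=0$. To get the general $\kappa$, one inserts the spatial weight $|x|^{\kappa p}$ and uses that multiplication by $|x|^{-\kappa}$ maps $L^p(|x|^{\kappa p}dx)$-data appropriately — here the Hardy-type/Stein–Weiss ingredient (the $\kappa<n/p'$ constraint being the natural Hardy restriction near the origin in dimension $n$, and $\sigma-\kappa\ge n(1/p-1/2)$ being the scaling-at-the-origin balance) — combined with the $\kappa=0$ estimate via a standard $L^p$-weighted interpolation. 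For $p=1$ one uses the restriction estimate \eqref{f_L2K<L1_pre} directly instead of Paley, which is why $\kappa=0$ is forced.

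\textbf{Step 2: Necessity.} For the converse one tests \eqref{inq_pit_p,2} on suitable families of functions. To get $\sigma-\kappa\ge n(1/p-1/2)$ and $\kappa\ge0$: take $f$ a bump concentrated near the origin $\mathbf 0\in\X$ at scale $t\to0$; since $\X$ is locally Euclidean of dimension $n$, both $\widetilde f$ (on the spectral side, where $|\bfc(\lambda)|^{-2}\asymp|\lambda|^{\nu-l}$ near $0$ but $\asymp|\lambda|^{n-1}$ near $\infty$ governs the relevant frequency range $|\lambda|\sim 1/t$) and the spatial integral scale like their Euclidean counterparts, and comparing powers of $t$ yields the two constraints exactly as in \eqref{k-s=N_equiv} with $N=n$. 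To get $\sigma<\nu/2$: test with functions whose Fourier transform concentrates near $\lambda=0$ (e.g. $f$ spread out on $\X$), so that the finiteness of $\int_{|\lambda|<1}(\cdots)|\lambda|^{-2\sigma}|\bfc(\lambda)|^{-2}d\lambda$ against $|\bfc(\lambda)|^{-2}\asymp|\lambda|^{\nu-l}$ forces $-2\sigma+\nu-l>-l$, i.e. $\sigma<\nu/2$. The constraint $\kappa<n/p'$ comes from testing against $f(x)\sim|x|^{-n/p'}$ (logarithmically truncated) near the origin, the classical obstruction to Hardy's inequality in dimension $n$. One must also verify $\kappa\le\sigma$: since $\sigma-\kappa\ge n(1/p-1/2)\ge0$ this is automatic from the balance inequality, so it need not be tested separately.

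\textbf{Step 3: The endpoint $p=2$.} When $p=2$ the spectral side integrand has no room for interpolation: \eqref{inq_pit_p,2} becomes $\int_\fa\int_B|\widetilde f|^2|\lambda|^{-2\sigma}|\bfc(\lambda)|^{-2}\lesssim\int_\X|f|^2|x|^{2\kappa}$. By Plancherel \eqref{Planc}, the left side with $\sigma=\kappa$ and the right side are genuine weighted $L^2$ norms, and the argument reduces to the self-improving statement that $\||\lambda|^{-\sigma}\widetilde f\|_{L^2}\asymp\||x|^{\sigma}f\|_{L^2}$-type equivalence can only hold when $\sigma=\kappa$ (a scaling/homogeneity obstruction: any mismatch is destroyed by a dilation-like test family exploiting the local Euclidean structure), and then the remaining condition $\sigma<\min\{\nu/2,n/2\}$ is forced — $\sigma<\nu/2$ from the spectral density near $\lambda=0$ as in Step 2, and $\sigma=\kappa<n/2=n/p'$ from the Hardy obstruction. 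Conversely $\sigma=\kappa<\min\{\nu/2,n/2\}$ suffices by combining the $\kappa=0$ Hardy–Littlewood–Paley-type bound at $p=2$ (from Step 1 with $\kappa=0$, valid since $\sigma<\nu/2$) with the sharp Hardy inequality $\||x|^{-\sigma}g\|_{L^2(\X)}\lesssim\|(-\mathcal L)^{\sigma/2}g\|_{L^2(\X)}$ for $\sigma<n/2$, together with the identity $\widetilde{(-\mathcal L)^{\sigma/2}f}(\lambda,b)=(|\lambda|^2+|\rho|^2)^{\sigma/2}\widetilde f(\lambda,b)\gtrsim|\lambda|^\sigma\widetilde f(\lambda,b)$.

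\textbf{Main obstacle.} The delicate point is the sufficiency argument in Step 1 with nonzero $\kappa$: Paley's inequality (Theorem~\ref{thm_paley_uni}) cleanly handles the unweighted spatial side, but transferring the spatial weight $|x|^{\kappa p}$ onto the estimate requires a Hardy–Stein–Weiss type inequality on $\X$ whose sharp constant/range ($\kappa<n/p'$, balance $\sigma-\kappa\ge n(1/p-1/2)$) must be matched precisely — this is where the interplay between the two distinct asymptotics of $|\bfc(\lambda)|^{-2}$ (the $\nu$ near $0$, the $n$ near $\infty$) enters, and it is the reason the admissible region is a genuine two-dimensional region rather than the single Euclidean balance line. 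Verifying that the necessity tests in Step 2 are mutually consistent and jointly exhaust the stated conditions (and handling the $p=1$ versus $1<p<2$ distinction in the $\kappa$-constraint) is the remaining bookkeeping.
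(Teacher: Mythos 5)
The paper itself does not prove this corollary: it is imported verbatim from \cite[Corollary 1.8]{KRZ_23} (just as Lemma \ref{cor_pitt_KRZ>0} is imported from \cite[Theorem 1.2]{KRZ_23}), so your reconstruction has to stand on its own, and as written it does not. The central problem is Step 1: applying Theorem \ref{thm_paley_uni} with $u(\lambda)=|\lambda|^{-2\sigma/(2-p)}$ yields the mixed-norm bound with \emph{outer exponent $p$}, i.e. $\bigl(\int_{\fa}(\int_B|\widetilde f|^2db)^{p/2}|\lambda|^{-2\sigma}|\bfc(\lambda)|^{-2}d\lambda\bigr)^{1/p}\lesssim\|f\|_{L^p}$, which is the Hardy--Littlewood--Paley inequality ($q=p$), \emph{not} \eqref{inq_pit_p,2}, whose Fourier side is an $L^2$ norm ($q=2$); your sentence ``i.e.\ \eqref{inq_pit_p,2} with $\kappa=0$'' is simply false, and no interpolation step bridging $q=p$ to $q=2$ is supplied. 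The actual mechanism behind the corollary is different: by Plancherel the left side of \eqref{inq_pit_p,2} is the $L^2(\X)$ norm of a Riesz-type convolution operator applied to $f$, and the inequality is a weighted $(L^p,L^2)$ Stein--Weiss estimate for that operator, which is exactly what \cite{KRZ_23} proves via kernel estimates (and what this paper invokes for $\zeta>0$ in Lemma \ref{cor_pitt_KRZ>0}). Relatedly, your treatment of the spatial weight $|x|^{\kappa p}$ --- ``a Hardy-type/Stein--Weiss ingredient \dots combined with \dots a standard $L^p$-weighted interpolation'' --- is precisely the hard content of the cited result and cannot be dismissed as routine; the conditions $\kappa<n/p'$ and $\sigma-\kappa\geq n(1/p-1/2)$ have to come out of that estimate, not be pasted onto it.

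The necessity sketch also has gaps. A bump at scale $t\to0$ near the origin only produces the balance condition $\sigma-\kappa\geq n(1/p-1/2)$; it does not give $\kappa\geq0$, and at $p=2$ it does not give $\sigma\leq\kappa$. In fact there is no dilation structure on $\X$ (the paper stresses this), so the ``dilation-like test family'' you invoke for the $p=2$ rigidity $\sigma=\kappa$ does not exist; that rigidity is a large-scale, genuinely non-Euclidean phenomenon coming from the $|\lambda|^{\nu-l}$ behavior of $|\bfc(\lambda)|^{-2}$ at $\lambda=0$ combined with exponential volume growth --- compare the paper's own rank-one necessity argument (Theorem \ref{thm_nec_pitt_n}, the test functions $f_1,f_2$ and the estimates \eqref{eqn_pitt_nec1}--\eqref{eqn_nec>1_sJ}), which uses functions supported at large distance, not local bumps. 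Your argument for $\sigma<\nu/2$ (nonvanishing of $\widetilde f(0,\cdot)$ forces local integrability of $|\lambda|^{-2\sigma}|\bfc(\lambda)|^{-2}$ at the origin) is fine, but the remaining necessity claims and the entire sufficiency step need to be replaced either by a genuine Stein--Weiss argument or, as the paper does, by citing \cite{KRZ_23}.
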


\subsection{Mixed norm interpolation theory}\label{subsec: mix_norm}
 Let $\left(X_j, dx_j\right), j=0,1$, be two $\sigma$-finite measure spaces and $(X, dx)$ their product measure space. For an ordered pair $P=\left(p_0, p_1\right) \in[1, \infty] \times[1, \infty]$ and a measurable function $f\left(x_0, x_1\right)$ on $(X, \mu)$,  we define the mixed norm $\left(p_0, p_1 \right)$ of $f$ as
\begin{align}\label{defn_mix_norm}
\|f\|_P=\|f\|_{{\left(p_0, p_1\right)}}=\left(\int_{X_0}\left(\int_{X_1}\left|f\left(x_0, x_1\right)\right|^{p_0} d x_1\right)^{p_1 / p_0} d x_0\right)^{1 / p_1} .
\end{align}
 For two such ordered pairs $P=\left(p_0, p_1\right)$ and $Q=\left(q_0, q_1\right)$, we write $1 / R=(1-\theta) / P+\theta / Q$, $0<\theta<1$ to mean $1 / r_0=(1-\theta) / p_0+\theta / q_0$ and $1 / r_1=(1-\theta) / p_1+\theta / q_1$, where $R=\left(r_0, r_1\right)$. We have the following analytic interpolation for the mixed norm spaces.

Let us consider two product measure spaces $X=X_0 \times X_1$ and $Y=Y_0 \times Y_1$. Let $d x$ and $d y$ denote respectively the (product) measures on $X$ and $Y$. Let $T_z $ be an analytic family of linear operators between $X$ and $Y$ of admissible growth, defined in the strip $\{ z \in \C : 0 \leq  \Re z \leq 1 \}$. We suppose that for all finite linear combinations of characteristic functions of rectangles of finite measures $f$ on $X$:
\begin{align*}
\left\|T_{j+ i \xi}(f)\right\|_{Q_j} \leq A_j(\xi)\|f\|_{P_j} 
\end{align*}
for $P_j, Q_j \in[1, \infty] \times[1, \infty]$ such that $\log \left|A_j(\xi)\right| \leq A e^{a|\xi|}, a<\pi, j=0, 1$. Let $1 / R=(1-\theta) / P_0+\theta / P_1$ and $1 / S=(1-\theta) / Q_0+\theta / Q_1, 0<\theta<1$. Then it follows similarly as in \cite[Page 313, Theorem 1]{BP61} (see also \cite[Theorem 1]{Ste56}) that
\begin{align}\label{eqn_ana_int}
\left\|T_\theta(f)\right\|_S \leq A_{\theta}\|f\|_R.
\end{align}
We refer the reader to \cite{BP61} for more details about the mixed norm spaces. However, we would like to mention that the order in which the mixed norms are taken in this article differs from that in \cite{BP61}. Following the calculation as a mixed norm space version of the  Stein-Weiss interpolation theory \cite[Theorem 2]{Ste56}, which can be derived using \eqref{eqn_ana_int}; see \cite[Corollary 7.3]{RR24} for more details.  
\begin{lemma}\label{cor_sw_ana}  
Let $1 \leq p, q_j, \tilde{q}_j \leq \infty$, and let $T$ be a linear operator defined on simple functions on the product space $X = X_0 \times X_1$, mapping into measurable functions on $Y = Y_0 \times Y_1$. Suppose ${w}_j: Y_0 \rightarrow \mathbb{R}^+$ and ${v}_j: X_0 \rightarrow \mathbb{R}^+$ are non-negative weight functions that are integrable over every subset of finite measure, for $j = 0, 1$. We further assume that for all functions $f$ that are finite linear combinations of characteristic functions of rectangles with finite measure in $X$, the following holds
\begin{align}\label{hyp_T_z_SW}
\|T (f)\cdot { w}_j \|_{L^{Q_j} (Y, dy)} \leq A_j \|f\cdot v_j\|_{L^{P_j}(X,dx)}
\end{align}
where $j = 0, 1$, and $ Q_j = (2,q_j),$ and $ P_j=(p_j,p_j).$ Then we have 
     \begin{align*}
        \|T (f)\cdot  {w}_{\theta}\|_{L^{Q_{\theta}} (Y, dy)} \leq A_{\theta} \|f\cdot v_{\theta} \|_{L^{P_\theta}(X, dx)}
    \end{align*}
    for all simple functions $f$ of finite measure support, $0<\theta<1$, where $Q_{\theta}= (2,q_{\theta})$, $P_{\theta}=(p_{\theta},p_{\theta})$ with 
    \begin{equation}\label{theta_exp_again}
    \begin{aligned}
       &\frac{1}{p_\theta}=  \frac{1-\theta}{p_0}+\frac{\theta}{p_1}, \qquad \quad \quad &&\frac{1}{q_{\theta}}= \frac{1-\theta}{q_0}+\frac{\theta}{q_1},\\
       & v_{\theta} = v_0^{ (1-\theta)} v_1^{ \theta} , \qquad \quad    && {w}_{\theta} = w_0^{ (1-\theta)} w_1^{ \theta} .
       \end{aligned}
       \end{equation}
\end{lemma}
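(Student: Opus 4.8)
The plan is to prove this by the classical Stein--Weiss complex interpolation scheme \cite{Ste56}, carried out in the mixed-norm category by invoking the analytic interpolation statement \eqref{eqn_ana_int} as the engine. The structural observation that makes the argument work is that $v_0,v_1$ depend only on the \emph{outer} variable $x_0\in X_0$ and $w_0,w_1$ only on $y_0\in Y_0$; hence multiplication by a complex power of such a weight commutes with the inner integrations and can be pulled out of them. This is exactly what lets us confine all the $z$-dependence of the interpolating family to scalar multipliers acting trivially on the inner variables, rather than inserting complex powers into the function itself as in the scalar Stein--Weiss proof --- the latter manoeuvre is incompatible with the fixed inner exponents ($2$ on the $Y$-side, $p$ on the $X$-side) of our mixed norms.

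After a routine reduction we may assume $v_0,v_1,w_0,w_1$ are strictly positive and, approximating, that they are finite linear combinations of characteristic functions of finite-measure rectangles, so that $v_0^{z-1}v_1^{-z}f$ again lies in the domain of $T$ whenever $f$ does. For $z$ in the strip $\{0\le \Re z\le 1\}$ define
\begin{align*}
T_z f \;:=\; w_0^{\,1-z}\,w_1^{\,z}\cdot T\!\bigl(v_0^{\,z-1}\,v_1^{-z}\,f\bigr),
\end{align*}
acting on finite linear combinations $f$ of characteristic functions of finite-measure rectangles in $X$. Since $w_j^{z}=e^{z\log w_j}$, $v_j^{z}=e^{z\log v_j}$ are entire in $z$ with locally bounded exponents, $\{T_z\}$ is an analytic family of admissible growth, and at $z=\theta$ one has $T_\theta f = w_\theta\, T(v_\theta^{-1}f)$ with $v_\theta = v_0^{1-\theta}v_1^{\theta}$, $w_\theta = w_0^{1-\theta}w_1^{\theta}$ as in \eqref{theta_exp_again}. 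On $\Re z=0$ write $z=i\xi$: then $|w_0^{1-i\xi}w_1^{i\xi}|=w_0$ and $v_0^{i\xi-1}v_1^{-i\xi}=v_0^{-1}u_\xi$ with $|u_\xi|\equiv 1$. Because $w_0$ is constant in the inner variable $y_1$ it factors out of the inner $L^2$-norm defining $\|\cdot\|_{L^{Q_0}}$, and applying the hypothesis \eqref{hyp_T_z_SW} with $j=0$ to the function $v_0^{-1}u_\xi f$ gives
\begin{align*}
\|T_{i\xi}f\|_{L^{Q_0}(Y,dy)} = \bigl\|w_0\, T(v_0^{-1}u_\xi f)\bigr\|_{L^{Q_0}(Y,dy)} \le A_0\,\bigl\|(v_0^{-1}u_\xi f)\, v_0\bigr\|_{L^{P_0}(X,dx)} = A_0\,\|f\|_{L^{P_0}(X,dx)},
\end{align*}
using $|u_\xi f|=|f|$ and that $P_0=(p_0,p_0)$ is the ordinary $L^{p_0}(X)$-norm. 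The estimate on $\Re z=1$ is identical with indices $0$ replaced by $1$. Since $A_0,A_1$ are independent of $\xi$, these boundary bounds put us in the situation of \eqref{eqn_ana_int}: with $1/R=(1-\theta)/P_0+\theta/P_1$ and $1/S=(1-\theta)/Q_0+\theta/Q_1$ we obtain $R=(p_\theta,p_\theta)=P_\theta$ and $S=(2,q_\theta)=Q_\theta$, hence $\|T_\theta f\|_{L^{Q_\theta}(Y,dy)}\le A_\theta\|f\|_{L^{P_\theta}(X,dx)}$.

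It then remains to undo the substitution: writing $f=v_\theta g$ and recalling $T_\theta f = w_\theta T(g)$, the last inequality reads $\|w_\theta\,T(g)\|_{L^{Q_\theta}(Y)}\le A_\theta\,\|v_\theta\,g\|_{L^{P_\theta}(X)}$, valid for every $g$ with $v_\theta g$ admissible --- in particular for all simple $g$ of finite-measure support, since $v_\theta$ is simple. Finally one removes the temporary assumption that the $v_j,w_j$ are simple positive weights by a limiting argument (monotone/dominated convergence applied to a fixed simple $g$), as in \cite{RR24}. I expect the genuine obstacle to lie not in the interpolation itself but in this bookkeeping step: one must check that the analytic family is defined on a class of test functions that is simultaneously large enough to yield the conclusion for all simple functions and compatible with the requirement that all the $z$-dependence remain inside multipliers that act trivially on the inner variables --- once that is arranged, the mixed-norm Riesz--Thorin/Stein theorem \eqref{eqn_ana_int} delivers the result mechanically.
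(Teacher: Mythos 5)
Your proposal is correct and is essentially the argument the paper has in mind: the paper omits a written proof and simply defers to the mixed-norm analytic interpolation \eqref{eqn_ana_int} together with the Stein--Weiss substitution scheme of \cite{Ste56} (see \cite[Corollary 7.3]{RR24}), which is exactly the analytic family $T_z f = w_0^{1-z}w_1^{z}\,T(v_0^{z-1}v_1^{-z}f)$ you construct. Your key observation --- that the weights depend only on the outer variables and hence commute with the inner $L^2$ (resp.\ inner $L^{p_j}$) integration, so the boundary bounds reduce to the hypothesis \eqref{hyp_T_z_SW} --- is precisely what makes the scalar argument go through in the mixed-norm category, and your closing remarks on the admissible test class and the limiting argument for general weights cover the remaining bookkeeping.
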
 
\section{Shifted Pitt's inequality on noncompact type symmetric spaces}\label{sec_pitt_X}
In this section, we prove Theorem \ref{thm_pitt_X_n}. We begin by establishing the following analogue of the classical Hardy–Littlewood–Paley inequality, as stated in \cite[Theorem 4.3]{Sad79}.
\begin{lemma}[Hardy-Littlewood-Paley's inequality]\label{lem_plaey_inq}
        Let $\X$ be a symmetric space of dimension $n\geq 2$, pseudo-dimension $\nu \geq 3$, and $1<p\leq 2$. Suppose that $ \zeta \geq 0, \sigma>0$, then for all $f \in C_c^{\infty}(\X)$ we have the  following inequality 
  \begin{align}\label{eqn_inq_paley_lem} 
\left( \int_{\mathbb{\fa}} \left(\int_B|\widetilde{f}(\lambda, b)|^{2} \, db\right)^{\frac{p}{2}} (|\lambda|^2+\zeta^2)^{-\frac{\sigma p}{2}} |\bfc(\lambda)|^{-2} \, d\lambda \right)^{\frac{1}{p}}
\leq C_p \left( \int_{\X}|f(x)|^p \,dx \right)^{\frac{1}{p}},
\end{align}
which holds for  $\zeta > 0$ if $\sigma \geq n(2/p-1)$, and  for $\zeta = 0$ if $n (2/p-1) \leq \sigma \leq \nu(2/p-1)$.

\end{lemma}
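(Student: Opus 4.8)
The plan is to derive \eqref{eqn_inq_paley_lem} from the abstract Paley inequality of Theorem~\ref{thm_paley_uni} by making a judicious choice of the weight $u$ and then verifying the weak-type normalization $\|u\|_{c,\infty}<\infty$; this is the standard route by which a Hardy--Littlewood--Paley inequality follows from a Paley inequality once the weight is a power. First I would dispose of the endpoint $p=2$: there the hypothesis $\sigma>0$ makes the case $\zeta=0$ vacuous (one would need $\sigma\le\nu(2/p-1)=0$), while for $\zeta>0$ the factor $(|\lambda|^2+\zeta^2)^{-\sigma}\le\zeta^{-2\sigma}$ is bounded and \eqref{eqn_inq_paley_lem} is immediate from the Plancherel formula \eqref{Planc}. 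So I may assume $1<p<2$ and set
\[
\beta:=\frac{\sigma p}{2(2-p)}>0,\qquad u(\lambda):=(|\lambda|^2+\zeta^2)^{-\beta},
\]
chosen so that $u(\lambda)^{2-p}=(|\lambda|^2+\zeta^2)^{-\sigma p/2}$. With this choice, Theorem~\ref{thm_paley_uni} reduces the whole statement to the single claim that $\|u\|_{c,\infty}=\sup_{\alpha>0}\alpha\int_{\{u>\alpha\}}|\bfc(\lambda)|^{-2}\,d\lambda<\infty$ under the stated hypotheses, the constant $C_p\|u\|_{c,\infty}^{2/p-1}$ then serving in \eqref{eqn_inq_paley_lem}.

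To handle $\|u\|_{c,\infty}$ I would first record, via the estimate \eqref{est_c-2,hr} and polar coordinates on $\fa\simeq\R^l$, the ball bound
\[
\int_{\{|\lambda|<R\}}|\bfc(\lambda)|^{-2}\,d\lambda\;\lesssim\;\int_0^R r^{\nu-1}(1+r)^{n-\nu}\,dr\;\lesssim\;
\begin{cases} R^{\nu}, & 0<R\le 1,\\ R^{n}, & R\ge 1, \end{cases}
\]
which uses only $\nu\ge 3$ and $n\ge 2$. Since $\{u>\alpha\}=\{|\lambda|^2<\alpha^{-1/\beta}-\zeta^2\}$ is the ball of radius $R(\alpha)=\sqrt{(\alpha^{-1/\beta}-\zeta^2)_{+}}\le\alpha^{-1/(2\beta)}$ (with equality when $\zeta=0$), empty for $\alpha\ge\zeta^{-2\beta}$, the quantity $\alpha\int_{\{u>\alpha\}}|\bfc|^{-2}$ is controlled by $\alpha R(\alpha)^{\nu}$ when $R(\alpha)\le 1$ and by $\alpha R(\alpha)^{n}$ when $R(\alpha)\ge 1$.

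The substance of the argument is then the bookkeeping of these two regimes. When $R(\alpha)\ge 1$, i.e.\ $\alpha$ small, one has $\alpha R(\alpha)^{n}\le\alpha^{1-n/(2\beta)}$, bounded as $\alpha\to 0$ exactly when $\beta\ge n/2$, equivalently $\sigma\ge n(2/p-1)$. When $R(\alpha)\le 1$ and $\zeta>0$, one has $\alpha<\zeta^{-2\beta}$, so $\alpha R(\alpha)^{\nu}\le\zeta^{-2\beta}$ with no further hypothesis; when $R(\alpha)\le 1$ and $\zeta=0$, instead $\alpha R(\alpha)^{\nu}=\alpha^{1-\nu/(2\beta)}$, bounded as $\alpha\to\infty$ exactly when $\beta\le\nu/2$, equivalently $\sigma\le\nu(2/p-1)$. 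Assembling the cases yields $\|u\|_{c,\infty}<\infty$ under precisely the hypotheses of the lemma, which completes the proof via Theorem~\ref{thm_paley_uni}.

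The hard part will be exactly this last step: the simultaneous control of $|\bfc(\lambda)|^{-2}$ near $\lambda=0$, where it grows like $|\lambda|^{\nu-l}$, and near infinity, where it grows like $|\lambda|^{n-l}$. It is this dichotomy in the Plancherel density that forces the two-sided constraint $n(2/p-1)\le\sigma\le\nu(2/p-1)$ in the unshifted case, whereas a shift $\zeta>0$ keeps the relevant ball radius bounded away from infinity and removes the upper constraint, leaving only $\sigma\ge n(2/p-1)$; everything else is routine once the weight is correctly matched to the exponent $2-p$ in Theorem~\ref{thm_paley_uni}.
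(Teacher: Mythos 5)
Your proposal is correct and follows essentially the same route as the paper: choose the power weight $u$ so that $u^{2-p}$ matches the desired weight, verify $\|u\|_{c,\infty}<\infty$ by the two-regime estimate of the Plancherel density coming from \eqref{est_c-2,hr}, and invoke Theorem~\ref{thm_paley_uni}; the paper phrases the verification via the distribution function $d_{u_\sigma}(\gamma)$ rather than the ball radius $R(\alpha)$, but the computation is identical. Your separate, explicit treatment of the endpoint $p=2$ (vacuous for $\zeta=0$, trivial from Plancherel for $\zeta>0$) is a small but welcome tightening of the paper's argument.
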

\begin{proof}
     For a fixed $\zeta\geq 0$, let us consider $u_{\sigma, \zeta}(\lambda):=(|\lambda|^2+\zeta^2)^{-\frac{\sigma}{2}}$, for $\sigma > 0$. 
  We will determine for which values of $\sigma$, $u_{\sigma,\zeta}$ will satisfy the hypothesis of Theorem \ref{thm_paley_uni}. First, we consider the case $\zeta=0$. The distribution function of $u_{\sigma,0}= u_\sigma$ with respect to the Plancherel measure $d\mu (\lambda):=|\bfc(\lambda)|^{-2} d\lambda$ is given by
 \begin{align*}
    \hspace{1.2cm} d_{u_{\sigma}}(\gamma)=  \mu\{\lambda \in \fa:   |\lambda|^{-\sigma} >\gamma\} = \mu\{\lambda \in \fa:   |\lambda|^{\sigma} < 1/\gamma\} = \mu\{\lambda \in \fa:   |\lambda| < \left( 1/\gamma\right)^{\frac{1}{\sigma}}\}.
 \end{align*}
 When $1<\gamma<\infty$, we can write using  \eqref{est_c-2,hr}
\begin{align*}
      d_{u_{\sigma}}(\gamma) \leq C \int_{0}^{\left( 1/\gamma\right)^{\frac{1}{\sigma}}}  s^{\nu-l} s^{l-1} \,ds \leq C \gamma^{-\frac{\nu}{\sigma}}.
\end{align*}
 For $0< \gamma\leq 1 $,  we have
 \begin{align*}
     d_{u_{\sigma}}(\gamma)\asymp \int_{0}^{1} s^{\nu -1}    ds +  \int_{1}^{\left( 1/\gamma\right)^{\frac{1}{\sigma}}} s^{n-l} s^{l-1} \,ds \leq C {\gamma^{-\frac{n}{\sigma}}}.
 \end{align*}
Thus, utilizing the estimates above, we obtain that for $\zeta=0$, if $n \leq \sigma\leq \nu$, then
\begin{align}\label{inq_u,L1}
     \sup_{\gamma>0} \gamma  \int\limits_{\substack{\{\lambda \in \fa : u_{\sigma, \zeta}(\lambda)>\gamma\} }} |\bfc(\lambda)|^{-2} \,d\lambda <\infty.
\end{align}  
For $\zeta > 0$, we observe that when $\gamma$ is sufficiently large, $d_{u_{\sigma, \zeta}}(\gamma)$ vanishes. More precisely, if $\gamma \geq (1/\zeta^2)^{\sigma/2}$, then 
\begin{align*}
    d_{u_{\sigma, \zeta}}(\gamma)=  \mu\{\lambda \in \fa:   {(|\lambda|^2+\zeta^2)}^{-\frac{\sigma}{2}} >\gamma\}  = \mu\{\lambda \in \fa:   |\lambda|^2 < \left( 1/\gamma\right)^{\frac{2}{\sigma}}-\zeta^2\}=\mu\{\emptyset \} \equiv 0.
\end{align*} 
Consequently, by performing a similar computation, we deduce that inequality \eqref{inq_u,L1} holds for $\zeta > 0$ whenever $\sigma \geq n$. Therefore, applying Theorem \ref{thm_paley_uni}, we obtain the following inequality for $1 < p \leq 2$:
\begin{align*} 
\left( \int_{\mathbb{\fa}} \left(\int_B|\widetilde{f}(\lambda, b)|^{2} \, db\right)^{\frac{p}{2}} (|\lambda|^2+\zeta^2)^{-\frac{\sigma p}{2}} |\bfc(\lambda)|^{-2} \, d\lambda \right)^{\frac{1}{p}}
\leq C \left( \int_{\X}|f(x)|^p \,dx \right)^{\frac{1}{p}}.
\end{align*}
This inequality holds in the case $\zeta = 0$ provided that $\sigma$ satisfies $n(2/p - 1) \leq \sigma \leq \nu(2/p - 1)$, and in the case $\zeta \neq 0$ whenever $\sigma \geq n(2/p - 1)$.
\end{proof}
\begin{remark}\label{rem_prob_lam_0}
\begin{enumerate}
    \item We note that unless $\nu \geq n$, it may not be possible to find any $\sigma \geq 0$ for which the Hardy–Littlewood–Paley inequality \eqref{eqn_inq_paley_lem} holds with $\zeta = 0$. As justification, we observed in the rank one case (see Remark \ref{rem_after_J_pit}) that \eqref{eqn_inq_paley_lem} may not hold for all $1 < p \leq 2$ when $\zeta = 0$, unless the condition $n \leq \nu$ is satisfied.

Furthermore, in higher rank cases, we recall from Corollary \ref{cor_pitt_KRZ,0} that in order for unshifted Pitt's inequality \eqref{inq_pit_p,2} to hold with $q = 2$ and $1 \leq p < 2$, it is necessary that $0 \leq \kappa \leq \sigma < \nu/2$, $\kappa p' < n$, and $n(1/p - 1/2) \leq \sigma - \kappa$. These conditions together imply $n(1/p - 1/2) < \nu/2$. From this, it follows that if $n > \nu$, then for some $p$ sufficiently close to 1, inequality \eqref{inq_pit_p,2} may fail. Therefore, to ensure that the unshifted Pitt's inequality holds for all $1 < p \leq q < \infty$, the condition $n \leq \nu$ is essential.  This justifies our assumption $\min\{n, \nu\} = n$ in Theorem \ref{thm_pitt_X_n} for the case $\zeta = 0$, and ensures the existence of some $\sigma \geq 0$ for which \eqref{eqn_inq_paley_lem} holds.
\item We recall that one of the most distinctive features of symmetric spaces of noncompact type is the holomorphic extension of the Fourier transform. This motivates the study of Fourier inequalities not merely on $\fa$, but over the entire domain of holomorphic extension, something without a direct analogue in $\R^N$.  Particularly, in the rank one case, it is natural to investigate the Hardy–Littlewood–Paley inequality (see Lemma \ref{lem_plaey_inq}) for $L^p$-functions ($1\leq p<2$)  whose Fourier transform is defined on the tube domain $S_p = \fa + i\mathrm{co}(W \cdot \rho_p)$, where $\rho_p := |2/p - 1||\rho|$.

In this direction, using the Paley inequality on non-unitary duals from \cite[Theorem 1.1, Remark 5.2 (2)]{RR24} and a similar argument as in Lemma \ref{lem_plaey_inq}, we obtain the following version of the Hardy–Littlewood–Paley inequality on non-unitary duals in rank one cases: Let $\X$ be a rank one symmetric space of dimension $n \geq 2$, and let $1 < p \leq 2$.  Then for any $q_0$ with $p \leq q_0 \leq p'$, we have the  following inequality for all $f \in C_c^\infty(\X)$ 
  \begin{align*}
\left( \int_{\mathbb{\fa}} \left( \int_B  \left|\widetilde {f}(\lambda +i\rho_{q_0} , b) \right|^{q_0}   \,  db \right)^{\frac{p}{q_0}} (|\lambda|^2+\zeta^2)^{-\frac{\sigma p}{2}} |\bfc(\lambda)|^{-2} \, d\lambda \right)^{\frac{1}{p}}
\leq C_p \left( \int_{\X}|f(x)|^p \,dx \right)^{\frac{1}{p}},
\end{align*}
which holds for  $\zeta > 0$ if $\sigma >n(2/p-1)$, and  for $\zeta = 0$ if $n (2/p-1) < \sigma < 3(2/p-1)$. We also note that $\nu = 3$ for all rank one symmetric spaces.
\end{enumerate}
\end{remark}
We require the following lemma, which follows from \cite[Theorem 1.2]{KRZ_23} by taking $\beta = 0$, $q = 2$, and $\zeta > 0$, and then applying the Plancherel theorem.
\begin{lemma}\label{cor_pitt_KRZ>0}
     Let $\X$ be symmetric space of dimension $n\geq 2$ and $ \zeta > 0$. Suppose that $\sigma >0$ and $f \in C_c^{\infty}(\X)$. Then, the inequality 
    \begin{align}\label{inq_pit_p,2_n}
     \left(   \int_{\fa} \int_{B}|\widetilde{f}(\lambda, b)|^2 (|\lambda|^2+\zeta^2)^{-{\sigma}} |\bfc(\lambda)|^{-2} \, db\, d\lambda \right)^{\frac{1}{2}} \leq C \left( \int_{\X}|f(x)|^p |x|^{\kappa p}\,dx \right)^{\frac{1}{p}}
    \end{align}
    holds for $1\leq  p\leq 2$ if and only if  $0\leq \kappa\leq \sigma$, $\kappa<{n}/{p'}$ (when $p=1$, $\kappa=0$), and $\sigma-\kappa \geq n\left( {1}/{p}-{1}/{2}\right)$. 
\end{lemma}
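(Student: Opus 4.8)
The plan is to deduce Lemma~\ref{cor_pitt_KRZ>0} as a direct specialization of the Stein--Weiss inequality on $\X$ from \cite[Theorem 1.2]{KRZ_23}, followed by an application of the Plancherel theorem. That result supplies, together with a sharp characterization of the admissible exponents, an inequality of the shape
\[
\left\| \, |x|^{-\beta} \, (-\mathcal{L}_\zeta)^{-\frac{\sigma}{2}} f \, \right\|_{L^q(\X)} \leq C \, \left\| \, |x|^{\kappa} f \, \right\|_{L^p(\X)},
\]
where $\mathcal{L}_\zeta := \mathcal{L} + (|\rho|^2 - \zeta^2) I$ is the shifted Laplacian, whose Fourier multiplier on $\fa \times B$ is $-(|\lambda|^2 + \zeta^2)$; for $\zeta > 0$ this operator is positive with $L^2$-spectrum $[\zeta^2, \infty)$, so its negative fractional powers are well defined. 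I would take $\beta = 0$, $q = 2$, and $\zeta > 0$.

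With $\beta = 0$ and $q = 2$ the left-hand side becomes $\| (-\mathcal{L}_\zeta)^{-\sigma/2} f \|_{L^2(\X)}$. Since the Fourier transform of $(-\mathcal{L}_\zeta)^{-\sigma/2} f$ equals $(|\lambda|^2 + \zeta^2)^{-\sigma/2}\, \widetilde{f}(\lambda, b)$ --- a legitimate identity for $f \in C_c^\infty(\X)$ because $\zeta > 0$ keeps the multiplier bounded on $\fa$ --- the Plancherel theorem \eqref{Planc} yields
\[
\left\| (-\mathcal{L}_\zeta)^{-\frac{\sigma}{2}} f \right\|_{L^2(\X)}^2 = \frac{1}{|W|} \int_{\fa} \int_B |\widetilde{f}(\lambda, b)|^2 \, (|\lambda|^2 + \zeta^2)^{-\sigma} \, |\bfc(\lambda)|^{-2} \, db \, d\lambda ,
\]
which is precisely the square of the left-hand side of \eqref{inq_pit_p,2_n}, up to the harmless factor $|W|^{-1/2}$. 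Thus \eqref{inq_pit_p,2_n} is equivalent to the $(\beta = 0,\, q = 2)$ instance of \cite[Theorem 1.2]{KRZ_23}; in particular its sufficiency and its necessity transfer verbatim from the two directions of that theorem. It then remains only to verify that, after setting $\beta = 0$, $q = 2$ and restricting to $1 \leq p \leq 2$, the general admissibility conditions of \cite[Theorem 1.2]{KRZ_23} reduce exactly to $0 \leq \kappa \leq \sigma$, $\kappa < n/p'$ (with the convention $\kappa = 0$ when $p = 1$), and $\sigma - \kappa \geq n(1/p - 1/2)$.

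Since all the substantive analysis is already contained in \cite[Theorem 1.2]{KRZ_23}, there is no real obstacle here: the argument is a matter of bookkeeping with the parameters, entirely parallel to the unshifted case $\zeta = 0$ recorded in Corollary~\ref{cor_pitt_KRZ,0}. The only step warranting a remark --- and the reason for imposing $\zeta > 0$ --- is the Plancherel identity above: when $\zeta = 0$ the multiplier $|\lambda|^{-\sigma}$ is singular at the origin (indeed only locally $L^2$ against the Plancherel density when $\sigma < \nu/2$), so $(-\mathcal{L}_0)^{-\sigma/2} f$ need not belong to $L^2(\X)$, and one is forced back to the more delicate route behind Corollary~\ref{cor_pitt_KRZ,0}.
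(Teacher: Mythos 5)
Your proposal is correct and follows exactly the route the paper takes: the paper derives this lemma by specializing \cite[Theorem 1.2]{KRZ_23} to $\beta=0$, $q=2$, $\zeta>0$ and then applying the Plancherel theorem to rewrite $\|(-\mathcal{L}_\zeta)^{-\sigma/2}f\|_{L^2(\X)}$ as the weighted Fourier-side integral. Your additional remark on why $\zeta>0$ makes the Plancherel step unproblematic is a reasonable elaboration of the same argument.
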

With the Hardy–Littlewood–Paley inequality and the preceding lemma in place, we are now prepared to prove one of our main results on shifted Pitt's inequality on noncompact type symmetric spaces. Our approach is inspired by the mixed-norm version (Lemma \ref{cor_sw_ana}) of Stein’s interpolation theory developed in \cite{Ste56}. However, a key novelty of our method lies in how we handle the presence of shifted weights on the Fourier transform side. The shifted structure in our setting necessitates a careful modification of the interpolation framework. To accommodate this, we adapt the interpolation argument by introducing a modified inequality that captures the interaction between the weight and the spectral shift. This refinement is crucial for establishing the desired Pitt-type inequalities in our setting.

\begin{proof}[\textbf{Proof of Theorem \ref{thm_pitt_X_n}}]
We will prove the theorem for the case $p > 1$; when $p = 1$, the parameter $\kappa$ equals zero, and the theorem can be established using a similar argument. So, let us assume $p > 1$ and recall the Hardy–Littlewood–Paley inequality from \eqref{eqn_inq_paley_lem}, which we now rewrite for $1 < r \leq 2$
\begin{align}\label{inq_paley_pol} 
\left( \int_{\mathbb{\fa}} \left(\int_B|\widetilde{f}(\lambda, b)|^{2} \, db\right)^{\frac{r}{2}} (|\lambda|^2+\zeta^2)^{-\frac{\delta r}{2}} |\bfc(\lambda)|^{-2} \, d\lambda \right)^{\frac{1}{r}}
\lesssim \left( \int_{\X}|f(x)|^r \,dx \right)^{\frac{1}{r}},
\end{align}
which holds for $\zeta = 0$ if $n (2/r-1) \leq \delta \leq \nu(2/r-1)$, and for $\zeta > 0$ if $\delta \geq n(2/r-1)$. \par
     First, we will handle the case when $\zeta \neq 0$. For this, we recall from \eqref{inq_pit_p,2_n} that
 \begin{align}\label{inq_pit_p,2_rec>0}
     \left(   \int_{\fa} \int_{B}|\widetilde{f}(\lambda, b)|^2 (|\lambda|^2 +\zeta^2)^{-{\beta} } |\bfc(\lambda)|^{-2} \, db\, d\lambda \right)^{\frac{1}{2}} \lesssim  \left( \int_{\X}|f(x)|^2 |x|^{2\alpha }\,dx \right)^{\frac{1}{2}}.
    \end{align} 
        holds for if   $0\leq \alpha\leq \beta$ and $\alpha<{n}/{2}$.
      \par
    \textbf{Case I.} $1<p=q\leq 2$.   
    We choose $\theta$ such that 
    $1/p= (1-\theta)/r +\theta/2$, where we will vary $r$ between $1$ to $p$. Applying Lemma \ref{cor_sw_ana} with the following
    \begin{align*}
        p_0=q_0=r,\,  p_1=q_1=2, \quad v_0 =1,v_1=|x|^{\alpha}, \quad w_0=(|\lambda|^2+\zeta^2)^{-\frac{\delta}{2}} , w_1=(|\lambda|^2+\zeta^2)^{-\frac{\beta}{2}},
    \end{align*}
    and interpolating \eqref{inq_paley_pol}, \eqref{inq_pit_p,2_rec>0}, we obtain
    \begin{align}\label{pitt_p=q<2,X}
          \left(   \int_{\fa} \left(\int_{B}|\widetilde{f}(\lambda, b)|^2 db\right)^{\frac{p}{2}} (|\lambda|^2 +\zeta^2)^{-\frac{\sigma p}{2}} |\bfc(\lambda)|^{-2} \, d\lambda \right)^{\frac{1}{p}} \lesssim \left( \int_{\X}|f(x)|^p |x|^{\kappa p }\,dx \right)^{\frac{1}{p}}
    \end{align}
    where 
    \begin{align*}
        \kappa= \theta \alpha, \qquad \sigma =(1-\theta) \delta +\theta \beta.
    \end{align*}
    By putting the value $\theta = (1/r-1/p)/(1/r-1/2)$ and $(1-\theta) = (1/p-1/2)/(1/r-1/2)$, the equation above transforms into 
  \begin{align*}
        \kappa=   \frac{\left( \frac{1}{r}-\frac{1}{p} \right)}{ \left( \frac{1}{r}-\frac{1}{2} \right)} \alpha, \qquad \sigma = \frac{\left( \frac{1}{p}-\frac{1}{2} \right)}{ \left( \frac{1}{r}-\frac{1}{2} \right)} \delta+  \frac{\left( \frac{1}{r}-\frac{1}{p} \right)}{ \left( \frac{1}{r}-\frac{1}{2} \right)} \beta .
    \end{align*}
    Since $\alpha \in [0,n/2)$, and we can choose $r \in (1,p)$ arbitrarily, so \eqref{pitt_p=q<2,X} holds for  all  $0 \leq \kappa <n/{p'} $. Additionally, we observe that since $\beta \geq \alpha$, and \eqref{inq_paley_pol}  holds for all $\delta \geq n(2/r-1) $, therefore, we have shown that  \eqref{pitt_p=q<2,X} holds for all $\kappa$ and $\sigma$ such that 
    \begin{align*}
        0\leq \kappa < \frac{n}{p'}, \quad \text{and} \quad \sigma \geq n\left(\frac{2}{p}-1 \right)+\kappa.
    \end{align*}
\par 
    \textbf{Case II.} $1<p\leq 2 <q=p'$. We choose $\theta$ such that $1/p= (1-\theta)+\theta/2$.\\
   \noindent We have the following restriction inequality from \eqref{f_L2K<L1_pre}
    \begin{align}\label{f_L2K<L1}
        \left( \int_B  \left| \widetilde{f}(\lambda,b)\right|^2 \,db \right)^{\frac{1}{2}}  & \leq  \|f\|_{L^1(\X)}.
    \end{align}
 We now apply Lemma \ref{cor_sw_ana} with 
    \begin{align*}
        p_0=1, q_0=\infty,\,  p_1=q_1=2, \quad v_0 =1,v_1=|x|^{\alpha}, \quad w_0=1, w_1=(|\lambda|^2+\zeta^2)^{-\frac{\beta}{2}},
    \end{align*}
    and interpolating \eqref{inq_pit_p,2_rec>0} with \eqref{f_L2K<L1}, we obtain
     \begin{align}\label{pitt_p<2<p',X}
          \left(   \left(\int_{\fa} \int_{B}|\widetilde{f}(\lambda, b)|^2  \,db \right)^{\frac{p'}{2}}(|\lambda|^2 +\zeta^2)^{-\frac{\gamma p'}{2}} |\bfc(\lambda)|^{-2} \, d\lambda \right)^{\frac{1}{p'}} \lesssim \left( \int_{\X}|f(x)|^p |x|^{\kappa p }\,dx \right)^{\frac{1}{p}},
    \end{align}
    where 
    \begin{align*}
        \kappa=   \frac{2\alpha}{p'}, \qquad \gamma =\frac{2\beta}{p'}.
    \end{align*}
    Since \eqref{inq_pit_p,2_rec>0} holds for all  $0\leq \alpha\leq \beta$ and $\alpha<{n}/{2}$, thus we have shown that \eqref{pitt_p<2<p',X} holds for all  $\kappa$ and $\gamma$ such that 
    \begin{align*}
        0\leq \kappa < \frac{n}{p'}, \quad \text{and} \quad \gamma \geq \kappa,
    \end{align*}
    where $1\leq p\leq 2$.
         \par
    
    \textbf{Case III.} $1<p\leq 2, p\leq q \leq p'$. We can rewrite \eqref{pitt_p=q<2,X} as 
       \begin{align}\label{pitt_p=q<2,X_nw}
          \left(   \int_{\fa} \left( \int_{B}|\widetilde{f}(\lambda, b)|^2 db\right)^{\frac{p}{2}} (|\lambda|^2 +\zeta^2)^{-\frac{\vartheta p}{2}} |\bfc(\lambda)|^{-2} \, d\lambda \right)^{\frac{1}{p}} \lesssim  \left( \int_{\X}|f(x)|^p |x|^{\kappa p }\,dx \right)^{\frac{1}{p}},
    \end{align}
    which holds for $1<p\leq 2$, $0\leq \kappa<n/{p'}$, and $\vartheta \geq n( {2}/{p}-1)+\kappa.$ Let us choose $\theta$ such that $1/q= (1-\theta)/{p}+ \theta/{p'}$. Applying Lemma \ref{cor_sw_ana} with the following
    \begin{align*}
        p_0= q_0= p,\,  p_1=p, q_1=p', \quad v_0 =|x|^{\kappa},v_1=|x|^{\kappa}, \quad w_0=(|\lambda|^2+\zeta^2)^{-\frac{\vartheta}{2}} , w_1=(|\lambda|^2+\zeta^2)^{-\frac{\gamma}{2}},
    \end{align*}
    and interpolating \eqref{pitt_p=q<2,X_nw}, \eqref{pitt_p<2<p',X}, we get
    \begin{align}\label{pitt_p<2,p<q,X}
          \left(   \int_{\fa} \left( \int_{B}|\widetilde{f}(\lambda, b)|^2 \,db\right)^{\frac{q}{2}} (|\lambda|^2 +\zeta^2)^{-\frac{\sigma q}{2}} |\bfc(\lambda)|^{-2} \, d\lambda \right)^{\frac{1}{q}} \lesssim \left( \int_{\X}|f(x)|^p |x|^{\kappa p }\,dx \right)^{\frac{1}{p}},
    \end{align}
    where 
    \begin{align*}
        0\leq \kappa< \frac{n}{p'}, \qquad \sigma =(1-\theta) \vartheta +\theta  \gamma. 
    \end{align*}
  Now using the range of $ \vartheta$ and $\gamma $ (as in \eqref{pitt_p=q<2,X_nw}, \eqref{pitt_p<2<p',X}) and putting the value of $\theta = (1/p-1/q)/(2/p-1) $, we can say that the inequality \eqref{pitt_p<2,p<q,X} holds for all $0\leq \kappa <n/p'$ and for all $\sigma$ such that
  \begin{align}
      \sigma\geq (1-\theta)  \left( n\left(\frac{2}{p}-1 \right)+\kappa\right)+\theta  \kappa = n\left(  \frac{1}{p}+\frac{1}{q}-1\right) +\kappa. 
  \end{align}
  This concludes the sufficient part of Theorem~\ref{thm_pitt_X_n} for $\zeta>0$.

    \vspace{3mm}
    Now we will handle the case $\zeta=0$.  In this case, we assume that $\nu\geq n$.  Then we  recall the Hardy-Littlewood-Paley's inequality from \eqref{eqn_inq_paley_lem}  for $1<r\leq 2$
\begin{align}\label{inq_paley_pol_0} 
\left( \int_{\mathbb{\fa}} \left(\int_B|\widetilde{f}(\lambda, b)|^{2} \, db\right)^{\frac{r}{2}} |\lambda|^{-{\delta r}} |\bfc(\lambda)|^{-2} \, d\lambda \right)^{\frac{1}{r}}
\lesssim \left( \int_{\X}|f(x)|^r \,dx \right)^{\frac{1}{r}},
\end{align}
which holds for $\zeta = 0$ if $n (2/r-1) \leq \delta \leq \nu(2/r-1)$.  We have from  \eqref{inq_pit_p,2} that
 \begin{align}\label{inq_pit_p,2_0}
     \left(   \int_{\fa} \int_{B}|\widetilde{f}(\lambda, k)|^2 |\lambda|^{-2\alpha} |\bfc(\lambda)|^{-2} \, db\, d\lambda \right)^{\frac{1}{2}} \lesssim \left( \int_{\X}|f(x)|^2 |x|^{2\alpha }\,dx \right)^{\frac{1}{2}}.
    \end{align} 
        holds for if and only if $\alpha<n/2$. Next, we will follow a similar strategy as in the case $\zeta \neq 0$.

        \textbf{Case I.} $1<p=q\leq 2$.  
   By choosing $\theta$ such that 
    $1/p= (1-\theta)/r +\theta/2$, and following a similar calculation as in \eqref{pitt_p=q<2,X}, except we make use of \eqref{inq_paley_pol_0} and \eqref{inq_pit_p,2_0} to obtain 
    \begin{align}\label{pitt_p=q<2,X_0}
          \left(   \int_{\fa} \left(\int_{B}|\widetilde{f}(\lambda, b)|^2 db\right)^{\frac{p}{2}} |\lambda| ^{-\sigma p} |\bfc(\lambda)|^{-2} \, d\lambda \right)^{\frac{1}{p}} \lesssim \left( \int_{\X}|f(x)|^p |x|^{\kappa p }\,dx \right)^{\frac{1}{p}},
    \end{align}
    where 
    \begin{align*}
        \kappa= \theta \alpha, \qquad \sigma =(1-\theta) \delta +\theta \alpha.
    \end{align*}
    By putting the value $\theta = (1/r-1/p)/(1/r-1/2)$, and observing that we can choose any $r\in (1,\infty) $, we can derive that \eqref{pitt_p=q<2,X_0} holds for all $\kappa$ and $\sigma$ such that 
    \begin{align}
        0\leq \kappa < \frac{n}{p'} \quad \text{and} \quad n\left(\frac{2}{p}-1 \right)\leq  \sigma-\kappa \leq \nu\left(\frac{2}{p}-1 \right).
    \end{align}
  \par
   \textbf{Case II.} $1<p\leq 2 <q=p'$. Following the exact same calculation as in \eqref{pitt_p<2<p',X}, except that we use \eqref{inq_pit_p,2_0} instead of \eqref{inq_pit_p,2_rec>0} to obtain the following 
    \begin{align}\label{pitt_p<2<p',X_0}
          \left(  \int_{\fa} \left( \int_{B}|\widetilde{f}(\lambda, b)|^2  db \right)^{\frac{p'}{2}}|\lambda|^{-\kappa p'} |\bfc(\lambda)|^{-2} \, d\lambda \right)^{\frac{1}{p'}} \lesssim \left( \int_{\X}|f(x)|^p |x|^{\kappa p }\,dx \right)^{\frac{1}{p}}
    \end{align}
    holds for all $ 0\leq \kappa < {n}/{p'}$.

    \textbf{Case III.} $1<p\leq 2, p\leq q\leq p'$. We can rewrite \eqref{pitt_p=q<2,X_0} as 
       \begin{align}\label{pitt_p=q<2,X_nw_0}
          \left(   \int_{\fa} \left( \int_{B}|\widetilde{f}(\lambda, b)|^2 \, db\right)^{\frac{p}{2}} |\lambda|^{-\vartheta p} |\bfc(\lambda)|^{-2} \, d\lambda \right)^{\frac{1}{p}} \lesssim \left( \int_{\X}|f(x)|^p |x|^{\kappa p }\,dx \right)^{\frac{1}{p}},
    \end{align}
    which holds for $1<p\leq 2$, $0\leq \kappa<n/{p'}$, and $n( {2}/{p}-1) \leq \vartheta-\kappa \leq \nu( {2}/{p}-1).$ Now by choosing $\theta$ such that $1/q= (1-\theta)/{p}+ \theta/{p'}$ and interpolating between \eqref{pitt_p=q<2,X_nw_0} and \eqref{pitt_p<2<p',X_0}, we obtain
    \begin{align}\label{pitt_p<2,p<q,X_0}
          \left(   \int_{\fa} \left( \int_{B}|\widetilde{f}(\lambda, k)|^2 \,db\right)^{\frac{q}{2}} |\lambda|^{-{\sigma q}} |\bfc(\lambda)|^{-2} \, d\lambda \right)^{\frac{1}{q}} \lesssim \left( \int_{\X}|f(x)|^p |x|^{\kappa p }\,dx \right)^{\frac{1}{p}},
    \end{align}
    where 
    \begin{align*}
        0\leq \kappa< \frac{n}{p'}, \qquad \sigma =(1-\theta) \vartheta +\theta  \gamma. 
    \end{align*}
  Now using the range of $ \vartheta$ and $\gamma $  and putting the value of $\theta = (1/p-1/q)/(2/p-1) $, we can say that the inequality \eqref{pitt_p<2,p<q,X_0} holds for all $0\leq \kappa <n/p'$ and for all $\sigma$ such that
  \begin{align}
      n\left(  \frac{1}{p}+\frac{1}{q}-1\right) \leq \sigma -\kappa \leq  \nu\left(  \frac{1}{p}+\frac{1}{q}-1\right). 
  \end{align}
This concludes the proof of the sufficiency part of Theorem~\ref{thm_pitt_X_n} for all $\zeta \geq 0$.

The necessity part of Theorem~\ref{thm_pitt_X_n} in the rank one case follows from Theorem~\ref{thm_nec_pitt_n}, which will be established in Section~\ref{sec_jac_transform}
\end{proof}
\begin{remark}
   To address the case $q > p'$, Stein \cite{Ste56} used the Hardy–Littlewood–Paley inequality for $2 < p = q < \infty$ with $\sigma = 0$ in \eqref{Clss_Pitt_Rn_pw}. In the setting of rank one symmetric spaces of noncompact type, we have shown that establishing a similar inequality (for $2 < p = q < \infty$ and $\sigma = 0$ in \eqref{inq_pitt_pol}) requires the introduction of additional exponential weights on the function side. More generally, Theorem~\ref{thm_nec_pitt_n} shows that for the shifted Pitt's inequality \eqref{inq_pitt_pol} to hold, a necessary condition is $p \leq 2$, due to the exponential volume growth of these spaces. 
   Since the present work focuses on Pitt-type inequalities with polynomial weights, consistent with the classical framework, we leave the investigation of exponential-weighted versions and the case $q > p'$ on symmetric spaces to future research.
\end{remark}

\section{Shifted Pitt's inequality for Jacobi transforms}\label{sec_jac_transform}
We now turn our focus to shifted Pitt's inequality in the setting of the Jacobi transform, which generalizes spherical analysis on rank one symmetric spaces \cite{Koo84}. Unlike the higher rank case, the Jacobi functions that define the Fourier transform in this context exhibit a rich set of special properties. We leverage these features to establish not only sufficient conditions but also necessary ones for the validity of the shifted Pitt's inequality. In contrast to Theorem \ref{thm_pitt_X_n}, where our method relied on mixed-norm interpolation techniques, the proof here follows a different strategy tailored specifically to the Jacobi setting and adapted to handle the range $q >p'$, which was not accessible in the symmetric space case.

 We begin by reviewing some general facts and necessary preliminaries regarding Jacobi transforms. Most of this information is already known and can be found in \cite{FJT73,Koo75,Koo84, GLT18}.
 We will use the standard notation from \cite{Koo84, GLT18}. To keep the article self-contained, we will include only the results necessary for this article, following the preliminaries outlined in \cite{Koo84,GLT18}.
\subsection{Basic properties of Jacobi transforms}\label{sec_jac_base} The Jacobi functions are defined by
\begin{align*}
    \varphi_{\lambda}^{(\alpha, \beta)}(t) =\, _2F_1\left( \frac{\rho+i\lambda}{2},\frac{\rho-i\lambda}{2};\alpha
    +1; -(\sinh t)^2\right), \quad t\geq 0,\, \alpha, \beta, \lambda\in \C,
\end{align*}
where $\rho= \alpha+\beta+1$ and $_2F_1(a,b;c;z) $ is the hypergeometric Gauss function defined as in \cite[(2.1)]{Koo84}. We consider the case
\begin{align}\label{eqn_rel_a,b}
    \alpha\geq \beta \geq -\frac{1}{2}, \quad \alpha>-\frac{1}{2}.
\end{align}
Equivalently, the Jacobi function $\varphi_{\lambda}^{(\alpha,\beta)}$ can be defined as the unique even $C^{\infty}$ function on $\R$ such that $\varphi(0)=1$ and 
\begin{align*}
   \left( \mathcal{L}_{\alpha, \beta} +\lambda^2+\rho^2 \right)\varphi=0,
\end{align*}
where 
\begin{align}\label{defn_of_L_Jac}
   \mathcal{L}_{\alpha, \beta}:= \frac{d^2}{dt^2} +(2\alpha+1)\coth t+(2\beta+1)\tanh t)\frac{d}{dt}.
\end{align}
We have the following estimate of $\varphi_{0}^{(\alpha,\beta)}$ (see
 \cite[(3.5)]{ADY96} and \cite[Prop. 8.1]{GLT18} for instance):
\begin{align}\label{eqn_est_phi_0}
\varphi_0^{(\alpha,\beta)}(t) \asymp \left(1+t\right)e^{-\rho t}, \quad t\geq 0.
\end{align}
From now on, we fix $\alpha$ and $\beta$ satisfying \eqref{eqn_rel_a,b}, and for simplicity of notation, we will write $\varphi_{\lambda}$ instead of $\varphi_{\lambda}^{(\alpha,\beta)}$ and $\mathcal{L}$ instead of $\mathcal{L}_{\alpha,\beta}$.
\subsubsection{Standard Jacobi transforms}   For suitable functions $f,F$ on $\R_+$, the direct Jacobi transform is defined by 
\begin{align*} 
\mathcal{J}f(\lambda) =  \int_0^{\infty} f(t) \varphi_{\lambda}^{(\alpha,\beta)}(t) m(t) \,dt, \quad \lambda \geq 0,
\end{align*}
and the inverse Jacobi transform defined by
\begin{align*}
    \mathcal{I}F(t) =  \int_0^{\infty} F(\lambda) \varphi_{\lambda}^{(\alpha,\beta)}(t) n(\lambda) d\lambda, \quad t\geq 0,
\end{align*}
where 
\begin{equation}\label{def_m(t)}
\begin{aligned}
     m(t)= (2\pi)^{-\frac{1}{2}} \Delta(t), \qquad & \Delta(t)= 2^{2\rho}(\sinh t)^{2\alpha+1}(\cosh t)^{2\beta+1},\\
     n(\lambda) =(2\pi)^{-\frac{1}{2}}|\bfc(\lambda)|^{-2}, \qquad& \bfc(\lambda)=\frac{2^{\rho-i\lambda} \Gamma(\alpha+1)\Gamma(i\lambda)}{\Gamma\left( \frac{\rho+i\lambda}{2}\right)\Gamma\left( \frac{\rho+i\lambda}{2}-\beta\right)}.
\end{aligned}
\end{equation}
The Jacobi transform $\mathcal{J}$ establishes a bijection between the space of even, infinitely differentiable functions with compact support and the space of even entire functions of exponential type exhibiting rapid decay; see \cite[Theorem 3.4]{Koo75}. It also extends to an isometric isomorphism between the spaces $L^2(dm(t))$ and $L^2(dn(\lambda))$, and satisfies the corresponding Plancherel theorems
\begin{align}\label{eqn_planc_J}
  \left(\int_0^{\infty}|f(t)|^2 m(t)\, dt \right)^{\frac{1}{2}} &=\left( \int_0^{\infty} |\mathcal{J}f(\lambda)|^2 n(\lambda) \, d\lambda \right)^{\frac{1}{2}},\\
  \label{eqn_planc_I}
     \left( \int_0^{\infty}|F(\lambda)|^2 n(\lambda) \, d\lambda \right)^{\frac{1}{2}}&= \left(\int_0^{\infty} |\mathcal{I}F(t)|^2 m(t)\, dt \right)^{\frac{1}{2}}.
\end{align}
When $\alpha = \beta$, the Jacobi transform is commonly referred to as the Mehler–Fock transform. In particular, when $\alpha = \beta = -{1}/{2}$, the Jacobi transform reduces to the classical cosine transform. For a detailed study of Pitt's inequality in the context of the cosine transform, we refer the reader to \cite[Sec. 7]{GLT18}.
\begin{remark}
\begin{enumerate}
    \item    We remark that the definition of $\mathcal{J}f(\lambda)$ coincides precisely with the spherical Fourier transform \eqref{eqn_sph_trans} when the parameters $\alpha$ and $\beta$ correspond to geometric cases. More precisely, as shown in \cite[(4.6)]{Koo84}, if $\X$ is a rank one symmetric space of noncompact type, then $\alpha = (n-1)/2$ and $\beta =(m_2-1)/2$,    where $m_2$ is a constant depending on  $\X$.
    \item In the Euclidean setting, the authors in \cite{BH03, He84} employed the method of non-increasing rearrangements to prove Pitt's inequality. However, in the Jacobi setting, this approach does not seem to be very effective for polynomial weights, as the exponential growth of the measure dominates and essentially ignores the polynomial decay at infinity (see Remark \ref{rem_after_J_pit} \eqref{rem_problem_exp_space}).  To address this limitation, we introduce a suitable weight into the Jacobi transform, thereby modifying the function space to reflect polynomial growth. More precisely, following Gorbachev et al. \cite{GLT18}, we define a modified Jacobi transform by adjusting the underlying measure. This leads to a setting with doubling volume growth, where the rearrangement method becomes applicable and more accurately captures the interplay between the geometry of the space and the behavior of polynomial weights.
    \end{enumerate}

\end{remark}
 \subsubsection{Modified Jacobi transforms}
The modified Jacobi transforms are defined by the relations
\begin{align*} 
\tilde{\mathcal{J}}f(\lambda) &=  \int_0^{\infty} f(t) \tilde{\varphi}_{\lambda}^{(\alpha,\beta)}(t) \tilde{m}(t) \,dt, \,\,\,\quad \lambda \geq 0,\\
    \tilde{\mathcal{I}}F(t) &=  \int_0^{\infty} F(\lambda) \tilde{\varphi}_{\lambda}^{(\alpha,\beta)}(t) n(\lambda) \, d\lambda, \quad t\geq 0,
\end{align*}
where
\begin{align*}
    \tilde{\varphi}_{\lambda}(t)=  \tilde{\varphi}_{\lambda}^{\alpha, \beta}(t):=\frac{\varphi_{\lambda}(t)}{\varphi_0(t)}, \qquad \tilde{m}(t)= \varphi_0^2(t) m(t).
\end{align*}
 It follows from \eqref{def_m(t)} and the estimate of $\varphi_0$ \eqref{eqn_est_phi_0}  that
\begin{align}\label{est_sharp_mtil}
    \tilde{m}(t)\asymp \begin{cases}
        t^{2\alpha+1} &\quad t\leq 1,\\
        t^{2} & \quad  t \geq 1.
    \end{cases}
\end{align}
On the other hand, we have from   \cite[(8.20)]{GLT18}
\begin{align}\label{est_sharp_n}
    n(\lambda)\asymp \begin{cases}
        \lambda^{2} &\quad \lambda\leq 1,\\
        \lambda^{2\alpha+1} & \quad  \lambda \geq 1. 
    \end{cases}
\end{align}
  We have the following relation  between the transforms
 \begin{align}\label{eqn_rel_J_mJ}
      \tilde{\mathcal{J}} f(\lambda) = \mathcal{J}(f\cdot \varphi_0)(\lambda), \qquad \tilde{\mathcal{I}}F(t) = \varphi_0(t)^{-1}\mathcal{I}F(t).
 \end{align}
 Also, we have from \eqref{eqn_planc_J} and \eqref{eqn_planc_I} that
 \begin{align}\label{eqn_plac_mJ,I}
     \|f\|_{L^2(\tilde{m})} = \|\mathcal{J}f\|_{L^2(n)}, \qquad  \|F\|_{L^2({n})} = \|\mathcal{I}F\|_{L^2(\tilde{m})}. 
 \end{align}
 The authors  \cite[Theorem 8.2]{GLT18} proved the following unshifted Pitt's inequality for piecewise power weights. However, we are going the state the result for the usual power weights. 
\begin{theorem}[{{\cite[Theorem 8.2]{GLT18}}}]\label{thm_pitt_Jac}
 Suppose $1<p \leq q<\infty$.
 \begin{enumerate}
     \item[(A)] Let $\sigma$ and $\kappa$ be \textbf{dual} and $(\sigma, \kappa) \in \mathbf{D_{p, q}}(2\alpha+1, 2)$; then,
\begin{align}\label{Pitt_pow_J_int}
  \left( \int_0^{\infty} |\tilde{\mathcal{J}} f(\lambda)|^q  \lambda^{-\sigma q} {n}(\lambda)\, d\lambda \right)^{1/q} \lesssim   \left( \int_0^{\infty} | f(t)|^p  t ^{\kappa p} \Tilde{m}(t) \, dt \right)^{1/p}  
\end{align}
for the direct transform. If $({\sigma}, \kappa) \in \mathbf{D_{p, q}}(2, 2\alpha+1)$, then
\begin{align}\label{Pitt_pow_J-1_int}
\left( \int_0^{\infty} |\tilde{\mathcal{I}} F(t)|^q  t^{-\sigma q} \Tilde{m}(t)\, dt \right)^{1/q} \lesssim   \left( \int_0^{\infty} | F(\lambda)|^p  \lambda ^{\kappa p} {n}(\lambda) \, d\lambda \right)^{1/p}  
\end{align}
for the inverse one.
\item[(B)] Then, for the modified Jacobi transform $\tilde{\mathcal{J}}$, the following conditions
\begin{equation}
\begin{aligned}
2\left(\alpha+1\right)\left(\frac{1}{p}+\frac{1}{q}-1\right)\leq \sigma-\kappa  \leq 3\left(\frac{1}{p}+\frac{1}{q}-1\right),\\
\sigma \geq (2\alpha+1)\left(\frac{1}{q}-\frac{1}{2}\right) +\frac{1}{q}-\frac{1}{p^{\prime}}, \qquad \kappa \geq 2\left(\frac{1}{p^{\prime}}-\frac{1}{2}\right) +\frac{1}{p^{\prime}}-\frac{1}{q}
\end{aligned}
\end{equation}
are necessary for Pitt's inequality \eqref{Pitt_pow_J_int} to be valid. For the inverse modified Jacobi transform $\tilde{\mathcal{I}}$ the following conditions 
\begin{equation}
\begin{aligned}
3\left(\frac{1}{p}+\frac{1}{q}-1\right)\leq \sigma-\kappa  \leq 2\left(\alpha+1\right) \left(\frac{1}{p}+\frac{1}{q}-1\right), \\
\sigma  \geq 2 \left(\frac{1}{q}-\frac{1}{2}\right) +\frac{1}{q}-\frac{1}{p^{\prime}}, \qquad \kappa  \geq (2\alpha+1)\left(\frac{1}{p^{\prime}}-\frac{1}{2}\right) +\frac{1}{p^{\prime}}-\frac{1}{q},
\end{aligned}
\end{equation}
are necessary for Pitt's inequality \eqref{Pitt_pow_J-1_int}  to be valid.

\end{enumerate}
\end{theorem}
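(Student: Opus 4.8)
The plan is to prove the sufficiency part (A) by reducing the weighted $(L^p,L^q)$ estimate to a weighted Hardy inequality via a Calderón–type rearrangement argument, and the necessity part (B) by a two–scale testing argument that exploits the fact that $\tilde m$ and $n$ are only \emph{asymptotically} homogeneous. For (A) it suffices to treat the direct transform, since the inverse statement follows verbatim with the roles of $(\tilde m,t)$ and $(n,\lambda)$ — hence of the exponents $2\alpha+1$ and $2$ — interchanged. The first ingredients are the pointwise bound $|\tilde\varphi_\lambda(t)|=|\varphi_\lambda(t)|/\varphi_0(t)\le 1$, the on–diagonal comparison $\tilde\varphi_\lambda(t)\asymp 1$ for $\lambda t\lesssim 1$, and the oscillation/decay of $\tilde\varphi_\lambda(t)$ for $\lambda t\gtrsim 1$. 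By \eqref{est_sharp_mtil} and \eqref{est_sharp_n}, both $\big((0,\infty),\tilde m\,dt\big)$ and $\big((0,\infty),n\,d\lambda\big)$ are spaces of homogeneous type, with ``dimension'' $2(\alpha+1)$ near the origin and $3$ near infinity (and the two swapped on the second space). One may therefore run a Calderón–type rearrangement inequality for $\tilde{\mathcal J}$, dominating the left side of \eqref{Pitt_pow_J_int} by a weighted norm of a Hardy averaging operator applied to the decreasing rearrangement of $f$; the weighted $(L^p,L^q)$ boundedness of that Hardy operator against the power weights $t^{\kappa p}$ and $\lambda^{-\sigma q}$ relative to $\tilde m$ and $n$ is characterised by Muckenhoupt–type conditions, and writing these out separately at $t,\lambda\to0$ and at $t,\lambda\to\infty$ yields precisely the inequalities defining $\mathbf{D_{p,q}}(2\alpha+1,2)$, the ``dual''/balance relation being what ties the two power weights together. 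An alternative route, which I would keep as a fallback, is to decompose the $(t,\lambda)$–plane according to $t\lessgtr1$, $\lambda\lessgtr1$, $\lambda t\lessgtr1$, replace $\tilde m$ and $n$ by the appropriate pure powers on each piece, apply the classical power–weight Pitt / Stein–Weiss inequality for a half–line Fourier–type operator of the corresponding homogeneous dimension, and patch by interpolation; in either approach only $1<p\le q<\infty$ is used.

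For the necessity part (B) I would test \eqref{Pitt_pow_J_int} against three families. First, a fixed bump rescaled to a single scale $r$: as $r\to0$ the measures and $\tilde\varphi_\lambda$ feel homogeneous dimension $2(\alpha+1)$, while as $r\to\infty$ they feel dimension $3$, so comparing the two sides' asymptotics in these two limits forces $2(\alpha+1)(1/p+1/q-1)\le\sigma-\kappa\le3(1/p+1/q-1)$ — which of the two is the upper and which the lower bound depending on the sign of $2(\alpha+1)-3$. Second, to force $\sigma\ge(2\alpha+1)(1/q-1/2)+1/q-1/p'$, I would take $f=\tilde{\mathcal I}(\chi)$ with $\chi$ a bump concentrated near a large frequency, using $n(\lambda)\asymp\lambda^{2\alpha+1}$ there and estimating $\|f\|_{L^p(t^{\kappa p}\tilde m)}$ from the oscillatory integral representing $f$; this is the step that genuinely needs the sharp decay and oscillation of $\tilde\varphi_\lambda$, not merely $|\tilde\varphi_\lambda|\le1$. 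Third, the lower bound on $\kappa$ follows dually — either from a bump concentrated near a large $t_0$ together with stationary phase, or by applying the second family to $\tilde{\mathcal I}$. Running the same three families with the two exponents swapped yields the necessary conditions for $\tilde{\mathcal I}$.

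The main obstacle is the genuinely two–scaled geometry of $(0,\infty)$ under $\tilde m\,dt$ (and under $n\,d\lambda$): near the origin it is modelled on $\R^{2(\alpha+1)}$ and near infinity on $\R^{3}$, so there is no exact dilation symmetry to exploit. In the sufficiency direction this is exactly why $\mathbf{D_{p,q}}(2\alpha+1,2)$ is cut out by four constraints rather than a single balance line, and why several model inequalities must be patched with matching constants; in the necessity direction it is what forces the delicate two–scale test–function constructions in the second and third families, which in turn rest on precise pointwise asymptotics of the Jacobi functions $\tilde\varphi_\lambda$ rather than the crude uniform bound.
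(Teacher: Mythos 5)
This statement is not proved in the paper at all: it is quoted verbatim from \cite[Theorem 8.2]{GLT18} as background, and the authors explicitly decline even to reproduce the definitions of \textbf{dual} and of the set $\mathbf{D_{p,q}}(v_1,v_2)$, calling them ``highly intricate.'' So there is no in-paper proof to compare against. That said, your strategy is recognizably the one used in the cited source and, in parallel form, in the paper's own Section 4.3: sufficiency via a Calder\'on-type rearrangement reduction to weighted Hardy/Bellman inequalities over the two measures $\tilde m\,dt$ and $n\,d\lambda$ (cf.\ Theorem \ref{thm_pitt_subl} and the rearrangement computations \eqref{est_1/vk*}--\eqref{est_u_s^*z>0}), and necessity via test functions concentrated at a single scale $r$, read off separately as $r\to0$ and $r\to\infty$ where the measures have homogeneous dimensions $2(\alpha+1)$ and $3$ respectively (cf.\ Theorem \ref{thm_nec_modJ} and the functions $f_1,f_2$ there, which rely on the lower bound \eqref{eqn_sph_low} rather than on oscillation).

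As a proof, however, the proposal has concrete gaps. First, the rearrangement method does not run on the pointwise bound $|\tilde\varphi_\lambda(t)|\le1$ together with ``oscillation/decay for $\lambda t\gtrsim1$'': what it actually requires are the two endpoint bounds, type $(1,\infty)$ (from $|\tilde\varphi_\lambda|\le1$) and type $(2,2)$ (from the Plancherel identity \eqref{eqn_plac_mJ,I}); you never invoke the latter, and oscillation is not a substitute for it. Second, the central claim of part (A) --- that the Muckenhoupt conditions for the resulting Hardy operators ``yield precisely the inequalities defining $\mathbf{D_{p,q}}(2\alpha+1,2)$'' --- is unverifiable as written, since you never state what $\mathbf{D_{p,q}}$ or the dual relation is; this identification is exactly the nontrivial bookkeeping content of the theorem, and indeed the sufficient set $\mathbf{D_{p,q}}(2\alpha+1,2)$ obtained this way does \emph{not} coincide with the necessary conditions in (B), a discrepancy the present paper highlights and sets out to repair. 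Third, in part (B) the first test family (giving the two-sided constraint on $\sigma-\kappa$) is sound and matches the paper's own argument, but the second and third families --- needed for the lower bounds on $\sigma$ and $\kappa$ individually --- are only gestured at: you acknowledge they require sharp asymptotics and stationary phase for $\tilde\varphi_\lambda$ but supply neither the asymptotics nor the resulting estimates, so those necessary conditions remain unproved in your sketch.
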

\begin{remark}
\begin{enumerate}
    \item We refer the reader to \cite[Def. 5.1]{GLT18} for the definition of the \textbf{dual}, and to \cite[p. 1974, Figure 5.2]{GLT18} for the explicit, but highly intricate, definition of the set $\mathbf{D_{p,q}}(v_1,v_2)$. Due to the numerous auxiliary conditions and technical subtleties involved, we do not reproduce the definition here. However, we emphasize that the sufficient condition derived from their approach, namely $(\sigma, \kappa) \in \mathbf{D_{p,q}}(2\alpha+1,2)$, does not coincide with the necessary condition; see \cite[p. 1981]{GLT18} for further discussion. One of the main objectives of this work is to refine both the sufficient and necessary conditions for Pitt-type inequalities so that they coincide.
    \item We also note that in \cite[Theorem 8.2]{GLT18}, the authors applied a general framework for spaces with polynomial growth to derive necessary conditions for modified Jacobi transforms. However, their analysis was limited to the unshifted Pitt’s inequality, involving Fourier weights of the form $\lambda^{-\sigma}$, and may not extend to the \textit{shifted} version with weights of the form $(\lambda^2 + \zeta^2)^{-\sigma/2}$ considered in our work. Furthermore, their necessary conditions for the modified Jacobi transform may not apply to the standard Jacobi transform. Motivated by these distinctions, we now proceed to establish sharper and more natural necessary conditions tailored to the setting of the standard Jacobi transform and the shifted Pitt’s inequality.

\end{enumerate}  
\end{remark}

\subsection{Necessary conditions for shifted Pitt's inequality for Jacobi transforms}
The exponential volume growth inherent in the Jacobi setting requires substantial modifications to the classical assumptions from the Euclidean case. This makes it crucial to carefully determine the precise limitations and range of validity for such inequalities. In Euclidean spaces, the existence of a dilation structure allows for simple scaling arguments to establish the necessity of the \textit{balance condition} \eqref{k-s=N_equiv}. However, no such dilation structure exists in the Jacobi setting. To address this, we will instead rely on a key property of the Jacobi functions. 
\begin{lemma}
 Suppose $0<\theta_0<\pi/2$ is a constant. Then, the following holds 
    \begin{align}\label{eqn_sph_low}
        \varphi_\lambda(t) \asymp_{\theta_0}  \varphi_0 (t) \quad \text{whenever } \lambda t \leq \theta_0.
    \end{align}
\end{lemma}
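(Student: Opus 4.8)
The plan is to prove the two-sided estimate $\varphi_\lambda(t) \asymp_{\theta_0} \varphi_0(t)$ for $\lambda t \leq \theta_0$ by exploiting the integral representation of the Jacobi function together with the sharp asymptotics \eqref{eqn_est_phi_0} for $\varphi_0$. Recall that $\varphi_\lambda = \varphi_\lambda^{(\alpha,\beta)}$ admits the Harish-Chandra type integral representation
\begin{align*}
\varphi_\lambda(t) = \int_{-t}^{t} e^{(i\lambda - \rho) s}\, K(t,s)\, ds,
\end{align*}
where $K(t,\cdot) \geq 0$ is supported in $[-t,t]$ and $\int_{-t}^t e^{-\rho s} K(t,s)\, ds = \varphi_0(t)$ (this is the specialization to rank one / Jacobi of the classical formula; see e.g. \cite{Koo84}). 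The key point is that on the support $|s| \leq t$, the factor $e^{i\lambda s}$ has argument bounded by $|\lambda t| \leq \theta_0 < \pi/2$, so $\Re(e^{i\lambda s}) = \cos(\lambda s) \geq \cos\theta_0 > 0$. Thus
\begin{align*}
\varphi_\lambda(t) = \int_{-t}^{t} \cos(\lambda s)\, e^{-\rho s}\, K(t,s)\, ds,
\end{align*}
the imaginary part vanishing by the evenness of $K(t,-s)e^{\rho s}$ combined with the symmetry properties, or simply because $\varphi_\lambda(t)$ is real for real $\lambda, t$. From the bound $\cos\theta_0 \leq \cos(\lambda s) \leq 1$ valid pointwise on the support, we immediately get
\begin{align*}
\cos\theta_0 \cdot \varphi_0(t) \;\leq\; \varphi_\lambda(t) \;\leq\; \varphi_0(t),
\end{align*}
which is exactly the claimed comparison with constants depending only on $\theta_0$ (and in fact the upper bound $\varphi_\lambda \leq \varphi_0$ holds with no restriction since $\varphi_\lambda(t) \leq \varphi_{i\Im\lambda}(t) \leq \varphi_0(t)$ type monotonicity, but we do not need the sharpest form).

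An alternative route, should one wish to avoid invoking the precise integral kernel, is to argue via the defining differential equation \eqref{defn_of_L_Jac}: write $\psi_\lambda = \varphi_\lambda/\varphi_0$, note $\psi_\lambda(0) = 1$, $\psi_\lambda'(0) = 0$, and derive from $(\mathcal L + \lambda^2 + \rho^2)\varphi_\lambda = 0$ a differential inequality for $\psi_\lambda$ on the interval where $\lambda t \leq \theta_0$, then apply a comparison/Grönwall argument. However, this is more delicate because of the singular coefficient $\coth t$ at $t = 0$, so I would prefer the integral-representation approach, which handles the behavior near $t=0$ automatically through the known regularity of $\varphi_0$.

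The main obstacle, such as it is, is purely bookkeeping: one must make sure the integral representation is stated in the correct normalization so that the total mass of the kernel $e^{-\rho s} K(t,s)\, ds$ is precisely $\varphi_0(t)$, and that the estimate $|\lambda s| \leq \theta_0$ on the support is used before estimating the oscillatory factor (rather than afterwards, where cancellation could occur). Since $0 < \theta_0 < \pi/2$ keeps $\cos(\lambda s)$ bounded away from zero on the whole support, there is no cancellation and the two-sided bound follows with $\asymp_{\theta_0}$ constants $\cos\theta_0$ and $1$. One should also remark that for real $\lambda$ the function $\varphi_\lambda(t)$ is real-valued, which is what allows the clean statement; this is immediate from the hypergeometric series definition since the parameters $\tfrac{\rho\pm i\lambda}{2}$ are complex conjugates.
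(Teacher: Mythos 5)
Your proposal is correct and is essentially the paper's own argument: the paper proves this lemma in one line by invoking Mehler's integral representation $\varphi_\lambda(t)=\int_0^t A(s,t)\cos(\lambda s)\,ds$ with nonnegative kernel (valid under \eqref{eqn_rel_a,b}) and bounding $\cos(\lambda s)$ between $\cos\theta_0$ and $1$ on the support, exactly as you do. The only cosmetic difference is that you write the representation in the Harish-Chandra form $\int_{-t}^{t}e^{(i\lambda-\rho)s}K(t,s)\,ds$ rather than the standard Jacobi form, but this does not affect the argument.
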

\begin{proof} By using Mehler's integral representation (see \cite[p. 47]{Koo84}, also \cite[(2.16)]{Koo75}), and using similar technique as in \cite[Prop. 4.2]{KPRS24}, the lemma follows.
\end{proof}
Let us consider the following class of operators for $\zeta \geq 0$ and $\kappa,\sigma \in \R$:
\begin{align*}
    \mathcal{J}^{\zeta}&= \left\{\mathcal{J}^{\zeta}_{\kappa,\sigma}f(\lambda) =(\lambda^2+\zeta^2)^{-\frac{\sigma}{2}}  \int_0^{\infty}t^{-\kappa}f(t) \varphi_{\lambda}^{(\alpha,\beta)}(t) m(t) \,dt, \quad f\in C_c^{\infty}(\R_+) \right\},\\
    \mathcal{I}^{\zeta}&= \left\{\mathcal{I}^{\zeta}_{\kappa,\sigma}F(t) =t^{-\sigma}  \int_0^{\infty}(\lambda^2+\zeta^2)^{-\frac{\kappa}{2}}F(\lambda) \varphi_{\lambda}^{(\alpha,\beta)}(t) n(\lambda) d\lambda, \quad F\in C_c^{\infty}(\R_+) \right\}.
\end{align*}
\begin{theorem}\label{thm_nec_pitt_n}
Let $1<p,q<\infty$ and $\kappa,\sigma \in \R$. Suppose the following Pitt-type inequality holds:
\begin{align}\label{eqn_pit_nec,n}
   \left( \int_0^{\infty} |\mathcal{J}^{\zeta}_{\kappa,\sigma}f(\lambda)|^q  n(\lambda) d\lambda\right)^{\frac{1}{q}}\leq C \left( \int_0^{\infty} |f(t)|^p m(t) dt \right)^{\frac{1}{p}}
\end{align}
for all $f \in C_c^{\infty}(\R^+)$. Then it is necessary that $p \leq 2$, and the following conditions hold
\begin{enumerate}
 \item \label{it_stanJ1} if $\zeta \neq 0$, then $\kappa p'<{2(\alpha+1)}$ and
     \begin{align}\label{nec_stan_s-k>}
            \sigma-\kappa \geq 2(\alpha+1)\left( \frac{1}{p}+\frac{1}{q}-1\right);
     \end{align}
     \item   if  $\zeta \neq 0$ and $p=2$, then $\kappa <{(\alpha+1)}$, \eqref{nec_stan_s-k>} and  $\kappa \geq 3(1/2-1/q)$. Additionally, if $p=q=2$, then $ 0\leq \kappa <{\alpha+1}$ and $\kappa \leq  \sigma$;
     \item if $\zeta=0$,   then $\kappa p'<{2(\alpha+1)}$,  $\sigma q<3$ and \eqref{nec_stan_s-k>};
     \item if  $\zeta=0$ and $p=2$, then $\kappa <{(\alpha+1)}$,   \eqref{nec_stan_s-k>}, $\sigma q<3$, and $\sigma-\kappa \leq 3(1/q-1/2) $. Additionally, if 
     $p=q=2$, then  $\kappa=\sigma<{\min\{2(\alpha+1),3\}}/{2}$.
\end{enumerate}
\end{theorem}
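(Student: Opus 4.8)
The plan is to extract the stated necessary conditions from the hypothesized inequality \eqref{eqn_pit_nec,n} by testing it against carefully chosen families of functions and by exploiting the sharp asymptotics \eqref{eqn_est_phi_0}, \eqref{est_sharp_mtil}, \eqref{est_sharp_n}, together with the key comparison \eqref{eqn_sph_low}. The four items should be treated in a common framework: fix a small parameter, test the inequality on functions concentrated at suitable scales (near the origin, near infinity, or in an intermediate dyadic shell), compute both sides up to constants, and let the parameter degenerate. The exponential volume growth forces the necessity $p \le 2$; this is morally the same mechanism that obstructs $q > p'$ in Theorem \ref{thm_pitt_X_n}.

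\textbf{Necessity of $p\le 2$ and the large-scale behavior.} First I would test \eqref{eqn_pit_nec,n} on $f_R(t) = \varphi_0(t)\,\psi(t/R)$ for a fixed bump $\psi$ supported away from $0$, and let $R \to \infty$. Using \eqref{eqn_est_phi_0} and \eqref{est_sharp_mtil}, the right-hand side behaves like $R^{1/p}$ up to lower-order factors (the $e^{-\rho p t}$ decay of $|f_R|^p$ against the $e^{2\rho t}$ growth of $m(t)$ leaves a polynomially growing integrand of size $\sim R^{2/p}\cdot R^{-1}$ after accounting for the $(1+t)^p$ factor — one must be careful here since $\varphi_0(t)^p m(t) \asymp (1+t)^p t^2 e^{(2-p)\rho t}$, so for $p<2$ the integrand grows exponentially and for $p=2$ it is polynomial). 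The Fourier side is estimated from below by restricting the $\lambda$-integral to $\lambda \lesssim 1/R$, where \eqref{eqn_sph_low} gives $\varphi_\lambda(t) \asymp \varphi_0(t)$ so that $\mathcal{J}f_R(\lambda) \asymp \int \varphi_0(t)^2 \psi(t/R) m(t)\,dt \asymp R^3$ by \eqref{est_sharp_mtil}, multiplied by $(\lambda^2+\zeta^2)^{-\sigma/2}$; combined with $n(\lambda)\asymp\lambda^2$ near $0$ this yields a lower bound of order $R^3 \cdot R^{-3/q}\cdot(\text{weight})$. Comparing the two exponential rates forces $p\le 2$, and comparing the polynomial powers in $R$ (in the borderline case, or after refining the test function) produces the inequality \eqref{nec_stan_s-k>}: the balance $\sigma-\kappa \ge 2(\alpha+1)(1/p+1/q-1)$ is exactly what survives, with the dimension $2(\alpha+1)$ entering through the $\tilde m$-volume of a ball of radius $R$ being $\asymp R^3$ at large scale but the $\kappa$-twisted and $\sigma$-twisted integrals contributing the $2(\alpha+1)$ via the local asymptotics — I will need to recompute this carefully, testing at the origin scale to pin down $2(\alpha+1)$ and at infinity to pin down the $p\le2$ constraint. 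This is the step I expect to be the main obstacle: disentangling which scale governs which exponent, since $\tilde m$ changes its power-law exponent between $t\le 1$ and $t\ge1$, so a single test family may not isolate \eqref{nec_stan_s-k>} cleanly and one likely needs a two-parameter family $f_{R,\epsilon}(t)=\varphi_0(t)\mathbf{1}_{[\epsilon,2\epsilon]}(t)$ with both $\epsilon\to0$ and $\epsilon\to\infty$ (i.e. $R\to\infty$) regimes, then optimize.

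\textbf{The local conditions $\kappa p' < 2(\alpha+1)$ and the $\zeta=0$ condition $\sigma q < 3$.} For $\kappa p' < 2(\alpha+1)$, I would test on functions $g_\epsilon$ concentrated near $t=0$ at scale $\epsilon$; the weight $t^{-\kappa}$ is then integrable against $m(t)\asymp t^{2\alpha+1}$ only in a way that, combined with the duality/self-improvement of the inequality (or by a direct Knapp-type computation), forces $\kappa < 2(\alpha+1)/p'$ — equivalently the right-hand side of \eqref{eqn_pit_nec,n} must remain finite, which via the appearance of $t^{-\kappa}$ inside $\mathcal{J}^\zeta_{\kappa,\sigma}$ and a suitable choice of $f$ gives the strict bound; the endpoint case $p=2$ improves the constant to $\alpha+1$ because then $p'=2$. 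Dually, for $\zeta=0$, testing on functions whose Fourier transform concentrates near $\lambda=0$ and using $(\lambda^2)^{-\sigma/2}=\lambda^{-\sigma}$ against $n(\lambda)\asymp\lambda^2$ forces $\sigma q < 3$ for the left-hand side to be finite; when $\zeta\ne0$ this obstruction disappears since $(\lambda^2+\zeta^2)^{-\sigma/2}$ is bounded near $0$, which is precisely why the $\sigma q<3$ condition is listed only in items (3) and (4).

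\textbf{The $p=2$ refinements via Plancherel.} For items (2) and (4), when $p=2$ I would use the Plancherel theorem \eqref{eqn_planc_J} to convert \eqref{eqn_pit_nec,n} into a statement purely on the Fourier side (an $L^2$-weighted-to-$L^q$-weighted inequality for multiplication operators composed with $\mathcal{J}$), and then the extra conditions $\kappa \ge 3(1/2-1/q)$ (for $\zeta\ne0$) and $\sigma-\kappa\le 3(1/q-1/2)$ (for $\zeta=0$) come from testing at the \emph{large}-$\lambda$ / \emph{large}-$t$ scale, where the roles of $n$ and $\tilde m$ swap their exponents relative to the small scale (cf. \eqref{est_sharp_mtil}, \eqref{est_sharp_n}): near infinity $n(\lambda)\asymp\lambda^{2\alpha+1}$ but $\tilde m(t)\asymp t^2$, and the number $3$ appearing in these conditions is exactly the large-scale $\tilde m$-dimension $2$ plus the $1$ from the one-dimensional $d\lambda$/$dt$, or more transparently it is $\nu=3$ for rank-one spaces. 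Finally, the case $p=q=2$ is handled by combining the already-obtained constraints: from \eqref{nec_stan_s-k>} with $p=q=2$ one gets $\sigma\ge\kappa$, from $\kappa<\alpha+1$ and $\kappa\ge3(1/2-1/q)\big|_{q=2}=0$ one gets $0\le\kappa$, and when $\zeta=0$ the conditions $\sigma q<3$, $\sigma-\kappa\le 3(1/q-1/2)=0$, and $\sigma\ge\kappa$ together pin down $\kappa=\sigma<\min\{2(\alpha+1),3\}/2$, recovering the sharp $L^2$ statement consistent with Corollary \ref{cor_pitt_KRZ,0} in the geometric cases $\alpha=(n-1)/2$.
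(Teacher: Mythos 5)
Your strategy is essentially the paper's: test \eqref{eqn_pit_nec,n} on functions of the form (power of $t$)$\times$(power of $\varphi_0$)$\times$(indicator of an interval at scale $s$), use \eqref{eqn_sph_low} to replace $\varphi_\lambda$ by $\varphi_0$ on the dual region $\lambda \lesssim 1/s$, and read off the exponents from the asymptotics of $m$, $n$, $\varphi_0$ at $0$ and $\infty$. The paper's test functions are the H\"older-extremal choices $f_1(t)=t^{-\kappa/(p-1)}\varphi_0^{1/(p-1)}(t)\Chi_{(0,s)}(t)$ and $f_2(t)=t^{-\kappa/(p-1)}\varphi_0^{1/(p-1)}(t)\Chi_{(1,s)}(t)$, which yield the two supremum conditions \eqref{eqn_pitt_nec1} and \eqref{eqn_pitt_nec>1}; your $\varphi_0\,\psi(t/R)$ and $\varphi_0\mathbf{1}_{[\epsilon,2\epsilon]}$ give the same exponent bookkeeping (and coincide with the paper's choice when $p=2$). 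Your worry about ``disentangling which scale governs which exponent'' is unfounded: the two regimes separate cleanly exactly as you suspect --- the $s\le 1$ regime produces $\kappa p'<2(\alpha+1)$ and \eqref{nec_stan_s-k>}, the $s>1$ regime produces $p\le 2$ (via the surviving factor $e^{\rho s(2/p'-1)}$) and, once $p=2$ kills that exponential, the conditions $\kappa\ge 3(1/2-1/q)$ resp.\ $\sigma-\kappa\le 3(1/q-1/2)$. The Plancherel detour you propose for $p=2$ is unnecessary; the paper just sets $p=2$ in the same physical-space computation.

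The one concrete gap is the \emph{strictness} of $\kappa p'<2(\alpha+1)$ and $\sigma q<3$. Single-scale testing with $\mathbf{1}_{(0,\epsilon)}$ only yields the non-strict inequalities $\kappa p'\le 2(\alpha+1)$ and (after the fact) $\sigma q\le 3$; to rule out equality you must test with the borderline function at the critical exponent --- the paper uses $f_0(t)=\Chi_{(0,1)}(t)\,t^{-2(\alpha+1)/p}$ (understood via truncation, since as written it is not quite in $L^p(dm)$) and shows that $\mathcal{J}^{\zeta}_{\kappa,\sigma}f_0(\lambda)$ is bounded below by the divergent integral $\int_0^1 t^{-1-\epsilon}\,dt$ already at $\epsilon=0$, while the analogous function $F_0(\lambda)=\Chi_{(0,1)}(\lambda)\lambda^{-3/p}$ handles $\sigma q<3$ through \eqref{eqn_nec_sigm}. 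Your proposal gestures at ``a suitable choice of $f$'' here but does not exhibit it; this step must be made explicit for the theorem as stated. Everything else in your outline, including the observation that $(\lambda^2+\zeta^2)^{-\sigma/2}$ being bounded near $\lambda=0$ removes the $\sigma q<3$ obstruction when $\zeta\neq 0$, and the assembly of the $p=q=2$ cases from the accumulated constraints, matches the paper.
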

\begin{proof}
  First, we will show that the condition $\kappa<2(\alpha+1)/{p'} $ is necessary for all $\zeta\geq 0$. To that end, assume that the inequality \eqref{eqn_pit_nec,n} holds with $\kappa= 2(\alpha+1)/{p'}+\epsilon $ for some $\epsilon\geq 0$. By a standard density argument, it follows that the operator $\mathcal{J}^{\zeta}_{\kappa,\sigma}$ is bounded from $L^p(dm)$ to $L^q(dn)$. Now, consider the function 
\begin{align}
f_0(t) =  \Chi_{(0,1)}(t)  t^{-\frac{2(\alpha+1)}{p}} \quad t\in (0,\infty),
\end{align} 
where $\Chi_{(a,b)}$ is the characteristic function on the interval $(a,b)$. It is straightforward to verify that $f_0 \in L^p(dm)$. Then, for all $0 \leq \lambda \leq 1$, using \eqref{eqn_sph_low}, we can write: 
\begin{equation}\label{eqn_jf>c}
\begin{aligned}
(\lambda^2+\zeta^2)^{\frac{\sigma}{2}} \mathcal{J}^{\zeta}_{\kappa,\sigma} f_0(\lambda) &\geq \int_0^1 t^{-\kappa} f_0(t) \varphi_{\lambda}(t) m(t) \,dt \\ 
&\gtrsim \int_0^1 t^{-\frac{2(\alpha+1)}{p'}-\epsilon} t^{-\frac{2(\alpha+1)}{p}} \varphi_0(t)  t^{2\alpha+1} \,dt \\
& \gtrsim \int_0^1 t^{- 1-\epsilon} \,dt. 
\end{aligned}
\end{equation}
We observe that the integral in the last line above diverges for all $\epsilon\geq 0$, contradicting the boundedness of $\mathcal{J}_{\kappa,\sigma}$. Hence, we conclude that the condition $\kappa<2(\alpha+1)/{p'} $ is necessary. Assuming $\kappa<2(\alpha+1)/{p'} $, it follows from \eqref{eqn_jf>c} that for $\zeta=0$
\begin{align}\label{eqn_nec_sigm}
\int_0^{\infty} | \mathcal{J}^{0}_{\kappa,\sigma} f_0(\lambda) |^q n(\lambda) \, d\lambda \gtrsim  \int_0^1 \lambda^{-\sigma q} \lambda^2 \, d\lambda, 
\end{align} where, in the final line, we used the sharp estimate \eqref{est_sharp_n} of $n(\lambda)$ near $\lambda = 0$. We note that the integral on the right-hand side is finite only if $\sigma < 3/q$. It is worth mentioning that when $\zeta > 0$, the condition $\sigma q < 3$ may not be necessary.

Having established that $\kappa p' < 2(\alpha+1)$ is necessary for $\zeta \geq 0$, and that the additional condition $\sigma < 3/q$ is required when $\zeta = 0$, we now assume that inequality \eqref{eqn_pit_nec,n} holds for such values of $\kappa$ and $\sigma$. We define the following function, for $0 <s\leq  1$
\begin{align}
    f_1(t) =   t^{-\frac{\kappa}{p-1}} \varphi_{0}^{\frac{1}{p-1}}(t)\Chi_{(0,s)}(t).
\end{align}
 Since $\kappa p' < {2(\alpha+1)}$, the function $f_1$ is locally $L^p$-integrable, particularly near zero. We now bound the left-hand side of
\eqref{eqn_pit_nec,n} in the following way
 \begin{align*}
      \int_0^{\infty} | \mathcal{J}_{\kappa,\sigma} f_1(\lambda) |^q n(\lambda) \, d\lambda \geq   \int_1^{1/{s}}  \lambda^{-\sigma q}  \left| \int_0^{s}t^{-\kappa(1+\frac{1}{p-1})} \varphi_{0}^{\frac{1}{p-1}} \varphi_{\lambda}(t) m(t) \,dt \right|^q n(\lambda)d\lambda,
    \end{align*}
    where we used the fact $f_1$ is supported on the interval $(0,s)$. Using \eqref{eqn_sph_low}, the inequality above
    reduced to
 \begin{align}\label{eqn_jf1>}
      \int_0^{\infty} | \mathcal{J}_{\kappa,\sigma} f_1(\lambda) |^q n(\lambda) \, d\lambda \geq C  \left( \int_0^{s}t^{-\kappa p'} \varphi_{0}(t)^{p'} m(t) \,dt \right)^q  \left(\int_1^{{1}/{s}}  \lambda^{-\sigma q}  n(\lambda)d\lambda \right).
    \end{align}
By substituting $f = f_1$ into \eqref{eqn_pit_nec,n} and combining it with the inequality preceding \eqref{eqn_jf1>}, we derive
\begin{align*}
  \left( \int_0^{s}t^{-\kappa p'} \varphi_{0}(t)^{p'} m(t) \,dt \right)  \left( \int_1^{{1}/{s}}  \lambda^{-\sigma q}   n(\lambda)\,d\lambda\right)^{\frac{1}{q}}\leq C \left( \int_0^{s}t^{-\kappa p'} \varphi_{0}(t)^{p'} m(t) \,dt \right)^{\frac{1}{p}}.
\end{align*}
Since the constant $C$ is independent of $s$ and was chosen arbitrarily, we can conclude that
\begin{align}\label{eqn_pitt_nec1}
    \sup_{0<s\leq 1} \left( \int_0^{s}t^{-\kappa p'} \varphi_{0}(t)^{p'} m(t) \,dt \right)^{\frac{1}{p'}}  \left( \int_1^{{1}/{s}}  \lambda^{-\sigma q}   n(\lambda)\,d\lambda\right)^{\frac{1}{q}}<\infty.
\end{align}
Similarly for $s>1$, by considering 
\begin{align}\label{eqn_defn_f2}
     f_2(t) =   t^{-\frac{\kappa}{p-1}} \varphi_{0}^{\frac{1}{p-1}}(t)\Chi_{(1,s)}(t),
\end{align}
  we can prove that
  \begin{align}\label{eqn_pitt_nec>1}
    \sup_{s>1} \left( \int_1^{s}t^{-\kappa p'} \varphi_{0}(t)^{p'} m(t) \,dt \right)^{\frac{1}{p'}}  \left( \int_0^{{1}/{s}}  \lambda^{-\sigma q}   n(\lambda)\,d\lambda\right)^{\frac{1}{q}}<\infty.
\end{align}
We will now use \eqref{eqn_pitt_nec1} and \eqref{eqn_pitt_nec>1} to derive more necessary conditions. In fact, we have for $s>1$ from \eqref{est_sharp_n} that
\begin{align}\label{eqn_n(l)>0}
    \left( \int_0^{{1}/{s}}  \lambda^{-\sigma q}   n(\lambda)\,d\lambda\right)^{\frac{1}{q}} \geq C s^{\sigma -\frac{3}{q}},
\end{align}
On the other hand utilizing the estimate \eqref{eqn_est_phi_0} of  $\varphi_0(t)$  and $m(t)$ near infinity, we can write  for $s>1$ 
\begin{align}\label{eqn_tk>1}
   \left( \int_1^{s}t^{-\kappa p'} \varphi_{0}(t)^{p'} m(t) \,dt \right)^{\frac{1}{p'}}   \geq  C \left( \int_1^{s}t^{p'-\kappa p'}e^{\rho t(2-p')}  \,dt \right)^{\frac{1}{p'}} \geq C \min\{s^{1-\kappa},1\}   e^{{\rho s(\frac{2}{p'}-1)}}.
\end{align}
We claim that for Pitt-type inequality in this exponential volume growth setting, it is necessary that $p\leq 2$. More precisely, if $p>2$, then $p'<2$ and plugging the inequality above in \eqref{eqn_n(l)>0}, we get for all $\sigma>0$
\begin{align}\label{eqn_p<2_nec}
    \sup_{s>1} \left( \int_1^{s}t^{-\kappa p'} \varphi_{0}(t)^{p'} m(t) \,dt \right)^{\frac{1}{p'}}  \left( \int_0^{{1}/{s}}  \lambda^{-\sigma q}   n(\lambda)\,d\lambda\right)^{\frac{1}{q}} \geq C \sup_{s>1} e^{{\rho s(\frac{2}{p'}-1)}} s^{\sigma -\frac{3}{q}}=\infty.
\end{align}
This contradicts \eqref{eqn_pitt_nec>1}, therefore $p\leq 2$ is necessary.

We will now proceed to establish that the condition in \eqref{nec_stan_s-k>} is necessary for all $\zeta \geq 0$. However, before doing that, we note that if $\sigma q \geq 2(\alpha + 1)$, then together with the necessary condition $\kappa p' < 2(\alpha + 1)$, we already have \eqref{nec_stan_s-k>}. Thus, it is sufficient to consider the case where $\sigma q < 2(\alpha + 1)$. For $0 < s < 1$, using the estimates of $\varphi_0(t)$ and $m(t)$ near zero, as well as the estimates of $n(\lambda)$ near infinity, we can deduce that 
\begin{multline*}
    \sup_{0<s\leq 1} \left( \int_0^{s}t^{-\kappa p'} \varphi_{0}(t)^{p'} m(t) \,dt \right)^{\frac{1}{p'}}  \left( \int_1^{{1}/{s}}  \lambda^{-\sigma q}   n(\lambda)\,d\lambda\right)^{\frac{1}{q}}\\ 
        \gtrsim \sup_{0<s\leq 1} \left( \int_0^{s}t^{-\kappa p'} t^{2\alpha+1} \,dt \right)^{\frac{1}{p'}}  \left( \int_1^{{1}/{s}}  \lambda^{-\sigma q}   \lambda^{2\alpha+1}\,d\lambda\right)^{\frac{1}{q}}.
\end{multline*}
 Then we bound below the right-hand side of the inequality above  by
\begin{align}\label{eqn_nec_0}
\sup_{0<s\leq 1} s^{\sigma-\kappa+2(\alpha+1)\left(\frac{1}{p'}-\frac{1}{q}\right)},
\end{align}
which will be finite if and only if \eqref{nec_stan_s-k>} holds. Thus \eqref{nec_stan_s-k>} is necessary for all $\zeta \geq 0$.  Moreover,
when $p\leq 2$ and $q=2$, we derive from \eqref{nec_stan_s-k>}  that $\kappa\leq \sigma$ is also necessary for all $\zeta \geq 0$. 
Additionally for $\zeta=0$, utilizing \eqref{eqn_n(l)>0} we can write 
\begin{align}\label{eqn_nec>1_sJ}
 \sup_{s>1}\left( \int_1^{s}t^{- 2 \kappa } \varphi_{0}(t)^{2} m(t) \,dt \right)^{\frac{1}{2}}   \left( \int_0^{{1}/{s}}  \lambda^{-\sigma q }   n(\lambda)\,d\lambda\right)^{\frac{1}{q}}  \geq C \sup_{s>1} s^{\frac{3}{2}-\kappa} s^{\sigma-\frac{3}{q}},
\end{align}
which will be finite if and only if 
\begin{align*}
    \sigma- \kappa \leq 3 \left(\frac{1}{q}-\frac{1}{2}\right).
\end{align*}
Thus for $p=q=2$ and $\zeta =0$ case, it is necessary that $\kappa=\sigma<{\min\{2(\alpha+1),3\}}/{2}$.  
Similarly, when $\zeta \neq 0$ and $p = 2$, we can show that $\kappa \geq 3\left(\frac{1}{2} - \frac{1}{q}\right)$ is a necessary condition. In particular, for $p = q = 2$, we also find that $\kappa \geq 0$ is necessary.
\end{proof}

\begin{theorem}\label{thm_nec_inv_pitt_n}
   Let $1<p,q<\infty$. Suppose the following Pitt-type inequality holds:
\begin{align}\label{eqn_pit_nec,n_I}
   \left( \int_0^{\infty} |\mathcal{I}_{\kappa,\sigma}F(t)|^q  m(t) \,dt \right)^{\frac{1}{q}}\leq C \left( \int_0^{\infty} |F(\lambda)|^{p} n(\lambda) d\lambda \right)^{\frac{1}{p}}
\end{align}
for all $f \in C_c^{\infty}(\R_+)$.  Then it is necessary that $q\geq 2$, and the following conditions hold
\begin{enumerate}
\item if $\zeta \neq 0$, then $\sigma  q<{2(\alpha+1)}$ and
\begin{align}\label{eqn_nec_bal_I,ks}
    \sigma- \kappa\leq  2\left(\alpha+1 \right)\left(\frac{1}{p}+\frac{1}{q}-1\right).
\end{align}
\item if  $\zeta \neq 0$ and $q=2$, then $\sigma <{\alpha+1}$,   \eqref{eqn_nec_bal_I,ks}, and $\sigma \geq 3(1/p-1/2)$. Additionally, if $p=q=2$, then $ 0\leq \sigma <{\alpha+1}$ and $\sigma \leq  \kappa$;
\item if $\zeta=0$,   then $\sigma q<{2(\alpha+1)}$,  $\kappa p'<3$, and \eqref{eqn_nec_bal_I,ks};
\item\label{it_sJ4} if $\zeta=0$ and $q=2$,   then $\sigma q<{2(\alpha+1)}$,  $\kappa p'<3$,  \eqref{eqn_nec_bal_I,ks} and $\sigma -\kappa \geq 3(1/p-1/2) $. Additionally, if $p=q=2$, then  $\sigma=\kappa<{\min\{2(\alpha+1),3\}}/{2}$.
\end{enumerate}
\end{theorem}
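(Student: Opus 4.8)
The plan is to derive all the listed necessary conditions directly from Theorem~\ref{thm_nec_pitt_n} by duality, without introducing a new family of test functions. The point I would exploit is that, after interchanging the roles of the two weights, $\mathcal{I}^{\zeta}_{\kappa,\sigma}$ and $\mathcal{J}^{\zeta}_{\sigma,\kappa}$ are formal adjoints for the dual pairings of $L^{p}(n\,d\lambda)$ with $L^{p'}(n\,d\lambda)$ and of $L^{q}(m\,dt)$ with $L^{q'}(m\,dt)$. Indeed, for $F,g\in C_c^{\infty}(\R_+)$, Fubini's theorem yields
\begin{align*}
\int_0^{\infty}\mathcal{I}^{\zeta}_{\kappa,\sigma}F(t)\,g(t)\,m(t)\,dt
&=\int_0^{\infty}F(\lambda)\,\Bigl[(\lambda^2+\zeta^2)^{-\frac{\kappa}{2}}\int_0^{\infty}t^{-\sigma}g(t)\,\varphi_{\lambda}(t)\,m(t)\,dt\Bigr]\,n(\lambda)\,d\lambda\\
&=\int_0^{\infty}F(\lambda)\,\mathcal{J}^{\zeta}_{\sigma,\kappa}g(\lambda)\,n(\lambda)\,d\lambda;
\end{align*}
the interchange is legitimate because $F$ and $g$ are supported in compact subintervals of $(0,\infty)$, on which $t^{-\sigma}$, $(\lambda^2+\zeta^2)^{-\kappa/2}$, $m$ and $n$ are bounded and $(\lambda,t)\mapsto\varphi_\lambda(t)$ is continuous, so the double integral converges absolutely.

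Assuming that \eqref{eqn_pit_nec,n_I} holds, I would then combine this identity with Hölder's inequality to obtain, for all $F,g\in C_c^{\infty}(\R_+)$,
\begin{align*}
\Bigl|\int_0^{\infty}F(\lambda)\,\mathcal{J}^{\zeta}_{\sigma,\kappa}g(\lambda)\,n(\lambda)\,d\lambda\Bigr|
=\Bigl|\int_0^{\infty}\mathcal{I}^{\zeta}_{\kappa,\sigma}F(t)\,g(t)\,m(t)\,dt\Bigr|
\leq C\,\|F\|_{L^{p}(n)}\,\|g\|_{L^{q'}(m)}.
\end{align*}
A routine exhaustion-and-density argument (test against $F\in C_c^{\infty}(\R_+)$ supported in an increasing family of compact subsets of $(0,\infty)$, on each of which $\mathcal{J}^{\zeta}_{\sigma,\kappa}g$ is continuous hence bounded) upgrades this to
\begin{align*}
\Bigl(\int_0^{\infty}|\mathcal{J}^{\zeta}_{\sigma,\kappa}g(\lambda)|^{p'}\,n(\lambda)\,d\lambda\Bigr)^{\frac{1}{p'}}
\leq C\,\Bigl(\int_0^{\infty}|g(t)|^{q'}\,m(t)\,dt\Bigr)^{\frac{1}{q'}}\qquad\text{for all }g\in C_c^{\infty}(\R_+),
\end{align*}
which is exactly the hypothesis of Theorem~\ref{thm_nec_pitt_n} applied to the operator $\mathcal{J}^{\zeta}_{\sigma,\kappa}$, with exponent $q'$ on the space side and $p'$ on the frequency side; here $1<q',p'<\infty$ since $1<p,q<\infty$.

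It then remains to transcribe the conclusions of Theorem~\ref{thm_nec_pitt_n} through the dictionary $(p,q,\kappa,\sigma)\longmapsto(q',p',\sigma,\kappa)$. Under this substitution: the necessity of $p\leq2$ there becomes $q'\leq2$, i.e.\ $q\geq2$; the condition $\kappa p'<2(\alpha+1)$ becomes $\sigma q<2(\alpha+1)$ (using $(q')'=q$); the balance condition \eqref{nec_stan_s-k>}, namely $\sigma-\kappa\geq 2(\alpha+1)(1/p+1/q-1)$, becomes $\kappa-\sigma\geq 2(\alpha+1)(1/q'+1/p'-1)$, which is precisely \eqref{eqn_nec_bal_I,ks}; the constraint $\sigma q<3$ in the case $\zeta=0$ becomes $\kappa p'<3$; and the cases $p=2$ and $p=q=2$ of Theorem~\ref{thm_nec_pitt_n} correspond to $q=2$ and $p=q=2$ here, converting $\kappa<\alpha+1$ into $\sigma<\alpha+1$, $\kappa\geq 3(1/2-1/q)$ into $\sigma\geq 3(1/p-1/2)$, $\sigma-\kappa\leq 3(1/q-1/2)$ into $\sigma-\kappa\geq 3(1/p-1/2)$, $0\leq\kappa\leq\sigma$ into $0\leq\sigma\leq\kappa$, and $\kappa=\sigma<\min\{2(\alpha+1),3\}/2$ into $\sigma=\kappa<\min\{2(\alpha+1),3\}/2$. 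Matching these translated statements against items (1)--(4) completes the proof.

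The analytic content of this argument is entirely contained in Theorem~\ref{thm_nec_pitt_n}; the two places that call for a little care are the justification of Fubini in the adjoint identity (handled above through the compact supports of the test functions) and the bookkeeping of the index substitution $p\leftrightarrow q'$, $q\leftrightarrow p'$, $\kappa\leftrightarrow\sigma$, which is routine but is the main source of potential slips. As a fully self-contained alternative, one could instead mimic the proof of Theorem~\ref{thm_nec_pitt_n} line by line, now placing test functions analogous to $f_0,f_1,f_2$ on the frequency variable and using the lower bound \eqref{eqn_sph_low} for $\varphi_\lambda$ together with the sharp asymptotics \eqref{est_sharp_n} of $n$ and \eqref{eqn_est_phi_0} of $\varphi_0$; this reproduces the same four cases but is substantially longer.
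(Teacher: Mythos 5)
Your duality reduction is correct, and it is a genuinely different route from the paper. The paper proves Theorem \ref{thm_nec_inv_pitt_n} directly, by transplanting the test-function machinery of Theorem \ref{thm_nec_pitt_n} to the frequency side: it introduces $F_0(\lambda)=\Chi_{(0,1)}(\lambda)\lambda^{-3/p}$ and $F_{1},F_{2}(\lambda)=\Chi_{I}(\lambda)\lambda^{\kappa/(1-p)}$ on $(0,s)$ and $(1,s)$, uses \eqref{eqn_sph_low} together with the asymptotics of $n(\lambda)$, $m(t)$ and $\varphi_0$ to produce the two suprema \eqref{eqn_I<1_nec}--\eqref{eqn_I>_nec}, and then extracts each condition exactly as in the proof of Theorem \ref{thm_nec_pitt_n}. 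Your argument instead observes that the kernel $\varphi_\lambda(t)$ is symmetric with respect to the pairings $\langle\cdot,\cdot\rangle_{m}$ and $\langle\cdot,\cdot\rangle_{n}$, so that $\mathcal{I}^{\zeta}_{\kappa,\sigma}$ and $\mathcal{J}^{\zeta}_{\sigma,\kappa}$ are formal adjoints, and reduces the whole statement to Theorem \ref{thm_nec_pitt_n} under the substitution $(p,q,\kappa,\sigma)\mapsto(q',p',\sigma,\kappa)$. I checked your dictionary line by line (in particular $1/q'+1/p'-1=-(1/p+1/q-1)$, which flips \eqref{nec_stan_s-k>} into \eqref{eqn_nec_bal_I,ks}, and $3(1/2-1/p')=3(1/p-1/2)$), and every item of the theorem is recovered; it is also essential, and true, that Theorem \ref{thm_nec_pitt_n} is stated for all $1<p,q<\infty$ without an ordering $p\leq q$, since dualizing reverses the order of the exponents. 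What the duality route buys is economy and a structural explanation of why the two necessary-condition theorems are mirror images of one another; what the paper's direct route buys is self-containedness (no appeal to converse Hölder/density to identify the adjoint) and uniformity of method with Theorems \ref{thm_nec_modJ} and \ref{thm_nec_modI}, which are proved the same way. Your Fubini and exhaustion justifications are adequate for test functions supported in compact subsets of $(0,\infty)$, which form a dense norming class for the relevant $L^{p}(n\,d\lambda)$ spaces, so no gap remains.
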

\begin{proof}
   By considering the function
    \begin{align}
        F_0(\lambda)= \Chi_{(0,1)}(\lambda)  \lambda^{-\frac{3}{p}} ,\qquad \lambda\in (0,\infty),
    \end{align}
    and following similar calculations as outlined in equations $ \eqref{eqn_jf>c} $ and $ \eqref{eqn_nec_sigm} $, we can demonstrate that the condition $ \sigma q < 2(\alpha + 1) $ is necessary for all $ \zeta \geq 0 $. Additionally, when $ \zeta = 0 $, the condition $ \kappa p' < 3 $ is also  necessary.
    
Next, considering the function for $0<s<1$
\begin{align*}
    F_1(\lambda) = \Chi_{(0,s)}(\lambda)  \lambda^{\frac{\kappa}{1-p}} ,\qquad \lambda\in (0,\infty),
\end{align*}
we get 
    \begin{align*}
     \left( \int_0^{\infty} |\mathcal{I}_{\kappa,\sigma}F_1(t)|^q  m(t) \,dt \right)^{\frac{1}{q}} & \geq \left( \int_0^{\infty} t^{-\sigma q}  \left|\int_0^{\infty}\lambda^{-\kappa}F_1(\lambda) \varphi_{\lambda}(t) n(\lambda) \,d\lambda \right|^q m(t) \,dt\right)^{\frac{1}{q}}\\
     &\geq \left( \int_1^{1/s} t^{-\sigma q}  \left(\int_0^{s}\lambda^{-\kappa p'} \varphi_{0}(t) n(\lambda) \,d\lambda \right)^q m(t)\, dt\right)^{\frac{1}{q}}\\
     &\geq \left( \int_1^{1/s} t^{-\sigma q} \varphi_{0}(t)^{q}   m(t) \,dt\right)^{\frac{1}{q}} \left(\int_0^{s}\lambda^{-\kappa p'}  n(\lambda)\, d\lambda \right).
    \end{align*}
    Putting the estimate above in \eqref{eqn_pit_nec,n_I} implies the following
    \begin{align}\label{eqn_I<1_nec}
       \sup_{0<s\leq 1}  \left(\int_0^{s}\lambda^{-\kappa p'}  n(\lambda)\, d\lambda \right)^{\frac{1}{p'}} \left( \int_1^{1/s} t^{-\sigma q} \varphi_{0}(t)^{q}   m(t) \,dt\right)^{\frac{1}{q}} <\infty.
    \end{align}
    Similarly for $s>1$, by considering 
    \begin{align*}
    F_2(\lambda) = \Chi_{(1,s)}(\lambda)  \lambda^{\frac{\kappa}{1-p}} ,\qquad \lambda\in (0,\infty),
\end{align*}
we derive the following
\begin{align}\label{eqn_I>_nec}
       \sup_{s>1}  \left(\int_1^{s}\lambda^{-\kappa p'}  n(\lambda)\, d\lambda \right)^{\frac{1}{p'}} \left( \int_0^{1/s} t^{-\sigma q} \varphi_{0}(t)^{q}   m(t) \,dt\right)^{\frac{1}{q}} <\infty.
    \end{align}
    Following a similar calculation as in \eqref{eqn_p<2_nec}, and using \eqref{eqn_I<1_nec}, we can derive that in this case $q\geq 2$ is necessary. Moreover, using \eqref{eqn_I>_nec} and following the calculation as in \eqref{eqn_nec_0}, we can derive that \eqref{eqn_nec_bal_I,ks} is necessary, that is 
    \begin{align}\label{eqn_nec_InvJ}
        \sigma-\kappa \leq 2(\alpha+1)\left(\frac{1}{p}+\frac{1}{q}-1 \right).
    \end{align}
    Moreover, it follows from above that for $q\geq p'$, $\sigma\leq \kappa <{3}/{p'}$ is also necessary. Additionally, for the $q=2$ case, we can write for $\zeta =0$ case
    \begin{align} 
       \sup_{0<s\leq 1}  \left(\int_0^{s}\lambda^{-\kappa p' }  n(\lambda)\, d\lambda \right)^{\frac{1}{p'}} \left( \int_1^{1/s} t^{-2\sigma } \varphi_{0}(t)^{2}   m(t) \,dt\right)^{\frac{1}{2}} \geq C  \sup_{0<s\leq 1} s^{-\kappa+\frac{3}{p'}}s^{\sigma-\frac{3}{2}}.
    \end{align}
   The right hand side of the inequality above will be finite if and only if $$ \sigma - \kappa \geq 3\left( \frac{1}{2}-\frac{1}{p'}\right)= 3\left(\frac{1}{p}-\frac{1}{2} \right).$$ In particular, for the $p=q=2$ case, we can say that its necessary that $\sigma=\kappa<\min\{3, 2(\alpha+1)\}/2$. The calculation for $\zeta \neq 0$ is similar.
\end{proof}
\begin{remark}
When $p < 2$, the presence of the exponentially decaying term $e^{\rho(2-p')t}$ on the left-hand side of \eqref{eqn_pitt_nec>1} prevents us from directly applying the argument used in \eqref{eqn_nec>1_sJ} to formulate a necessary condition, since the supremum may remain finite irrespective of the polynomial weight. However, when $p = 2$ and $q > 1$, using \eqref{eqn_n(l)>0}, we  establish that the condition $\sigma - \kappa \leq 3\left( {1}/{q} - {1}/{2} \right)$ is necessary for the inequality \eqref{eqn_pit_nec,n} to hold. In the next section, we will see that the absence of the exponential factor in the modified Jacobi transform allows us to derive additional necessary conditions, which coincide with those of the standard Jacobi transform when $p = 2$ and $q > 1$. 
\end{remark}

 \subsection{Necessary conditions for shifted Pitt's inequality for modified Jacobi transforms}

Here we will consider the following class of operators
\begin{align*}
    \tilde{\mathcal{J}^{\zeta}}&= \left\{\tilde{\mathcal{J}}^{\zeta}_{\kappa,\sigma}f(\lambda) =(\lambda^2+\zeta^2)^{-\frac{\sigma}{2}}  \int_0^{\infty}t^{-\kappa}f(t) \tilde{\varphi}_{\lambda}^{(\alpha,\beta)}(t) m(t) \,dt, \quad f\in C_c^{\infty}(\R_+) \right\},\\
    \tilde{\mathcal{I}}^{\zeta}&= \left\{\tilde{\mathcal{I}}^{\zeta}_{\kappa,\sigma}F(t) =t^{-\sigma}  \int_0^{\infty}(\lambda^2+\zeta^2)^{-\frac{\kappa}{2}}F(\lambda) \tilde{\varphi}_{\lambda}^{(\alpha,\beta)}(t) n(\lambda) \,d\lambda, \quad F\in C_c^{\infty}(\R_+) \right\}.
\end{align*}
\begin{theorem}\label{thm_nec_modJ}
    Suppose that the following inequality holds for $1<p,q<\infty$
\begin{align}\label{eqn_nec_modJ}
   \left( \int_0^{\infty} |\tilde{\mathcal{J}}^{\zeta}_{\kappa,\sigma}f(\lambda)|^q  n(\lambda) d\lambda\right)^{\frac{1}{q}}\leq C \left( \int_0^{\infty} |f(t)|^p m(t) dt \right)^{\frac{1}{p}}
\end{align}
for all $f \in C_c^{\infty}(\R^+)$.
Then, the following conditions are necessary:
\begin{enumerate}
     \item\label{it_modJ2}  if  $\zeta \neq 0$, then $\kappa p' < 2(\alpha+1)$,  \eqref{nec_stan_s-k>}, and
     \begin{align}\label{eqn_nec_mJ,k>}
         \kappa \geq 3\left( 1-\frac{1}{p}-\frac{1}{q}\right);
     \end{align}
    
       \item\label{it_modJ4} if  $\zeta=0$, then $\kappa p' <  2(\alpha+1)$, $\sigma q<3$, \eqref{nec_stan_s-k>}, and 
       \begin{align}\label{eqn_nec_s-k<}
            \sigma-\kappa \leq 3\left( \frac{1}{p}+\frac{1}{q} -1\right).
       \end{align}
\end{enumerate}
\end{theorem}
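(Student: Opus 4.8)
The strategy mirrors the proof of Theorem~\ref{thm_nec_pitt_n}, but now exploits the polynomial (doubling) growth of $\tilde m$ in place of the exponential growth of $m$, which is precisely what makes the extra necessary conditions \eqref{eqn_nec_mJ,k>} and \eqref{eqn_nec_s-k<} accessible. Note first that $\tilde{\mathcal{J}}^{\zeta}_{\kappa,\sigma}f(\lambda) = (\lambda^2+\zeta^2)^{-\sigma/2}\int_0^\infty t^{-\kappa} f(t)\varphi_\lambda(t)\varphi_0(t)^{-1} m(t)\,dt = (\lambda^2+\zeta^2)^{-\sigma/2}\int_0^\infty t^{-\kappa} f(t)\varphi_\lambda(t)\,\tilde m(t)\varphi_0(t)^{-1}\,dt$ after absorbing one factor of $\varphi_0$; equivalently, by \eqref{eqn_rel_J_mJ}, inequality \eqref{eqn_nec_modJ} is a weighted bound for $\mathcal{J}$ applied to $f\varphi_0$, so the natural test functions are $\varphi_0$-twisted analogues of those used in Theorem~\ref{thm_nec_pitt_n}. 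I would first re-derive, by testing on $f_0(t)=\Chi_{(0,1)}(t)\,t^{-2(\alpha+1)/p}\varphi_0(t)^{-1}$ (so that $f_0\varphi_0$ reproduces the earlier $f_0$), the conditions $\kappa p' < 2(\alpha+1)$ for all $\zeta\ge 0$ and $\sigma q < 3$ when $\zeta=0$, using the sharp estimates \eqref{est_sharp_mtil} and \eqref{est_sharp_n} near the origin; these steps are essentially identical to \eqref{eqn_jf>c}--\eqref{eqn_nec_sigm}.

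Next, to obtain the \emph{balance}-type condition \eqref{nec_stan_s-k>} and its companion upper bounds, I would test on the functions
\begin{align*}
 f_1(t) = t^{-\kappa/(p-1)}\varphi_0(t)^{-1}\Chi_{(0,s)}(t),\qquad 0<s\le 1,
\end{align*}
and similarly $f_2$ supported on $(1,s)$ for $s>1$, so that $f_j\varphi_0$ matches the test functions in the proof of Theorem~\ref{thm_nec_pitt_n}. Applying \eqref{eqn_sph_low} to replace $\varphi_\lambda(t)$ by a constant multiple of $\varphi_0(t)$ on the region $\lambda t \le \theta_0$, one reduces \eqref{eqn_nec_modJ} to a product of one-dimensional integrals and extracts, exactly as in \eqref{eqn_pitt_nec1}--\eqref{eqn_pitt_nec>1},
\begin{align*}
 \sup_{0<s\le 1}\Bigl(\int_0^s t^{-\kappa p'}\tilde m(t)\,dt\Bigr)^{1/p'}\Bigl(\int_1^{1/s}\lambda^{-\sigma q} n(\lambda)\,d\lambda\Bigr)^{1/q}<\infty,
\end{align*}
and the analogous supremum over $s>1$ with the roles of the intervals reversed. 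Feeding in \eqref{est_sharp_mtil} and \eqref{est_sharp_n} near the origin reproduces $\sup_{0<s\le1} s^{\sigma-\kappa+2(\alpha+1)(1/p'-1/q)}<\infty$, hence \eqref{nec_stan_s-k>}; this is the step common to all parts.

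The genuinely new input is the $s>1$ analysis. Because $\tilde m(t)\asymp t^2$ and $n(\lambda)\asymp\lambda^2$ near $\lambda=0$, the $s>1$ supremum now involves only \emph{polynomial} factors rather than the exponential $e^{\rho s(2/p'-1)}$ that obstructed the argument in the standard Jacobi case (cf.\ the Remark following Theorem~\ref{thm_nec_inv_pitt_n}). Concretely, for $\zeta=0$ one gets $\int_1^s t^{-\kappa p'}\tilde m(t)\,dt \asymp s^{3-\kappa p'}$ (valid since $3-\kappa p'$ can be either sign; one keeps the dominant contribution) and $\int_0^{1/s}\lambda^{-\sigma q}n(\lambda)\,d\lambda\asymp s^{\sigma q - 3}$, so \eqref{eqn_I>_nec}-type finiteness forces $s^{3/p'-\kappa}\,s^{\sigma-3/q}$ to stay bounded as $s\to\infty$, i.e.\ $\sigma-\kappa\le 3(1/p+1/q-1)$, which is \eqref{eqn_nec_s-k<}. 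When $\zeta\ne 0$ the weight $(\lambda^2+\zeta^2)^{-\sigma/2}$ is bounded near $\lambda=0$, so the $\lambda$-integral over $(0,1/s)$ behaves like $\int_0^{1/s}\lambda^2\,d\lambda\asymp s^{-3}$ with no $\sigma$-dependence, and the same $s>1$ test yields $s^{3/p'-\kappa}\,s^{-3/q}$ bounded, i.e.\ $\kappa\ge 3(1-1/p-1/q)$, which is \eqref{eqn_nec_mJ,k>}. I would organize these two sub-cases as the final step, reading off $\zeta=0$ versus $\zeta\ne 0$ from whether the spectral weight contributes a power of $s$.

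\textbf{Main obstacle.} The delicate point is making the reduction via \eqref{eqn_sph_low} rigorous on the right regions: one must ensure the $\lambda$-range over which $\varphi_\lambda\asymp\varphi_0$ is comparable to the $\lambda$-support picked out by the weight (namely $\lambda\in(1,1/s)$ or $\lambda\in(0,1/s)$) while $t$ ranges over the support of the test function, so the constant $\theta_0$ must be chosen uniformly and the cross terms in the product of integrals controlled — this is where the "$\lambda t\le\theta_0$" condition has to be threaded carefully. A secondary subtlety is verifying the local $L^p(dm)$-integrability of $f_1,f_2$ near $t=0$, which is exactly where $\kappa p'<2(\alpha+1)$ (already shown necessary) is used, so the logical order — establish $\kappa p'<2(\alpha+1)$ first, then use it to justify the test functions — must be respected. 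Everything else is a routine matter of substituting the sharp asymptotics \eqref{est_sharp_mtil}, \eqref{est_sharp_n}, and \eqref{eqn_est_phi_0}.
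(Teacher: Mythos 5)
Your overall strategy — Hölder-saturating test functions, the lower bound \eqref{eqn_sph_low}, the two suprema over $0<s\le 1$ and $s>1$, and the final power counting that separates $\zeta=0$ from $\zeta\neq 0$ — is exactly the paper's, and your $s\le 1$ analysis and the concluding case split are correct. The gap is in the test function for the decisive $s>1$ regime. For the modified transform the kernel is $t^{-\kappa}\tilde\varphi_\lambda$ with $\tilde\varphi_\lambda\asymp 1$ on $\{\lambda t\le\theta_0\}$ and the measure on both sides is $\tilde m$, so the extremizing choice is the \emph{untwisted} function $f_2(t)=t^{-\kappa/(p-1)}\Chi_{(1,s)}(t)$: only this choice produces your claimed supremum $\sup_{s>1}\bigl(\int_1^s t^{-\kappa p'}\tilde m\bigr)^{1/p'}\bigl(\int_0^{1/s}\lambda^{-\sigma q}n\bigr)^{1/q}<\infty$, and hence (via $\int_1^s t^{-\kappa p'}\tilde m\gtrsim s^{3-\kappa p'}$, which requires $\kappa p'<3$ — the case $\kappa p'\ge 3$ must be split off, as then the integral is bounded and your ``$\asymp s^{3-\kappa p'}$'' fails, but \eqref{eqn_nec_s-k<} is automatic there) the exponent $s^{3/p'-\kappa}$ needed for \eqref{eqn_nec_mJ,k>} and \eqref{eqn_nec_s-k<}. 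Your twisted choice $f_2=t^{-\kappa/(p-1)}\varphi_0^{-1}\Chi_{(1,s)}$ does not deliver this: since $\tilde\varphi_\lambda\tilde m=\varphi_\lambda\varphi_0 m$, the kernel integral is $\gtrsim\int_1^s t^{-\kappa p'}\varphi_0 m\asymp s^{1-\kappa p'}e^{\rho s}$ while $\|f_2\|_{L^p(\tilde m)}\asymp s^{(2-p-\kappa p')/p}e^{\rho s}$; the exponentials cancel but the ratio is only $s^{2/p'-\kappa}$, a loss of $s^{1/p'}$. This yields the strictly weaker necessary conditions $\kappa\ge 2/p'-3/q$ and $\sigma-\kappa\le 3/q-2/p'$ in place of \eqref{eqn_nec_mJ,k>} and \eqref{eqn_nec_s-k<}.

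The source of the slip is the identity in your opening display: $\tilde\varphi_\lambda m=\varphi_\lambda\varphi_0^{-1}m$ equals $\varphi_\lambda\,\tilde m\,\varphi_0^{-3}$, not $\varphi_\lambda\,\tilde m\,\varphi_0^{-1}$ (recall $\tilde m=\varphi_0^{2}m$); relatedly, $f_2\varphi_0=t^{-\kappa/(p-1)}\Chi_{(1,s)}$ does not reproduce the test function $t^{-\kappa/(p-1)}\varphi_0^{1/(p-1)}\Chi_{(1,s)}$ of Theorem~\ref{thm_nec_pitt_n}, and in any case the right criterion here is $|f|^{p-1}\asymp$ (kernel lower bound) $\asymp t^{-\kappa}$ on the support, with no $\varphi_0$ at all. (Be aware also that the printed statement carries $m$ rather than $\tilde m$ in the operator class and on the right of \eqref{eqn_nec_modJ}; the conditions being proved match the sufficiency result of Theorem~\ref{thm_pitt's_mod_J} only under the $\tilde m$ reading, which is what both your supremum and the paper's use.) With $f_2$ corrected as above — and with $f_0,f_1$ left as you have them, since on $(0,1)$ the factor $\varphi_0^{-1}\asymp 1$ is harmless — the remainder of your argument goes through and coincides with the paper's proof.
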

\begin{proof}
We follow the approach outlined in Theorem \ref{thm_nec_pitt_n}. In fact, by repeating a similar argument, we find that the condition $\kappa p' < 2(\alpha + 1)$ and inequality \eqref{nec_stan_s-k>} are necessary for all $\zeta \geq 0$. Moreover, when $\zeta = 0$, the additional condition $\sigma q < 3$ is also required. To demonstrate that \eqref{eqn_nec_s-k<} is necessary in the case $\zeta = 0$, we consider the function $f_2$ defined in \eqref{eqn_defn_f2}. Using \eqref{eqn_nec_modJ}, we derive the following analogue of \eqref{eqn_pitt_nec>1} 
\begin{align}\label{eqn_pitt_nec-mj>1}
    \sup_{s>1} \left( \int_1^{s}t^{-\kappa p'} \tilde{m}(t) \,dt \right)^{\frac{1}{p'}}  \left( \int_0^{{1}/{s}}  \lambda^{-\sigma q}   n(\lambda)\,d\lambda\right)^{\frac{1}{q}}<\infty.
\end{align}
We observe that if $\kappa p'\geq 3$, then \eqref{eqn_nec_s-k<} automatically satisfied. So let us assume that $\kappa p' < \min\{3, 2(\alpha+1)\}$. Then, using the estimate \eqref{est_Delta} for $\tilde{m}(t)$, we obtain the lower bound 
\begin{align}\label{est_lower_vk} \int_1^s t^{-\kappa p'} \tilde{m}(t) \, dt \gtrsim s^{-\kappa p'+3}. 
\end{align} Combining the lower bound \eqref{est_lower_vk} with \eqref{eqn_n(l)>0}, we find that 
\begin{align*} \sup_{s>1} \left( \int_1^{s}t^{-\kappa p'} \tilde{m}(t) \,dt \right)^{\frac{1}{p'}} \left( \int_0^{{1}/{s}} \lambda^{-\sigma q} n(\lambda)\,d\lambda\right)^{\frac{1}{q}} \gtrsim \sup_{s>1} s^{\sigma -\frac{3}{q}+ \frac{3}{p'}-\kappa}. 
\end{align*} The right-hand side is finite if and only if condition \eqref{eqn_nec_s-k<} holds, thus establishing \eqref{it_modJ4}.

The case \eqref{it_modJ2} follows by noting that when $\zeta\neq  0$, the term $\lambda^2 + \zeta^2$ is bounded below by a positive constant for small $\lambda$, which ensures that the relevant integral remains finite.
\end{proof}

\begin{theorem}\label{thm_nec_modI}
    Suppose that the following inequality holds for $1<p,q<\infty$
\begin{align}\label{eqn_nec_modI}
   \left( \int_0^{\infty} |\tilde{\mathcal{I}}^{\zeta}_{\kappa,\sigma}F(t)|^q  \tilde{m}(t)\, dt \right)^{\frac{1}{q}}\leq C \left( \int_0^{\infty} |F(\lambda)|^p n(\lambda) \,  d\lambda \right)^{\frac{1}{p}}
\end{align}
for all $f \in C_c^{\infty}(\R^+)$.
Then, the following conditions are necessary:
\begin{enumerate}
     \item  if  $\zeta \neq 0$, then $\sigma q <  2(\alpha+1)$,  \eqref{eqn_nec_bal_I,ks}, and
     \begin{align}\label{eqn_nec_con_mI,s>}
         \sigma \geq 3\left( \frac{1}{p}+\frac{1}{q}-1\right);
     \end{align}
       \item if  $\zeta=0$, then $\sigma q < 2(\alpha+1)$,  $\kappa p'<3$, \eqref{eqn_nec_bal_I,ks}, and 
       \begin{align}\label{eqn_nec_s-k>_mI}
            \sigma-\kappa \geq 3\left( \frac{1}{p}+\frac{1}{q} -1\right).
       \end{align}
\end{enumerate}
\end{theorem}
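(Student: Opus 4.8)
The plan is to run the analogue of the arguments in Theorems~\ref{thm_nec_inv_pitt_n} and~\ref{thm_nec_modJ}, exploiting that $\tilde{\mathcal I}^{\zeta}$ is the formal dual of $\tilde{\mathcal J}^{\zeta}$: the roles of the variable--measure pairs $(t,\tilde m)$ and $(\lambda,n)$ are interchanged, and, crucially, the exponentially growing factor $\varphi_0(t)^{q}m(t)$ that appeared on the target side of the standard inverse transform is now replaced by the \emph{polynomially} growing weight $\tilde m$ of \eqref{est_sharp_mtil}. The only analytic input beyond \eqref{est_sharp_mtil}--\eqref{est_sharp_n} is the two--sided bound $\tilde\varphi_\lambda(t)=\varphi_\lambda(t)/\varphi_0(t)\asymp1$ whenever $\lambda t\le\theta_0$, which is immediate from \eqref{eqn_sph_low}. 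Because $\tilde m$ is polynomial, no analogue of the constraint $q\ge 2$ (forced in Theorem~\ref{thm_nec_inv_pitt_n} by exponential growth) is expected here.

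First I would establish the ``dimension at the origin'' constraints. Testing \eqref{eqn_nec_modI} against $F_0=\Chi_{(0,1)}$ and using $\tilde\varphi_\lambda(t)\asymp1$ on $(0,1)\times(0,\theta_0)$ gives $\tilde{\mathcal I}^{\zeta}_{\kappa,\sigma}F_0(t)\gtrsim t^{-\sigma}$ for $t\le\theta_0$, so finiteness of $\int_0^{\theta_0}t^{-\sigma q}\tilde m(t)\,dt$ forces $\sigma q<2(\alpha+1)$ by \eqref{est_sharp_mtil}; this is needed for every $\zeta\ge0$. When $\zeta=0$ one tests instead against the truncations $\Chi_{(\epsilon,1)}\lambda^{-\kappa p'/p}$ and lets $\epsilon\to0$: the input $L^p(n)$--norm and the output lower bound are both governed by $\int_\epsilon^1\lambda^{2-\kappa p'}\,d\lambda$, so, exactly as in \eqref{eqn_jf>c}--\eqref{eqn_nec_sigm}, boundedness fails unless $\kappa p'<3$.

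Next I would extract the two ``$\sup$ conditions'' via the test functions $F_1=\Chi_{(0,s)}\lambda^{-\kappa p'/p}$ ($0<s\le1$) and $F_2=\Chi_{(1,s)}\lambda^{-\kappa p'/p}$ ($s>1$), restricting the $t$--integral in $\|\tilde{\mathcal I}^{\zeta}F_j\|_{L^q(\tilde m)}^q$ to $(1,1/s)$ and to $(0,\theta_0/s)$ respectively, where $\tilde\varphi_\lambda(t)\asymp1$ on the corresponding rectangle; this separates the double integral and yields the analogues of \eqref{eqn_I<1_nec}--\eqref{eqn_I>_nec} with $\varphi_0(t)^{q}m(t)$ replaced by $\tilde m(t)$, for instance
\[
\sup_{0<s\le1}\Big(\int_0^s\lambda^{-\kappa p'}n(\lambda)\,d\lambda\Big)^{1/p'}\Big(\int_1^{1/s}t^{-\sigma q}\tilde m(t)\,dt\Big)^{1/q}<\infty .
\]
Inserting \eqref{est_sharp_mtil}--\eqref{est_sharp_n}, the pairing ``$\lambda$ large / $t$ small'' (dimension $2(\alpha+1)$ on both sides) turns the $F_2$--condition into $\sup_{s>1}s^{\sigma-\kappa+2(\alpha+1)(1/p'-1/q)}<\infty$, i.e.\ the balance \eqref{eqn_nec_bal_I,ks}; the pairing ``$\lambda$ small / $t$ large'' (dimension $3$ on both sides) turns the $F_1$--condition into $\sup_{0<s\le1}s^{\sigma-\kappa+3(1/p'-1/q)}<\infty$, i.e.\ $\sigma-\kappa\ge 3(1/p+1/q-1)$. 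For $\zeta\ne0$ the weight $(\lambda^2+\zeta^2)^{-\kappa/2}$ is bounded above and below on $(0,1)$, so in the small--$\lambda$ regime $\kappa$ drops out and the same computation with $F_1$ a constant gives $\sigma\ge 3(1/p+1/q-1)$, i.e.\ \eqref{eqn_nec_con_mI,s>}; for $\zeta=0$ it is \eqref{eqn_nec_s-k>_mI}. As in Theorem~\ref{thm_nec_modJ} one must treat the complementary ranges separately --- if $\kappa p'\ge2(\alpha+1)$ then \eqref{eqn_nec_bal_I,ks} already follows from $\sigma q<2(\alpha+1)$, and if $\sigma q\ge 3$ then \eqref{eqn_nec_s-k>_mI} (resp.\ \eqref{eqn_nec_con_mI,s>}) follows from $\kappa p'<3$ (resp.\ from $p\ge1$) --- and in those ranges the relevant power of $s$ is nonnegative anyway, so the test-function computation degenerates without loss.

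I expect the main difficulty to be organisational rather than conceptual: one must (i) check that truncating the model weights near $0$ and $\infty$ still produces genuine $C_c^{\infty}(\R_+)$ test functions with input norms of the asserted size (or invoke the density extension of $\tilde{\mathcal I}^{\zeta}_{\kappa,\sigma}$ to $L^p(n)$); (ii) keep track of which of the four endpoint pairings $(t\to0,\lambda\to\infty)$, $(t\to\infty,\lambda\to0)$, $(t\to0,\lambda\to0)$, $(t\to\infty,\lambda\to\infty)$ actually produces a constraint --- only the two ``dimension--matched'' ones do, the other two giving convergent, $s$--independent integrals; and (iii) handle the dichotomy $\zeta=0$ versus $\zeta\ne0$ carefully, since the behaviour of $(\lambda^2+\zeta^2)^{-\kappa/2}$ near $\lambda=0$ decides whether $\kappa$ or only $\sigma$ is visible to the small--$\lambda$ test functions. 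No ingredient is needed beyond those already used for $\tilde{\mathcal J}^{\zeta}$ in Theorem~\ref{thm_nec_modJ}.
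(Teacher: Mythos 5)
Your proposal is correct and follows essentially the same route as the paper, whose proof of this theorem consists precisely of invoking the arguments of Theorems~\ref{thm_nec_inv_pitt_n} and~\ref{thm_nec_modJ} with $(t,\tilde m)$ and $(\lambda,n)$ interchanged; your test functions $F_1,F_2$, the use of $\tilde\varphi_\lambda(t)\asymp 1$ for $\lambda t\le\theta_0$, the resulting sup conditions, and the case analysis in the complementary ranges all match. The only (harmless) deviations are your choice of $F_0=\Chi_{(0,1)}$ and the $\epsilon$-truncation $\Chi_{(\epsilon,1)}\lambda^{-\kappa p'/p}$ for the condition $\kappa p'<3$, which is in fact a more careful implementation than the borderline test function $\Chi_{(0,1)}\lambda^{-3/p}$ used in Theorem~\ref{thm_nec_inv_pitt_n}.
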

\begin{proof}
    The proof follows by applying arguments similar to those used in Theorems \ref{thm_nec_inv_pitt_n} and \ref{thm_nec_modJ}.
\end{proof}

\subsection{Sufficient conditions for shifted Pitt's inequality for Jacobi transforms} In this section, we will establish the shifted Pitt's inequality for the Jacobi transforms. To that end, we first develop a general framework for weighted boundedness of sublinear operators.

Throughout this section, $u$ and $v$ will denote non-negative measurable functions defined on $\mathbb{R}_+$. Since we will be working with the non-increasing rearrangement of functions on $\mathbb{R}_+$ with respect to two different measures, namely $n(\lambda)\,d\lambda$ and $\tilde{m}(t)\,dt$, we adopt the following notation: the rearrangement of $u$ with respect to $n(\lambda)\,d\lambda$ is denoted by $u^{\star}$, while the rearrangement of $v$ with respect to $\tilde{m}(t)\,dt$ is written as $v^*$.
We also let $v_* = [(1/v)^*]^{-1}$. For $g\geq 0$, let us define for non-negative locally integrable functions 
\begin{align*}
    P_x g=\int_0^x g(y) dy, \quad Q_x g=\int_x^{\infty} g(y) dy,
\end{align*}
which are known as the Hardy and Bellman operators, respectively. Then, for non-negative locally integrable functions $u$ and $v$, one can consider the following Hardy inequalities for $1\leq p\leq q<\infty$,
\begin{align}\label{H_ineq_P}
    \|P_x g\|_{q,u}\lesssim \|g\|_{p,v}
\end{align}
and
\begin{align}\label{H_ineq_Q}
    \|Q_x g\|_{q,u}\lesssim \|g\|_{p,v}.
\end{align}
It is known that \cite{Bra78} inequality \eqref{H_ineq_P} holds if and only if, for each $r>0$,
\begin{align*}
    (Q_ru)^{1/q}(P_r v^{1-p'})^{1/p'} \lesssim 1.
\end{align*}
Similarly \eqref{H_ineq_Q} holds if and only if for each $r>0$,
\begin{align*}
    (P_r u)^{1/q} (Q_r v^{1-p'})^{1/p'} \lesssim 1.
\end{align*}
We consider the following measure spaces $X= (\R_+, d\tilde{m}(t))$ and $Y= (\R_+, dn(\lambda) )$.
\begin{theorem}\label{thm_pitt_subl}
Let $1 < p \leq q < \infty$, and let $T$ be a sublinear operator from $X$ to $Y$ that maps $C_c^{\infty}(\R_+)$ into measurable functions on $\R_+$. Suppose that $T$ is of type $(1,\infty)$ and type $(2,2)$, and let $u, v$ be weight functions satisfying the condition
 \begin{align}\label{hyp_on_gen_wts}
        \sup_{r>0} \left(P_{1/r}{u^{\star}}^q \right)^{1/q}\left(P_r v_{*}^{-p'} \right)^{1/{p'}}<\infty.
    \end{align}
    Then the following weighted inequality holds:
 \begin{align}\label{eqn_pitt_gent}
        \|Tf\cdot u\|_{L^q(Y)} \leq C \|f\cdot v\|_{L^p(X)}
    \end{align}
    for all $f \in C_c^{\infty}(\R_+)$.    
    Moreover, an analogous result holds if the roles of $X$ and $Y$ are interchanged, with the corresponding adjustments to the hypothesis \eqref{hyp_on_gen_wts} on the weights.
\end{theorem}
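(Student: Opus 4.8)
The plan is to run the Calderón rearrangement scheme, as in the proof of \cite[Theorem 1.4]{KPRS24} and in \cite{GLT18}. First I would pass from the bilinear quantities $Tf\cdot u$ and $f\cdot v$ to one-dimensional decreasing rearrangements. On $Y=(\R_+,n(\lambda)\,d\lambda)$, the Hardy--Littlewood rearrangement inequality applied to $|Tf|^q$ and $u^q$, together with the fact that decreasing rearrangement commutes with $x\mapsto x^q$, gives
\begin{equation}\label{pl_HL}
\|Tf\cdot u\|_{L^q(Y)}^q\ \le\ \int_0^\infty \big((Tf)^{\star}(t)\big)^q\,\big(u^{\star}(t)\big)^q\,dt .
\end{equation}
On $X=(\R_+,\tilde m(t)\,dt)$ I would use the ``oppositely ordered'' rearrangement inequality $\int_X \phi\,w\,d\tilde m\ge\int_0^\infty \phi^{*}(t)\,w_{*}(t)\,dt$ (with $w_{*}=[(1/w)^{*}]^{-1}$), applied to $\phi=|f|^p$ and $w=v^p$; since $(|f|^p)^{*}=(f^{*})^p$ and $(v^p)_{*}=(v_{*})^p$, this yields
\begin{equation}\label{pl_rev}
\|f\cdot v\|_{L^p(X)}^p\ \ge\ \int_0^\infty \big(f^{*}(t)\,v_{*}(t)\big)^p\,dt .
\end{equation}
By \eqref{pl_HL}--\eqref{pl_rev} it suffices to prove the one-dimensional estimate with $(Tf)^{\star}u^{\star}$ on the left and $f^{*}v_{*}$ on the right.

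Next I would apply Calderón's interpolation theorem. Splitting $f=b_\tau+g_\tau$ at the $\tilde m$-mass level $\tau$, so that $\|b_\tau\|_{L^1(X)}\le\int_0^\tau f^{*}$ and $\|g_\tau\|_{L^2(X)}^2\lesssim \tau f^{*}(\tau)^2+\int_\tau^\infty (f^{*})^2$, then using sublinearity together with the hypotheses that $T$ is of type $(1,\infty)$ and of type $(2,2)$ from $X$ to $Y$, and finally choosing $\tau=1/t$, one obtains the pointwise bound
\begin{equation}\label{pl_cald}
(Tf)^{\star}(t)\ \lesssim\ \int_0^{1/t} f^{*}(s)\,ds\ +\ \frac{1}{\sqrt t}\left(\int_{1/t}^\infty \big(f^{*}(s)\big)^2\,ds\right)^{1/2},\qquad t>0,
\end{equation}
the contribution $t^{-1}f^{*}(1/t)$ being absorbed into the first summand since $f^{*}$ is non-increasing. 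The scale inversion $t\mapsto 1/t$ here is precisely what makes the weight hypothesis appear with $P_{1/r}$ rather than $Q_r$. Inserting \eqref{pl_cald} into the reduced inequality and writing $g=f^{*}$, it then remains to prove the two weighted inequalities
\begin{align}
\left(\int_0^\infty\left(\int_0^{1/t}g(s)\,ds\right)^q(u^{\star}(t))^q\,dt\right)^{1/q}&\ \lesssim\ \left(\int_0^\infty (g\,v_{*})^p\right)^{1/p},\label{pl_S1}\\
\left(\int_0^\infty t^{-q/2}\left(\int_{1/t}^\infty g(s)^2\,ds\right)^{q/2}(u^{\star}(t))^q\,dt\right)^{1/q}&\ \lesssim\ \left(\int_0^\infty (g\,v_{*})^p\right)^{1/p}.\label{pl_S2}
\end{align}

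For \eqref{pl_S1} I would substitute $t\mapsto 1/t$, which converts the inner integral into the Hardy operator $P$ against the weight $\tau^{-2}(u^{\star}(1/\tau))^q$; the Bradley--Muckenhoupt criterion \eqref{H_ineq_P} for the resulting inequality, after undoing the substitution, is literally $\sup_{r>0}(P_{1/r}{u^{\star}}^q)^{1/q}(P_r v_{*}^{-p'})^{1/p'}<\infty$, i.e.\ hypothesis \eqref{hyp_on_gen_wts}. For \eqref{pl_S2}, after the same substitution I would set $G=g^2$ and recognise a Bellman ($Q$-type) Hardy inequality for $G$ with exponent pair $(p/2,q/2)$; when $p\ge2$ the criterion \eqref{H_ineq_Q} applies directly, and when $p<2$, where $p/2<1$ lies outside the classical range, I would instead use that $g=f^{*}$ is non-increasing to write $\int_{1/t}^\infty g^2\le g(1/t)\int_{1/t}^\infty g$, linearising the square and reducing again to \eqref{H_ineq_P}--\eqref{H_ineq_Q}; a direct computation, using the monotonicity of $u^{\star}$ and $v_{*}$ and the transition in their power behaviour, then shows the resulting condition is dominated by \eqref{hyp_on_gen_wts}. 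I expect this last point — that the single scalar condition \eqref{hyp_on_gen_wts} simultaneously controls both the Hardy part \eqref{pl_S1} and the Bellman part \eqref{pl_S2} of the Calderón operator, with the $p<2$ case the delicate one — to be the main obstacle; once it is settled, \eqref{eqn_pitt_gent} follows by combining \eqref{pl_HL}, \eqref{pl_rev}, \eqref{pl_cald}. The ``moreover'' statement is proved by the same argument with the roles of $X$ and $Y$ interchanged throughout: \eqref{pl_cald} holds verbatim with $\tilde m$ and $n$ exchanged, $u^{\star}$ becomes the $\tilde m$-rearrangement of the $X$-side weight while $v_{*}$ is built from $1/v$ via its $n$-rearrangement, and \eqref{hyp_on_gen_wts} is replaced by its mirror image.
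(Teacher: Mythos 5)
Your overall framework (reduction to decreasing rearrangements, a Calder\'on-type bound for $(Tf)^{\star}$ in terms of $f^{*}$, then weighted Hardy inequalities via the Bradley criteria \eqref{H_ineq_P}--\eqref{H_ineq_Q}) is the same one the paper relies on through \cite{He84}, \cite{KPRS24} and \cite{JT70}, and your treatment of the Hardy part is correct: after $t\mapsto 1/t$ its Bradley condition is exactly \eqref{hyp_on_gen_wts}. The gap is in the Bellman part, and it is not one that ``a direct computation'' can close: your second target inequality
\begin{align*}
\left(\int_0^\infty t^{-q/2}\Bigl(\int_{1/t}^\infty g(s)^2\,ds\Bigr)^{q/2}\bigl(u^{\star}(t)\bigr)^q\,dt\right)^{1/q}\ \lesssim\ \left(\int_0^\infty \bigl(g\,v_{*}\bigr)^p\right)^{1/p}
\end{align*}
is \emph{false} under hypothesis \eqref{hyp_on_gen_wts} alone. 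Take $p=q=2$ and $u\equiv v\equiv 1$, so that $u^{\star}\equiv v_{*}\equiv 1$, \eqref{hyp_on_gen_wts} holds with constant $1$, and the theorem is just the assumed type $(2,2)$ bound; yet for any nonzero non-increasing $g$ (e.g. $g=\chi_{(0,1)}$) Fubini gives
\begin{align*}
\int_0^\infty t^{-1}\int_{1/t}^\infty g(s)^2\,ds\,dt=\int_0^\infty g(s)^2\int_{1/s}^\infty\frac{dt}{t}\,ds=\infty,
\end{align*}
while the right-hand side is finite. So your two reduced inequalities are jointly strictly stronger than the theorem and cannot both follow from \eqref{hyp_on_gen_wts}.

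The loss occurs at the pointwise Calder\'on bound: estimating the good part by $t^{-1/2}\|g_\tau\|_{2}$ uses only weak information at the $(2,2)$ endpoint, and the resulting Calder\'on operator is not bounded on $L^2$ (this endpoint is exactly where Calder\'on's weak-type theory degenerates). This is why the paper instead invokes the Jodeit--Torchinsky estimate in integrated form,
\begin{align*}
\int_0^x\bigl((Tf)^{\star}(t)\bigr)^2\,dt\ \lesssim\ \int_0^x\Bigl(\int_0^{1/t}f^{*}(s)\,ds\Bigr)^2\,dt,
\end{align*}
which exploits the \emph{strong} type $(2,2)$ to absorb the Bellman contribution into the Hardy part; only the Hardy operator then survives, and general $p,q$ and general weights are handled via Hardy's lemma/level-function arguments for monotone weights, as in \cite{He84} and \cite{KPRS24}. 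Note also that your fallback appeal to ``the transition in the power behaviour'' of $u^{\star}$ and $v_{*}$ is not available: the theorem concerns arbitrary weights satisfying \eqref{hyp_on_gen_wts}, so only the monotonicity of $u^{\star}$ and $v_{*}$, not any power-law structure, may be used.
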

\begin{proof}
   The proof follows similar techniques to those employed in \cite[Theorem 2.4]{He84} and \cite[Proposition 2.6]{He84} for the case when both $p, q$ are not equal to 2, where we use \eqref{H_ineq_P} and \eqref{H_ineq_Q}. In fact, in \cite[Theorem 1.3]{KPRS24}, the first author established an analogue of \cite[Theorem 2.4]{He84} in a comparable setting, from which the proof of the theorem directly follows when both $p, q$ are not equal to  $2$.
   
For the case $p = q = 2$, a similar approach to that in \cite[Theorem (3.1) B]{GLT18} can be applied; (see also \cite[Theorem 1(i)]{BH03}). While the underlying methods can easily be extended to a broader context, we include a sketch of the proof for the case $p = q = 2$ for the sake of completeness. Suppose $T$ is a sublinear operator of   type $(1, \infty)$ and  type $(2, 2)$. Then, following the calculation as in \cite[Theorem 4.6, p. 260]{JT70}, we obtain 
\begin{align*}
\int_{0}^{x} (Tf)^{\star}(t)^2 \,dt \lesssim \int_0^{x} \left( \int_0^{1/t}f^*(s)\, ds \right)^2 dt.
\end{align*} Using this inequality and applying a similar argument as in \cite[Theorem B]{GLT18}, we can show that inequality \eqref{eqn_pitt_gent} holds.
\end{proof}
Next, we apply Theorem \ref{thm_pitt_subl} to the modified Jacobi transform. To do so, we first need to estimate the non-increasing rearrangement of the associated polynomial weights.
\subsubsection{Weights and their non-increasing rearrangement}
\begin{enumerate}
   \item\label{exm-triv} \textit{(Trivial weight)}  We denote by $w_0$ the weight that is identically $1$. It is easy to verify that the non-increasing rearrangement with respect to an infinite measure is the constant function $1$.
    \item\textit{(Rearrangement w.r.t. the measure $\tilde{m}(t) dt$)} For $\kappa\geq 0$, let us consider the weight $v_{\kappa}(s):= s^{\kappa}, s\geq 0$. We first find the non-increasing rearrangement of the function $1/v_{\kappa}$ for $ \kappa>0$ with respect to the measure $\tilde{m}(t)dt$. The $\kappa=0$ case follows from the previous case \eqref{exm-triv}. For $\kappa>0$, the distribution function of $1/v_\kappa$ is 
\begin{align*}
    d_{1/v_{\kappa}}(\gamma)&=\mu \left(\left\{s \geq 0: v_{\kappa}^{-1}(s) >\gamma \right\}\right)\\
    &=  \mu \left(\left\{s \geq 0: s^{-\kappa} >\gamma \right\}\right)\\&= \mu \left( \left\{s:s <(1/\gamma)^{\frac{1}{\kappa}} \right\}\right).
\end{align*}
For $1 < \gamma < \infty$, using 
\begin{align}\label{est_Delta}
 \tilde{m}(s) \asymp \begin{cases}
                s^{2\alpha+1}  \quad &\text{if } 0\leq s\leq 1,\\
                   s^2 \quad  & \text{if } 1< s <\infty,
                   \end{cases}
\end{align}
 we get
\begin{align*}
     d_{1/v_{\kappa}}(\gamma)\asymp \int_0^{(1/\gamma)^{\frac{1}{\kappa}}} s^{2\alpha+1}\, dt \asymp \gamma^{-\frac{2(\alpha+1)}{\kappa}}.
\end{align*}
When $0<\gamma\leq 1$, 
\begin{align*}
     d_{1/v_{\kappa}}(\gamma)\asymp \int_{0}^{1} s^{2\alpha+1}ds + \int_1^{\gamma^{-\frac{1}{\kappa}}}s^2\,ds
    \asymp \gamma^{-\frac{3}{\kappa}}.
\end{align*}
Thus the non-increasing rearrangement $(1/v_{\kappa})^{*}$ of $1/v_{\kappa}$ is given by
\begin{align*}
     {(1/v_{\kappa})}^{*}(t) & = \inf\{\gamma>0: d_{1/v_{\kappa}}(\gamma)\leq t\} \\
                   & =  \min\{ \inf\{0<\gamma\leq 1: d_{{1/v_{\kappa}}}(\gamma) \leq t\}, \, \inf\{1<\gamma < \infty: d_{{1/v_{\kappa}}}(\gamma) \leq t\}\}.
\end{align*}
Now we make use of the estimate of $d_{1/v_{\kappa}}(\gamma)$ for $\gamma$ near zero and away from zero to derive the following
\begin{align*}
    (1/v_{\kappa})^*(t) & \asymp \min\{ \inf\{0<\gamma\leq 1: \gamma^{-\frac{3}{\kappa}} \lesssim t\}, \, \inf\{1<\alpha < \infty: \gamma^{-\frac{2(\alpha+1)}{\kappa}} \lesssim t\}\}.
\end{align*}
This gives
\begin{align*}
     (1/v_{\kappa})^*(t) & \asymp  \min\left\{ \inf\left\{0<\gamma\leq 1:  \gamma \geq   t ^{-\frac{\kappa}{3}} \right\}, \, \inf\left\{1<\gamma < \infty: \gamma \geq  t^{-\frac{\kappa}{2(\alpha+1)}} \right\}\right\}.
\end{align*}
Hence, by considering $t$ is near and away from zero, we obtain from the above
\begin{align}\label{est_1/vk*}
     (1/v_{\kappa})^*(t) \asymp &\begin{cases}
         t^{-\frac{\kappa}{(2(\alpha+1)}} &\text{if  } 0\leq t\leq 1,\\
        t^{-\frac{\kappa}{3}} &\text{if }1 <t<\infty.
     \end{cases}
\end{align}
    
    \item  \textit{((Rearrangement w.r.t. the measure $n(\lambda)d\lambda$)}
 For $\sigma \geq 0$ and $\zeta\geq 0$, we consider the weight $u_{\zeta, \sigma}$ given by 
 \begin{equation*}
u_{\zeta,\sigma}(\lambda)=(\lambda^2+\zeta^2)^{-\frac{\sigma}{2}}, \qquad \text{for all $\lambda>0$.}
\end{equation*} 
Using a similar computation as in Lemma \ref{lem_plaey_inq}, together with the estimate for $n(\lambda)$ provided in \eqref{est_sharp_n}
 \begin{equation}\label{est_c-2_d}
 \begin{aligned}
        n(\lambda)\asymp \begin{cases}
            \lambda^2 \qquad &\text{if } 0\leq \lambda\leq 1,\\
            \lambda^{2\alpha+1} \qquad &\text{if } \lambda > 1,
        \end{cases}
    \end{aligned}
    \end{equation}
    we arrive at the following inequality for $\zeta=0$
  \begin{align}
      d_{u_{0,\sigma}}(\gamma)\leq C \begin{cases}
          \gamma^{-\frac{2(\alpha+1)}{\sigma}} \quad &\text{for } 0<\gamma\leq 1,\\
          \gamma^{-\frac{3}{\sigma}} \quad &\text{for } \gamma>1.
      \end{cases}
  \end{align}  
  When $\zeta \neq 0$,
   \begin{align}
      d_{u_{\zeta,\sigma}}(\gamma)\leq C \begin{cases}
          \gamma^{-\frac{2(\alpha+1)}{\sigma}}  \quad &\text{for } 0<\gamma<\left(\frac{1}{\zeta^2}\right)^{\frac{\sigma}{2}},\\
          0 \quad &\text{for } \gamma \geq \left(\frac{1}{\zeta^2}\right)^{\frac{\sigma}{2}}.
      \end{cases}
  \end{align} 
Thus, from the definition of $ u_{0,\sigma}^{\star}$, we have
\begin{align*}
    u_{0,\sigma}^{\star}(t) & = \inf\{\gamma>0: d_{u_{0,\sigma}}(\gamma)\leq t\} \\
                   &\lesssim \min\{ \inf\{0<\gamma\leq 1: {\gamma^{-\frac{2(\alpha+1)}{\sigma}}}\lesssim t\}, \, \inf\{1< \gamma< \infty: \gamma^{-\frac{3}{\sigma}} \lesssim t\}\}\\
    & \lesssim \min\{ \inf\{0<\gamma\leq 1: {\gamma \gtrsim  t^{-\frac{\sigma}{2(\alpha+1)}}}\}, \, \inf\{1< \gamma< \infty: \gamma \gtrsim t^{-\frac{\sigma}{3}}\}\}.
\end{align*}
Therefore,  we have obtained the following
\begin{align}\label{est_u_s^*}
     u_{0,\sigma}^{\star}(t)\lesssim
 \begin{cases}
        t^{-\frac{\sigma}{3}} \qquad &\text{if } 0 \leq t\leq 1,\\
        t^{-\frac{\sigma}{2(\alpha+1)}} \qquad & \text{if } 1<t< \infty.\\
        \end{cases}
\end{align}
Similarly, for $\zeta \neq 0$, we can show that
\begin{align}\label{est_u_s^*z>0}
     u_{\zeta,\sigma}^{\star}(t)\lesssim
 \begin{cases}
       1 \qquad &\text{if } 0 \leq t\leq 1,\\
        t^{-\frac{\sigma}{2(\alpha+1)}} \qquad & \text{if } 1<t< \infty.\\
        \end{cases}
\end{align}
\end{enumerate}
Now that we have obtained estimates for the non-increasing rearrangements of the weights, we apply Theorem \ref{thm_pitt_subl} to derive sufficient conditions on the weights $u_{\zeta, \sigma}$ and $v_{\kappa}$ for the  Pitt's inequality \eqref{eqn_pitts_modi} to hold for modified Jacobi transforms. Our first result in this direction is as follows.

\begin{theorem}\label{thm_pitt's_mod_J}
  Let $\zeta \geq 0$ and $1 < p \leq q < \infty$.   Assume that $\sigma, \kappa \geq 0$ such that $  \kappa p' < 2(\alpha + 1)$ and  \eqref{nec_stan_s-k>} is satisfied.    Then the following shifted Pitt's inequality holds   
    \begin{align}\label{eqn_pitts_modi}
         \left( \int_0^{\infty} |\tilde{\mathcal{J}} f(\lambda)|^q  (\lambda^2 +\zeta^2)^{-\frac{\sigma q}{2}} n(\lambda) \, d\lambda \right)^{1/q} \leq C   \left( \int_0^{\infty} | f(t)|^p  t^{\kappa p} \tilde{m}(t) \, dt \right)^{1/p},
    \end{align}
  provided one of the following additional conditions is satisfied
   \begin{enumerate}
       \item if $\zeta \neq 0$ and $\kappa p' \geq 3$;
       \item if  $\zeta \neq 0$ and $\kappa p' < \min\{3, 2(\alpha+1)\}$, then \eqref{eqn_nec_mJ,k>};
       \item if $\zeta=0$ and $\kappa p' \geq 3$, then $\sigma q<3$;
       \item if  $\zeta=0$ and $\kappa p' < \min\{3, 2(\alpha+1)\}$, then $\sigma q<3$ and  \eqref{eqn_nec_s-k<}; 
   \end{enumerate}
\end{theorem}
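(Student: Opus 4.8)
The plan is to derive Theorem~\ref{thm_pitt's_mod_J} from the abstract weighted boundedness criterion Theorem~\ref{thm_pitt_subl}, applied to the (sub)linear operator $T=\tilde{\mathcal{J}}$ between $X=(\R_+,d\tilde m(t))$ and $Y=(\R_+,dn(\lambda))$, with the weights $v(t)=v_\kappa(t)=t^\kappa$ on the $X$-side and $u(\lambda)=u_{\zeta,\sigma}(\lambda)=(\lambda^2+\zeta^2)^{-\sigma/2}$ on the $Y$-side; with these identifications the conclusion \eqref{eqn_pitt_gent} of Theorem~\ref{thm_pitt_subl} is exactly the claimed inequality \eqref{eqn_pitts_modi}. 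So it suffices to verify the two hypotheses of Theorem~\ref{thm_pitt_subl}. The mapping properties are immediate: $\tilde{\mathcal{J}}$ extends to an isometry from $L^2(\tilde m)$ onto $L^2(n)$ (see \eqref{eqn_rel_J_mJ}, \eqref{eqn_plac_mJ,I}), hence is of type $(2,2)$; and since $|\tilde\varphi_\lambda(t)|=|\varphi_\lambda(t)|/\varphi_0(t)\le 1$ for all real $\lambda$ and all $t\ge0$ — a consequence of Mehler's positive integral representation, the same tool that yields \eqref{eqn_sph_low} — one gets $\|\tilde{\mathcal{J}}f\|_{L^\infty}\le\|f\|_{L^1(\tilde m)}$, i.e. type $(1,\infty)$.

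The heart of the proof is the weight condition \eqref{hyp_on_gen_wts}, namely
\[
\sup_{r>0}\Big(\int_0^{1/r}\big(u_{\zeta,\sigma}^{\star}(t)\big)^{q}\,dt\Big)^{1/q}\Big(\int_0^{r}\big((1/v_\kappa)^{*}(t)\big)^{p'}\,dt\Big)^{1/p'}<\infty,
\]
where we used $v_*^{-p'}=\big((1/v_\kappa)^{*}\big)^{p'}$. Here I would simply insert the two-regime rearrangement estimates already recorded above: \eqref{est_1/vk*} for $(1/v_\kappa)^{*}$ and \eqref{est_u_s^*}–\eqref{est_u_s^*z>0} for $u_{\zeta,\sigma}^{\star}$. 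The assumption $\kappa p'<2(\alpha+1)$ makes $\int_0^r v_*^{-p'}$ convergent at the origin, and in the cases with $\zeta=0$ the assumption $\sigma q<3$ makes $\int_0^{1/r}(u_{0,\sigma}^{\star})^{q}$ convergent at the origin for $r\ge1$; in every case the two Hardy integrals are comparable to fixed powers of $r$ (up to a logarithm at a threshold). The supremum then separates into the regime $r\to\infty$, controlled by the near-$0$ behaviour of $u^\star$ and the near-$\infty$ behaviour of $v_*^{-p'}$, and the regime $r\to0^+$, controlled by the near-$\infty$ behaviour of $u^\star$ and the near-$0$ behaviour of $v_*^{-p'}$; in each one reduces to checking that the exponent of $r$ has the right sign. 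The $r\to\infty$ regime gives: no further constraint when $\kappa p'\ge3$ (cases (1) and (3)); the constraint $\kappa\ge3(1-1/p-1/q)$ when $\zeta\ne0$ and $\kappa p'<3$, which is \eqref{eqn_nec_mJ,k>} (case (2)); and the constraint $\sigma-\kappa\le3(1/p+1/q-1)$ when $\zeta=0$ and $\kappa p'<3$, which is \eqref{eqn_nec_s-k<} (case (4)). The $r\to0^+$ regime uses $\kappa p'<2(\alpha+1)$ to make the $v$-factor decay and reduces, in the only nontrivial sub-case $\sigma q<2(\alpha+1)$, to $\sigma-\kappa\ge2(\alpha+1)(1/p+1/q-1)$, i.e. \eqref{nec_stan_s-k>}, which is assumed throughout. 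Collecting these sub-cases shows that \eqref{hyp_on_gen_wts} holds under exactly the stated hypotheses, and Theorem~\ref{thm_pitt_subl} then yields \eqref{eqn_pitts_modi}.

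I expect the only genuine difficulty to be organisational: the sign of the exponent of $r$ produced above depends jointly on whether $\zeta=0$, on the position of $\kappa p'$ relative to both $3$ and $2(\alpha+1)$, and on the position of $\sigma q$ relative to both $3$ and $2(\alpha+1)$, so a handful of sub-cases must be run through — each, however, being an elementary power-counting estimate whose threshold matches precisely one of the items (1)–(4) in the statement. Since the matching necessary conditions are supplied by Theorem~\ref{thm_nec_modJ}, this argument simultaneously shows that, for polynomial weights with non-negative exponents, the sufficient and necessary conditions for the modified Jacobi transform coincide; this is the improvement over the sufficient condition $(\sigma,\kappa)\in\mathbf{D_{p,q}}(2\alpha+1,2)$ of \cite[Theorem 8.2]{GLT18}, which is strictly stronger than necessary.
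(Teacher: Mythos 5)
Your proposal is correct and follows essentially the same route as the paper: the paper also reduces Theorem~\ref{thm_pitt's_mod_J} to Theorem~\ref{thm_pitt_subl} with $T=\tilde{\mathcal{J}}$ (type $(2,2)$ from \eqref{eqn_plac_mJ,I}, type $(1,\infty)$ from the definition), and then verifies \eqref{hyp_on_gen_wts} by splitting into the regimes $0<r\le1$ and $r>1$ and inserting the rearrangement estimates \eqref{est_1/vk*}, \eqref{est_u_s^*}, \eqref{est_u_s^*z>0}, with the same sub-case bookkeeping on the positions of $\kappa p'$ and $\sigma q$ relative to $3$ and $2(\alpha+1)$ that produces conditions (1)--(4).
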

\begin{proof}
We recall from \eqref{eqn_plac_mJ,I} that the modified Jacobi transform $\tilde{\mathcal{J}}$ is a bounded operator from $L^2(d\tilde{m})$ to $L^2(dn)$. Moreover, its boundedness from $L^1(d\tilde{m})$ to $L^\infty(dn)$ follows directly from its definition. Therefore, we can apply Theorem \ref{thm_pitt_subl} with $T = \tilde{\mathcal{J}}$. It is thus sufficient to show that if the parameters $\sigma$ and $\kappa$ satisfy the conditions stated in the hypothesis of Theorem \ref{thm_pitt's_mod_J}, then the following inequality holds for all $1 < p \leq q < \infty$:
\begin{align*}
\sup_{r > 0} \left(P_{1/r} u_{\zeta, \sigma}^{\star q} \right)^{1/q} \left(P_r v_\kappa^{-p'} \right)^{1/p'} < \infty.
\end{align*}
Since the estimates of ${u_{\zeta, \sigma}^{\star}}(s) $ and $ {v_\kappa}_{*}(s)$ differ significantly when $s$ is near zero, and when it is away from zero, it is natural to consider these two cases separately. Let us first examine the case when $s$ is near zero.

\par\textbf{Case I: $0<r \leq 1 $.} We need to consider the cases $\alpha>1/2$ and $-1/2<\alpha\leq 1/2$ separately. First, let us consider $\alpha>1/2$ and $\zeta=0$.  For  $\sigma q<3$, we can write from  \eqref{est_u_s^*}
\begin{equation}\label{est_u,a>1/2}
\begin{aligned}
\left(\int_0^{1/r}{u_{0, \sigma}^{\star}}^qdt\right)^{\frac{1}{q}} & \asymp \left(\int_0^1t^{-\frac{\sigma q}{3}}dt+\int_1^{1/r}t^{-\frac{\sigma q}{2(\alpha+1)}}dt\right)^{\frac{1}{q}}\\
    & \lesssim (1+r^{\frac{\sigma}{2(\alpha+1)}-\frac{1}{q}}).
\end{aligned}
\end{equation}
 For the same range of $s$ as above  and $\kappa p'<2(\alpha+1)$, we get from  \eqref{est_1/vk*}
\begin{align*}
     \left(\int_0^r {v_{\kappa}}_{*}^{-p'}\,dt\right)^{\frac{1}{p'}} \leq C \left(\int_0^r t^{-\frac{\kappa p'}{2(\alpha+1)}}\,dt\right)^{\frac{1}{p'}} = C r^{-\frac{\kappa}{2(\alpha+1)}+\frac{1}{p'}}.
\end{align*}
Therefore, we have
\begin{align}\label{eqn_sufpitt<1}
   \sup_{0<r\leq 1}  \left(P_{1/r}{u_{0,\sigma}^{\star}}^q \right)^{1/q}\left(P_r {v_\kappa}_{*}^{-p'} \right)^{1/{p'}} \leq C \left(1+\sup_{0<r\leq 1} r^{-\frac{\kappa}{2(\alpha+1)}+\frac{1}{p'}+\frac{\sigma}{2(\alpha+1)}-\frac{1}{q}} \right).
\end{align}
The expression on the right-hand side of the inequality above is finite if and only if \begin{align}\label{cond_pitt<1}
\frac{\sigma}{2(\alpha+1)}  +\frac{1}{p'} \geq \frac{\kappa}{2(\alpha+1)}+ \frac{1}{q} \quad \Leftrightarrow \quad \sigma-\kappa \geq 2(\alpha+1)\left( \frac{1}{p}+\frac{1}{q}-1\right).
\end{align}
When $\alpha \leq 1/2$, that is, when $2(\alpha+1) \leq 3$, we observe that for $\sigma q < 2(\alpha+1)$, the computation remains same, and the left-hand side of \eqref{eqn_sufpitt<1} is finite whenever condition \eqref{cond_pitt<1} is satisfied.
In the case where $2(\alpha+1) \leq \sigma q < 3$, the assumption $\kappa p' < 2(\alpha+1)$ alone guarantees that the left-hand side of \eqref{eqn_sufpitt<1} remains finite.

Additionally, we note that for the case $\zeta > 0$, to derive the inequality \eqref{est_u,a>1/2},  we can use \eqref{est_u_s^*z>0} instead of  \eqref{est_u_s^*}. In this case, the condition $\sigma q < 3$ is not required, which ensures that the right-hand side of \eqref{est_u,a>1/2} is finite when $\zeta = 0$.

\par\textbf{Case II: $1<r<\infty$.}   We can write from \eqref{est_u_s^*} and  \eqref{est_u_s^*z>0} 
\begin{equation}\label{est_u*>1}
    \begin{aligned}
    \left(\int_0^{1/r}u_{\zeta, \sigma}(t)^qdt\right)^{\frac{1}{q}} & \lesssim \begin{cases}
        r^{\frac{\sigma}{3}-\frac{1}{q}}&\quad \text{if } \zeta=0, \text{ and } \sigma q<3,\\
        r^{-\frac{1}{q}} &\quad \text{if } \zeta \neq 0.
    \end{cases} 
\end{aligned}
\end{equation}
To estimate $P_r  {v_{\kappa}}_{*}^{-p'}$, we need to consider the ranges of $\kappa$ separately.  First, let us handle the case when $\kappa p'<3$, for which we can write from \eqref{est_1/vk*} 
\begin{equation}\label{est_v_*>1,<3}
\begin{aligned}
    \left(\int_0^r {v_{\kappa}}_{*}^{-p'}dt\right)^{\frac{1}{p'}} & \leq  C \left(\int_0^{1} t^{-\frac{ \kappa p'}{2(\alpha+1)}}\,dt+\int_{1}^s t^{-\frac{\kappa {p'}}{3}}\,dt\right)^{\frac{1}{p'}}\\
    & \leq C r^{\frac{1}{p'}-\frac{\kappa}{3}}.
\end{aligned}
\end{equation}
 Thus, for the above range of $\kappa$, we have obtained the following 
\begin{align}\label{eqn_est_suff_r>1}
      \sup_{r>1} \left(P_{1/r}{u_{\zeta, \sigma}^{\star}}^q \right)^{1/q}\left(P_r {v_\kappa}_{*}^{-p'} \right)^{1/{p'}} \leq C  \begin{cases}\sup_{r>1}
        r^{\frac{\sigma}{3}-\frac{\kappa}{3}+\frac{1}{p'}-\frac{1}{q}}&\quad \text{if } \zeta=0, \text{ and } \sigma q<3,\\
        \sup_{r>1} r^{\frac{1}{p'}-\frac{\kappa}{3}{-\frac{1}{q}} }&\quad \text{if } \zeta \neq 0.
    \end{cases}
\end{align}
Therefore, for the case $\kappa p'<3$, the left-hand side of the inequality above will be finite, provided the following conditions are met 
\begin{enumerate}
    \item if $\zeta=0$, then $\sigma q<3$, and $\sigma -\kappa \leq 3(1/q-1/p')$;
    \item if $\zeta \neq 0$, then $ \kappa \geq 3(1/p'-1/q)$.
\end{enumerate}

We now consider the case $\kappa p' \geq 3$. We first observe that if $\alpha \leq 1/2$, then the result follows exactly as in the case $\kappa p' < 3$, since in this situation, we have $\kappa p' < 2(\alpha+1) \leq  3$ by assumption. Therefore, it remains to address the case when $\alpha > 1/2$ to complete the proof.
Suppose $\alpha>1/2$ and   $3< \kappa p' < 2(\alpha+1)$, then from \eqref{est_1/vk*} we obtain
\begin{equation}\label{est_v_*>1,1}
\begin{aligned}
    \left(\int_0^r {v_{\kappa}}_{*}^{-p'}dt\right)^{\frac{1}{p'}} & \leq  C \left(\int_0^{1} t^{-\frac{ \kappa p'}{2(\alpha+1)}}\,dt+\int_{1}^s t^{-\frac{\kappa {p'}}{3}}\,dt\right)^{\frac{1}{p'}}\\
    & \leq C \left(1+ \frac{1}{\frac{\kappa p'}{3}-1}(1-s^{1-\frac{\kappa p'}{3}}) \right)^{\frac{1}{p'}}<C,
\end{aligned}
\end{equation}
where the constant $C$ is independent of $r$. Similarly, when $\kappa p'=3 $, we can show that 
\begin{align}\label{est_v_*>1,2}
     \left(\int_0^r {v_{\kappa}}_{*}^{-p'}dt\right)^{\frac{1}{p'}} & \leq  C (1+\log r).
\end{align}
Thus combining the inequalities \eqref{est_u*>1}, \eqref{est_v_*>1,1}, \eqref{est_v_*>1,2},  it follows that
\begin{align}\label{eqn_pr_p1/r>1}
 \sup_{r>1} \left(P_{1/r}{u_{\zeta, \sigma}^{\star}}^q \right)^{1/q}\left(P_r {v_\kappa}_{*}^{-p'} \right)^{1/{p'}} \leq C  \begin{cases}\sup_{r>1}
        r^{\frac{\sigma}{3}-\frac{1}{q}}&\quad \text{if } \zeta=0, \text{ and } \sigma q<3,\\
        \sup_{r>1} r^{-\frac{1}{q}} (1+\log r) &\quad \text{if } \zeta \neq 0.
        \end{cases}
\end{align}
We conclude the proof of the theorem by noting that the right-hand side of \eqref{eqn_pr_p1/r>1} is always finite when $\zeta \neq 0$, and also when $\zeta = 0$, provided that $\sigma q < 3$.
\end{proof}
 
\begin{theorem}\label{thm_pitt's_mod_J-1}
  Let $\zeta \geq 0$ and $1 < p \leq q < \infty$.   Assume that $\sigma, \kappa \geq 0$ such that $\sigma q<2(\alpha+1)$ and  \eqref{eqn_nec_bal_I,ks} is satisfied.
    Then the following shifted Pitt's inequality  holds
    \begin{align}\label{eqn_pitts_modi-1}
         \left( \int_0^{\infty} |\tilde{\mathcal{I}} F(t)|^q t^{-{\sigma q}} \tilde{m}(t) \, dt \right)^{1/q} \leq C   \left( \int_0^{\infty} |F(\lambda)|^p  (\lambda^2+\zeta^2)^{\frac{\kappa p}{2}} n(\lambda)\, d\lambda \right)^{1/p},
    \end{align}
 provided one of the following additional conditions is satisfied
   \begin{enumerate}
       \item if $\zeta\neq 0$ and $\sigma q \geq 3$;
       \item if  $\zeta \neq 0$ and $\sigma q <\min\{3, 2(\alpha+1)\}$,  then \eqref{eqn_nec_con_mI,s>};
       \item if $\zeta=0$ and $\sigma q \geq 3$, then $\kappa p'<3$;
       \item if  $\zeta=0$ and $\sigma q < \min\{3, 2(\alpha+1)\}$, then $\kappa p'<3$ and \eqref{eqn_nec_s-k>_mI}.
   \end{enumerate}
\end{theorem}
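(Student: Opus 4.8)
The plan is to deduce this from Theorem~\ref{thm_pitt_subl} applied to $T=\tilde{\mathcal{I}}$, now regarded as an operator from $Y=(\R_+,dn(\lambda))$ to $X=(\R_+,d\tilde m(t))$, i.e.\ in the ``roles interchanged'' version of that theorem. Two endpoint bounds are required. First, $\tilde{\mathcal{I}}$ is of type $(1,\infty)$: since $|\varphi_\lambda(t)|\le\varphi_0(t)$ for all real $\lambda$ and $t\ge0$, the kernel satisfies $|\tilde\varphi_\lambda(t)|=|\varphi_\lambda(t)/\varphi_0(t)|\le1$, and hence $\|\tilde{\mathcal{I}}F\|_{L^\infty(d\tilde m)}\le\|F\|_{L^1(dn)}$ directly from the definition. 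Second, $\tilde{\mathcal{I}}$ is of type $(2,2)$ by the Plancherel identity~\eqref{eqn_plac_mJ,I}. Thus Theorem~\ref{thm_pitt_subl} reduces the claim to verifying, under the stated restrictions on $(\sigma,\kappa)$, the Hardy-type condition $\sup_{r>0}\bigl(P_{1/r}(u^\star)^q\bigr)^{1/q}\bigl(P_r v_*^{-p'}\bigr)^{1/p'}<\infty$, where $u(t)=t^{-\sigma}$ is the target weight, rearranged with respect to $\tilde m(t)\,dt$, and $v(\lambda)=(\lambda^2+\zeta^2)^{\kappa/2}$ is the domain weight, with $v_*=[(1/v)^*]^{-1}$ and $(1/v)^*$ the rearrangement of $(\lambda^2+\zeta^2)^{-\kappa/2}$ with respect to $n(\lambda)\,d\lambda$.

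The rearrangement estimates needed here are already available from Section~\ref{sec_jac_transform}. Applying the computation in item~(2) of the weights subsection with $\kappa$ replaced by $\sigma$ gives $u^\star(t)\asymp t^{-\sigma/(2(\alpha+1))}$ for $0\le t\le1$ and $u^\star(t)\asymp t^{-\sigma/3}$ for $t>1$. Likewise $(1/v)^*$ is exactly the rearrangement $u_{\zeta,\kappa}^\star$ computed in item~(3), so that for $\zeta=0$ one has $v_*^{-p'}(t)\asymp t^{-\kappa p'/3}$ for $0\le t\le1$ and $v_*^{-p'}(t)\asymp t^{-\kappa p'/(2(\alpha+1))}$ for $t>1$, while for $\zeta\neq0$ the factor $v_*^{-p'}$ is comparable to a positive constant near the origin. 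The key structural point is that, compared with the proof of Theorem~\ref{thm_pitt's_mod_J}, the exponents $2(\alpha+1)$ and $3$ now exchange their roles at $0$ and at $\infty$; this is precisely what reproduces the exponents appearing in the necessary conditions of Theorem~\ref{thm_nec_modI}.

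With these estimates, I would verify the Hardy-type condition by splitting into the ranges $0<r\le1$ and $1<r<\infty$, and within each distinguishing the sub-cases $\sigma q<3$ versus $\sigma q\ge3$ (and, where it matters, $\alpha\le1/2$ versus $\alpha>1/2$), exactly in parallel with the proof of Theorem~\ref{thm_pitt's_mod_J}. For $0<r\le1$, integrability of $\int_0^{1/r}(u^\star)^q$ near $0$ forces $\sigma q<2(\alpha+1)$ (assumed), integrability of $\int_0^r v_*^{-p'}$ near $0$ forces $\kappa p'<3$ (assumed in the $\zeta=0$ cases), and balancing the resulting powers of $r$ as $r\to0^+$ gives exactly $\sigma-\kappa\ge3(1/p+1/q-1)$ when $\sigma q<3$ (that is, \eqref{eqn_nec_s-k>_mI}, or its $\zeta\neq0$ counterpart \eqref{eqn_nec_con_mI,s>}), whereas when $\sigma q\ge3$ the $(u^\star)^q$-integral over $t>1$ stays bounded (up to a harmless logarithm at $\sigma q=3$) and no constraint beyond $\kappa p'<3$ is needed. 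For $1<r<\infty$ the analysis is symmetric and produces \eqref{eqn_nec_bal_I,ks} through the $t>1$ behavior, with logarithmic borderline cases (e.g.\ $\kappa p'=2(\alpha+1)$) again causing no trouble. The main point is bookkeeping rather than a genuine obstacle: one must run this case analysis carefully so that the exponent inequalities extracted from the supremum match the stated hypotheses on $(\sigma,\kappa,\zeta)$ exactly; the argument is otherwise entirely parallel to that of Theorem~\ref{thm_pitt's_mod_J}, and the case $p=q=2$ is handled by interpolation between the $(1,\infty)$ and $(2,2)$ bounds already built into Theorem~\ref{thm_pitt_subl}.
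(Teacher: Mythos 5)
Your proposal is correct and follows essentially the same route as the paper: both apply the interchanged-roles version of Theorem~\ref{thm_pitt_subl} to $\tilde{\mathcal{I}}$ using the $(1,\infty)$ bound (from $|\tilde\varphi_\lambda|\le 1$) and the $(2,2)$ Plancherel bound \eqref{eqn_plac_mJ,I}, and then verify the Hardy-type condition with $u_\sigma^*$ rearranged against $\tilde m(t)\,dt$ and $(1/v_{\zeta,\kappa})^\star$ against $n(\lambda)\,d\lambda$, splitting into $0<r\le 1$ and $r>1$ exactly as in the paper's Cases I and II. Your exponent bookkeeping (the swap of $2(\alpha+1)$ and $3$ relative to Theorem~\ref{thm_pitt's_mod_J}, and the resulting conditions \eqref{eqn_nec_bal_I,ks}, \eqref{eqn_nec_con_mI,s>}, \eqref{eqn_nec_s-k>_mI}) matches the paper's computation.
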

\begin{proof}We begin by recalling that the operator $\tilde{\mathcal{I}}$ is bounded from $L^1(dn)$ to $L^{\infty}(dm)$, as well as from $L^2(dn)$ to $L^2(dm)$. Therefore, to apply Theorem \ref{thm_pitt_subl}, it is enough to show that if the parameters $\sigma$ and $\kappa$ satisfy the hypothesis,, then the following inequality holds:
\begin{align*}
\sup_{r>0} \left(P_{1/r}(u_{\sigma}^{* q}) \right)^{1/q}\left(P_r \left({v_{\zeta,\kappa}}_{{\star}}^{-p'}\right) \right)^{1/{p'}}<\infty,
\end{align*}
where $u_{\sigma}(t) = t^{-\sigma}$ and $v_{\zeta, \kappa}(\lambda) = (\lambda^2 + \zeta^2)^{\kappa}$.

\par\textbf{Case I: $0<r \leq 1 $.} Using the hypothesis $\sigma q < 2(\alpha+1)$ and the estimate \eqref{est_1/vk*}, and by considering different cases, we obtain
\begin{equation}\label{est_u,a<1_mI_b}
\begin{aligned}
\left(\int_0^{1/r}{u_{ \sigma}^{*}}^qdt\right)^{\frac{1}{q}} & \lesssim 
\begin{cases}
  1+  r^{\frac{\sigma}{3}-\frac{1}{q}} \quad & \text{if } \sigma q <\min\{3, 2(\alpha+1)\},\\
    1+\log({\frac{1}{r}}) \quad & \text{if } \sigma q =3,\\
   1 \quad & \text{if } \sigma q >3.
\end{cases}
\end{aligned}
\end{equation}
Similarly, for $\kappa p'<3$ (when $\zeta=0$), we can derive the following from \eqref{est_u_s^*} and \eqref{est_u_s^*z>0}
\begin{equation}\label{est_v_z,k*<1_mI}
\begin{aligned}
     \left(\int_0^r {v_{\zeta,\kappa}}_{\star}^{-p'}\,dt\right)^{\frac{1}{p'}} \lesssim \begin{cases}
         r^{\frac{1}{p'}-\frac{\kappa}{3}} \quad & \text{if } \zeta=0,\\
         r^{\frac{1}{p'}} \quad & \text{if } \zeta\not=0.\\
     \end{cases}
\end{aligned}
\end{equation}
We observe the right-hand side of \eqref{est_v_z,k*<1_mI} is bounded for $0<r\leq 1$. Now let us first consider the case $\alpha\leq 1/2$ and $\zeta=0$. In this case, we obtain the following estimate
\begin{align}\label{eqn_sufpitt<1_mI}
   \sup_{0<r\leq 1}  \left(P_{1/r}{u_{\sigma}^{*}}^q \right)^{1/q}\left(P_r {v_{\zeta,\kappa}}_{\star}^{-p'} \right)^{1/{p'}} \lesssim \begin{cases}
       \left(1+\sup_{0<r\leq 1} r^{\frac{\sigma}{3}-\frac{1}{q}-\frac{\kappa}{3}+\frac{1}{p'}} \right) \quad & \text{if } \zeta=0,\\
        \left(1+\sup_{0<r\leq 1} r^{\frac{\sigma}{3}-\frac{1}{q}+\frac{1}{p'}} \right) \quad &\text{if } \zeta\not=0.\\
   \end{cases} 
\end{align} 
For $\zeta = 0$, the expression on the right-hand side of the inequality above is finite if condition \eqref{eqn_nec_s-k>_mI} holds. For $\zeta \neq 0$, it is finite if condition \eqref{eqn_nec_con_mI,s>} is satisfied.

Next, we consider the case $\alpha>1/2 $. If $\sigma q <\min\{3, 2(\alpha+1)\}$, then the calculation is same as in \eqref{eqn_sufpitt<1_mI}. However, if $\sigma q\geq 3$, then the condition $\kappa p'<3$ alone guarantees that the expression in \eqref{eqn_sufpitt<1_mI} is finite.

\par\textbf{Case II: $1<r<\infty$.}   We obtain the following estimate from \eqref{est_1/vk*} for $\sigma q<2(\alpha+1)$
\begin{equation}\label{est_u*>1_mI}
    \begin{aligned}
    \left(\int_0^{1/r}u_{ \sigma}^{*}(t)^qdt\right)^{\frac{1}{q}} & \lesssim  r^{\frac{\sigma}{2(\alpha+1)}-\frac{1}{q}}.
\end{aligned}
\end{equation}
For $\zeta > 0$, using the estimate \eqref{est_u_s^*z>0}, and for $\zeta = 0$, using \eqref{est_u_s^*} along with the assumption $\kappa p' < 3$, we obtain
\begin{equation}\label{est_u,a<1_mI}
\begin{aligned}
\left(\int_0^{r}{{v_{\zeta, \kappa}}_{\star}}^{-p'}dt\right)^{\frac{1}{q}} & \lesssim 
\begin{cases}
  1+  r^{-\frac{\kappa}{2(\alpha+1)}+\frac{1}{p'}} \quad & \text{if } \kappa p' <2(\alpha+1),\\
    1+\log({{r}}) \quad & \text{if } \kappa p' =2(\alpha+1),\\
   1 \quad & \text{if } \kappa p' \geq 2(\alpha+1).
\end{cases}
\end{aligned}
\end{equation}
Under the conditions $\sigma q < 2(\alpha + 1)$ and $\kappa p' < 2(\alpha + 1)$ when $\zeta \neq 0$ (with the additional assumption $\kappa p' < 3$ required when $\zeta = 0$), it follows from \eqref{est_u*>1_mI} and \eqref{est_u,a<1_mI} that
\begin{align}\label{eqn_P1/r,Pr>1_mJ}
      \sup_{r>1}  \left(P_{1/r}{u_{\sigma}^{*}}^q \right)^{1/q}\left(P_r {v_{\zeta,\kappa}}_{\star}^{-p'} \right)^{1/{p'}}  \lesssim \left(1+\sup_{r>1}
        r^{\frac{\sigma}{2(\alpha+1)}-\frac{\kappa}{2(\alpha+1)}+\frac{1}{p'}-\frac{1}{q}} \right).
\end{align}
The right-hand side of the inequality above is finite if \eqref{eqn_nec_bal_I,ks} holds. When $\kappa \geq 2(\alpha+1)$, we observe that the condition $\sigma q <2(\alpha+1)$ alone guarantees that  right hand of \eqref{eqn_P1/r,Pr>1_mJ} is finite.
\end{proof}
\begin{remark}\label{rem_nec_suff_mJ}
     If we assume that $\sigma, \kappa \geq 0$, a condition required for obtaining the non-increasing rearrangements of the weights, then the necessary conditions for the shifted Pitt's inequality for the modified Jacobi transforms (for $\tilde{\mathcal{J}}$ in Theorem~\ref{thm_nec_modJ} and $\tilde{\mathcal{I}}$ in Theorem~\ref{thm_nec_modI}) coincide with the sufficient conditions in Theorem~\ref{thm_pitt's_mod_J} and Theorem~\ref{thm_pitt's_mod_J-1}, respectively. This provides a complete characterization of the class of polynomial weights with non-negative exponents for which shifted Pitt's inequality holds in the setting of modified Jacobi transforms, thereby complementing the result of \cite[Theorem 8.2]{GLT18}. More precisely, we have the following corollary for the transform $\tilde{\mathcal{J}}$ (see also Figure~\ref{pitt_mJ_Fig}). A similar result also holds for $\tilde{\mathcal{I}}$.
\end{remark}
\begin{corollary}\label{cor_char_pitt_mJ}
      Let $\zeta \geq 0$ and $1 < p \leq q < \infty$.   Assume that $\sigma, \kappa \geq 0$.  Then the shifted Pitt's inequality  \eqref{eqn_pitts_modi} holds  if and only if 
      \begin{align*}
            \sigma-\kappa \geq 2(\alpha+1)\left( \frac{1}{p}+\frac{1}{q}-1\right),
      \end{align*}
 and one of the following conditions is met
 \begin{enumerate}
          \item  if  $\zeta \neq 0$, then $\kappa p' < 2(\alpha+1)$, and
     \begin{align*}
         \kappa \geq 3\left( 1-\frac{1}{p}-\frac{1}{q}\right);
     \end{align*}
       \item if  $\zeta=0$, then $\kappa p' <  2(\alpha+1)$, $\sigma q<3$, and 
       \begin{align*} 
            \sigma-\kappa \leq 3\left( \frac{1}{p}+\frac{1}{q} -1\right).
       \end{align*}
 \end{enumerate}
\end{corollary}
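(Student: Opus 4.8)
The plan is to deduce Corollary~\ref{cor_char_pitt_mJ} by juxtaposing the necessary conditions of Theorem~\ref{thm_nec_modJ} with the sufficient conditions of Theorem~\ref{thm_pitt's_mod_J}. Since both were established under the running assumption $\sigma,\kappa\ge 0$ (the range in which the rearrangement estimates \eqref{est_1/vk*}, \eqref{est_u_s^*}, \eqref{est_u_s^*z>0} are available), the entire proof reduces to a bookkeeping check that the two lists of conditions match verbatim.

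\emph{Necessity.} Assuming \eqref{eqn_pitts_modi} holds for some $1<p\le q<\infty$, I would apply Theorem~\ref{thm_nec_modJ}. Part~\eqref{it_modJ2} (for $\zeta\ne 0$) gives $\kappa p'<2(\alpha+1)$, the balance inequality \eqref{nec_stan_s-k>}, and $\kappa\ge 3(1-1/p-1/q)$ from \eqref{eqn_nec_mJ,k>}; part~\eqref{it_modJ4} (for $\zeta=0$) gives $\kappa p'<2(\alpha+1)$, $\sigma q<3$, \eqref{nec_stan_s-k>}, and $\sigma-\kappa\le 3(1/p+1/q-1)$ from \eqref{eqn_nec_s-k<}. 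These are exactly the conditions listed in the corollary, so this direction is immediate.

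\emph{Sufficiency.} Conversely, suppose $\sigma,\kappa\ge 0$ satisfy $\sigma-\kappa\ge 2(\alpha+1)(1/p+1/q-1)$ together with the appropriate extra condition of the corollary; in particular $\kappa p'<2(\alpha+1)$ and \eqref{nec_stan_s-k>} hold, so Theorem~\ref{thm_pitt's_mod_J} applies provided we land in one of its four additional cases. I would split on the sign of $\kappa p'-3$. If $\zeta\ne 0$: for $\kappa p'\ge 3$ we are in the first case of Theorem~\ref{thm_pitt's_mod_J}, while for $\kappa p'<3$ (hence $\kappa p'<\min\{3,2(\alpha+1)\}$) the hypothesis $\kappa\ge 3(1-1/p-1/q)$ is precisely \eqref{eqn_nec_mJ,k>}, placing us in the second case. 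If $\zeta=0$: for $\kappa p'\ge 3$ the hypothesis $\sigma q<3$ puts us in the third case, and for $\kappa p'<3$ the hypotheses $\sigma q<3$ and $\sigma-\kappa\le 3(1/p+1/q-1)$ are exactly what the fourth case requires. In every case Theorem~\ref{thm_pitt's_mod_J} yields \eqref{eqn_pitts_modi}.

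\emph{Matching the two lists.} The one point requiring care — the step I expect to be the main obstacle, though still elementary — is confirming that the corollary is a genuine \emph{iff} with identical conditions on both sides, i.e.\ that the extra conditions \eqref{eqn_nec_mJ,k>} and \eqref{eqn_nec_s-k<} are not silently dropped when $\kappa p'\ge 3$. For $\zeta\ne 0$ and $\kappa p'\ge 3$ one has $\kappa\ge 3/p'=3(1-1/p)\ge 3(1-1/p-1/q)$, so \eqref{eqn_nec_mJ,k>} is automatic; for $\zeta=0$, $\kappa p'\ge 3$ together with $\sigma q<3$ forces $\sigma<3/q$ and $\kappa+3(1/p+1/q-1)\ge 3/p'+3/p+3/q-3=3/q>\sigma$, so $\sigma-\kappa\le 3(1/p+1/q-1)$ is automatic. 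Thus in the regime $\kappa p'\ge 3$ these conditions are redundant, while in the regime $\kappa p'<3$ they are the genuine constraints appearing in both Theorem~\ref{thm_nec_modJ} and Theorem~\ref{thm_pitt's_mod_J}; this establishes the stated characterization. The analogous statement for $\tilde{\mathcal I}$ announced in Remark~\ref{rem_nec_suff_mJ} follows identically, using Theorem~\ref{thm_nec_modI} and Theorem~\ref{thm_pitt's_mod_J-1}.
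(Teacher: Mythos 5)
Your proposal is correct and follows the same route as the paper, which presents the corollary as an immediate consequence of Theorem~\ref{thm_nec_modJ} (necessity) and Theorem~\ref{thm_pitt's_mod_J} (sufficiency) via Remark~\ref{rem_nec_suff_mJ}. Your explicit verification that \eqref{eqn_nec_mJ,k>} and \eqref{eqn_nec_s-k<} become redundant in the regime $\kappa p'\ge 3$ is exactly the bookkeeping the paper leaves implicit, and it is carried out correctly.
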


\begin{figure}[ht]
    \centering
        \begin{subfigure}[t]{0.33\textwidth}
            \begin{tikzpicture}[line cap=round,line join=round,>=triangle 45,x=1.0cm,y=1.0cm, scale=0.33]
\clip(-3,-2) rectangle (15,11);
\draw [line width=0.5pt] (0,0)-- (0,10);
\draw[-latex, color=black](10,0)--(10.5,0);
\draw[-latex, color=cyan](0,10.5);

\draw [line width=0.5pt] (0,0)--(10,0);


\draw (-0.75,-0.75) node{\small $0$};
\draw (10,-0.75) node{\small $\kappa$};
\draw (-0.75,10) node{\small $\sigma$};

\draw (4,3.5) node[rotate=45]{\tiny $\!\frac{\sigma\! -\!\kappa}{\mathcal{N}} = \frac{1}{q}-\!\!\frac{1}{p'}\!$};
\draw [line width=1pt,color=cyan] (0,1)--(6,7);
\draw [line width=1pt,color=cyan] (0,1)--(0,10);

\draw [line width=1pt,color=cyan,dashed] (6,7)--(6,10);

\fill[line width=1pt,color=cyan,fill=cyan, opacity=0.2] (0,1) -- (6,7) -- (6,10) --(0,10)--cycle;

\draw (0,1) node[color=cyan]{$\circ$};
\draw (6,7) node[color=cyan]{$\circ$};

 \draw (8,7) node{\tiny $(\frac{\mathcal{N}}{p'},\frac{\mathcal{N}}{q})$};
\end{tikzpicture}
            \noindent\subcaption{$\zeta \neq 0$ and $q< p'$}
            \label{Fig_Pitt_mjN1}
        \end{subfigure}
        ~
        \begin{subfigure}[t]{0.33\textwidth}
            \begin{tikzpicture}[line cap=round,line join=round,>=triangle 45,x=1.0cm,y=1.0cm, scale=0.33]
\clip(-3,-2) rectangle (15,11);
\draw [line width=0.5pt] (0,0)-- (0,10);
\draw[-latex, color=black](10,0)--(10.5,0);
\draw[-latex](0,10)--(0,10.5);

\draw [line width=0.5pt] (0,0)--(10,0);


\draw (-0.75,-0.75) node{\small $0$};
\draw (10,-0.75) node{\small $\kappa$};
\draw (-0.75,10) node{\small $\sigma$};

\draw (6,2.5) node[rotate=45]{\tiny $\!\frac{\sigma\! -\!\kappa}{\mathcal{N}} = \frac{1}{q}-\!\!\frac{1}{p'}\!$};
\draw (2,5) node[rotate=90]{\tiny $\!\frac{\kappa}{3} = \!\frac{1}{p'}-\!\!\frac{1}{q}\!$};
\draw [line width=1pt,color=cyan] (2,0)--(8,6);
\draw [line width=1pt,color=cyan] (1,0)--(1,10);
\draw [line width=1pt,color=cyan] (1,0)--(2,0);

\draw [line width=1pt,color=cyan,dashed] (8,6)--(8,10);

\fill[line width=1pt,color=cyan,fill=cyan, opacity=0.2] (1,0) --(2,0)-- (8,6) -- (8,10) --(1,10)--cycle;

\draw (1,0) node[color=cyan]{$\circ$};
\draw (2,0) node[color=cyan]{$\circ$};
\draw (8,6) node[color=cyan]{$\circ$};

 \draw (10,6) node{\tiny $(\frac{\mathcal{N}}{p'},\frac{\mathcal{N}}{q})$};

 
\end{tikzpicture}
            \noindent\subcaption{$\zeta \neq 0$, $\mathcal{N}>  3$, and $q> p'$}
            \label{Fig_Pitt_mjN2}
        \end{subfigure}
        ~
        \begin{subfigure}[t]{0.33\textwidth}
            \begin{tikzpicture}[line cap=round,line join=round,>=triangle 45,x=1.0cm,y=1.0cm, scale=0.33]
\clip(-3,-2) rectangle (15,11);
\draw [-latex,line width=0.5pt] (0,0)-- (0,10.5);
\draw[-latex, color=black](10,0)--(10.5,0);

\draw [line width=0.5pt] (0,0)--(10,0);

\fill[line width=1pt,color=gray,fill=gray, opacity=0.4] (0,3) -- (6,9) -- (6,10) --(0,10)--cycle;
\fill[line width=1pt,color=red,fill=red, opacity=0.4] (0,1) -- (6,7) -- (6,0) --(0,0)--cycle;

\draw (-0.75,-0.75) node{\small $0$};
\draw (10,-0.75) node{\small $\kappa$};
\draw (-0.75,10) node{\small $\sigma$};

\draw (2.7,6.9) node[rotate=45]{\tiny $\!\frac{\sigma\! -\!\kappa}{\mathcal{N}} \geq  \frac{1}{q}-\!\!\frac{1}{p'}\!$};

\draw (3,2.8) node[rotate=45]{\tiny $\!\frac{\sigma\! -\!\kappa}{3} \leq \frac{1}{q}-\!\!\frac{1}{p'}\!$};

\draw [line width=1pt,color=red] (0,1)--(6,7);
\draw [line width=1pt,color=black] (0,3)--(6,9);

\draw [line width=.5pt,color=black,dashed] (6,0)--(6,10);


\draw (0,1) node[color=black]{$\circ$};
\draw (6,7) node[color=black]{$\circ$};
\draw (6,9) node[color=black]{$\circ$};
\draw (0,3) node[color=black]{$\circ$};

 \draw (9.2,7) node{\tiny $(\frac{\mathcal{N}}{p'}, \frac{\mathcal{N}-3}{p'}\!+\!\frac{3}{q})$};
 \draw (8,9) node{\tiny $(\frac{\mathcal{N}}{p'},\frac{\mathcal{N}}{q})$};
 
\end{tikzpicture}









            \noindent\subcaption{$\zeta = 0$, $\mathcal{N}> 3$ and $q< p'$- \\ no admissible region exists}
            \label{Fig_Pitt_mjN3}
        \end{subfigure}
        \caption{Admissible regions of $\sigma$ and $\kappa$ for shifted and unshifted Pitt's inequality for the modified  Jacobi transform $\tilde{\mathcal{J}}$.  Here 
        $\mathcal{N}:= 2(\alpha+1)$.}
         \label{pitt_mJ_Fig}
    \end{figure}
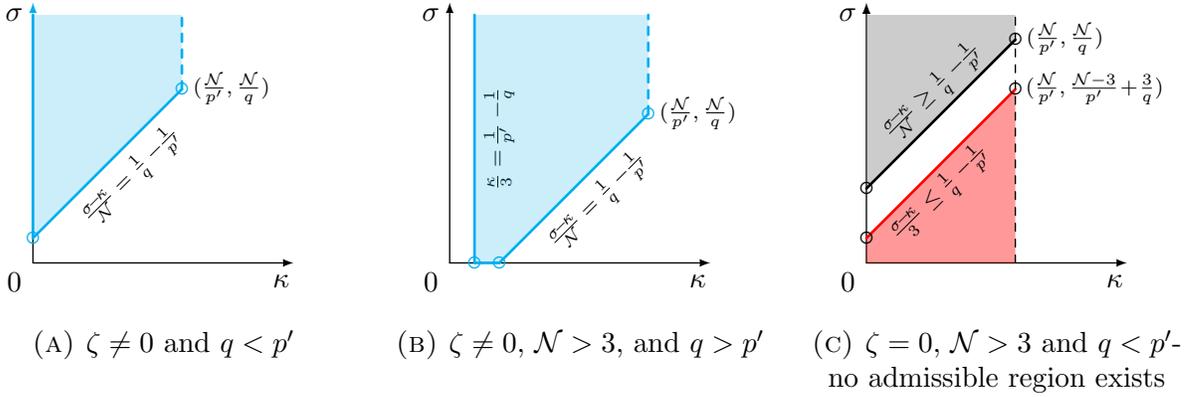 
    \begin{remark}
The result of the Corollary \ref{cor_char_pitt_mJ} is illustrated in Figure~\ref{pitt_mJ_Fig}. More specifically, when $\zeta \neq 0$, the blue region in Figures~\ref{pitt_mJ_Fig} ((A) and (B)) represents the admissible pairs $(\sigma, \kappa)$ for which the shifted Pitt's inequality holds, across different values of $p$ and $q$. Figure~\ref{pitt_mJ_Fig}(C) depicts the case $\zeta = 0$, where two necessary conditions, namely, $\sigma - \kappa \geq 2(\alpha + 1)(1/q - 1/p')$ (shown in gray) and $\sigma - \kappa \leq 3(1/q - 1/p')$ (shown in red), imply that no admissible values of $\sigma$ and $\kappa$ can satisfy both constraints, as the corresponding regions do not intersect. A similar phenomenon was observed in the case of symmetric spaces (see Remark~\ref{rem_prob_lam_0}). We note that this type of obstruction for $\zeta = 0$ in these non-Euclidean settings stems from the distinctive behavior of Harish-Chandra's $\mathbf{c}$-function near zero and infinity.
\end{remark}
We now use this result of shifted Pitt's inequality for the modified Jacobi transforms, along with sharp estimates of the Jacobi function $\varphi_0$, to establish shifted Pitt's inequality for the standard Jacobi transform.
 
\begin{theorem}\label{thm_pitt's_stand_J}
       Let $\zeta \geq 0$ and $1 < p \leq 2$ and $p\leq q < \infty$.   Assume that $\sigma, \kappa \geq 0$ are such that they satisfy the hypothesis of Theorem \ref{thm_pitt's_mod_J}. Then the following shifted Pitt's inequality holds 
    \begin{align}\label{eqn_pitts_stanJ}
         \left( \int_0^{\infty} |{\mathcal{J}} f(\lambda)|^q  (\lambda^2 +\zeta^2)^{-\frac{\sigma q}{2}} n(\lambda) \, d\lambda \right)^{1/q} \leq C   \left( \int_0^{\infty} | f(t)|^p  t^{\kappa p} {m}(t) \, dt \right)^{1/p}
    \end{align}
    for all $f \in C_c^{\infty}(\R_+)$.
    Moreover, in the special case when $p = q = 2$, we have the following result
    \begin{enumerate} 
    \item\label{en_pitt_J_2,2} Suppose $\zeta \neq 0$, then \eqref{eqn_pitts_stanJ} holds if and only if $0 \leq \kappa < \alpha + 1$ and $\sigma \geq \kappa$. 
    \item\label{en_pitt_J_2,2,0} Suppose $\zeta = 0$ and $\kappa \geq 0$, then \eqref{eqn_pitts_stanJ} holds if and only if $\kappa = \sigma < \min\{2(\alpha + 1), 3\}/2$. 
    \end{enumerate}
\end{theorem}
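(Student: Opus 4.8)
The idea is to transfer the inequality for the standard Jacobi transform $\mathcal{J}$ to the modified one $\tilde{\mathcal{J}}$ (for which Theorem~\ref{thm_pitt's_mod_J} is available) by moving the weight $\varphi_0$ between the two sides. Recall from \eqref{eqn_rel_J_mJ} that $\tilde{\mathcal{J}}h(\lambda)=\mathcal{J}(h\,\varphi_0)(\lambda)$, so taking $h=f/\varphi_0$ — which again lies in $C_c^{\infty}(\R_+)$ since $\varphi_0$ is a strictly positive smooth even function on $[0,\infty)$ by \eqref{eqn_est_phi_0} — gives $\mathcal{J}f(\lambda)=\tilde{\mathcal{J}}(f/\varphi_0)(\lambda)$. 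Applying Theorem~\ref{thm_pitt's_mod_J} to $f/\varphi_0$, whose hypotheses on $(\sigma,\kappa)$ are assumed in the statement, and using $\tilde m(t)=\varphi_0^2(t)m(t)$, we obtain
\[
\left( \int_0^{\infty} |{\mathcal{J}} f(\lambda)|^q  (\lambda^2 +\zeta^2)^{-\frac{\sigma q}{2}} n(\lambda) \, d\lambda \right)^{1/q}
\leq C \left( \int_0^{\infty} |f(t)|^p \, \varphi_0(t)^{2-p}\, t^{\kappa p}\, m(t) \, dt \right)^{1/p}.
\]
Since $1<p\le 2$ we have $2-p\ge 0$, while $\varphi_0$ is bounded on $[0,\infty)$ (again by \eqref{eqn_est_phi_0}, as $\rho>0$), so $\varphi_0(t)^{2-p}\le C$ and \eqref{eqn_pitts_stanJ} follows. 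This is the only step in which the hypothesis $p\le 2$ is used, and it is consistent with Theorem~\ref{thm_nec_pitt_n}, which shows $p\le 2$ is necessary.

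For the endpoint case $p=q=2$ (so $p'=2$), I would first record that the hypothesis of Theorem~\ref{thm_pitt's_mod_J} specializes to: $0\le\kappa<\alpha+1$ (from $\kappa p'<2(\alpha+1)$), $\sigma\ge\kappa$ (from \eqref{nec_stan_s-k>}), the auxiliary condition \eqref{eqn_nec_mJ,k>} becoming $\kappa\ge0$ (automatic), and, when $\zeta=0$, the extra requirements $\sigma q<3$ and \eqref{eqn_nec_s-k<}, i.e. $\sigma\le\kappa$. Hence for $\zeta\ne0$ the hypothesis is exactly $0\le\kappa<\alpha+1$ together with $\sigma\ge\kappa$, while for $\zeta=0$ it forces $\sigma=\kappa<\min\{\alpha+1,3/2\}=\min\{2(\alpha+1),3\}/2$. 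Thus the "if" directions in \eqref{en_pitt_J_2,2} and \eqref{en_pitt_J_2,2,0} are contained in the first part of the theorem already proved.

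For the "only if" directions, the key observation is that, after the substitution $h(t)=t^{-\kappa}f(t)$, inequality \eqref{eqn_pitts_stanJ} is precisely the assertion that $\mathcal{J}^{\zeta}_{\kappa,\sigma}$ is bounded from $L^p(dm)$ to $L^q(dn)$, that is, inequality \eqref{eqn_pit_nec,n}: the weight $t^{\kappa p}$ on the function side corresponds to the factor $t^{-\kappa}$ inside the operator $\mathcal{J}^{\zeta}_{\kappa,\sigma}$, and $(\lambda^2+\zeta^2)^{-\sigma/2}$ is the Fourier weight of that operator. So, by a standard density argument, \eqref{eqn_pitts_stanJ} with $p=q=2$ implies \eqref{eqn_pit_nec,n} with $p=q=2$, and items $(2)$ and $(4)$ of Theorem~\ref{thm_nec_pitt_n} yield exactly $0\le\kappa<\alpha+1$ with $\kappa\le\sigma$ when $\zeta\ne0$, and $\kappa=\sigma<\min\{2(\alpha+1),3\}/2$ when $\zeta=0$, as claimed.

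The only points that require some care in writing this out are (i) checking that $f/\varphi_0\in C_c^{\infty}(\R_+)$ and that $\varphi_0$ is bounded above on $[0,\infty)$ and below on the support of $f$ — both immediate from \eqref{eqn_est_phi_0} and $\rho>0$ — and (ii) the bookkeeping identifying \eqref{eqn_pitts_stanJ} with \eqref{eqn_pit_nec,n} so that Theorem~\ref{thm_nec_pitt_n} applies verbatim. No estimate beyond those already in the excerpt is needed, so I do not anticipate a genuine obstacle; the mild subtlety is simply keeping the exponent algebra in the $p=q=2$ specialization straight.
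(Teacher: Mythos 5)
Your proposal is correct and follows essentially the same route as the paper: both reduce \eqref{eqn_pitts_stanJ} to Theorem \ref{thm_pitt's_mod_J} via the relation $\mathcal{J}f=\tilde{\mathcal{J}}(f/\varphi_0)$ and then absorb the resulting factor (your $\varphi_0^{2-p}m\lesssim m$ is the same computation as the paper's $\varphi_0^{-p}\tilde m\lesssim m$, using $p\le 2$ at exactly the same point), and both obtain the $p=q=2$ characterization by combining the sufficiency just proved with the necessary conditions of Theorem \ref{thm_nec_pitt_n}. No gaps.
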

\begin{proof} We will show that for $1 < p \leq 2$, the inequality \eqref{eqn_pitts_modi} implies \eqref{eqn_pitts_stanJ}. As an application of Theorem \ref{thm_pitt's_mod_J}, the theorem will then follow. So, let us suppose that \eqref{eqn_pitts_modi} holds for all $f \in C_c^{\infty}(\mathbb{R}_+)$, that is,
\begin{align*}
   \left( \int_0^{\infty} |\tilde{\mathcal{J}} f(\lambda)|^q  (\lambda^2+\zeta^2)^{-\frac{\sigma q}{2}} n(\lambda) \, d\lambda \right)^{1/q} \lesssim   \left( \int_0^{\infty} | f(t)|^p  t ^{-\kappa p} \tilde{m}(t) \, dt \right)^{1/p} .  
\end{align*}
Now plugging relation $  \tilde{\mathcal{J}} f(\lambda) = \mathcal{J}\left(f\cdot \varphi_0\right)(\lambda)$  from \eqref{eqn_rel_J_mJ} in the equation above, it follows that
\begin{align*}
   \left( \int_0^{\infty} \left|\mathcal{J}\left(f\cdot \varphi_0 \right)(\lambda) \right|^q  (\lambda^2+\zeta^2)^{-\frac{\sigma q}{2}}  n(\lambda) \, d\lambda \right)^{1/q} \lesssim   \left( \int_0^{\infty} | f(t)|^p  t ^{-\kappa p} \tilde{m}(t) \, dt \right)^{1/p} , 
\end{align*}
which implies
\begin{align}\label{eqn_pit_mJ->J}
   \left( \int_0^{\infty} |\mathcal{J}f(\lambda) |^q  (\lambda^2+\zeta^2)^{-\frac{\sigma q}{2}}  n(\lambda) \, d\lambda  \right)^{1/q} \lesssim    \left( \int_0^{\infty} | f(t) |^p  t ^{-\kappa p} \varphi_0(t)^{-p} \tilde{m}(t) \, dt \right)^{1/p} . 
\end{align}
The asymptotic estimate of $\varphi_0 \asymp (1+t)e^{-\rho t}$, gives us the following for $1\leq p\leq 2$
\begin{align}\label{eqn_phi-p}
    \varphi_0(t)^{-p} \tilde{m}(t) \asymp \begin{cases}
        1\cdot t^{2\alpha+1} \asymp m(t), \quad &\text{if } t\leq 1,\\
         t^{2-p} e^{\rho p t} \lesssim e^{2\rho t }\asymp m(t), \quad &\text{if } t\geq 1.
    \end{cases} 
\end{align}
Plugging the estimate \eqref{eqn_phi-p} in \eqref{eqn_pit_mJ->J}, we obtain \eqref{eqn_pitts_stanJ}.

The sufficiency for the case $p = q = 2$ follows from Theorem \ref{thm_pitt's_mod_J} and the preceding calculations. Conversely, the necessity follows from Theorem \ref{thm_nec_pitt_n}.
\end{proof}
By following a similar line of calculation as in the previous theorem and using   $q \geq 2$, we can establish the following version of Pitt's inequality for the inverse Jacobi transform. 
\begin{theorem}\label{thm_pitt_inq_inv_J}
     Let $\zeta \geq 0$ and $1 < p\leq q<\infty$ and $ q \geq 2$.   Assume that $\sigma, \kappa \geq 0$ are such that they satisfy the hypothesis of Theorem \ref{thm_pitt's_mod_J-1}. Then the following shifted Pitt's inequality  
    \begin{align}\label{eqn_pitts_stanI}
         \left( \int_0^{\infty} |{\mathcal{I}} F(t)|^q   t^{-\kappa q} {m}(t) \, dt  \right)^{1/q} \leq C   \left( \int_0^{\infty} | F(\lambda)|^p (\lambda^2 +\zeta^2)^{-\frac{\sigma p}{2}} n(\lambda) \, d\lambda \right)^{1/p}
    \end{align}
   holds for all $F\in C_c^{\infty}(\R_+)$.
    Moreover, in the special case when $p = q = 2$, we have the following result
    \begin{enumerate} 
    \item\label{en_pitt_I_2,2} Suppose $\zeta \neq 0$, then \eqref{eqn_pitts_stanI} holds if and only if $0 \leq \sigma < \alpha + 1$ and $\sigma \leq \kappa$. 
    \item\label{en_pitt_I_2,2,0} Suppose $\zeta = 0$ and $\sigma \geq 0$, then \eqref{eqn_pitts_stanI} holds if and only if $\sigma = \kappa < \min\{2(\alpha + 1), 3\}/2$. 
    \end{enumerate}   
\end{theorem}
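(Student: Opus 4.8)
The plan is to deduce the inequality \eqref{eqn_pitts_stanI} for the standard inverse Jacobi transform $\mathcal{I}$ from the inequality \eqref{eqn_pitts_modi-1} for the modified inverse transform $\tilde{\mathcal{I}}$ (Theorem~\ref{thm_pitt's_mod_J-1}), in exact analogy with the way Theorem~\ref{thm_pitt's_stand_J} was obtained from Theorem~\ref{thm_pitt's_mod_J}. The two ingredients I would use are the pointwise identity $\mathcal{I}F(t)=\varphi_0(t)\,\tilde{\mathcal{I}}F(t)$ from \eqref{eqn_rel_J_mJ}, together with the relation $\tilde m(t)=\varphi_0(t)^2 m(t)$ and the sharp asymptotic $\varphi_0(t)\asymp(1+t)e^{-\rho t}$ from \eqref{eqn_est_phi_0}. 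Since the spectral-side measure $n(\lambda)\,d\lambda$ is common to $\mathcal{I}$ and $\tilde{\mathcal{I}}$, only the left-hand side of \eqref{eqn_pitts_stanI}, where the measure $m$ must be compared with $\tilde m$, needs to be related to that of \eqref{eqn_pitts_modi-1}.

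First I would assume \eqref{eqn_pitts_modi-1} and substitute $\mathcal{I}F=\varphi_0\,\tilde{\mathcal{I}}F$ into the left-hand side of \eqref{eqn_pitts_stanI}; this replaces the factor $|\mathcal{I}F(t)|^q m(t)$ by $|\tilde{\mathcal{I}}F(t)|^q\varphi_0(t)^q m(t)=|\tilde{\mathcal{I}}F(t)|^q\varphi_0(t)^{q-2}\tilde m(t)$. Next I would observe that, because $q\ge 2$, the factor $\varphi_0(t)^{q-2}\asymp\bigl((1+t)e^{-\rho t}\bigr)^{q-2}$ is bounded on $[0,\infty)$ — it is comparable to $1$ near the origin and decays exponentially as $t\to\infty$ — so that $\varphi_0(t)^q m(t)\lesssim\tilde m(t)$. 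Hence the $t$-integral in \eqref{eqn_pitts_stanI} is dominated by $\int_0^{\infty}|\tilde{\mathcal{I}}F(t)|^q\,t^{-\kappa q}\,\tilde m(t)\,dt$, and applying \eqref{eqn_pitts_modi-1} with parameters matched to $(\sigma,\kappa)$ controls the latter by the right-hand side of \eqref{eqn_pitts_stanI}. This yields the first assertion for all $1<p\le q<\infty$ with $q\ge 2$.

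For the endpoint $p=q=2$, the sufficiency half is the argument above specialized to $p=q=2$: the hypotheses of Theorem~\ref{thm_pitt's_mod_J-1} then collapse (the balance quantities $2(\alpha+1)(1/p+1/q-1)$ and $3(1/p+1/q-1)$ both vanish) to $0\le\sigma<\alpha+1$ and $\sigma\le\kappa$ when $\zeta\ne 0$, and to $\sigma=\kappa<\min\{2(\alpha+1),3\}/2$ when $\zeta=0$. The necessity half requires no new computation: these are precisely the $p=q=2$ conditions already recorded in Theorem~\ref{thm_nec_inv_pitt_n}.

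The main obstacle — really the only nontrivial point — is the boundedness of $\varphi_0^{q-2}$ in the second step: one must check that the exponential decay of $\varphi_0$ at infinity is not destroyed when it is raised to the power $q-2$, which forces $q-2\ge 0$ and is the exact place where the hypothesis $q\ge 2$ enters (it plays the role that $p\le 2$ plays in Theorem~\ref{thm_pitt's_stand_J}, and it is why the case $q>p'$, inaccessible for symmetric spaces, now becomes reachable). Everything else — tracking the polynomial weights $t^{-\kappa q}$ and $(\lambda^2+\zeta^2)^{-\sigma p/2}$ through the substitution, and invoking the already-established Theorems~\ref{thm_pitt's_mod_J-1} and \ref{thm_nec_inv_pitt_n} — is routine bookkeeping.
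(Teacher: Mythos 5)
Your proposal is correct and is exactly the argument the paper intends: the paper's entire proof of this theorem is the remark that one follows the computation of Theorem \ref{thm_pitt's_stand_J} with $q\geq 2$ playing the role that $p\leq 2$ played there, i.e. writing $|\mathcal{I}F|^{q}\,m=|\tilde{\mathcal{I}}F|^{q}\,\varphi_0^{q-2}\,\tilde m\lesssim|\tilde{\mathcal{I}}F|^{q}\,\tilde m$ via \eqref{eqn_rel_J_mJ} and \eqref{eqn_est_phi_0} and then invoking Theorem \ref{thm_pitt's_mod_J-1}, with the $p=q=2$ necessity imported from Theorem \ref{thm_nec_inv_pitt_n}. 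The only caveat is notational rather than mathematical: applied with the same $(\sigma,\kappa)$, your substitution reproduces the weight configuration of \eqref{eqn_pitts_modi-1} (namely $t^{-\sigma q}$ on the left and $(\lambda^2+\zeta^2)^{+\kappa p/2}$ on the right), so the exponents must be relabelled to match the form in which \eqref{eqn_pitts_stanI} is printed.
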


\begin{remark}\label{rem_after_J_pit}
\begin{enumerate}
\item We would like to mention that the shifted Pitt's inequality result for the case $p = q = 2$ in Theorem \ref{thm_pitt's_stand_J} can be extended to the range $1 < p \leq q \leq p'$  ($1 < p \leq 2$) using interpolation techniques similar to those employed in Theorem \ref{thm_pitt_X_n}. Specifically, by following the approach in Lemma \ref{lem_plaey_inq}, we can obtain a Hardy–Littlewood–Paley type inequality in the context of  the Jacobi transform for $1 < p \leq 2$: 
\begin{align}\label{eqn_paley_ineq_J}
\left( \int_0^{\infty} |\mathcal{J} f(\lambda)|^p (\lambda^2 + \zeta^2)^{-\frac{\sigma p}{2}} n(\lambda) \, d\lambda \right)^{1/p} \leq C \left( \int_0^{\infty} |f(t)|^p   m(t) \, dt \right)^{1/p}. \end{align} This inequality holds when $\zeta > 0$ provided that $\sigma \geq 2(\alpha + 1)(2/p - 1)$, and for $\zeta = 0$ if $2(\alpha + 1)(2/p - 1) \leq \sigma \leq 3(2/p - 1)$. 

By applying the interpolation method as in Theorem \ref{thm_pitt_X_n}, and interpolating the inequality above with the $p = q = 2$ case in Theorem \ref{thm_pitt's_stand_J} (\eqref{en_pitt_J_2,2} and \eqref{en_pitt_J_2,2,0}), one can derive the   Pitt-type inequalities for the range $1<p\leq q\leq p'$. However, this method does not extend to the case $q > p'$, as the analogue of the Hardy–Littlewood–Paley inequality cannot hold for $p > 2$.

\item From Theorem \ref{thm_nec_pitt_n}, we observe that for inequality \eqref{eqn_paley_ineq_J} to hold with $\zeta = 0$, it is necessary that $2(\alpha + 1)\left({2}/{p} - 1\right)\leq \sigma-\kappa < {3}/{p}$. This condition indicates that \eqref{eqn_paley_ineq_J} does not hold for all $1 < p \leq 2$ when $\zeta = 0$, unless $2(\alpha + 1) \leq 3$. In particular, for a rank one symmetric space of noncompact type with dimension $n$, this result implies that \eqref{eqn_inq_paley_lem} may fail to hold for all $1 < p \leq 2$ when $\zeta = 0$, unless $n \leq 3$.
\item\label{rem_problem_exp_space} 
We also point out that if the non-increasing rearrangement method were applied directly to the Jacobi transform without first modifying it to account for exponential volume growth,  we would be forced to impose an additional condition:
$ {\sigma} \leq 3( {1}/{p}  + {1}/{q}-1)$.
This arises because, in such a setting, for the weight function $v_\kappa(s) = s^{\kappa}$ $\kappa \geq 0$, the non-increasing rearrangement of $1/v_\kappa$ with respect to the measure $m(t)dt$ behaves like $(\log t)^{-\kappa}$ (instead of $t^{-\kappa/3}$ in the modified case). As a consequence, when attempting an analogous estimate to that in \eqref{eqn_est_suff_r>1}, the lack of polynomial decay in the rearrangement forces us to assume the additional hypothesis $ {\sigma} \leq 3( {1}/{p}  + {1}/{q}-1)$ in order to prove an analogue of Theorem \ref{thm_pitt's_stand_J}.
\item We conclude this section with a characterization of the weights for which the shifted Pitt's inequality holds for the standard Jacobi transform. This result follows as a corollary of Theorem~\ref{thm_nec_pitt_n} and Theorem~\ref{thm_pitt's_stand_J}. While a similar characterization also holds for the special case $p = 2 < q$, we state the result only for the range $1 < p \leq q \leq p'$ for simplicity and leave the remaining case to the interested reader. This corollary also recovers the result for rank one symmetric spaces in the $K$-biinvariant case, as presented in Corollary~\ref{cor_char_pitt_X<p'}.
\end{enumerate} 
\end{remark}
\begin{corollary}
     Let $\zeta >0$ and $1 < p \leq q\leq p'$.   Suppose that $\sigma \in \R, \kappa \geq 0$. Then the shifted   Pitt's inequality \eqref{eqn_pitts_stanJ} holds if and only  if $ p\leq 2$,
     \begin{align*}
            \sigma-\kappa \geq 2(\alpha+1)\left( \frac{1}{p}+\frac{1}{q}-1\right),
      \end{align*}
 and $\kappa p' < 2(\alpha+1)$.
\end{corollary}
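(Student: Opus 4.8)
The plan is to obtain the corollary by combining the sufficiency result Theorem~\ref{thm_pitt's_stand_J} with the necessity result Theorem~\ref{thm_nec_pitt_n}, after checking that in the restricted range $q\le p'$ the auxiliary hypotheses appearing in those statements either become vacuous or hold automatically. So there is essentially no new analysis: the work is to match hypotheses and record a few elementary arithmetic observations about the exponents.

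For the sufficiency direction, I would assume $p\le 2$, $\kappa\ge 0$, $\kappa p'<2(\alpha+1)$, and $\sigma-\kappa\ge 2(\alpha+1)(1/p+1/q-1)$. The first observation is that $q\le p'$ gives $1/p+1/q\ge 1/p+1/p'=1$, so the right-hand side of the balance inequality is nonnegative; since $\alpha+1\ge 1/2>0$ and $\kappa\ge 0$, this forces $\sigma\ge\kappa\ge 0$, so the sign restriction $\sigma\ge 0$ built into Theorems~\ref{thm_pitt's_mod_J} and \ref{thm_pitt's_stand_J} is automatic here even though the corollary only assumes $\sigma\in\R$. Next I would verify the hypotheses of Theorem~\ref{thm_pitt's_mod_J} for $\zeta\ne 0$: the required conditions $\kappa p'<2(\alpha+1)$ and \eqref{nec_stan_s-k>} are exactly our assumptions, while of the two remaining alternatives for $\zeta\neq 0$ either $\kappa p'\ge 3$ (alternative (1)), or $\kappa p'<3$, in which case $\kappa p'<\min\{3,2(\alpha+1)\}$ and $\kappa\ge 0\ge 3(1-1/p-1/q)$, so \eqref{eqn_nec_mJ,k>} holds (alternative (2)). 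Hence Theorem~\ref{thm_pitt's_stand_J} applies with $1<p\le 2$ and $p\le q\le p'<\infty$, and delivers \eqref{eqn_pitts_stanJ}.

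For the necessity direction, suppose \eqref{eqn_pitts_stanJ} holds for all $f\in C_c^\infty(\R_+)$. Writing $h(t)=t^{\kappa}f(t)$, the identities $|\mathcal{J}f(\lambda)|^q(\lambda^2+\zeta^2)^{-\sigma q/2}=|\mathcal{J}^{\zeta}_{\kappa,\sigma}h(\lambda)|^q$ and $\int_0^\infty|f(t)|^p t^{\kappa p}m(t)\,dt=\int_0^\infty|h(t)|^p m(t)\,dt$ show that \eqref{eqn_pitts_stanJ} is precisely the boundedness estimate \eqref{eqn_pit_nec,n} for the operator $\mathcal{J}^{\zeta}_{\kappa,\sigma}$; the change of variables $f\mapsto t^{\kappa}f$, together with the standard density argument, identifies the class of admissible test functions with the one used in Theorem~\ref{thm_nec_pitt_n}. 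Since $\zeta\ne 0$, Theorem~\ref{thm_nec_pitt_n} then forces $p\le 2$, $\kappa p'<2(\alpha+1)$, and \eqref{nec_stan_s-k>}, which is exactly the claimed list of conditions; this argument also gives back the $K$-biinvariant rank one case recorded in Corollary~\ref{cor_char_pitt_X<p'}. I expect the only mildly delicate point to be the change of variables and the accompanying density step; no genuine obstacle arises beyond carefully reconciling the hypotheses of the cited theorems and observing that the extra conditions there are forced or empty once $q\le p'$ and $\zeta\ne 0$.
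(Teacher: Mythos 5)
Your proposal is correct and follows exactly the route the paper intends: the paper derives this corollary by combining the necessity statement of Theorem~\ref{thm_nec_pitt_n} (case $\zeta\neq 0$) with the sufficiency statement of Theorem~\ref{thm_pitt's_stand_J}, and your hypothesis-matching (in particular, noting that $q\le p'$ forces $\sigma\ge\kappa\ge 0$ and makes \eqref{eqn_nec_mJ,k>} automatic, and that \eqref{eqn_pitts_stanJ} is equivalent to \eqref{eqn_pit_nec,n} under $f\mapsto t^{\kappa}f$) supplies the details the paper leaves implicit.
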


\section{Heisenberg-Pauli-Weyl type uncertainty inequalities}\label{sec_uncn_principle} 
In this section, we develop a family of uncertainty principles on symmetric spaces of noncompact type, with particular emphasis on $L^p$ HPW-type inequalities. Motivated by earlier work on stratified Lie groups \cite{CRS07, CCR15}, we adapt and extend these ideas to the setting of symmetric spaces, where new phenomena arise due to the spectral gap of the Laplacian and the distinctive geometry at infinity. Our approach to $L^2$ HPW-type inequalities builds on shifted Pitt's inequality from  Section~\ref{sec_pitt_X} and Section \ref{sec_jac_transform}, while the $L^p$ analogues are derived using Hardy-type estimates involving the modified Laplacians $\mathcal{L}_{p_0}= \mathcal{L}+(|\rho|^2-|\rho_{p_0}|^2) I$, where we recall that $|\rho_{p_0}|=|2/{p_0}-1||\rho|$. Furthermore, we establish Landau–Kolmogorov-type inequalities for these operators, which allow us to formulate generalized HPW inequalities. We begin by proving the $L^2$ HPW inequality stated in Corollary~\ref{cor_unc_2,2_F} from the introduction.

\begin{proof}[\textbf{Proof of Corollary \ref{cor_unc_2,2_F}}] 
Suppose $p_0 \in [1,\infty)$ and $\gamma, \delta \in (0,\infty)$. Recalling from \eqref{inq_pit_p,2_n} and \eqref{inq_pit_p,2}, we note that the following inequality holds for all $f \in C_c^{\infty}(\X)$
\begin{align}\label{eqn_pitt's_p=q=2_n}
      \int_{\fa} \int_{B}|\widetilde{f}(\lambda,b)|^2  (|\lambda|^2+|\rho_{p_0}|^2)^{-{\sigma} } |\bfc(\lambda)|^{-2}\,db\, d\lambda \leq  \int_{\X} |x|^{2 \kappa} |f(x)|^2 dx
    \end{align}
   provided $\sigma \geq \kappa \geq 0$ and $\kappa < n/2$ when $p_0 \neq 2$, and $\sigma = \kappa < \min\{n/2,\nu/2\}$ when $p_0 = 2$. By applying Plancherel’s formula, we can write
     \begin{align*}
        \|f\|_{L^2(\X)}^2&=\frac{1}{|W|} \int_{\fa}   \int_{B} |\widetilde{f}(\lambda, b)|^2  |\bfc(\lambda)|^{-2}\, db \,d\lambda\\
        &= \frac{1}{|W|} \int_{\fa} \|\widetilde{f}(\lambda,\cdot)\|_{L^2(B)}^{\frac{2\gamma}{\gamma+\delta}} (|\lambda|^2+|\rho_{p_0}|^2)^{\frac{\gamma \delta}{(\gamma+\delta)}} \|\widetilde{f}(\lambda,\cdot)\|_{L^2(B)}^{\frac{2\delta}{2(\gamma+\delta)}} (|\lambda|^2+|\rho_{p_0}|^2)^{-\frac{\gamma \delta} {\gamma+\delta}}|\bfc(\lambda)|^{-2} d\lambda.
    \end{align*}
    Applying H\"older's inequality, we derive
    \begin{multline*}
          \|f\|_{L^2(\X)}^2\lesssim \left(\int_{\fa} \|\widetilde{f}(\lambda,\cdot)\|_{L^2(B)}^{2} (|\lambda|^2+|\rho_{p_0}|^2)^{{\delta} } |\bfc(\lambda)|^{-2}\, d\lambda\right)^{\frac{\gamma}{\gamma+\delta}} \\ \cdot \left(\int_{\fa} \|\widetilde{f}(\lambda,\cdot)\|_{L^2(B)}^{2} (|\lambda|^2+|\rho_{p_0}|^2)^{-{\gamma} } |\bfc(\lambda)|^{-2}\, d\lambda\right)^{\frac{\delta}{\gamma+\delta}} .
    \end{multline*}
    Now, taking $\sigma = \kappa = \gamma$ in \eqref{eqn_pitt's_p=q=2_n} and applying it to the second term on the right-hand side, we obtain the inequality  
    \begin{align}\label{eqn_unc_2,2_F_n} 
          \|f\|_{L^2(\X)} \leq C \| (|\lambda|^2+ |\rho_{p_0}|^2)^{\frac{\delta}{2}}\widetilde{f}\|_{L^2(\fa \times B, |\bfc(\lambda)|^{-2}d\lambda db)}^{\frac{\gamma} {\gamma+\delta}}  \| |x|^{\gamma} f\|_{L^2(\X)}^{\frac{\delta} {\gamma+\delta}} 
    \end{align}
    for $\gamma < n/2$ when $p_0 \neq 2$, and for $\gamma < \min\{n/2, \nu/2\}$ when $p_0 = 2$. 
To extend \eqref{eqn_unc_2,2_F_n} to the full range of $\gamma > 0$, we proceed as follows. Suppose $\gamma \geq \min\{n/2, \nu/2\}$, and choose any $\gamma_0 <\min\{n/2, \nu/2\}$.
Then, for every $\epsilon > 0$, we have 
    \begin{align}
        \frac{|x|^{\gamma_0}}{\epsilon^{\gamma_0}}\leq 1+ \frac{|x|^{\gamma}}{\epsilon^{\gamma}},
    \end{align}
    from which it follows that 
    \begin{align*}
        \||x|^{\gamma_0}f\|_{L^2(\X)}\leq \epsilon^{\gamma_0} \|f\|_{L^2(\X)}+ \epsilon^{\gamma_0-\gamma} \||x|^{\gamma}f\|_{L^2(\X)}.
    \end{align*}
   Putting $\epsilon^{\gamma} = \||x|^{\gamma }f\|_{L^2(\X)}/\|f\|_{L^2(\X)}$  in the inequality above, yields that
     \begin{align*}
        \||x|^{\gamma_0}f\|_{L^2(\X)}\leq C   \|f\|_{L^2(\X)}^{1-\frac{\gamma_0}{\gamma}} \||x|^{\gamma}f\|_{L^2(\X)}^{\frac{\gamma_0}{\gamma}}.
    \end{align*}
    Substituting the estimate above into \eqref{eqn_unc_2,2_F_n} with $\gamma$ replaced by $\gamma_0$, we obtain
    \begin{align*}
        \|f\|_{L^2(\X)} \leq C \| (|\lambda|^2+ |\rho_p|^2)^{\frac{\delta}{2}}\widetilde{f}\|_{L^2(\fa \times B, |\bfc(\lambda)|^{-2}d\lambda dk)}^{\frac{\gamma_0} {\gamma_0+\delta}} \|f\|_{L^2(\X)}^{\left(1-\frac{\gamma_0}{\gamma} \right)\frac{\delta} {\gamma_0+\delta}} \||x|^{\gamma}f\|_{L^2(\X)}^{\frac{\gamma_0 \delta} {\gamma(\gamma_0+\delta)}} \, .
    \end{align*}
    Reorganizing the expression above, it follows that
    \begin{align*}
       \|f\|_{L^2(\X)}^{\frac{\gamma_0 (\gamma+\delta)} {\gamma(\gamma_0+\delta)}}  \leq C \| (|\lambda|^2+ |\rho_p|^2)^{\frac{\delta}{2}}\widetilde{f}\|_{L^2(\fa \times B, |\bfc(\lambda)|^{-2}d\lambda dk)}^{\frac{\gamma_0} {\gamma_0+\delta}} \||x|^{\gamma}f\|_{L^2(\X)}^{\frac{\gamma_0 \delta} {\gamma(\gamma_0+\delta)}},
    \end{align*}
    which gives the desired inequality \eqref{eqn_unc_2,2_F} for all $\gamma, \delta > 0$, concluding our result.
\end{proof}
Uncertainty inequalities of the form
 $\|f\|_{L^2}^2 \leq C \|v^{1/p}f\|_{L^p} \|w^{1/q} f\|_{q}$ are also of significant interest, where $v$ and $w$ are suitable weight functions and $1\leq p,q<\infty$; see \cite[p. 214]{FS97} and \cite{CP84} for further discussion. We now use the shifted Pitt's inequality to derive such an analogue of weighted uncertainty inequalities.

\begin{corollary}
Let $ \X$ be a symmetric space of noncompact type of dimension $n$. Let $1<p\leq 2$, $p\leq q'\leq p'$ and $\zeta \geq 0$.  Suppose $\zeta \neq 0$, $\sigma$, $\kappa\geq 0 $ be such that $ \kappa <n/p'$ and $\sigma -\kappa\geq n(1/p-1/q)$.  Then for every $f\in C_c^{\infty}(\X)$,
     \begin{align}\label{eqn_uncertain_2,p}
        \|f\|_{L^2(\X)}^2 &\leq C \left( \int_{\fa}\|\widetilde{f}(\lambda, \cdot)\|_{L^2(B)}^{q}   (|\lambda|^2+\zeta^2)^{\frac{\sigma q}{2}} |\bfc(\lambda)|^{-2}\,d\lambda \right)^{\frac{1}{q}}  \left(\int_{{\X}}{\left|  f(x)\right| }^p  |x|^{\kappa p} \,dx\right)^{\frac{1}{p}}.
    \end{align}
\end{corollary}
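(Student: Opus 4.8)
The plan is to combine the Plancherel formula on $\X$ with the shifted Pitt's inequality of Theorem~\ref{thm_pitt_X_n} via a single application of H\"older's inequality, exactly paralleling the argument used in the proof of Corollary~\ref{cor_unc_2,2_F}, but now keeping a genuine $L^p$ norm on the spatial side and a $(2,q)$-mixed norm on the Fourier side. First I would write
\begin{align*}
\|f\|_{L^2(\X)}^2=\frac{1}{|W|}\int_{\fa}\|\widetilde f(\lambda,\cdot)\|_{L^2(B)}^2\,|\bfc(\lambda)|^{-2}\,d\lambda
=\frac{1}{|W|}\int_{\fa}\Big(\|\widetilde f(\lambda,\cdot)\|_{L^2(B)}^2(|\lambda|^2+\zeta^2)^{\frac{\sigma}{2}\cdot\frac{?}{?}}\Big)\Big(\cdots\Big)\,|\bfc(\lambda)|^{-2}\,d\lambda ,
\end{align*}
splitting the integrand as a product of two factors, one carrying the weight $(|\lambda|^2+\zeta^2)^{\sigma/2}$ raised to an appropriate power and the other carrying the inverse weight. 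I then apply H\"older with exponents $q$ and $q'$ (using $1/q+1/q'=1$ and the hypothesis $p\le q'\le p'$) to separate these two integrals.

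The second step is to identify the two resulting factors. The factor with the positive power of the weight will, after taking the $q$-th root, be exactly the first term on the right-hand side of \eqref{eqn_uncertain_2,p}, namely $\big(\int_{\fa}\|\widetilde f(\lambda,\cdot)\|_{L^2(B)}^{q}(|\lambda|^2+\zeta^2)^{\sigma q/2}|\bfc(\lambda)|^{-2}d\lambda\big)^{1/q}$. The factor with the negative power of the weight, after taking the $q'$-th root, is
\begin{align*}
\left(\int_{\fa}\|\widetilde f(\lambda,\cdot)\|_{L^2(B)}^{q'}(|\lambda|^2+\zeta^2)^{-\frac{\sigma q'}{2}}|\bfc(\lambda)|^{-2}\,d\lambda\right)^{1/q'},
\end{align*}
and this is precisely the left-hand side of the shifted Pitt's inequality \eqref{inq_pitt_pol} with the roles $q\rightsquigarrow q'$ and with the $L^p$ weight $|x|^{\kappa p}$. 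By hypothesis $1<p\le2$, $p\le q'\le p'$, $\zeta\neq 0$, $\kappa\ge 0$, $\kappa<n/p'$, and $\sigma-\kappa\ge n(1/p+1/q'-1)=n(1/p-1/q)$ (using $1/q'=1-1/q$); these are exactly the conditions \eqref{eqn_suff_s-k>X} needed to invoke Theorem~\ref{thm_pitt_X_n}, so this factor is bounded by $C\big(\int_\X|f(x)|^p|x|^{\kappa p}\,dx\big)^{1/p}$. Combining the two estimates yields \eqref{eqn_uncertain_2,p}.

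The only bookkeeping subtlety — and the one place a careful calculation is needed — is choosing the exponents in the factorization so that the powers of $\|\widetilde f(\lambda,\cdot)\|_{L^2(B)}$ in the two H\"older factors are $q$ and $q'$ respectively, matching $2 = 2/q\cdot q \cdot (1/2) + \dots$; concretely one writes $\|\widetilde f\|_{L^2(B)}^2=\|\widetilde f\|_{L^2(B)}^{2/q}\cdot\|\widetilde f\|_{L^2(B)}^{2/q'}$ and distributes the weight as $(|\lambda|^2+\zeta^2)^{\sigma/2}\cdot(|\lambda|^2+\zeta^2)^{-\sigma/2}$ accordingly, then raises each factor to the power that makes the $L^2(B)$-exponent equal to $q$ (resp.\ $q'$) after pulling the $|\bfc(\lambda)|^{-2}$ weight — note $|\bfc(\lambda)|^{-2}$ itself must also be split as $(|\bfc(\lambda)|^{-2})^{1/q}(|\bfc(\lambda)|^{-2})^{1/q'}$ for H\"older to apply cleanly. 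This is routine and identical in spirit to the computation in the proof of Corollary~\ref{cor_unc_2,2_F}; I do not expect any genuine obstacle, since the verification that the parameter constraints match those of Theorem~\ref{thm_pitt_X_n} is direct. One should also remark that the case $\zeta=0$ could be included under the extra hypotheses \eqref{eqn_suff_s-k<X} and $\min\{n,\nu\}=n$, but the statement as given restricts to $\zeta\neq 0$, so no such discussion is required.
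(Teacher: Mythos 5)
Your argument is correct and is essentially the paper's own proof: Plancherel, split the weight as $(|\lambda|^2+\zeta^2)^{\sigma/2}\cdot(|\lambda|^2+\zeta^2)^{-\sigma/2}$, apply H\"older with exponents $(q,q')$ against the measure $|\bfc(\lambda)|^{-2}\,d\lambda$, and bound the second factor by Theorem~\ref{thm_pitt_X_n} applied with exponent $q'$, whose hypotheses match yours since $n(1/p+1/q'-1)=n(1/p-1/q)$. The only quibble is your final ``bookkeeping'' remark: the correct factorization puts a single power of $\|\widetilde f(\lambda,\cdot)\|_{L^2(B)}$ in each H\"older factor (so that raising to $q$, resp.\ $q'$, produces the stated exponents), not the split $\|\widetilde f\|^{2/q}\cdot\|\widetilde f\|^{2/q'}$ you wrote, which would yield $\|\widetilde f\|^2$ in both factors; this slip does not affect the argument.
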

Furthermore the same statement will be true for $\zeta=0$, if in addition we assume $\sigma -\kappa \leq \nu(1/p-1/q) $.
\begin{proof}
    We can write from Plancherel's theorem and applying H{\"o}lder's inequality successively 
    \begin{align*}
        \|f\|_{L^2(\X)}^2&= \int_{\fa} \int_{B} \widetilde{f}(\lambda, k) \overline{\widetilde {f}(\lambda,k)} |\bfc(\lambda)|^{-2}\, dk \,d\lambda\\
        & = \int_{\fa} \|\widetilde{f}(\lambda, \cdot)\|_{L^2(B)}  (\lambda^2+ \zeta^2)^{\frac{\sigma}{2}} \|\widetilde {f}(\lambda,\cdot)\|_{L^2(B)} (\lambda^2+ \zeta^2)^{-\frac{\sigma}{2}} |\bfc(\lambda)|^{-2}\,d\lambda\\
        &\leq \left( \int_{\fa} \|\widetilde{f}(\lambda, \cdot)\|_{L^2(B)}^{q}  (\lambda^2+ \zeta^2)^{\frac{\sigma q}{2}}  |\bfc(\lambda)|^{-2}\,d\lambda \right)^{\frac{1}{q}} \\
        & \hspace{2cm} \cdot \left( \int_{\fa} \|\widetilde{f}(\lambda, \cdot)\|_{L^2(B)}^{q'}  (\lambda^2+ \zeta^2)^{-\frac{\sigma q'}{2}}  |\bfc(\lambda)|^{-2}\,d\lambda \right)^{\frac{1}{q'}}.
    \end{align*}
    By applying the shifted Pitt's inequality result from Theorem \ref{thm_pitt_X_n} to the second term on the right-hand side, we obtain \eqref{eqn_uncertain_2,p}
\end{proof}

\begin{corollary}
    Let $\zeta \neq 0$, $q\geq 2$, and $1<p\leq q'$. Suppose $\sigma, \kappa \geq 0$ be such that $ \kappa <n/p'$ and $\sigma \geq \kappa+n(1/p-1/q)$. Then we have for all $f \in C_c^{\infty}(\X)$
    \begin{align}\label{eqn_unc_p,q1_n}
        \|f\|_{L^q(\X)}^{q'} \leq C   \left( \int_{{\fa}} \left(\int_B|\widetilde{f}(\lambda, b)|^{2} \, db\right)^{\frac{q'}{2}} (|\lambda|^2+\zeta^2)^{\frac{\sigma q}{2}} |\bfc(\lambda)|^{-2} \,d \lambda \right)^{\frac{1}{q }}  \left( \int_{\X}|f(x)|^p  |x|^{\kappa p}\,dx \right)^{\frac{1}{p}}.
    \end{align}
\end{corollary}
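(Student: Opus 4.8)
The plan is to combine the reverse Hausdorff--Young inequality on $\X$ (Theorem~\ref{thm_HY_UD}), one application of H\"older's inequality in the spectral variable, and the shifted Pitt's inequality of Theorem~\ref{thm_pitt_X_n}. Write $F(\lambda):=\|\widetilde f(\lambda,\cdot)\|_{L^2(B)}=\bigl(\int_B|\widetilde f(\lambda,b)|^2\,db\bigr)^{1/2}$. Since $q\geq 2$, the reverse estimate \eqref{eqn_HYP_dual-int} gives, after absorbing $|W|$ into the constant,
$$\|f\|_{L^q(\X)}^{q'}\leq C\int_{\fa}F(\lambda)^{q'}\,|\bfc(\lambda)|^{-2}\,d\lambda,$$
so it is enough to bound the right-hand side by the product in \eqref{eqn_unc_p,q1_n}.

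Next I would split the integrand using the shift weight: since $\tfrac1q+\tfrac1{q'}=1$,
$$F^{q'}|\bfc|^{-2}=\Bigl(F^{q'}(|\lambda|^2+\zeta^2)^{\frac{\sigma q}{2}}|\bfc|^{-2}\Bigr)^{\frac1q}\Bigl(F^{q'}(|\lambda|^2+\zeta^2)^{-\frac{\sigma q'}{2}}|\bfc|^{-2}\Bigr)^{\frac1{q'}},$$
where the exponents of $F$, of $|\lambda|^2+\zeta^2$, and of $|\bfc|$ balance to $q'$, $0$, and $-2$ respectively. Applying H\"older's inequality with the conjugate pair $q,q'$ then yields
$$\int_{\fa}F^{q'}|\bfc|^{-2}\,d\lambda\leq\Bigl(\int_{\fa}F^{q'}(|\lambda|^2+\zeta^2)^{\frac{\sigma q}{2}}|\bfc|^{-2}\,d\lambda\Bigr)^{\frac1q}\Bigl(\int_{\fa}F^{q'}(|\lambda|^2+\zeta^2)^{-\frac{\sigma q'}{2}}|\bfc|^{-2}\,d\lambda\Bigr)^{\frac1{q'}}.$$

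The last step is to identify the second factor as the left-hand side of the shifted Pitt's inequality \eqref{inq_pitt_pol}, with the Fourier exponent $q'$ in place of $q$ and with weight exponents $\sigma,\kappa$, so that Theorem~\ref{thm_pitt_X_n} gives
$$\Bigl(\int_{\fa}F^{q'}(|\lambda|^2+\zeta^2)^{-\frac{\sigma q'}{2}}|\bfc|^{-2}\,d\lambda\Bigr)^{\frac1{q'}}\leq C\Bigl(\int_\X|f(x)|^p|x|^{\kappa p}\,dx\Bigr)^{\frac1p},$$
and combining the three displays produces \eqref{eqn_unc_p,q1_n}. The only point requiring care is checking that the hypotheses of Theorem~\ref{thm_pitt_X_n} in the case $\zeta\neq0$ hold: one needs $1\leq p\leq 2$, $p\leq q'\leq p'$, $\kappa<n/p'$, and $\sigma-\kappa\geq n(\tfrac1p+\tfrac1{q'}-1)$. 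Here $\kappa<n/p'$ is assumed; the balance condition reads $\sigma-\kappa\geq n(\tfrac1p-\tfrac1q)$, which is precisely the stated hypothesis; and since $q\geq 2$ one has $q'\leq 2$, so $p\leq q'\leq 2\leq p'$, giving $p\leq 2$ and $q'\leq p'$ for free from $p\leq q'$. Thus there is no genuine analytic obstacle — only the bookkeeping of exponents — and the argument degenerates correctly at $q=2$, where the reverse Hausdorff--Young step is simply the Plancherel identity \eqref{Planc}.
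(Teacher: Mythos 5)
Your proposal is correct and follows essentially the same route as the paper's own proof: reverse Hausdorff--Young \eqref{eqn_HYP_dual-int}, H\"older with the conjugate pair $(q,q')$ to split off the shift weight, and then Theorem~\ref{thm_pitt_X_n} applied with Fourier exponent $q'$. Your explicit verification that $p\leq q'\leq 2\leq p'$ and that the balance condition becomes $\sigma-\kappa\geq n(1/p-1/q)$ is exactly the bookkeeping the paper leaves implicit.
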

\begin{proof}
    By using dual of Hausdorff-Young  inequality from \eqref{eqn_HYP_dual-int}, we get
    \begin{align*}
         \|f\|_{L^q(\X)} \leq   \left( \frac{1}{|W|}\int_{\mathbb{\fa}}  \|\widetilde{f}(\lambda, \cdot)\|_{L^2(B)}^{q'} |\bfc(\lambda)|^{-2} \,d \lambda \right)^{\frac{1}{q'}}.
    \end{align*}
    Writing $q'= \frac{1}{q-1}+1$ and applying H\"older's inequality, we get from the inequality above
    \begin{multline*}
         \|f\|_{L^q(\X)}^{q'} \leq   \left( \frac{1}{|W|}\int_{\mathbb{\fa}}  \|\widetilde{f}(\lambda, \cdot)\|_{L^2(B)}^{q'} (|\lambda|^2+\zeta^2)^{\frac{\sigma q}{2}}|\bfc(\lambda)|^{-2} \,d \lambda \right)^{\frac{1}{q}}\\
         \cdot \left( \frac{1}{|W|}\int_{\mathbb{\fa}} \ \|\widetilde{f}(\lambda, \cdot)\|_{L^2(B)}^{q'}  (|\lambda|^2+\zeta^2)^{-\frac{\sigma q'}{2}}|\bfc(\lambda)|^{-2} \,d \lambda \right)^{\frac{1}{q'}}.
    \end{multline*}
    Since $1<p\leq q'$,  using Theorem \ref{thm_pitt_X_n} we establish \eqref{eqn_unc_p,q1_n}.
\end{proof}
Using Theorem \ref{thm_pitt_X_n} with $q=p'$, $\zeta =|\rho_{p_0}|$, and taking $(-\mathcal{L}_{p_0})^{\sigma/2}f$ instead of $f$ in \eqref{inq_pitt_pol}, we obtain the following corollary.
\begin{corollary}
    Let $ \X$ be a symmetric space of noncompact type of dimension $n$, $p_0\in [1,2)$, and $p \in(1, 2],$. Suppose  that $0\leq \kappa <n/{p'}$. Then for any $\sigma \geq  \kappa$ , we have  the following inequality 
\begin{align}
         \left( \int_{{\fa}} \left(\int_B|\widetilde{f}(\lambda, b)|^{2} \, db\right)^{\frac{p'}{2}}   |\bfc(\lambda)|^{-2} \,d \lambda \right)^{\frac{1}{p'}}  \leq C \left( \int_{\X}|(-\mathcal{L}_{p_0})^{\sigma/2}f(x)|^p  |x|^{\kappa p}\,dx \right)^{\frac{1}{p}}
    \end{align}
    for all $f \in C_c^{\infty}(\X)$,
    which can be seen as a generalized version of the Hausdorff-Young inequality \eqref{eqn_HY_X,n}.
\end{corollary}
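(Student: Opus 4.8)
The strategy is to reduce this corollary to a direct application of Theorem~\ref{thm_pitt_X_n} after substituting the right test function and identifying the correct spectral shift. First I would specialize Theorem~\ref{thm_pitt_X_n} to the case $q = p'$ (which is the top of the admissible range $p \le q \le p'$, and is admissible since $p \le 2$ forces $p \le p'$), and $\zeta = |\rho_{p_0}|$, which is nonzero because $p_0 \in [1,2)$ gives $|\rho_{p_0}| = |2/p_0 - 1|\,|\rho| > 0$; in particular we are in the regime $\zeta \neq 0$, $1 \le p \le 2$, so the first assertion of Theorem~\ref{thm_pitt_X_n} applies with the hypothesis $0 \le \kappa < n/p'$ and $\sigma - \kappa \ge n(1/p + 1/q - 1) = n(1/p + 1/p' - 1) = 0$, i.e. $\sigma \ge \kappa$. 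With $q = p'$, the balance exponent $n(1/p + 1/q - 1)$ vanishes, which is precisely why the clean hypothesis ``$\sigma \ge \kappa$'' suffices here.

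Next I would apply the resulting inequality \eqref{inq_pitt_pol} not to $f$ but to $g := (-\mathcal{L}_{p_0})^{\sigma/2} f$, where $\mathcal{L}_{p_0} = \mathcal{L} + (|\rho|^2 - |\rho_{p_0}|^2)I$ as recalled in Corollary~\ref{cor_unc_2,2_F}. The key computation is the Fourier-side identity: by the spectral theorem and the fact that $-\mathcal{L}$ acts as multiplication by $|\lambda|^2 + |\rho|^2$ on the Fourier transform side, the operator $-\mathcal{L}_{p_0} = -\mathcal{L} - (|\rho|^2 - |\rho_{p_0}|^2)I$ acts as multiplication by $(|\lambda|^2 + |\rho|^2) - (|\rho|^2 - |\rho_{p_0}|^2) = |\lambda|^2 + |\rho_{p_0}|^2$, hence
\[
\widetilde{(-\mathcal{L}_{p_0})^{\sigma/2} f}(\lambda, b) = (|\lambda|^2 + |\rho_{p_0}|^2)^{\sigma/2}\, \widetilde{f}(\lambda, b).
\]
Substituting $g$ into \eqref{inq_pitt_pol} with $\zeta = |\rho_{p_0}|$ and $q = p'$, the weight $(|\lambda|^2 + \zeta^2)^{-\sigma q/2}$ on the Fourier side exactly cancels the factor $(|\lambda|^2 + |\rho_{p_0}|^2)^{\sigma q/2}$ coming from $\widetilde{g}$, leaving
\[
\left( \int_{\fa} \left( \int_B |\widetilde{f}(\lambda,b)|^2\, db \right)^{p'/2} |\mathbf{c}(\lambda)|^{-2}\, d\lambda \right)^{1/p'} \le C \left( \int_{\X} |(-\mathcal{L}_{p_0})^{\sigma/2} f(x)|^p |x|^{\kappa p}\, dx \right)^{1/p},
\]
which, up to the normalizing constant $1/|W|$ absorbed into $C$, is the claimed inequality. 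Finally I would note that when $\kappa = 0$ and $\sigma = 0$ this reduces to the mixed-norm Hausdorff--Young inequality \eqref{eqn_HY_X,n} (with $(-\mathcal{L}_{p_0})^0 = I$), justifying the closing remark that the corollary is a ``generalized version'' of Hausdorff--Young.

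The only genuinely delicate point is justifying that $g = (-\mathcal{L}_{p_0})^{\sigma/2} f$ is an admissible test function, i.e. that the manipulation is legitimate for $f \in C_c^\infty(\X)$. Since $\sigma/2$ need not be an integer, $(-\mathcal{L}_{p_0})^{\sigma/2} f$ is defined via the spectral calculus (or equivalently via the Fourier multiplier $(|\lambda|^2 + |\rho_{p_0}|^2)^{\sigma/2}$ acting on $\widetilde{f}$, which is a Schwartz-class function of $\lambda$ by Paley--Wiener), so $g$ lies in $L^p(\X)$ with the stated weight — indeed $(-\mathcal{L}_{p_0})^{\sigma/2} f$ has appropriate decay because $\mathcal{L}_{p_0}$ has spectral gap $|\rho_{p_0}|^2 > 0$, and one can approximate $g$ by $C_c^\infty$ functions in the relevant weighted norm, or else observe that Theorem~\ref{thm_pitt_X_n} extends to $g$ by density since the right-hand side of \eqref{inq_pitt_pol} is finite for $g$. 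This density/approximation step is the main (though routine) obstacle; everything else is the bookkeeping of the spectral shift and the algebraic cancellation of weights.
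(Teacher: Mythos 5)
Your proposal is correct and follows essentially the same route as the paper, which obtains the corollary in one line by applying Theorem~\ref{thm_pitt_X_n} with $q=p'$ and $\zeta=|\rho_{p_0}|$ to $(-\mathcal{L}_{p_0})^{\sigma/2}f$ in place of $f$, exactly as you do. Your additional remarks on the cancellation of the spectral weight and on the admissibility of $(-\mathcal{L}_{p_0})^{\sigma/2}f$ as a test function simply make explicit what the paper leaves implicit.
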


\subsection{$L^p$ Heisenberg-Pauli-Weyl uncertainty inequality}
We now proceed to establish the $L^p$-type HPW uncertainty inequality. We begin by recalling a class of Hardy-type inequalities for the modified Laplacians $\mathcal{L}_{p_0}$ in \cite{KRZ_23}.
For other forms of Hardy inequalities on symmetric spaces of noncompact type, see also \cite{BP22}. We have the following result from  \cite[Theorem 1.2]{KRZ_23} by putting $p=q$ and  $\alpha=0$. 
    \begin{lemma}\label{cor_hardy_inq_X}
    Let $\X$ be a general symmetric space of dimension $n\geq 2$ and pseudo-dimension $\nu \geq 3$. Suppose that $\zeta\geq 0$, $\sigma>0$ ($0<\sigma <\nu$ if $\zeta=0$) and $\beta \in \R$. Then the inequality 
    \begin{align}\label{eqn_hardy_inq_X}
    \| |\cdot |^{-\beta} (-\mathcal{L} - |\rho|^2 +\zeta^2))^{-\frac{\sigma}{2}} f \|_{L^{p}(\X)} \lesssim \|f\|_{L^p(\X)}    \quad \forall f\in L^p(\X),
    \end{align}
     holds if $1\leq p\leq \infty$, $\beta< {n}/{p}$ when $\beta>0$, $0\leq \beta \leq \sigma$, and one of the following conditions is met:
    \begin{enumerate}
        \item if $\zeta >|\rho|$;
        \item if $0 \leq \zeta \leq |\rho|$, then 
        \begin{align}\label{eqn_ran_of_p,c}
             \frac{1}{2}-\frac{\zeta}{2|\rho|}<\frac{1}{p}<\frac{1}{2}+\frac{\zeta}{2|\rho|}.
        \end{align}
        \item if $\zeta=0$ and $p=2$, then $\sigma<\nu/2$ and $\sigma\leq \beta$.
    \end{enumerate}
    \end{lemma}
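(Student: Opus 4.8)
The plan is to obtain Lemma~\ref{cor_hardy_inq_X} directly from the Stein--Weiss inequality \cite[Theorem 1.2]{KRZ_23}, which, in the notation of this paper, provides an estimate of the form
\[
\big\||\cdot|^{-\beta}\,(-\mathcal{L}-|\rho|^2+\zeta^2)^{-\frac{\sigma}{2}}f\big\|_{L^q(\X)}\ \lesssim\ \big\||\cdot|^{\alpha}f\big\|_{L^p(\X)} ,
\]
for admissible exponents $1\le p\le q\le\infty$, non-negative weight powers $\alpha,\beta$, order $\sigma>0$ and spectral shift $\zeta\ge 0$, admissibility being governed by the endpoint conditions $\alpha<n/p'$ and $\beta<n/q$ (for positive powers), the dimensional relation $\sigma-\alpha-\beta\ge n(1/p-1/q)$ (with the extra bound $\sigma-\alpha-\beta\le\nu(1/p-1/q)$ when $\zeta=0$), and a restriction on $\zeta$ relative to $p,q$ coming from the $L^p$--$L^q$ boundedness of the negative power. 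I would specialise this to $p=q$ and $\alpha=0$.

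Under this specialisation the bookkeeping is short: the dimensional relation collapses to $\sigma\ge\beta$, the condition $\alpha<n/p'$ is vacuous, and $\beta<n/q$ becomes $\beta<n/p$, which with $\alpha+\beta\ge 0$ reproduces exactly the ambient hypotheses ``$0\le\beta\le\sigma$ and $\beta<n/p$ when $\beta>0$''; moreover, when $\zeta=0$ the upper bound forces $\sigma-\beta\le 0$, so $\beta=\sigma$, the first half of~(3). It then remains to read off the $\zeta$-restriction. Here I would use that the $L^p$-spectrum of $-\mathcal{L}$ on $\X$ has bottom $|\rho|^2-|\rho_p|^2$ with $|\rho_p|=|2/p-1|\,|\rho|$, so $-\mathcal{L}-|\rho|^2+\zeta^2$ has $L^p$-spectrum with bottom $\zeta^2-|\rho_p|^2$; hence $0$ lies strictly below the $L^p$-spectrum and the complex power is a bounded $L^p$ spectral multiplier precisely when $\zeta>|\rho_p|$. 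Rewriting $\zeta>|2/p-1|\,|\rho|$ as an inequality for $1/p$ gives $\frac{1}{2}-\frac{\zeta}{2|\rho|}<\frac{1}{p}<\frac{1}{2}+\frac{\zeta}{2|\rho|}$, which is case~(2); case~(1), $\zeta>|\rho|$, forces this for every $p\in[1,\infty]$ since $|\rho_p|\le|\rho|$, so no $p$-restriction survives there; and the degenerate endpoint $\zeta=0$ is admissible only at $p=2$ (where $|\rho_2|=0$), which is case~(3), the sharper bound $\sigma<\nu/2$ being the $\zeta=0$ part of \cite[Theorem 1.2]{KRZ_23}.

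For a self-contained treatment of the delicate case~(3) ($\zeta=0$, $p=2$) I would alternatively argue by Plancherel and duality. Since $(-\mathcal{L}-|\rho|^2)^{-\sigma/2}$ and multiplication by $|\cdot|^{-\beta}$ are self-adjoint, the $L^2\to L^2$ norm of $f\mapsto|\cdot|^{-\beta}(-\mathcal{L}-|\rho|^2)^{-\sigma/2}f$ equals that of $g\mapsto(-\mathcal{L}-|\rho|^2)^{-\sigma/2}(|\cdot|^{-\beta}g)$; applying the $p=2$ case of Corollary~\ref{cor_pitt_KRZ,0}, which after Plancherel reads $\|(-\mathcal{L}-|\rho|^2)^{-\sigma/2}g\|_{L^2(\X)}\lesssim\||x|^{\sigma}g\|_{L^2(\X)}$ for $\sigma<\min\{\nu/2,n/2\}$, to $g=|x|^{-\beta}h$ bounds the norm by a constant times $\||x|^{\sigma-\beta}h\|_{L^2(\X)}$, which is $\lesssim\|h\|_{L^2(\X)}$ exactly when $\sigma=\beta$, as $|x|$ takes all values in $[0,\infty)$. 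This recovers $\beta=\sigma<\nu/2$, with $\beta<n/2$ coming from the ambient hypothesis. I expect the only real difficulty to be organisational: carefully matching each sub-case of the hypotheses of \cite[Theorem 1.2]{KRZ_23} to the trichotomy (1)--(3) and confirming that the specialisation $p=q$, $\alpha=0$ leaves no hidden constraint, since all the analytic content (the spectral multiplier bound and the Stein--Weiss estimate) is already contained in the cited theorem.
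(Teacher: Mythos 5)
Your proposal matches the paper's own (very brief) argument: the lemma is obtained there exactly by citing \cite[Theorem 1.2]{KRZ_23} and specialising to $p=q$ and $\alpha=0$, which is precisely what you do, and your bookkeeping of how the conditions of that theorem collapse onto cases (1)--(3) is consistent. The extra self-contained treatment of case (3) via duality and Corollary~\ref{cor_pitt_KRZ,0} goes beyond what the paper records but is a correct supplement.
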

\begin{lemma}\label{lem_Lp_unc_X_p_0}
 Let $\X$ be a general symmetric space of noncompact type and  $ 1\leq p_0<2$. Suppose that $\gamma_0, \delta_0 \in (0,\infty) $, $p>1$, $q\geq 1$, and $r\in (p_0,p_0')$, such that
\begin{align*}
    \delta_0 <\frac{n}{r_0} \quad  \text{and} \quad  \frac{\gamma_0+\delta_0}{p}= \frac{\delta_0}{q}+\frac{\gamma_0}{r}.
\end{align*}
Then for any $\sigma \geq \delta_0$, we have
\begin{align}\label{eqn_Lp_uncn_X_p_0}
    \|f\|_{L^{p}(\X)} \leq C \| |x|^{\gamma_0}f\|_{L^{q}(\X)}^{\frac{\delta_0}{\gamma_0+\delta_0}} \| (-\mathcal{L}_{p_0})^{\frac{\sigma}{2}}f\|_{L^{r}(\X)}^{\frac{\gamma_0}{\gamma_0+\delta_0}}
\end{align}
for all $f \in C_c^{\infty}(\X)$.
\end{lemma}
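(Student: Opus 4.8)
\textbf{Proof plan for Lemma \ref{lem_Lp_unc_X_p_0}.}

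The plan is to adapt the classical interpolation-type argument behind Heisenberg--Pauli--Weyl inequalities (as in \cite{CCR15}), exploiting the spectral gap of $\mathcal{L}$ on $\X$ through the Hardy-type inequality of Lemma \ref{cor_hardy_inq_X}. The starting point is the decomposition of $f$ via the fractional operator: write $g = (-\mathcal{L}_{p_0})^{\sigma/2} f$, so that $f = (-\mathcal{L}_{p_0})^{-\sigma/2} g = (-\mathcal{L} - |\rho|^2 + |\rho_{p_0}|^2)^{-\sigma/2} g$, recalling that $\mathcal{L}_{p_0} = \mathcal{L} + (|\rho|^2 - |\rho_{p_0}|^2)I$ and $|\rho_{p_0}| = |2/p_0 - 1||\rho|$. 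Since $1 \le p_0 < 2$ we have $0 < |\rho_{p_0}| \le |\rho|$, so $\zeta := |\rho_{p_0}|$ lies in the admissible range $0 \le \zeta \le |\rho|$ for Lemma \ref{cor_hardy_inq_X}, with the condition \eqref{eqn_ran_of_p,c} reading $1/2 - \zeta/(2|\rho|) < 1/r < 1/2 + \zeta/(2|\rho|)$, which is exactly $r \in (p_0, p_0')$.

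First I would establish the key weighted bound: for $\sigma \geq \delta_0 > 0$ and $0 \leq \delta_0 < n/r$,
\begin{align}\label{eqn_plan_hardy_step}
 \| |x|^{-\delta_0} f \|_{L^r(\X)} = \| |x|^{-\delta_0} (-\mathcal{L}_{p_0})^{-\sigma/2} g \|_{L^r(\X)} \lesssim \| g \|_{L^r(\X)} = \| (-\mathcal{L}_{p_0})^{\sigma/2} f \|_{L^r(\X)},
\end{align}
which follows directly from Lemma \ref{cor_hardy_inq_X} applied with $\beta = \delta_0$, $\zeta = |\rho_{p_0}|$, $p = r$, using $0 \leq \delta_0 \leq \sigma$ and $\delta_0 < n/r$ together with the range condition on $r$ (case (2) of that lemma). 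Next, I would interpolate between \eqref{eqn_plan_hardy_step} and the trivial identity: the scaling condition $\frac{\gamma_0+\delta_0}{p} = \frac{\delta_0}{q} + \frac{\gamma_0}{r}$ is precisely what is needed to combine the two pieces. Concretely, one writes $\|f\|_{L^p(\X)}$ and uses H\"older's inequality with exponents split according to $\theta = \delta_0/(\gamma_0+\delta_0)$ and $1-\theta = \gamma_0/(\gamma_0+\delta_0)$: we bound $|f| = (|x|^{\gamma_0}|f|)^{\theta \cdot \mu} \cdot (|x|^{-\delta_0}|f|)^{(1-\theta)\cdot\mu} \cdot |f|^{1-\theta\mu-(1-\theta)\mu}$ for a suitable $\mu$, or more cleanly apply the three-term H\"older inequality with the pair of $L^q$ and $L^r$ norms. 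The exponent arithmetic $\frac{1}{p} = \frac{\theta}{q} + \frac{1-\theta}{r}$ with $\theta = \delta_0/(\gamma_0+\delta_0)$ reduces exactly to the hypothesis, so
\begin{align}\label{eqn_plan_holder}
 \|f\|_{L^p(\X)} \leq \| |x|^{\gamma_0} f\|_{L^q(\X)}^{\theta} \, \| |x|^{-\delta_0} f\|_{L^r(\X)}^{1-\theta},
\end{align}
and then \eqref{eqn_plan_hardy_step} converts the last factor into $\|(-\mathcal{L}_{p_0})^{\sigma/2} f\|_{L^r(\X)}^{1-\theta}$, giving \eqref{eqn_Lp_uncn_X_p_0}.

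The main obstacle I anticipate is verifying that the exponent bookkeeping in the H\"older step \eqref{eqn_plan_holder} is internally consistent — in particular, checking that the implicit weights $|x|^{\gamma_0 \theta'}$ and $|x|^{-\delta_0(1-\theta)'}$ cancel, which forces the relation $\gamma_0 \cdot \frac{\theta}{q}\cdot(\text{something}) = \delta_0 \cdot \frac{1-\theta}{r}\cdot(\text{something})$; this is where the choice $\theta = \delta_0/(\gamma_0+\delta_0)$ is pinned down and where one must confirm it is compatible with $\frac1p = \frac\theta q + \frac{1-\theta}{r}$. A secondary point requiring care is the density/justification that \eqref{eqn_plan_hardy_step} applies to $f \in C_c^\infty(\X)$ with the right-hand side finite, i.e. that $(-\mathcal{L}_{p_0})^{\sigma/2}$ is well-defined on the relevant class and that $g = (-\mathcal{L}_{p_0})^{\sigma/2} f \in L^r(\X)$; this uses standard spectral calculus for the (shifted, hence strictly positive) operator $\mathcal{L}_{p_0}$ on $\X$ and the boundedness of $(-\mathcal{L}_{p_0})^{-\sigma/2}$ as understood in Lemma \ref{cor_hardy_inq_X}. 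Once these are in place the conclusion is immediate.
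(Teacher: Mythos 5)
Your proposal is correct and follows essentially the same route as the paper: the two-factor Hölder splitting $\|f\|_{L^p}\leq \||x|^{\gamma_0}f\|_{L^q}^{\delta_0/(\gamma_0+\delta_0)}\||x|^{-\delta_0}f\|_{L^r}^{\gamma_0/(\gamma_0+\delta_0)}$ (whose exponent arithmetic is exactly the hypothesis $\frac{\gamma_0+\delta_0}{p}=\frac{\delta_0}{q}+\frac{\gamma_0}{r}$), followed by the Hardy inequality of Lemma \ref{cor_hardy_inq_X} with $\zeta=|\rho_{p_0}|$ and $\beta=\delta_0$, for which $r\in(p_0,p_0')$ is precisely condition \eqref{eqn_ran_of_p,c}. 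Your reading of the hypothesis $\delta_0<n/r_0$ as $\delta_0<n/r$ matches the paper's intent and the requirement $\beta<n/p$ in the Hardy lemma.
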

\begin{proof}
    Setting $\theta = \gamma_0/(\gamma_0 + \delta_0)$, we write
    \begin{align*}
        \int_{\X} |f(x)|^{p} \,dx=\int_{\X} \left(|x|^{\theta \delta_0} |f(x)|^{\theta} \right)^{p}  \left(|x|^{- {\theta \delta_0}} |f(x)|^{1-\theta} \right)^{p} \, dx.
    \end{align*}
   Let us  choose $s$ such that $q= \theta ps$, then $r =(1-\theta) p s'$. Applying H\"older’s inequality with exponent $s$ to the integrand and then taking the $p$th root, we obtain
   \begin{equation}\label{eqn_Lpf<xbf}
    \begin{aligned}
        \|f\|_{L^p(\X)} &\leq \left(\int_{\X} \left(|x|^{\theta \delta_0 } |f(x)|^{\theta} \right)^{ps }  \, dx\right)^{\frac{1}{ps}}  \left( \int_{\X}\left(|x|^{- {\theta \delta_0}} |f(x)|^{1-\theta} \right)^{ps'} \, dx \right)^{\frac{1}{ps'}}\\
        &= \||x|^{\gamma_0}f\|_{L^q(\X)}^{\frac{\delta_0}{\gamma_0+\delta_0}} \||x|^{-\delta_0}f\|_{L^r(\X)}^{\frac{\gamma_0}{\gamma_0+\delta_0}}.
    \end{aligned}
    \end{equation}
Now, applying \eqref{eqn_hardy_inq_X} with $\zeta =|\rho_{p_0}|$ and $\beta=\delta_0$, and using the assumption that $r \in (p_0, p_0')$, we deduce that the following inequality holds for any $\sigma \geq \delta_0$ and $r \in (p_0, p_0')$
 \begin{align*}
       \||x|^{-\delta_0}f\|_{L^r(\X)} \leq C   \| (-\mathcal{L}_{p_0})^{\frac{\sigma}{2}}f\|_{L^{r}(\X)},
 \end{align*}
 for all $f \in C_c^{\infty}(\X)$. Substituting the inequality above in \eqref{eqn_Lpf<xbf}, we establish \eqref{eqn_Lp_uncn_X_p_0}.
\end{proof}
\begin{remark}
   The conclusion of Lemma \ref{lem_Lp_unc_X_p_0} also holds without the additional condition $r \in (p_0, p_0')$ if we replace $-\mathcal{L}_{p_0}$ in \eqref{eqn_Lp_uncn_X_p_0} with $(-\mathcal{L} - |\rho|^2 + \zeta^2)$ for any $\zeta > |\rho|$. 
\end{remark}

We seek an analogue of the classical Landau–Kolmogorov inequality \cite{Kol39, Ste70} for the Laplacian on symmetric spaces of noncompact type. While similar results may be available in broader contexts, we present the full statement and proof here for the sake of completeness. Our approach builds on the method of \cite[Theorem 5.3]{CCR15}, along with sharp estimates for the imaginary powers of the Laplacian on symmetric spaces \cite{Ank90, CGM93, Ion02}. Moreover, to extend the Landau–Kolmogorov inequality beyond the standard Laplacian $\mathcal{L}$ to the more general $L^{p_0}$-Laplacians $\mathcal{L}_{p_0}$, we adapt and generalize the technique developed in  \cite[Theorem 5.3]{CCR15}.
\begin{lemma}\label{lem_LKP_p0_n}
Let $\X$ be a symmetric space of noncompact type, and let $1 \leq p_0 < 2$. Suppose $0 \leq \theta \leq 1$ and $p, q, r \in (p_0, p_0')$ are such that
\begin{align}\label{eqn_p,q,r_LKP}
\frac{1}{p} = \frac{\theta}{q} + \frac{1 - \theta}{r}.
\end{align}
Then, for all $\sigma > 0$, we have
    \begin{align}
        \|(-\mathcal{L}_{p_0})^{\frac{\sigma}{2}}f\|_{L^{p}(\X)} \leq C  \|(-\mathcal{L}_{p_0})^{\frac{\sigma}{2 \theta}}f\|_{L^{q}(\X)}^{\theta} \|f\|_{L^{r}(\X)}^{1-\theta} 
    \end{align}
    for all $f \in C_c^{\infty}(\X)$.
\end{lemma}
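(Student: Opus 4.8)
The plan is to reduce the statement to a Stein-type complex interpolation argument applied to an analytic family of operators built from imaginary powers of $\mathcal{L}_{p_0}$. First I would fix $\sigma>0$ and consider the family
$$
T_z = (-\mathcal{L}_{p_0})^{\frac{\sigma z}{2\theta}} (-\mathcal{L}_{p_0})^{-\frac{\sigma}{2}} = (-\mathcal{L}_{p_0})^{\frac{\sigma}{2}\left(\frac{z}{\theta}-1\right)},
\qquad 0\le \Re z\le \theta,
$$
so that $T_\theta = I$ while $T_0 = (-\mathcal{L}_{p_0})^{-\sigma/2}$. Writing $z = x+iy$, on the line $\Re z = 0$ the operator $T_{iy} = (-\mathcal{L}_{p_0})^{-\sigma/2}(-\mathcal{L}_{p_0})^{i\sigma y/(2\theta)}$, and on the line $\Re z = \theta$ we have $T_{\theta+iy} = (-\mathcal{L}_{p_0})^{i\sigma y/(2\theta)}$. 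The two facts I need are: (i) purely imaginary powers $(-\mathcal{L}_{p_0})^{is}$ are bounded on $L^r(\X)$ for every $r\in(p_0,p_0')$ with operator norm growing at most polynomially (indeed sub-exponentially) in $s$ — this is exactly the sharp multiplier/imaginary-power theory of Anker, Clerc--Giulini--Meda, Ionescu for $L^{p_0}$-Laplacians cited just before the lemma; and (ii) $(-\mathcal{L}_{p_0})^{-\sigma/2}(-\mathcal{L}_{p_0})^{is}$ is bounded on $L^q(\X)$ for $q\in(p_0,p_0')$, which follows by composing (i) with the boundedness of the negative power $(-\mathcal{L}_{p_0})^{-\sigma/2}$ on $L^q$; the latter is a consequence of Lemma~\ref{cor_hardy_inq_X} with $\beta = 0$ (taking $\zeta = |\rho_{p_0}|$, noting $q\in(p_0,p_0')$ means the range condition \eqref{eqn_ran_of_p,c} holds). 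Thus on $\Re z = \theta$, $T_{\theta+iy}$ is bounded $L^r\to L^r$ with admissible growth, and on $\Re z = 0$, $T_{iy}$ maps $L^q\to L^q$ with admissible growth. Applying Stein's interpolation theorem at $z=\theta$ with the exponent relation $1/p = \theta/q + (1-\theta)/r$ gives $T_\theta = I$ bounded $L^{p}\to L^{p}$ — but of course that is trivial, so the point is to interpolate the \emph{composition}, not $T_z$ itself.

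Let me recast it correctly. The clean route is to prove directly the operator inequality: define $S_z = (-\mathcal{L}_{p_0})^{\frac{\sigma z}{2}}$ acting on the fixed function $g := (-\mathcal{L}_{p_0})^{\frac{\sigma}{2\theta}}f$ after an appropriate normalization, or — more transparently — apply complex interpolation to the single operator $A := (-\mathcal{L}_{p_0})^{\frac{\sigma}{2}} (-\mathcal{L}_{p_0})^{-\frac{\sigma}{2\theta}} = (-\mathcal{L}_{p_0})^{\frac{\sigma}{2}(1 - 1/\theta)}$, which has negative exponent since $1-1/\theta \le 0$. Consider the analytic family
$$
U_z = (-\mathcal{L}_{p_0})^{\frac{\sigma}{2}\left((1-z) - \tfrac{z}{\theta}\cdot 0\right)}\cdots
$$
— rather than belabour the bookkeeping, the substance is: I want an operator $U_z$, analytic of admissible growth on $0\le\Re z\le1$, with $U_0 = A$ (to be estimated $L^q\to L^p$ on the relevant line) giving the factor $\|(-\mathcal{L}_{p_0})^{\sigma/(2\theta)}f\|_{L^q}$ raised to power $\theta$, and $U_1 = $ a bounded map into $L^r$ giving $\|f\|_{L^r}$ raised to power $1-\theta$. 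Concretely, set
$$
U_z = (-\mathcal{L}_{p_0})^{\frac{\sigma}{2}} (-\mathcal{L}_{p_0})^{-\frac{\sigma z}{2\theta}},
$$
so $U_\theta = (-\mathcal{L}_{p_0})^{\frac{\sigma}{2}}(-\mathcal{L}_{p_0})^{-\frac{\sigma}{2}} = I$. On $\Re z = 0$: $U_{iy} = (-\mathcal{L}_{p_0})^{\sigma/2}(-\mathcal{L}_{p_0})^{-i\sigma y/(2\theta)}$, which applied to $h := (-\mathcal{L}_{p_0})^{\sigma/(2\theta)}f$ and using that imaginary powers are $L^q$-bounded with admissible growth shows $\|U_{iy} h\|_{L^q}\lesssim A_0(y)\|(-\mathcal{L}_{p_0})^{\sigma/(2\theta)}f\|_{L^q}$ — wait, this again needs $U_{iy}$ to act between different spaces. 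The honest formulation is the following: apply Stein interpolation to the family $V_z := (-\mathcal{L}_{p_0})^{\frac{\sigma}{2}(1-z/\theta)}$ acting on a fixed Schwartz-class $f$, establishing (a) on $\Re z = \theta$, $\|V_{\theta+iy}f\|_{L^p}\le A_1(y)\|f\|_{L^p}$ trivially, and (b) on $\Re z = 0$, $\|V_{iy}f\|_{L^p}\le A_0(y)\|(-\mathcal{L}_{p_0})^{\sigma/2}(-\mathcal{L}_{p_0})^{-i\sigma y/(2\theta)}\ \cdot\ $ — and at $\Re z=0$, $V_{iy} = (-\mathcal{L}_{p_0})^{\sigma/2}\cdot(-\mathcal{L}_{p_0})^{-i\sigma y/(2\theta)}$, so $V_{iy}f = (-\mathcal{L}_{p_0})^{-i\sigma y/(2\theta)}\big[(-\mathcal{L}_{p_0})^{\sigma/2}f\big]$ and hence $\|V_{iy}f\|_{L^q}\le A_0(y)\|(-\mathcal{L}_{p_0})^{\sigma/2}f\|_{L^q}$. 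This is the wrong power placement relative to the claim ($\sigma/(2\theta)$ vs $\sigma/2$); correcting it, one takes instead $V_z = (-\mathcal{L}_{p_0})^{\frac{\sigma z}{2\theta}}$ so that $V_\theta = (-\mathcal{L}_{p_0})^{\sigma/2}$, $V_0 = I$, and then at $\Re z = \theta$ one has $V_{\theta+iy}f = (-\mathcal{L}_{p_0})^{i\sigma y/(2\theta)}[(-\mathcal{L}_{p_0})^{\sigma/(2\theta)}f]\cdot(-\mathcal{L}_{p_0})^{?}$ — the point being $V_{\theta+iy} = (-\mathcal{L}_{p_0})^{\sigma/(2\theta)}\cdot(-\mathcal{L}_{p_0})^{i\sigma y/(2\theta)}$, giving $\|V_{\theta+iy}f\|_{L^q}\le A_1(y)\|(-\mathcal{L}_{p_0})^{\sigma/(2\theta)}f\|_{L^q}$, while at $\Re z = 0$, $V_{iy} = (-\mathcal{L}_{p_0})^{i\sigma y/(2\theta)}$ is bounded on $L^r$, $\|V_{iy}f\|_{L^r}\le A_0(y)\|f\|_{L^r}$. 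Stein interpolation at $z=\theta$ with $1/p = \theta/q+(1-\theta)/r$ and admissible growth of $A_0, A_1$ yields $\|V_\theta f\|_{L^p} = \|(-\mathcal{L}_{p_0})^{\sigma/2}f\|_{L^p} \le C \|(-\mathcal{L}_{p_0})^{\sigma/(2\theta)}f\|_{L^q}^{\theta}\|f\|_{L^r}^{1-\theta}$, which is exactly the claim.

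The two ingredients I will invoke as black boxes are: the boundedness of $(-\mathcal{L}_{p_0})^{is}$ on $L^r(\X)$ for $r\in(p_0,p_0')$ with $\|(-\mathcal{L}_{p_0})^{is}\|_{L^r\to L^r}\lesssim (1+|s|)^{\alpha_r}e^{c|s|}$ for some $c<\pi$ (the sharp imaginary-power estimates of Anker, Clerc--Giulini--Meda, Ionescu referenced just above the lemma — crucially the exponential constant is strictly less than $\pi$, which is what makes Stein interpolation applicable), and the analyticity of $z\mapsto (-\mathcal{L}_{p_0})^{\sigma z/(2\theta)}f$ in the strip for $f\in C_c^\infty(\X)$ together with the admissible-growth bound, both standard from the spectral calculus of the self-adjoint (on the natural $L^{p_0}$-scale) operator $\mathcal{L}_{p_0}$. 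The main obstacle I anticipate is not conceptual but a bookkeeping/density matter: verifying that the family $V_z$ satisfies the hypotheses of Stein's theorem (admissible growth, analyticity on a dense class, and that $(-\mathcal{L}_{p_0})^{\sigma/(2\theta)}f\in L^q$ and $f\in L^r$ simultaneously for $f\in C_c^\infty(\X)$ so that both sides are finite), and ensuring the $L^r$- and $L^q$-boundedness of imaginary powers is valid for the specific exponents $q,r\in(p_0,p_0')$ — this is where one must check that the condition $r\in(p_0,p_0')$ places $1/r$ strictly inside the interval $(1/2 - |\rho_{p_0}|/(2|\rho|),\, 1/2 + |\rho_{p_0}|/(2|\rho|))$, matching the range in \eqref{eqn_ran_of_p,c}, so that the multiplier theory indeed applies. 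Once those are in place the interpolation delivers the conclusion directly.
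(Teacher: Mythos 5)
Your proposal follows essentially the same route as the paper: complex interpolation along an analytic family of powers of $-\mathcal{L}_{p_0}$, with the imaginary-power bound $\|(-\mathcal{L}_{p_0})^{iy}\|_{L^s\to L^s}\le Ce^{2\pi|y|/3}$ for $s\in(p_0,p_0')$ as the key black box — this is exactly the estimate \eqref{eqn_est_op_Lp_n} the paper invokes. Two points in your final formulation need repair. First, the strip normalization: with $V_z=(-\mathcal{L}_{p_0})^{\sigma z/(2\theta)}$ the real part of the exponent on the line $\Re z=\theta$ is $\sigma\theta/(2\theta)=\sigma/2$, not $\sigma/(2\theta)$, so your claimed factorization $V_{\theta+iy}=(-\mathcal{L}_{p_0})^{\sigma/(2\theta)}(-\mathcal{L}_{p_0})^{i\sigma y/(2\theta)}$ is false, and interpolating "at $z=\theta$" between the lines $\Re z=0$ and $\Re z=\theta$ evaluates at a boundary point and yields nothing. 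The correct setup is the strip $0\le\Re z\le 1$ (equivalently the paper's strip $0\le\Re z\le\sigma/\theta$ for the family $(-\mathcal{L}_{p_0})^{z/2}$), with the $L^q$-estimate on $\Re z=1$ and the conclusion read off at the interior point $z=\theta$ (the paper's $z=\sigma$). Second, as you yourself notice, literal Stein operator interpolation does not apply, because the two boundary bounds control $\|V_{1+iy}f\|_{L^q}$ by $\|(-\mathcal{L}_{p_0})^{\sigma/(2\theta)}f\|_{L^q}$ and $\|V_{iy}f\|_{L^r}$ by $\|f\|_{L^r}$ — norms of different transforms of the same fixed $f$. The paper resolves this by running Phragm\'en–Lindel\"of by hand on the scalar function $h(z)=e^{z^2}\int_{\X}(-\mathcal{L}_{p_0})^{z/2}f\cdot g_z$, where $g_z(x)=\|g\|_{L^{p'}}^{az+b}g(x)|g(x)|^{cz+d}$ is the standard interpolating dual family normalized so that $\|g_z\|_{L^{s'}}=1$ on each vertical line (with $s$ as in your \eqref{rel_s_r,p,n}-type relation), and the damping factor $e^{z^2}$ absorbs the $e^{2\pi|y|/3}$ growth. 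With these two corrections your argument is the paper's proof. (One small inaccuracy: admissibility for Stein/Phragm\'en–Lindel\"of requires $\log|A_j(y)|\lesssim e^{a|y|}$ with $a<\pi$, so any exponential growth $e^{c|y|}$ is admissible; the constant $2\pi/3<\pi$ is not what makes the interpolation work.)
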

\begin{proof}
    Let us define the strip
    \begin{align*}
        S_{\sigma/\theta}= \left\{ z \in \C : 0\leq \Re z \leq {\sigma}/{\theta} \right\}.
    \end{align*}
Now, let us take arbitrary compactly supported function $g \in L^{p'}(\X)$. Then for $z \in S_{\sigma/\theta}$, we define
\begin{align*}
    g_z(x)= \|g\|_{L^{p'}(\X)}^{az+b}g(x) |g(x)|^{cz+d} \quad \text{for all }x\in \X,
\end{align*}
where 
\begin{align*}
    \sigma=\frac{\theta p'}{\sigma} \left(\frac{1}{q}-\frac{1}{p} \right), \quad b=-\frac{p'}{r'}, \quad c=\frac{\theta p'}{\sigma}\left(\frac{1}{p}-\frac{1}{q} \right), \quad \text{and} \quad d=\frac{p'}{r'}-1.
\end{align*}
When $\Re z= \eta$ and 
\begin{align}\label{rel_s_r,p,n}
    \frac{1}{s}= \frac{1}{r}-\frac{\theta \eta}{\sigma}\left(\frac{1}{p}-\frac{1}{q}\right),
\end{align}
then it follows that $\|g_z\|_{L^{s'}(\X)}^{s'}= \|g\|_{L^{p'}(\X)}^{s'(a\eta+b)+p'}=1$. Now let $ f \in C_c^{\infty}(\X) $ be fixed, and define the following analytic function $h: S_{\sigma/\theta} \rightarrow \C$ by
\begin{align}\label{defn_h_z}
    h(z) =e^{z^2}\int_{\X} (-\mathcal{L}_{p_0})^{\frac{z}{2}}f(x) g_z(x)dx.
\end{align}
It is easy to see that $h$ is well defined. For $s \in (p_0,p_0')$ and $y \in \R$,  the following estimate is known
\begin{align}\label{eqn_est_op_Lp_n}
        \|(-\mathcal{L}_{p_0})^{iy}\|_{L^s(\X) \rightarrow L^s(\X)} \leq C e^{\frac{2\pi}{3} |y|},
    \end{align}
 see for instance \cite[Cor. 4.2]{CGM93} \cite[Theorem 9 and (5.2)]{Ion02} (see also \cite{Ank90, Ion03}). We observe from \eqref{rel_s_r,p,n} that since $q,r \in (p_0,p_0')$, $s$ also lies in the interval $(p_0, p_0')$ whenever $0 \leq \eta \leq \sigma/\theta$. Therefore, applying the above estimate for $z = \eta + i y \in S_{\sigma/\theta}$, we get
\begin{align*}
    |h(z)| \leq e^{\eta^2-y^2}\|(-\mathcal{L}_{p_0})^{\frac{iy}{2}}(-\mathcal{L}_{p_0})^{\frac{\eta}{2}}f\|_{L^s(\X)} \|g_z\|_{L^{s'}(\X)}\leq C e^{-y^2+\frac{2\pi}{3} |y|} \| (-\mathcal{L}_{p_0})^{\frac{\eta}{2}}f\|_{L^s(\X)},
\end{align*}
where in the last step we used \eqref{eqn_est_op_Lp_n}. Thus we have obtained $|h(\eta+iy)| \leq C\| (-\mathcal{L}_{p_0})^{\frac{\eta}{2}}f\|_{L^s(\X)}$. Particular, we have from the inequality above and  \eqref{rel_s_r,p,n}
\begin{align*}
    |h(z)| \leq C\begin{cases}
        \|f\|_{L^r(\X)} \quad &\text{if }\Re z=0,\\
         \|(-\mathcal{L}_{p_0})^{\frac{\sigma}{2\theta}}f\|_{L^q(\X)} \quad &\text{if }\Re z=\frac{\sigma}{\theta}.
    \end{cases}
\end{align*}
By the Phragm{\'e}n-Lindel{\"o}f theorem, it follows that
\begin{align}
    |h(\sigma)|\leq C \|(-\mathcal{L}_{p_0})^{\frac{\sigma}{2\theta}}f\|_{L^q(\X)}^{\theta}  \|f\|_{L^r(\X)}^{1-\theta}.
\end{align}
Since
\begin{align*}
     h(\sigma) =e^{\sigma^2}\int_{\X} (-\mathcal{L}_{p_0})^{\frac{\sigma}{2}}f(x) g_{\sigma}(x)dx.
\end{align*}
and $g_{\alpha}$ is an arbitrary compactly supported  simple function on $\X$ with  $L^{p'}(\X)$-norm equal to one, this completes the proof. 
\end{proof}
\begin{remark}
We would like to point out that in the setting of rank one symmetric spaces, Ionescu \cite[Theorem 9]{Ion02} obtained the sharp estimate of $\|(-\mathcal{L}_{p_0})^{iy}\|_{L^s(\X) \rightarrow L^s(\X)}$ for all $s \in [p_0, p_0']$, improving upon an earlier result by Anker and Seeger, who had established similar estimates only for $s \in (p_0, p_0')$. Furthermore, using Ionescu’s result, we can extend the Landau–Kolmogorov inequality for $\mathcal{L}_{p_0}$ in Lemma~\ref{lem_LKP_p0_n} to the full range $p, q, r \in [p_0, p_0']$.
\end{remark}
As an application of the Landau–Kolmogorov inequality, we now extend the uncertainty inequalities from Lemma \ref{lem_Lp_unc_X_p_0} to prove Theorem \ref{thm_Lp_unc_x}. For the reader’s convenience, we restate the theorem below.
\begin{theorem}\label{thm_uncn_Lp_p0,n}
    Let $\X$ be a general symmetric space of noncompact type, and let $1 \leq p_0 < 2$. Suppose $\gamma, \delta  \in (0,\infty)$, and that the exponents $p,q$, and $r$ with $p, r \in (p_0, p_0')$ and $q\geq 1$, which satisfy 
    \begin{align}
          \frac{\gamma+\delta}{p}= \frac{\delta}{q}+\frac{\gamma}{r}.
    \end{align}
Then, for any $\sigma \geq \delta$, we have the following inequality
        \begin{align}\label{eqn_uncn_p0_thm}
            \|f\|_{L^p(\X)} \leq C \| |x|^{\gamma}f\|_{L^q(\X)}^{\frac{\delta}{\gamma+\delta}} \| (-\mathcal{L}_{p_0})^{\frac{\sigma}{2}}f\|_{L^r(\X)}^{\frac{\gamma}{\gamma+\delta}}
        \end{align}
        for all $f \in C_c^{\infty}(\X)$.
\end{theorem}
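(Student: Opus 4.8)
The plan is to deduce Theorem~\ref{thm_uncn_Lp_p0,n} from the special case already established in Lemma~\ref{lem_Lp_unc_X_p_0} by feeding in the Landau--Kolmogorov inequality of Lemma~\ref{lem_LKP_p0_n}. The obstruction in Lemma~\ref{lem_Lp_unc_X_p_0} is the constraint $\delta_0 < n/r$: that lemma only handles the case where the power of $(-\mathcal{L}_{p_0})$ equals the power of $|x|$ occurring on the intermediate space, and the admissible exponent is capped by $n/r$. So the first step is to record that, for a suitably chosen small $\delta_0$ with $0<\delta_0<\min\{\delta, n/r\}$, together with the matching $\gamma_0$ determined by the homogeneity relation
\begin{align*}
    \frac{\gamma_0+\delta_0}{p}= \frac{\delta_0}{q}+\frac{\gamma_0}{r},
\end{align*}
Lemma~\ref{lem_Lp_unc_X_p_0} gives
\begin{align*}
    \|f\|_{L^p(\X)} \leq C \| |x|^{\gamma_0}f\|_{L^q(\X)}^{\frac{\delta_0}{\gamma_0+\delta_0}} \| (-\mathcal{L}_{p_0})^{\frac{\sigma_0}{2}}f\|_{L^r(\X)}^{\frac{\gamma_0}{\gamma_0+\delta_0}}
\end{align*}
for every $\sigma_0\geq \delta_0$. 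Here I must check that the relation defining $\gamma_0$ really does produce $\gamma_0>0$ and that the pair $(\gamma_0,\delta_0)$ can be chosen with $\delta_0$ as small as we like while $\gamma_0$ stays positive; this is elementary since as $\delta_0\to 0^+$ the relation forces $\gamma_0(1/p-1/r)\to 0$, and $1/p-1/r$ has a fixed sign.

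The second and main step is to \emph{bootstrap} the exponents $\gamma_0,\delta_0$ up to the desired $\gamma,\delta$. Since $\delta_0<\delta$, write $\delta = \delta_0 + (\delta-\delta_0)$ and observe that $\gamma/\delta$ need not equal $\gamma_0/\delta_0$, so the homogeneity relation has to be handled carefully. The clean way is as follows: starting from the inequality above, replace the term $\||x|^{\gamma_0}f\|_{L^q}$ using the elementary interpolation trick already used in the proof of Corollary~\ref{cor_unc_2,2_F} — namely, for $\gamma_0<\gamma$ and any $\epsilon>0$, $|x|^{\gamma_0}/\epsilon^{\gamma_0}\le 1 + |x|^{\gamma}/\epsilon^{\gamma}$, which upon optimizing in $\epsilon$ yields
\begin{align*}
    \||x|^{\gamma_0}f\|_{L^q(\X)} \leq C\, \|f\|_{L^q(\X)}^{1-\gamma_0/\gamma}\,\||x|^{\gamma}f\|_{L^q(\X)}^{\gamma_0/\gamma}.
\end{align*}
However, this introduces $\|f\|_{L^q(\X)}$, which is $L^q$ rather than $L^p$; to close the argument one instead applies this trick \emph{after} using Lemma~\ref{lem_LKP_p0_n} to move the differentiation power. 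Concretely: apply Lemma~\ref{lem_LKP_p0_n} with a suitable convex-combination parameter $\theta$ and a triple of exponents in $(p_0,p_0')$ to write $\|(-\mathcal{L}_{p_0})^{\sigma_0/2}f\|_{L^r}$ as a product of a power of $\|(-\mathcal{L}_{p_0})^{\sigma/2}f\|_{L^{r}}$ (with $\sigma$ the target order, chosen so that $\sigma_0/\theta=\sigma$, i.e.\ $\theta=\sigma_0/\sigma\le 1$) and a power of $\|f\|_{L^{r}}$, and then absorb the $\|f\|_{L^r}$ factor back into $\|f\|_{L^p}$ on the left via Hölder/interpolation against the $|x|^{\gamma}$ weight. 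The bookkeeping of exponents is where care is needed: one must verify that all the convex-combination identities on $1/p,1/q,1/r$ and on $\gamma,\delta,\sigma$ are mutually consistent, and that every intermediate Lebesgue exponent that appears lies in $(p_0,p_0')$ so that Lemma~\ref{lem_LKP_p0_n} and Lemma~\ref{cor_hardy_inq_X} apply; this is a finite check but the algebra is the genuine content of the proof.

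An alternative, and perhaps cleaner, route avoids the separate bootstrap: apply Lemma~\ref{lem_LKP_p0_n} directly inside the conclusion of Lemma~\ref{lem_Lp_unc_X_p_0}. Take $\theta = \delta_0/\delta \le 1$ (using $\delta_0 \le \delta$) and exponents $q',r'\in(p_0,p_0')$ with $1/r = \theta/q' + (1-\theta)/r'$; Lemma~\ref{lem_LKP_p0_n} with $\sigma/\theta$ playing the role of its top order gives
\begin{align*}
    \|(-\mathcal{L}_{p_0})^{\delta_0/2}f\|_{L^{r}(\X)} \leq C\, \|(-\mathcal{L}_{p_0})^{\sigma/2}f\|_{L^{q'}(\X)}^{\,\delta_0/\sigma}\, \|f\|_{L^{r'}(\X)}^{\,1-\delta_0/\sigma},
\end{align*}
wait — one needs the \emph{output} order to be $\delta_0$, so one applies the lemma with its parameter so that $\sigma_{\text{top}}\cdot\theta = \delta_0$; then raising powers and combining with the $\|\,|x|^{\gamma_0}f\,\|_{L^q}^{\delta_0/(\gamma_0+\delta_0)}$ factor, and finally using the elementary weight interpolation above to trade $\gamma_0$ for $\gamma$ and $\|f\|_{L^{r'}}$ for $\|f\|_{L^p}$, yields \eqref{eqn_uncn_p0_thm} after collecting exponents. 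I expect the main obstacle to be precisely this last stage of reconciling the three homogeneity constraints — the one in Lemma~\ref{lem_Lp_unc_X_p_0}, the one in Lemma~\ref{lem_LKP_p0_n}, and the target one in \eqref{eqn_uncn_p0_thm} — simultaneously while keeping all auxiliary exponents strictly between $p_0$ and $p_0'$; once the right $\theta$, $\delta_0$, and auxiliary exponents are pinned down, the estimates themselves are immediate from the quoted lemmas and Hölder's inequality. The hypothesis $\sigma\ge\delta$ is exactly what makes $\theta=\delta/\sigma\le 1$ (equivalently $\delta_0/\sigma\le1$) admissible in Lemma~\ref{lem_LKP_p0_n}, so no further restriction on $\sigma$ is needed.
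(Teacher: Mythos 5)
Your overall strategy (reduce to a small auxiliary order via Lemma~\ref{lem_Lp_unc_X_p_0}, then recover the full strength with the Landau--Kolmogorov inequality of Lemma~\ref{lem_LKP_p0_n}) is the right one, and it is the paper's strategy too. But your specific reduction does not close, and the step you defer as ``bookkeeping'' is exactly where it breaks. If you keep $p,q,r$ fixed and shrink $\delta_0$, the homogeneity relation forces $\gamma_0=(\gamma/\delta)\,\delta_0$, so the weight drops to $|x|^{\gamma_0}$ with $\gamma_0<\gamma$, and you must upgrade $\||x|^{\gamma_0}f\|_{L^q}$ to $\||x|^{\gamma}f\|_{L^q}$. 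The elementary interpolation you invoke produces a factor $\|f\|_{L^q}^{1-\gamma_0/\gamma}$, and on a space of infinite measure there is no way to absorb an $L^q$ norm of $f$ into the $L^p$ norm on the left (the Hölder split $\||x|^{\gamma_0}f\|_{L^q}\le\||x|^{\gamma}f\|_{L^{q_1}}^{s}\|f\|_{L^{q_2}}^{1-s}$ forces $q_2=q$ unless $p=q$). Applying Lemma~\ref{lem_LKP_p0_n} does not help here, since it only redistributes powers of the Laplacian and leaves the spatial weight untouched; and in your ``alternative route'' the leftover factor $\|f\|_{L^{r'}}$ is absorbable only if $r'=p$ exactly, which your setup does not arrange.

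The paper resolves this by making the opposite choice in the reduction: it keeps $\gamma$ and $q$ fixed, sets $\delta_0=\theta\delta$ with $\theta\in(0,n/(r\delta))$, and lets the homogeneity relation move the deficit into the Lebesgue exponent, replacing $r$ by $r_0$ with $1/r_0=\theta/r+(1-\theta)/p$. Lemma~\ref{lem_Lp_unc_X_p_0} then gives the inequality with the correct weight $|x|^{\gamma}$ but with $L^{r_0}$ on the Laplacian term, and Lemma~\ref{lem_LKP_p0_n} applied to the triple $(r_0,r,p)$ yields $\|(-\mathcal{L}_{p_0})^{\sigma/2}f\|_{L^{r_0}}\lesssim\|(-\mathcal{L}_{p_0})^{\sigma/(2\theta)}f\|_{L^{r}}^{\theta}\|f\|_{L^{p}}^{1-\theta}$; the leftover factor is exactly a power of $\|f\|_{L^p}$, which is absorbed into the left-hand side, and the exponents collapse to $\delta/(\gamma+\delta)$ and $\gamma/(\gamma+\delta)$ because $\delta_0=\theta\delta$. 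You should redo your first step with $\gamma$ fixed and $r$ varying; as written, the proposal has a genuine gap rather than a mere computation left to the reader.
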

\begin{proof}
When $\delta < {n}/{r}$, the theorem follows directly from Lemma \ref{lem_Lp_unc_X_p_0}. So, suppose instead that $\delta \geq {n}/{r}$. In this case, choose $\theta \in (0, {n}/{r\delta})$ and define $\delta_0 = \theta \delta$. Then,  $\delta_0 < {n}/{r}$, and by a straightforward computation it follows that
\begin{align}\label{eqn_reln_d,g_n}
    \frac{\gamma+\delta_0}{p}=\frac{\delta_0}{q}+{\frac{\gamma}{r_0}}, \qquad \text{where } \quad \frac{1}{r_0}= \frac{\theta}{r}+\frac{1-\theta}{p}.
\end{align}
Now applying Lemma \ref{lem_Lp_unc_X_p_0}, we get that for any $\sigma \geq  \delta_0$  the following holds 
\begin{align}\label{eqn_unc_p,d<nr}
    \|f\|_{L^p(\X)} \leq C \| |x|^{\gamma}f\|_{L^q(\X)}^{\frac{\delta_0}{\gamma+\delta_0}} \| (-\mathcal{L}_{p_0})^{\frac{\sigma}{2}}f\|_{L^{r_0}(\X)}^{\frac{\gamma}{\gamma+\delta_0}}
\end{align}
for all $f \in C_c^{\infty}(\X)$. By using \eqref{eqn_reln_d,g_n} and applying the Landau–Kolmogorov inequality from Lemma \ref{lem_LKP_p0_n}, we obtain
\begin{align*}
     \|(-\mathcal{L}_{p_0})^{\frac{\sigma}{2}}f\|_{L^{r_0}(\X)} \leq C  \|(-\mathcal{L}_{p_0})^{\frac{\sigma}{2 \theta}}f\|_{L^{r}(\X)}^{\theta} \|f\|_{L^{p}(\X)}^{1-\theta}. 
\end{align*}
Substituting the inequality above into \eqref{eqn_unc_p,d<nr} and reorganizing the terms yields \eqref{eqn_uncn_p0_thm}, thereby completing the proof of the theorem.
\end{proof}
\subsection{Applications of shifted Pitt's inequality for Jacobi transforms}\label{sec_app_jac} We have similar applications of shifted Pitt's inequality for the Jacobi transform as in the previous section. We omit the proofs, as they follow the same arguments presented in earlier subsections,  relying on Theorem \ref{thm_pitt's_stand_J}.

\begin{corollary}
    Let $\gamma, \delta>0 $, and $\rho_p = |2/p-1| \rho$. Then we have the following 
    \begin{align}\label{eqn_unc_J2,2_F}
        \|f\|_{L^2(m)} \leq C \| t^{\gamma} f\|_{L^2(m)}^{\frac{\delta} {\gamma+\delta}}  \| (\lambda^2+ \rho_p^2)^{\frac{\delta}{2}}\mathcal{J}f\|_{L^2(\R_+, |\bfc(\lambda)|^{-2}d\lambda)}^{\frac{\gamma} {\gamma+\delta}}
    \end{align}
    for all $f \in C_c^{\infty}(\R_+)$. Equivalently, we have 
     \begin{align}\label{eqn_unc_J2,2_X}
        \|f\|_{L^2(m)} \leq C \| t^{\gamma} f\|_{L^2(m)}^{\frac{\delta} {\gamma+\delta}}  \|  (-\mathcal{L}_{p})^{\frac{\delta}{2}} f \|_{L^2(m)}^{\frac{\gamma} {\gamma+\delta}},
    \end{align}
    where $\mathcal{L}_{p}= \mathcal{L}+(|\rho|^2-|\rho_p|^2) I$ and $\mathcal{L}$ is the Jacobi Laplacian defined in \eqref{defn_of_L_Jac}. 
\end{corollary}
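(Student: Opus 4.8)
The plan is to mirror the proof of Corollary~\ref{cor_unc_2,2_F}, with the spherical Fourier transform on $\X$ replaced by the standard Jacobi transform $\mathcal{J}$ and with Theorem~\ref{thm_pitt's_stand_J} playing the role that Theorem~\ref{thm_pitt_X_n} played there. First I would dispose of the equivalence between \eqref{eqn_unc_J2,2_F} and \eqref{eqn_unc_J2,2_X}. Since $\mathcal{L}\varphi_\lambda=-(\lambda^2+\rho^2)\varphi_\lambda$ by \eqref{defn_of_L_Jac} and $\mathcal{L}_p=\mathcal{L}+(|\rho|^2-|\rho_p|^2)I$, the operator $-\mathcal{L}_p$ has Jacobi symbol $\lambda^2+\rho_p^2$, i.e.
\begin{align*}
\mathcal{J}\big((-\mathcal{L}_p)^{\delta/2}f\big)(\lambda)=(\lambda^2+\rho_p^2)^{\delta/2}\,\mathcal{J}f(\lambda);
\end{align*}
combining this with the Plancherel identity \eqref{eqn_planc_J} gives $\|(-\mathcal{L}_p)^{\delta/2}f\|_{L^2(m)}=\|(\lambda^2+\rho_p^2)^{\delta/2}\mathcal{J}f\|_{L^2(n)}$, so the two stated inequalities coincide up to the harmless constant between $n(\lambda)\,d\lambda$ and $|\bfc(\lambda)|^{-2}\,d\lambda$.

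Next I would establish \eqref{eqn_unc_J2,2_F} first for $\gamma$ in a bounded range. Starting from Plancherel \eqref{eqn_planc_J}, I write
\begin{align*}
\|f\|_{L^2(m)}^2=\int_0^\infty |\mathcal{J}f(\lambda)|^{\frac{2\gamma}{\gamma+\delta}}(\lambda^2+\rho_p^2)^{\frac{\gamma\delta}{\gamma+\delta}}\;|\mathcal{J}f(\lambda)|^{\frac{2\delta}{\gamma+\delta}}(\lambda^2+\rho_p^2)^{-\frac{\gamma\delta}{\gamma+\delta}}\,n(\lambda)\,d\lambda,
\end{align*}
and apply H\"older's inequality in $\lambda$ with exponents $(\gamma+\delta)/\gamma$ and $(\gamma+\delta)/\delta$ to obtain
\begin{align*}
\|f\|_{L^2(m)}^2\le\Big(\int_0^\infty |\mathcal{J}f(\lambda)|^2(\lambda^2+\rho_p^2)^{\delta}n(\lambda)\,d\lambda\Big)^{\!\frac{\gamma}{\gamma+\delta}}\Big(\int_0^\infty |\mathcal{J}f(\lambda)|^2(\lambda^2+\rho_p^2)^{-\gamma}n(\lambda)\,d\lambda\Big)^{\!\frac{\delta}{\gamma+\delta}}.
\end{align*}
The first factor equals $\|(\lambda^2+\rho_p^2)^{\delta/2}\mathcal{J}f\|_{L^2(n)}^{2\gamma/(\gamma+\delta)}$. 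To the second factor I apply the $p=q=2$ case of Theorem~\ref{thm_pitt's_stand_J} with $\zeta=\rho_p$ and $\sigma=\kappa=\gamma$: the conditions $\kappa\ge0$ and $\sigma\ge\kappa$ hold trivially, and the admissible range is $0\le\gamma<\alpha+1$ when $p\neq2$ (so $\rho_p\neq0$), and $0\le\gamma<\min\{2(\alpha+1),3\}/2$ when $p=2$ (so $\rho_p=0$); in either case this bounds the second factor by $C\,\|t^\gamma f\|_{L^2(m)}^{2\delta/(\gamma+\delta)}$. Taking square roots yields \eqref{eqn_unc_J2,2_F} for $\gamma$ in this range.

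Finally I would remove the restriction on $\gamma$ by the absorption argument from the end of the proof of Corollary~\ref{cor_unc_2,2_F}. Given $\gamma$ above the threshold, fix $\gamma_0$ below it; from $t^{\gamma_0}\epsilon^{-\gamma_0}\le1+t^\gamma\epsilon^{-\gamma}$, valid for every $\epsilon>0$ and $t\ge0$ since $\gamma_0<\gamma$, one gets $\|t^{\gamma_0}f\|_{L^2(m)}\le\epsilon^{\gamma_0}\|f\|_{L^2(m)}+\epsilon^{\gamma_0-\gamma}\|t^\gamma f\|_{L^2(m)}$, and choosing $\epsilon^\gamma=\|t^\gamma f\|_{L^2(m)}/\|f\|_{L^2(m)}$ gives $\|t^{\gamma_0}f\|_{L^2(m)}\le C\|f\|_{L^2(m)}^{1-\gamma_0/\gamma}\|t^\gamma f\|_{L^2(m)}^{\gamma_0/\gamma}$. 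Plugging this into \eqref{eqn_unc_J2,2_F} for the exponent $\gamma_0$ and rearranging exactly as in Corollary~\ref{cor_unc_2,2_F} produces \eqref{eqn_unc_J2,2_F}, and hence \eqref{eqn_unc_J2,2_X}, for all $\gamma,\delta>0$. I do not anticipate a genuine obstacle: the exponential growth of the measure $m(t)\,dt$ is irrelevant here because it has already been absorbed into Theorem~\ref{thm_pitt's_stand_J}; the only points needing care are the spectral identification of the symbol of $(-\mathcal{L}_p)^{\delta/2}$ under $\mathcal{J}$ and the bookkeeping that $\sigma=\kappa=\gamma$ lies in the sharp admissible range of that theorem.
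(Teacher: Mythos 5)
Your proposal is correct and follows exactly the route the paper intends: it transplants the proof of Corollary~\ref{cor_unc_2,2_F} (Plancherel, the H\"older splitting with the inserted factors $(\lambda^2+\rho_p^2)^{\pm\gamma\delta/(\gamma+\delta)}$, the $p=q=2$ case of the shifted Pitt inequality — here Theorem~\ref{thm_pitt's_stand_J} with $\zeta=\rho_p$, $\sigma=\kappa=\gamma$ — and the absorption argument to remove the restriction on $\gamma$), which is precisely what the paper means when it says the proofs in Section~\ref{sec_app_jac} "follow the same arguments presented in earlier subsections, relying on Theorem~\ref{thm_pitt's_stand_J}." The spectral identification of $(-\mathcal{L}_p)^{\delta/2}$ under $\mathcal{J}$ and the case split between $\rho_p\neq 0$ and $\rho_p=0$ (i.e., $p=2$) are both handled correctly.
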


\begin{corollary}
Let $\zeta\neq 0$, $1<p\leq 2$, and $p\leq q'$.   Suppose  $\sigma,\kappa\geq 0$ be such that $3 (1/q-1/p) \leq \kappa <2(\alpha+1)/p'$ and $\sigma \geq \kappa+ 2(\alpha+1)(1/p-1/q)$.  Then for every $f\in C_c^{\infty}(\R_+)$,
     \begin{align}\label{eqn_uncertain_jac_2,p}
        \|f\|_{L^2(m)}^2 &\leq C \left( \int_{0}^{\infty} |\mathcal{J}f(\lambda)|^{q}   (\lambda^2+\zeta^2)^{\frac{\sigma q}{2}} |\bfc(\lambda)|^{-2}\,d\lambda \right)^{\frac{1}{q}}  \left(\int_{{0}}^{{\infty}}{\left|  f(t)\right| }^p  t^{\kappa p} m(t)\,dt\right)^{\frac{1}{p}}.
    \end{align}
\end{corollary}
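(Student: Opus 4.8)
The plan is to mirror the proof of the corresponding weighted uncertainty inequality \eqref{eqn_uncertain_2,p} on symmetric spaces: combine the Plancherel identity for the Jacobi transform with H\"older's inequality and the shifted Pitt's inequality of Theorem~\ref{thm_pitt's_stand_J}. First I would use the Plancherel formula \eqref{eqn_planc_J} to write $\|f\|_{L^2(m)}^2=\int_0^\infty |\mathcal{J}f(\lambda)|^2\,n(\lambda)\,d\lambda$, where $n(\lambda)=(2\pi)^{-1/2}|\bfc(\lambda)|^{-2}$ by \eqref{def_m(t)} (so the two densities agree up to a harmless constant that I will absorb into $C$). I would then factor the integrand as
\[
|\mathcal{J}f(\lambda)|^2=\Big(|\mathcal{J}f(\lambda)|\,(\lambda^2+\zeta^2)^{\sigma/2}\Big)\cdot\Big(|\mathcal{J}f(\lambda)|\,(\lambda^2+\zeta^2)^{-\sigma/2}\Big)
\]
and apply H\"older's inequality with the conjugate pair $(q,q')$ to obtain
\[
\|f\|_{L^2(m)}^2\lesssim\left(\int_0^\infty|\mathcal{J}f(\lambda)|^{q}(\lambda^2+\zeta^2)^{\frac{\sigma q}{2}}|\bfc(\lambda)|^{-2}\,d\lambda\right)^{\frac1q}\left(\int_0^\infty|\mathcal{J}f(\lambda)|^{q'}(\lambda^2+\zeta^2)^{-\frac{\sigma q'}{2}}|\bfc(\lambda)|^{-2}\,d\lambda\right)^{\frac1{q'}}.
\]

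The heart of the argument is to control the second factor by $\big(\int_0^\infty|f(t)|^p t^{\kappa p}m(t)\,dt\big)^{1/p}$ via the shifted Pitt's inequality \eqref{eqn_pitts_stanJ}, read with the exponent pair $(p,q')$ in place of $(p,q)$. For this I must translate the standing hypotheses into those of Theorem~\ref{thm_pitt's_stand_J}, hence of Theorem~\ref{thm_pitt's_mod_J} with $q'$ playing the role of $q$: the range condition $1<p\le 2$ and $p\le q'$ is assumed; $\kappa p'<2(\alpha+1)$ is part of the hypothesis; the balance condition \eqref{nec_stan_s-k>} becomes $\sigma-\kappa\ge 2(\alpha+1)\big(\tfrac1p+\tfrac1{q'}-1\big)=2(\alpha+1)\big(\tfrac1p-\tfrac1q\big)$, which is exactly the assumption $\sigma\ge\kappa+2(\alpha+1)(1/p-1/q)$; and, since $\zeta\neq 0$, the remaining side condition is either automatic (when $\kappa p'\ge 3$) or is \eqref{eqn_nec_mJ,k>} rewritten as $\kappa\ge 3\big(1-\tfrac1p-\tfrac1{q'}\big)=3(1/q-1/p)$ (when $\kappa p'<\min\{3,2(\alpha+1)\}$), again part of the hypothesis $3(1/q-1/p)\le\kappa$. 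Thus Theorem~\ref{thm_pitt's_stand_J} applies to the second factor and, inserted into the previous display, yields \eqref{eqn_uncertain_jac_2,p}.

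I do not anticipate a genuine obstacle: the proof is a routine assembly of Plancherel, H\"older, and the already established Pitt inequality. The only points requiring attention are the index bookkeeping — converting every occurrence of the exponent $q$ in Theorems~\ref{thm_pitt's_mod_J} and \ref{thm_pitt's_stand_J} into the conjugate exponent $q'$ of the present corollary and correspondingly rewriting each of the side conditions — and noting that $f\in C_c^\infty(\R_+)$ makes all integrals involved well defined, so that the applications of H\"older's inequality and of Theorem~\ref{thm_pitt's_stand_J} are legitimate.
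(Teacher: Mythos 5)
Your proposal is correct and follows exactly the route the paper intends: the paper omits this proof, stating that it follows the same argument as the symmetric space analogue \eqref{eqn_uncertain_2,p}, namely Plancherel, Hölder with the pair $(q,q')$, and then the shifted Pitt's inequality of Theorem~\ref{thm_pitt's_stand_J} applied with exponents $(p,q')$. Your index bookkeeping converting the hypotheses of Theorem~\ref{thm_pitt's_mod_J} with $q'$ in place of $q$ into the stated conditions $\kappa\ge 3(1/q-1/p)$ and $\sigma-\kappa\ge 2(\alpha+1)(1/p-1/q)$ is accurate.
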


\begin{corollary}
    Let $q\geq 2$,  $1<p\leq q'$ and $\zeta\neq 0$.   Suppose $\sigma$, $\kappa$ be such that $0\leq \kappa <2(\alpha+1)/p'$ and $\sigma \geq \kappa+2(\alpha+1)(1/p-1/q)$. Then we have for all $f \in C_c^{\infty}(\R_+)$
    \begin{align}\label{eqn_unc_p,q1}
        \|f\|_{L^q(m)}^{q'} \leq C   \left( \int_{0}^{\infty} |\mathcal{J}f(\lambda)|^{{q'}} (\lambda^2+\zeta^2)^{\frac{\sigma q}{2}} |\bfc(\lambda)|^{-2} \,d \lambda \right)^{\frac{1}{q}}  \left(\int_{{0}}^{{\infty}}{\left|  f(t)\right| }^p  t^{\kappa p} m(t)\,dt\right)^{\frac{1}{p}}.
    \end{align}
\end{corollary}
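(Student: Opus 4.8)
The plan is to mirror, in the Jacobi setting, the three-step argument used to prove the symmetric-space uncertainty inequality \eqref{eqn_unc_p,q1_n}, with the dual Hausdorff--Young inequality for the Jacobi transform replacing \eqref{eqn_HYP_dual-int} and the sharp shifted Pitt inequality of Theorem~\ref{thm_pitt's_stand_J} replacing Theorem~\ref{thm_pitt_X_n}. First I would record the dual Hausdorff--Young bound: for $q\ge 2$ and all $f\in C_c^\infty(\R_+)$,
\begin{align*}
\|f\|_{L^q(m)}\le \left(\int_0^\infty |\mathcal{J}f(\lambda)|^{q'}\,n(\lambda)\,d\lambda\right)^{1/q'}.
\end{align*}
This follows from the Parseval identity (polarizing \eqref{eqn_planc_J}) together with the Hausdorff--Young inequality $\|\mathcal{J}g\|_{L^{r'}(n)}\le \|g\|_{L^r(m)}$ for $1\le r\le 2$ (obtained by interpolating \eqref{eqn_planc_J} with the trivial $L^1(m)\to L^\infty(n)$ bound): writing $\|f\|_{L^q(m)}=\sup\{|\int_0^\infty f\bar g\,m\,dt|:\|g\|_{L^{q'}(m)}\le 1\}$ and estimating $|\int_0^\infty \mathcal{J}f\,\overline{\mathcal{J}g}\,n\,d\lambda|\le \|\mathcal{J}f\|_{L^{q'}(n)}\|\mathcal{J}g\|_{L^q(n)}\le \|\mathcal{J}f\|_{L^{q'}(n)}\|g\|_{L^{q'}(m)}$ gives the claim.

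Next, I would split the Fourier-side integral by inserting the shifted weight and its reciprocal,
\begin{align*}
|\mathcal{J}f(\lambda)|^{q'}=\Bigl[|\mathcal{J}f(\lambda)|^{q'/q}(\lambda^2+\zeta^2)^{\sigma/2}\Bigr]\Bigl[|\mathcal{J}f(\lambda)|(\lambda^2+\zeta^2)^{-\sigma/2}\Bigr],
\end{align*}
and apply H\"older's inequality with the conjugate exponents $q$ and $q'$ (using $q'/q+1=q'$) to obtain
\begin{align*}
\int_0^\infty |\mathcal{J}f(\lambda)|^{q'}n(\lambda)\,d\lambda\le \left(\int_0^\infty |\mathcal{J}f(\lambda)|^{q'}(\lambda^2+\zeta^2)^{\frac{\sigma q}{2}}n(\lambda)\,d\lambda\right)^{\frac1q}\left(\int_0^\infty |\mathcal{J}f(\lambda)|^{q'}(\lambda^2+\zeta^2)^{-\frac{\sigma q'}{2}}n(\lambda)\,d\lambda\right)^{\frac1{q'}}.
\end{align*}
To the last factor I would apply Theorem~\ref{thm_pitt's_stand_J} with the Fourier exponent there specialized to $q'$ and function exponent $p$, which bounds it by $C\bigl(\int_0^\infty |f(t)|^p t^{\kappa p} m(t)\,dt\bigr)^{1/p}$. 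Raising the dual Hausdorff--Young bound to the power $q'$ and combining the three displays yields precisely \eqref{eqn_unc_p,q1}.

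The only delicate point --- more bookkeeping than a genuine obstacle --- is verifying that the hypotheses of Theorem~\ref{thm_pitt's_stand_J}, hence of Theorem~\ref{thm_pitt's_mod_J}, are met after substituting $q'$ for its exponent ``$q$''. Since $q\ge 2$ we have $q'\le 2$, so $1<p\le q'$ already forces $1<p\le 2$ and $p\le q'$, as required. The balance condition \eqref{nec_stan_s-k>} with exponent $q'$ becomes $\sigma-\kappa\ge 2(\alpha+1)\bigl(\tfrac1p+\tfrac1{q'}-1\bigr)=2(\alpha+1)\bigl(\tfrac1p-\tfrac1q\bigr)$, which is exactly the assumption $\sigma\ge\kappa+2(\alpha+1)(1/p-1/q)$, and $\kappa p'<2(\alpha+1)$ is assumed directly. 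Finally, since $p\le q'$ gives $1/q-1/p\le 0$, the auxiliary requirement \eqref{eqn_nec_mJ,k>} (with exponent $q'$, namely $\kappa\ge 3(1/q-1/p)$) holds trivially from $\kappa\ge 0$; hence one of the two additional alternatives of Theorem~\ref{thm_pitt's_mod_J} in the case $\zeta\ne 0$ is always in force (the case $\kappa p'\ge 3$, or the case $\kappa p'<\min\{3,2(\alpha+1)\}$). This confirms all hypotheses and closes the argument.
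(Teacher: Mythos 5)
Your proposal is correct and follows essentially the same route as the paper, which omits the proof precisely because it is the verbatim analogue of the symmetric-space case: dual Hausdorff--Young on the Fourier side, the Hölder splitting with exponents $q$ and $q'$ inserting the weight $(\lambda^2+\zeta^2)^{\pm\sigma/2}$, and Theorem~\ref{thm_pitt's_stand_J} (with Fourier exponent $q'$) applied to the second factor. Your explicit verification that the hypotheses of Theorem~\ref{thm_pitt's_mod_J} reduce, after the substitution $q\mapsto q'$, exactly to the corollary's assumptions is the right bookkeeping and matches the paper's intent.
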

\begin{corollary}
Let $p \in (1,2]$ and  $p_0 \in [1,2)$.   Suppose that $0\leq \kappa <2(\alpha+1)/{p'}$, then for any $\sigma \geq  \kappa$, we have  the following inequality 
\begin{align}
         \left( \int_{0}^{\infty}  |\mathcal{J}f(\lambda)|^{p'}   |\bfc(\lambda)|^{-2} \,d \lambda \right)^{\frac{1}{p'}}  \leq C \left( \int_{0}^{\infty}|(-\mathcal{L}_{p_0})^{\sigma/2}f(t)|^p  t^{\kappa p} m(t) \,dt\right)^{\frac{1}{p}}
    \end{align}
    for all $f \in C_c^{\infty}(\R_+)$.
    \end{corollary}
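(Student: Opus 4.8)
The plan is to obtain this as a direct consequence of the shifted Pitt's inequality for the standard Jacobi transform (Theorem~\ref{thm_pitt's_stand_J}), following exactly the recipe used for the preceding symmetric-space corollary, where Theorem~\ref{thm_pitt_X_n} was applied with $q=p'$ and $\zeta=|\rho_{p_0}|$ to $(-\mathcal{L}_{p_0})^{\sigma/2}f$. The only additional input needed is that the Jacobi transform diagonalises $-\mathcal{L}_{p_0}$. From the defining ODE $(\mathcal{L}_{\alpha,\beta}+\lambda^2+\rho^2)\varphi_\lambda=0$ in \eqref{defn_of_L_Jac} one has $-\mathcal{L}\varphi_\lambda=(\lambda^2+\rho^2)\varphi_\lambda$, and since $\mathcal{L}_{p_0}=\mathcal{L}+(|\rho|^2-\rho_{p_0}^2)I$ with $\rho_{p_0}=|2/p_0-1|\rho$, self-adjointness of $\mathcal{L}$ with respect to $m(t)\,dt$ gives, for $f\in C_c^\infty(\R_+)$,
\[
\mathcal{J}\bigl((-\mathcal{L}_{p_0})^{\sigma/2}f\bigr)(\lambda)=(\lambda^2+\rho_{p_0}^2)^{\sigma/2}\,\mathcal{J}f(\lambda),\qquad \lambda\geq 0 .
\]
Since $p_0\in[1,2)$ we set $\zeta:=\rho_{p_0}\neq 0$ (the generic case $\rho>0$; the degenerate cosine-transform case $\alpha=\beta=-1/2$ would require separate comment).

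Next I would verify that the pair $(\sigma,\kappa)$ meets the hypotheses of Theorem~\ref{thm_pitt's_stand_J}, i.e.\ of Theorem~\ref{thm_pitt's_mod_J}, when $q=p'$. The balance condition \eqref{nec_stan_s-k>} becomes $\sigma-\kappa\geq 2(\alpha+1)(1/p+1/p'-1)=0$, which is precisely the assumed $\sigma\geq\kappa$; the condition $\kappa p'<2(\alpha+1)$ is the standing hypothesis; and since $\zeta\neq0$ the only remaining requirement is $\kappa p'\geq 3$ or, when $\kappa p'<\min\{3,2(\alpha+1)\}$, the inequality \eqref{eqn_nec_mJ,k>}, namely $\kappa\geq 3(1-1/p-1/q)=0$, which again holds automatically. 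Hence Theorem~\ref{thm_pitt's_stand_J} (with $q=p'$, $\zeta=\rho_{p_0}$) yields, for admissible $g$,
\[
\left(\int_0^\infty |\mathcal{J}g(\lambda)|^{p'}(\lambda^2+\rho_{p_0}^2)^{-\frac{\sigma p'}{2}}\,n(\lambda)\,d\lambda\right)^{1/p'}\leq C\left(\int_0^\infty |g(t)|^p\,t^{\kappa p}\,m(t)\,dt\right)^{1/p}.
\]
Taking $g=(-\mathcal{L}_{p_0})^{\sigma/2}f$ and inserting the spectral identity, the factor $(\lambda^2+\rho_{p_0}^2)^{\sigma p'/2}$ produced by $|\mathcal{J}g|^{p'}$ cancels the weight $(\lambda^2+\rho_{p_0}^2)^{-\sigma p'/2}$, leaving $\int_0^\infty|\mathcal{J}f(\lambda)|^{p'} n(\lambda)\,d\lambda$ on the left; recalling $n(\lambda)=(2\pi)^{-1/2}|\bfc(\lambda)|^{-2}$ from \eqref{def_m(t)}, this is the asserted inequality up to an absolute constant.

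The one point requiring genuine care — and the step I expect to be the main (if routine) obstacle — is that $g=(-\mathcal{L}_{p_0})^{\sigma/2}f$ is not compactly supported, so Theorem~\ref{thm_pitt's_stand_J} does not literally apply to it. I would resolve this by the standard density/limiting argument: for $f\in C_c^\infty(\R_+)$ the function $(-\mathcal{L}_{p_0})^{\sigma/2}f$ lies in the Jacobi Schwartz class (its Jacobi transform $(\lambda^2+\rho_{p_0}^2)^{\sigma/2}\mathcal{J}f(\lambda)$ is smooth with rapid decay, by the Paley--Wiener theorem of \cite[Theorem 3.4]{Koo75}), so both sides above are finite; one then either approximates $g$ in the relevant weighted norms by $C_c^\infty$ functions, or truncates $\mathcal{J}f$ and passes to the limit via monotone convergence. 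This is exactly the justification implicit in the proof of the analogous symmetric-space corollary, and no new difficulty arises in the Jacobi setting.
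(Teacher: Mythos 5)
Your proposal is correct and follows exactly the route the paper intends: the paper omits the proof, stating that it follows by applying Theorem~\ref{thm_pitt's_stand_J} with $q=p'$ and $\zeta=|\rho_{p_0}|$ to $(-\mathcal{L}_{p_0})^{\sigma/2}f$, precisely as in the preceding symmetric-space corollary. Your verification that the hypotheses of Theorem~\ref{thm_pitt's_mod_J} collapse to $\sigma\geq\kappa\geq 0$ and $\kappa p'<2(\alpha+1)$ when $q=p'$, and your attention to the density step for the non-compactly-supported $g=(-\mathcal{L}_{p_0})^{\sigma/2}f$, are exactly the details the paper leaves implicit.
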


\begin{remark}
Analogous applications for other types of Jacobi  transforms may be obtained using Theorem \ref{thm_pitt's_mod_J}, \ref{thm_pitt's_mod_J-1}, and \ref{thm_pitt_inq_inv_J}. Since the methods are similar, we omit the detailed statements and proofs.
\end{remark}
\section{Concluding remarks}
In conclusion, we would like to offer some observations and draw attention to a few open problems that, from our perspective, warrant further investigation.
\begin{enumerate}
    \item As in the Euclidean setting, it is also interesting to establish the Pitt-type inequality for the range $2 < p \leq q < \infty$. However, due to the exponential volume growth of symmetric spaces, it becomes necessary to incorporate exponential weights when $p > 2$. In fact, following a similar computation as in Theorem \ref{thm_nec_pitt_n}, we can show that for $p > 2$, the appropriate weight is $w_\delta = e^{2\rho \delta t}$ with $\delta$   greater than equal to  $({1}/{2} - {1}/{p})$. In this context, we also note that in the rank one case, Ray and Sarkar established a Hardy–Littlewood–Sobolev inequality for $p \geq 2$ in \cite[Theorem 4.11 (ii)]{RS09}. By interpolating their result with \eqref{inq_pit_p,2}, one can derive a version of the Pitt inequality for $p > 2$, albeit with exponential weights appearing on the function side.
    \item As noted earlier in Remark~\ref{rem_nec_suff_mJ}, we assumed $\sigma, \kappa \geq 0$  to compute the non-increasing rearrangement of polynomial weights. It is therefore natural to ask whether this condition is also necessary for the shifted Pitt's inequality to hold for $1 \leq  p \leq q < \infty$ in the setting of modified Jacobi transforms. Moreover, a complete characterization of admissible polynomial weights in shifted Pitt's inequality for Jacobi transforms, and, on Riemannian symmetric spaces, remains an important open problem.  
\end{enumerate}
\section*{Acknowledgments}
The authors thank Pritam Ganguly for some useful discussions on uncertainty principles. TR and MR are supported by the FWO Odysseus 1 grant G.0H94.18N: Analysis and Partial Differential Equations, the Methusalem program of the Ghent University Special Research Fund (BOF), (TR Project title: BOFMET2021000601). TR is also supported by a BOF postdoctoral fellowship at Ghent University BOF24/PDO/025.  MR is also supported by EPSRC grant EP/V005529/7.

\bibliographystyle{alpha}
\bibliography{Reference_PittsSt}

\end{document}